\numberwithin{equation}{section}
\newtheorem{thm}{Theorem}[section]
\newtheorem{lem}[thm]{Lemma}
\newtheorem{cor}[thm]{Corollary}
\newtheorem{Prop}[thm]{Proposition}
\newtheorem{Def}[thm]{Definition}
\newtheorem{step}[thm]{Step}
\newcommand{\dist}{{\rm dist}}
\begin{document}
	\title[Stability for nonlocal Sobolev inequality]
	{Remainder terms,
		profile decomposition and sharp quantitative stability in the fractional nonlocal Sobolev-type inequality with $n>2s$}
	
	\author[Q. Lu]{Qikai Lu}
	\address{\noindent Qikai Lu  \newline School of Mathematical Sciences, Zhejiang Normal University,\newline
		Jinhua 321004, Zhejiang, People's Republic of China.}
	\email{luqikai@zjnu.edu.cn}
	
	\author[M. Yang]{Minbo Yang$^\dag$}
	\address{\noindent Minbo Yang  \newline
		School of Mathematical Sciences, Zhejiang Normal University,\newline
		Jinhua 321004, Zhejiang, People's Republic of China.}\email{mbyang@zjnu.edu.cn}
	
	\author[S. Zhao]{Shunneng Zhao$^\ddag$}
	\address{\noindent Shunneng Zhao  \newline
		School of Mathematical Sciences, Zhejiang Normal University,\newline
		Jinhua 321004, Zhejiang, People's Republic of China.}
	
	\address{Dipartimento di Matematica, Universit\`{a} degli Studi di Bari Aldo Moro,\newline Via Orabona 4, 70125 Bari, Italy.
	}\email{snzhao@zjnu.edu.cn}

\thanks{Date: March 09, 2025.}
	\thanks{2020 {\em{Mathematics Subject Classification.}} Primary 35A23, 35R11;  Secondly 35B35, 35J08.}
	
	\thanks{{\em{Key words and phrases.}} The nonlocal Sobolev inequality; Struwe-type profile decomposition; Stability estimates; Hardy-Littlewood-Sobolev inequality; Fractional Laplacian.}
	
	\thanks{$^\dag$Minbo Yang was partially supported by National Key Research and Development Program of China (No. 2022YFA1005700) and National Natural Science Foundation of China (12471114).}
	\thanks{$^\ddag$Shunneng Zhao is the corresponding author who was partially supported by National Natural Science Foundation of China (12401146, 12261107), Natural Science Foundation of Zhejiang Province (LMS25A010007) and PNRR MUR project PE0000023 NQSTI - National Quantum Science and Technology Institute (CUP H93C22000670006).}
	
	\allowdisplaybreaks
	
	\begin{abstract}
		{\small
			In this paper, we study the following fractional nonlocal Sobolev-type inequality
			\begin{equation*}
				C_{HLS}\bigg(\int_{\mathbb{R}^n}\big(|x|^{-\mu} \ast |u|^{p_s}\big)|u|^{p_s} dx\bigg)^{\frac{1}{p_s}}\leq\|u\|_{\dot{H}^s(\mathbb{R}^n)}^2\quad \mbox{for all}~~u\in \dot{H}^s(\mathbb{R}^n),
			\end{equation*}
			induced by the classical fractional Sobolev inequality and Hardy-Littlewood-Sobolev inequality for $s\in(0,\frac{n}{2})$, $\mu\in(0,n)$ and where $p_{s}=\frac{2n-\mu}{n-2s}\geq2$ is energy-critical exponent. The $C_{HLS}>0$ is a constant depending on the dimension $n$, parameters $s$ and $\mu$,  which can be achieved by $W(x)$, and up to translation and scaling, $W(x)$ is the unique positive and radially symmetric extremal function of the nonlocal Sobolev-type inequality. It is well-known that, up to a suitable scaling,
			\begin{equation*}
				(-\Delta)^{s}u=(|x|^{-\mu}\ast |u|^{p_s})|u|^{p_s-2}u\quad \mbox{for all}~~u\in\dot{H}^s(\mathbb{R}^n),
			\end{equation*}
			is the Euler-Lagrange equation corresponding to the associated minimization problem.
			
			In this paper, we first prove the non-degeneracy of positive solutions to the critical Hartree equation for all $s\in(0,\frac{n}{2})$, $\mu\in(0,n)$ with $0<\mu\leq4s$. Furthermore, we show the existence of a gradient type remainder term and, as a corollary, derive the existence of a remainder term in the weak $L^{\frac{n}{n-2s}}$-norm for functions supported in domains of finite measure, under the condition $s\in(0,\frac{n}{2})$. Finally, we establish a Struwe-type profile decomposition and quantitative stability estimates for critical points of the above inequality in the parameter region $s\in(0,\frac{n}{2})$ with the number of bubbles $\kappa\geq1$, and for $\mu\in(0,n)$ with $0<\mu\leq4s$. In particular, we provide an example to illustrate the sharpness of our result for $n=6s$ and $\mu=4s$.
		}
	\end{abstract}
	
	\vspace{3mm}
	
	\maketitle
	\section{Introduction}
	\subsection{Motivation and main results}
	The classical fractional Sobolev inequality for exponent $s\in(0,\frac{n}{2})$ states that there exists a dimensional constant $S=S_{n,s}>0$ such that
	\begin{equation}\label{bsic}
		\int_{\mathbb{R}^n}|(-\Delta)^{s/2}u(x)|^2dx\geq S_{n,s}\big(\int_{\mathbb{R}^n}|u(x)|^{2^*_{s}}dx\big)^{\frac{2}{2^*_s}}\quad \mbox{for all}~~u\in \dot{H}^s(\mathbb{R}^n),
	\end{equation}
	where $2_s^*=\frac{2n}{n-2s}$ and the homogeneous Sobolev space $\dot{H}^s(\mathbb{R}^n)$ is defined as the completion of $C^\infty_0(\mathbb{R}^n)$ with respect to the norm
	$$\|u\|_{\dot{H}^s(\mathbb{R}^n)}:=\Big(\int_{\mathbb{R}^n}|(-\Delta)^{s/2}u|^2dy\Big)^{1/2}=\Big(\int_{\mathbb{R}^n}|\hat{u}(\xi)|^2|\xi|^{2s}d\xi\Big)^{1/2}.$$ In \cite{Lieb83}, Lieb determined the optimal constant and identified that the extremal functions of \eqref{bsic} are functions of the form (so-called Talenti bubble):
	\begin{equation}\label{minimizer}
		U[\xi,\lambda](x)=c_{n,s}\Big(\frac{\lambda}{1+\lambda^2|x-\xi|^2}\Big)^{\frac{n-2s}{2}},\hspace{4mm}\lambda\in\mathbb{R}^{+},\hspace{4mm}\xi\in\mathbb{R}^n,
	\end{equation}
	for $x\in\mathbb{R}^n$ and $c_{n,s}:=2^{2s}(\Gamma(\frac{n+2s}{2})/\Gamma(\frac{n-2s}{2}))^{\frac{n-2s}{4s}}$.
	It is well known that the Euler-Lagrange equation associated to (\ref{bsic}) is given by
	\begin{equation}\label{bec}
		(-\Delta)^s u=|u|^{2^*_{s}-2}u\quad \mbox{for all}~~u\in \dot{H}^s(\mathbb{R}^n).
	\end{equation}
	Chen, Li and Ou \cite{Chen-ou}
	classified that all the positive solutions to equation (\ref{bec}) are the Talanti bubbles in \eqref{minimizer}.
	For $s\in(0,\frac{n}{2})$, an equivalent reformulation of inequality \eqref{bsic}, known as the (diagonal) Hardy-Littlewood Sobolev (HLS) inequality
	in \cite{H-L-1928,S1963} asserts that:
	\begin{equation}\label{hlsi}
		\int_{\mathbb{R}^n}\int_{\mathbb{R}^n}u(x)|x-y|^{-\mu} v(y)dxdy\leq C_{n,r,t,\mu}\|u\|_{L^r(\mathbb{R}^n)}\|v\|_{L^t(\mathbb{R}^n)},
	\end{equation}
	where $\mu\in(0,n)$, $1<r,t<\infty$ and $\frac{1}{r}+\frac{1}{t}+\frac{\mu}{n}=2$.
	The equivalence of \eqref{bsic} and \eqref{hlsi} follows from a duality argument because $2_{s}^\ast$ is the H\"{o}lder conjugate of $r:=\frac{2n}{n+2s}(\mu=n-2s)$.
	In the general diagonal case $t=r=\frac{2n}{2n-\mu}$, Lieb \cite{Lieb83} classified the extremal functions for the HLS inequality and determined the optimal constant:
	\begin{equation}\label{defhlsbc}
		C_{n,\mu}=\pi^{\mu/2}\frac{\Gamma((n-\mu)/2)}{\Gamma(n-\mu/2)}\left(\frac{\Gamma(n)}{\Gamma(n/2)}\right)^{1-\frac{\mu}{n}}.
	\end{equation}
	Moreover, the equality holds if and only if
	\begin{equation*}
		u(x)=av(x)=a\big(\frac{1}{1+\lambda^2|x-x_0|^2}\big)^{\frac{2n-\mu}{2}}
	\end{equation*}
	for some $a\in \mathbb{C}$, $\lambda\in \mathbb{R}\backslash\{0\}$ and $x_0\in \mathbb{R}^n$.
	
	For the special case $s=1$, that is, the classical first-order Sobolev inequality.
	Brezis and Lieb \cite{BE198585} asked the question whether
	a remainder term proportional to the quadratic distance of the function $u$ to be the smooth manifold of extremal functions $\mathcal{M}$ can be added to the right hand side of (\ref{bsic}). This question was first addressed in the case $s=1$ by Bianchi and Egnell \cite{BE91}. They demonstrated that the quantity $c_{BE}^{-1}\big(\big\|\nabla u\big\|_{L^2}^2-S^2\big\|u\big\|_{L^{2^\ast}}^2\big)$ is bounded
	from below in terms of some natural distance to the manifold of optimizers.
	Subsequently, the result of Bianchi and Egnell was extended to the biharmonic case in \cite{L-W-99} and poly-harmonic operators in \cite{B-W-W,G-W10} as well. The fractional case was dealt with in \cite{CFW13}, while an explicit lower bound for the first-order Sobolev inequality was established in \cite{D-E-F-F-L}. More recently, K\"{o}nig proved that the sharp constant $ c_{BE}$ must be strictly smaller than $\frac{4}{n+4}$ (cf. \cite{T-K-23,T-K-24}). This refinement provides further insight into the stability of the Sobolev inequality and the optimality of associated constants. The quantitative stability of the Sobolev inequality (for the Sobolev space $p\neq2$) was carried out in \cite{C-F-M-P,F-N-18,F-Z-22}, and HLS inequality has also been studied in \cite{Ca17, C-L-T, Dolbeault-1, DE22}.
	
	A natural and more challenging perspective is to consider the quantitative stability of critical points of the Euler-Lagrange equation \eqref{bec}. If $s=1$, Struwe's seminal work \cite{Struwe-1984} established the
	well-known stability of profile decompositions for \eqref{bec}. Starting with \cite{Struwe-1984}, extensive research has been devoted to the stability properties of the Euler-Lagrange equation associated with the embedding $\mathcal{D}^{1,2}(\mathbb{R}^n)\hookrightarrow L^{2n/(n-2)}(\mathbb{R}^n)$ (where $\mathcal{D}^{1,2}(\mathbb{R}^n)$ is the completion of
	$C_0^{\infty}(\mathbb{R}^n)$ with respect to the norm $\|u\|_{L^2(\mathbb{R}^n)}$). A significant breakthrough was made by Ciraolo, Figalli, and Maggi in \cite{CFM18}, where they established the first sharp quantitative stability result for the one-bubble case ($\kappa = 1$) in dimensions $n \geq 3$. Subsequently, Figalli and Glaudo extended this stability result to the multi-bubble case ($\kappa \geq 2$) in dimensions $3 \leq n \leq 5$ in \cite{FG20}. Meanwhile, the authors in \cite{FG20} constructed counter-examples showing that when $n\geq6$, one may have
	\begin{equation*}
		\inf\limits_{\xi_1,\xi_2,\cdots,\xi_\kappa\in\mathbb{R}^n,
			\lambda_1,\lambda_2,\cdots,\lambda_{\kappa}\in\mathbb{R}^+}\big\|\nabla u-\nabla\big(\sum_{i=1}^\kappa U[\xi_i,\lambda_i]\big)\big\|_{L^2(\mathbb{R}^n)}\gg\big\|\Delta u+u^{\frac{n+2}{n-2}}\big\|_{(\mathcal{D}^{1,2}(\mathbb{R}^n))^{-1}}.
	\end{equation*}
	The optimal estimates in dimensions $n\geq6$ were recently established by Deng, Sun and Wei in \cite{DSW21}.
	Following the rigidity result in \cite{Chen-ou}, an analog of Struwe-type profile decompositions was derived by Palatucci and Pisante in \cite{P-P-15} and N. de Nitti and T. Konig in \cite{T-Konig23}. Furthermore, the quantitative stability results in \cite{CFM18,FG20,DSW21} was generalized by Aryan \cite{Aryan} for $s\in(0,1)$, and by Chen et al. \cite{C-K-L-24} for all $n\in\mathbb{N}$ and $s\in(0,\frac{n}{2})$ (see also \cite{T-Konig23}). We also refer to the extension results in \cite{Z} for the stability of fractional Sobolev trace inequality, and references therein.
	
	Recalling that the fractional Sobolev inequality \eqref{bsic} and
	HLS inequality \eqref{hlsi}, there exists an optimal constant $C_{HLS}>0$ depending only on $n$, $s$ and $\mu$ such that
	\begin{equation}\label{Prm}
		\int_{\mathbb{R}^n}|(-\Delta)^{\frac{s}{2}} u|^2dx\geq C_{HLS}\left(\int_{\mathbb{R}^n}(|x|^{-\mu} \ast|u|^{p_s})|u|^{p_s}dx\right)^{\frac{1}{p_s}}\quad \mbox{for all}~~u\in \dot{H}^s(\mathbb{R}^n)\hspace{2mm}\mbox{and}\hspace{2mm}s\in(0,\frac{n}{2}).
	\end{equation}
	It is well-known that the Euler-Lagrange equation associated with \eqref{Prm} is given by
	\begin{equation}\label{ele-1.1}
		(-\Delta)^s u=(|x|^{-\mu}\ast |u|^{p_s})|u|^{p_s-2}u\quad \mbox{for all}~~u\in \dot{H}^s(\mathbb{R}^n).
	\end{equation}
	Furthermore, Le \cite{ple} classified all positive solutions of \eqref{ele-1.1} are functions of the form
	\begin{equation}\label{defU}
		W[\xi,\lambda](x)=\alpha_{n,\mu,s}\big(\frac{\lambda}{1+\lambda^2|x-\xi|^2}\big)^{\frac{n-2s}{2}},\hspace{1mm}\lambda\in\mathbb{R}^{+},\hspace{1mm}\xi\in\mathbb{R}^n,
	\end{equation}
	and the constant $\alpha_{n,\mu,s}$ is given by $$\alpha_{n,\mu,s}:=\Big(\frac{2^{2s}\Gamma(\frac{n+2s}{2})\Gamma(\frac{2n-\mu}{2})}{\pi^{n/2}\Gamma\big((n-2s)/2\big)\Gamma\big((n-\mu)/2\big)}\Big)^{\frac{n-2s}{2(n+2s-\mu)}}.$$
	For $s=1$, the authors of \cite{GY18, DY19, GHPS19} independently computed the optimal constant $C_{HLS}>0$ and classified that the extremal functions of \eqref{Prm} are the bubbles $W[\xi,\lambda]$ described in \eqref{defU}. Moreover, they are all positive solutions to \eqref{ele-1.1}.
	The nondegeneracy of positive solutions for the critical Hartree equation \eqref{ele-1.1} plays an important role in PDE and functional analysis. As
	far as we know, the first nondegeneracy result is due to Li, Tang and Xu in \cite{LTX} for $s=1$, $n=6$ and $\mu=4$. Their approach relied on the expansion of the equations by spherical harmonics. Furthermore, Gao et al. \cite{GMYZ22} demonstrated the same results by using the method different from that in \cite{LTX}. Later on, Li et al. \cite{XLi} extended this nondegeneracy result to all $\mu\in(0,n)$ with $\mu\in(0,4]$ and $s=1$ by exploiting the spherical harmonic decomposition and the Funk-Hecke formula of spherical harmonic functions under the condition $v\in L^{\infty}(\mathbb{R}^n)$ which was established by using Moser's iteration method \cite{LLTX}. The nondegeneracy result for positive solutions $W[\xi,\lambda]$ was extended later to the case $s\in(0,1)$ by Deng et al. in \cite{DLYZ24}, and to the case $s=2$ by Zhang et al. in \cite{Zhang}. The first purpose of the present paper is to establish the nondegeneracy of the positive solutions $W[\xi,\lambda]$ of equation \eqref{ele-1.1} for all $s\in(0,\frac{n}{2})$, which can be summarized as follows:
	\begin{thm}\label{prondgr}
		Let $n\in\mathbb{N}$, $s\in(0,\frac{n}{2})$, $\mu\in(0,n)$ with $0<\mu\leq4s$.
		Then the solution
		$W:=W[\xi,\lambda](x)$ of equation (\ref{ele-1.1})
		is non-degenerate in the sense that all solutions of linearized equation
		\begin{equation*}
			(-\Delta)^{s}v=p_s\left(|x|^{-\mu} \ast (W^{p_s-1}v)\right)W^{p_s-1}
			+(p_s-1)\left(|x|^{-\mu} \ast W^{p_s}\right)W^{p_s-2}v,\hspace{4mm}v\in\dot{H}^s(\mathbb{R}^n),
		\end{equation*}
		are the linear combination of the functions
		\begin{equation*}
			\partial_\lambda W[\xi,\lambda],\hspace{2mm}\partial_{\xi_1}W[\xi,\lambda],\cdots,\partial_{\xi_n}W[\xi,\lambda].
		\end{equation*}
	\end{thm}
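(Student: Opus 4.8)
The plan is to use the conformal structure of the problem: via stereographic projection the linearized equation becomes a diagonal spectral identity on the round sphere, which can be solved explicitly. First, by the translation and dilation invariance of \eqref{ele-1.1} it suffices to treat $W=W[0,1](x)=\alpha_{n,\mu,s}(1+|x|^2)^{-(n-2s)/2}$. A direct computation gives $(|x|^{-\mu}\ast W^{p_s})(x)=c_0(1+|x|^2)^{-\mu/2}$ with $c_0>0$ explicit, and combining this with the Hardy--Littlewood--Sobolev inequality and the regularity theory for $(-\Delta)^s$ (a Moser iteration / bootstrap, as in \cite{LLTX,DLYZ24,XLi} for $s=1$ and $s\in(0,1)$), one shows that any $v\in\dot{H}^s(\mathbb{R}^n)$ solving the linearized equation is bounded and satisfies $|v(x)|\le C(1+|x|)^{-(n-2s)}$, i.e.\ $v$ decays like $W$. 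This is precisely where the hypothesis $\mu\le 4s$, equivalently $p_s\ge 2$, enters: it makes $W^{p_s-2}$ bounded near the origin, so the nonlinearity of the linearized equation is of class $C^1$ and the bootstrap closes.

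Next, let $\pi\colon S^n\setminus\{N\}\to\mathbb{R}^n$ denote stereographic projection and set $\tilde v(\zeta)=(\tfrac{1+|x|^2}{2})^{(n-2s)/2}v(x)$ for $x=\pi(\zeta)$; then $\widetilde W$ is constant. Using the conformal covariance of the fractional Laplacian --- $(-\Delta)^s$ intertwines with Branson's operator $\mathcal A_{2s}$ on $S^n$, whose eigenvalue on degree-$k$ spherical harmonics is $\gamma_k=\Gamma(k+\tfrac n2+s)/\Gamma(k+\tfrac n2-s)$ --- together with the identity $|x-y|=|\zeta-\eta|(\tfrac{1+|x|^2}{2})^{1/2}(\tfrac{1+|y|^2}{2})^{1/2}$ and the Jacobian of $\pi$, one checks that all conformal weights cancel (this is forced by $p_s=\tfrac{2n-\mu}{n-2s}$) and the linearized equation becomes
\begin{equation*}
\mathcal A_{2s}\tilde v=\frac{p_s\gamma_0}{\vartheta_0}\,\mathcal I_\mu\tilde v+(p_s-1)\gamma_0\,\tilde v\qquad\text{on }S^n,
\end{equation*}
where $\mathcal I_\mu f(\zeta)=\int_{S^n}|\zeta-\eta|^{-\mu}f(\eta)\,d\sigma(\eta)$, $\gamma_0=\Gamma(\tfrac n2+s)/\Gamma(\tfrac n2-s)$ and $\vartheta_0=\int_{S^n}|\zeta-\eta|^{-\mu}d\sigma$; the two coefficients are pinned down by transforming the nonlinear identity $(-\Delta)^sW=(|x|^{-\mu}\ast W^{p_s})W^{p_s-1}$ in the same way. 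The decay from the first step guarantees $\tilde v\in L^2(S^n)$, so this is a genuine identity of $L^2$ functions to which spectral theory applies.

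Finally, both operators are diagonalized by spherical harmonics: $\mathcal A_{2s}Y_k=\gamma_k Y_k$ and, by the Funk--Hecke formula, $\mathcal I_\mu Y_k=\vartheta_k Y_k$ with $\vartheta_k=\vartheta_0\prod_{j=0}^{k-1}\frac{\mu/2+j}{n-\mu/2+j}$. Writing $\tilde v=\sum_{k\ge0}\tilde v_k$, the equation decouples: for each $k$ with $\tilde v_k\ne0$,
\begin{equation*}
\gamma_k=p_s\gamma_0\,\frac{\vartheta_k}{\vartheta_0}+(p_s-1)\gamma_0 .
\end{equation*}
For $k=0$ this forces $p_s=1$, impossible since $p_s\ge2$, so $\tilde v_0=0$; for $k=1$ it is an identity (use $\gamma_1=\tfrac{n+2s}{n-2s}\gamma_0$, $\vartheta_1/\vartheta_0=\tfrac{\mu}{2n-\mu}$ and $p_s=\tfrac{2n-\mu}{n-2s}$), which matches the fact that the $(n+1)$-dimensional degree-one sector corresponds under $\pi$ exactly to $\mathrm{span}\{\partial_\lambda W,\partial_{\xi_1}W,\dots,\partial_{\xi_n}W\}$. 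For $k\ge2$ there is no solution: $\gamma_{k+1}/\gamma_k=\tfrac{k+n/2+s}{k+n/2-s}>1$, so $\gamma_k$ is strictly increasing, while $\mu<n$ makes every factor $\tfrac{\mu/2+j}{n-\mu/2+j}<1$, so $\vartheta_k/\vartheta_0$ is strictly decreasing; hence $k\mapsto\gamma_k-(p_s-1)\gamma_0-p_s\gamma_0\vartheta_k/\vartheta_0$ is strictly increasing and vanishes at $k=1$, so it is positive for all $k\ge2$. Therefore $\tilde v$ is a degree-one spherical harmonic, which pulls back to an element of $\mathrm{span}\{\partial_\lambda W,\partial_{\xi_1}W,\dots,\partial_{\xi_n}W\}$.

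The conceptual core --- the scalar equation for $k\ge2$ --- is thus an elementary monotonicity statement for Gamma-function ratios, and I expect the genuine difficulty to lie in the first two steps: establishing the sharp decay $|v(x)|\le C(1+|x|)^{-(n-2s)}$ uniformly in $s\in(0,\tfrac n2)$ (so that the conformal transform is legitimate and no non-$L^2$ ``phantom'' solution is overlooked), the fractional regularity theory and the Funk--Hecke computation for $\mathcal I_\mu$ being somewhat delicate for large $s$, and in carefully tracking the conformal weights of the genuinely nonlocal Choquard term through $\pi$ and verifying that they cancel exactly.
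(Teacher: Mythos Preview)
Your approach is essentially the one the paper takes: reduce nondegeneracy to a spectral identity on $S^n$ via conformal invariance and the Funk--Hecke formula, after first establishing enough regularity on $v$. The paper's new content is precisely the step you flag as the genuine difficulty---a case-by-case bootstrap (split into $n>6s$, $n=6s$, $2s<n<6s$) proving $v\in L^\infty(\mathbb{R}^n)$ for all $s\in(0,n/2)$, which the references you cite only cover for $s\le 2$---after which it defers the spherical-harmonic argument to \cite{XLi,DLYZ24,Zhang}; so your write-up and the paper's are complementary expositions of the same proof.
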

	For a further understanding of inequality \eqref{Prm}, it is natural to investigate the quantitative stability of \eqref{Prm}. In \cite{DSB213}, Deng et al. proved a gradient-type remainder term of inequality \eqref{Prm} for the case $s=1$. Later on, this result was extended to $s=2$ by Zhang et al. in \cite{Zhang}, and to the case of $s\in(0,1)$ with $s\neq\frac{1}{2}$ by Deng et al. in \cite{DLYZ24}. In this paper, we establish a corresponding gradient-type remainder term inequality for all values $s\in(0,\frac{n}{2})$. Our result can be stated as follows.
	\begin{thm}\label{remainder terms}
		Assume that $n\in\mathbb{N}$, $s\in(0,\frac{n}{2})$, and $\mu\in(0,n)$ with $0<\mu\leq4s$.
		Let a $(n+2)$-dimensional manifold be
		$$\mathcal{M}:=\{cW[\xi,\lambda]:~c\in\mathbb{R},~\xi\in\mathbb{R}^n, ~\lambda\in\mathbb{R}^+\},$$
		where $W$ is defined in \eqref{defU}.
		Then there exist two constants $A_2>A_1>0$ such that for every $u\in\dot{H}^s(\mathbb{R}^n)$, it holds
		\begin{equation*}
			A_2\dist(u,\mathcal{M})^2
			\geq \int_{\mathbb{R}^n}|(-\Delta)^{\frac{s}{2}} u|^2 dx
			-C_{HLS}\left(\int_{\mathbb{R}^n}(|x|^{-\mu} \ast|u|^{p_s})|u|^{p_s}dx\right)^{\frac{1}{p_s}}
			\geq A_1\dist(u,\mathcal{M})^2,
		\end{equation*}
		where $\dist(u,\mathcal{M}):=\inf_{ c\in\mathbb{R}, \xi\in\mathbb{R}^n, \lambda\in\mathbb{R}^+}\|u-cW[\xi,\lambda]\|_{\dot{H}^s(\mathbb{R}^n)}$.
	\end{thm}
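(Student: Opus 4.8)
The plan is to follow the now-standard Bianchi--Egnell scheme adapted to the nonlocal setting, splitting the argument into a \emph{local} spectral analysis near the manifold $\mathcal{M}$ and a \emph{global} compactness/concentration-compactness argument. First I would reduce to the case where $u$ is close to $\mathcal{M}$: if $\dist(u,\mathcal{M})$ is comparable to $\|u\|_{\dot H^s}$, both inequalities follow from the sharp constant $C_{HLS}$ and a direct estimate (the upper bound is immediate by expanding, and the lower bound when $u$ is \emph{far} from $\mathcal{M}$ will come from the global compactness step). So the heart of the matter is $u = cW[\xi,\lambda] + \rho$ with $\|\rho\|_{\dot H^s}$ small relative to $\|W\|_{\dot H^s}$; by the invariances (scaling, translation, dilation of $c$) we may normalize $c=1$, $\xi=0$, $\lambda=1$, and by choosing the optimal $(c,\xi,\lambda)$ we may assume $\rho \perp T_{W}\mathcal{M}$ in $\dot H^s$, i.e. $\rho$ is orthogonal to $W$, $\partial_\lambda W$, $\partial_{\xi_i}W$.

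The key step is then a second-order Taylor expansion of the deficit
\[
\delta(u) := \|u\|_{\dot H^s}^2 - C_{HLS}\Big(\int_{\mathbb{R}^n}(|x|^{-\mu}\ast|u|^{p_s})|u|^{p_s}\,dx\Big)^{1/p_s}
\]
around $W$. Writing $Q(\rho)$ for the quadratic form obtained from this expansion, one gets
\[
\delta(W+\rho) = Q(\rho) + o(\|\rho\|_{\dot H^s}^2),
\]
where $Q(\rho) = \|\rho\|_{\dot H^s}^2 - \big(\text{second variation of the nonlocal term}\big)$, the latter being exactly the quadratic form associated with the linearized operator appearing in Theorem~\ref{prondgr}. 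The crucial input is that this linearized operator is \emph{non-degenerate} (Theorem~\ref{prondgr}): its kernel is precisely $\operatorname{span}\{\partial_\lambda W,\partial_{\xi_i}W\}$, and $W$ itself is a negative-direction eigenfunction, while the rest of the spectrum is strictly positive. Hence on the orthogonal complement of $\operatorname{span}\{W,\partial_\lambda W,\partial_{\xi_i}W\}$ we obtain a strict positive lower bound $Q(\rho)\geq c_0\|\rho\|_{\dot H^s}^2$ for some $c_0>0$; together with the trivial two-sided bound $Q(\rho)\leq \|\rho\|_{\dot H^s}^2$ this yields both inequalities \emph{locally}, with $\dist(u,\mathcal{M})^2 = \|\rho\|_{\dot H^s}^2 + o(\|\rho\|_{\dot H^s}^2)$. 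The spectral decomposition here relies on the Funk--Hecke formula and spherical-harmonic expansion of the nonlocal operator, exactly the machinery used to prove Theorem~\ref{prondgr}; I would cite that analysis and extract the spectral gap from it.

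To upgrade from the local statement to a global one, I would run the usual contradiction argument: suppose there is a sequence $u_k$ with $\delta(u_k) < \tfrac1k \dist(u_k,\mathcal{M})^2$ (for the lower bound; the upper bound is analogous with the reversed inequality and is in fact easier). Normalize $\|u_k\|_{\dot H^s}=1$; then $\delta(u_k)\to 0$, so $u_k$ is a minimizing sequence for the Sobolev-type quotient, and by the concentration-compactness principle (or a Struwe-type decomposition — but one bubble suffices here since $\delta(u_k)\to0$ forces a single profile) $u_k \to W[\xi_k,\lambda_k]$ in $\dot H^s$ after rescaling. Thus $\dist(u_k,\mathcal{M})\to 0$ and we are back in the perturbative regime, where the local inequality gives $\delta(u_k)\geq A_1\dist(u_k,\mathcal{M})^2$, contradicting the choice of $u_k$. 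The main obstacle I anticipate is twofold: (i) justifying the Taylor expansion and the $o(\|\rho\|^2)$ remainder requires careful estimates on the nonlocal term $\int(|x|^{-\mu}\ast|u|^{p_s})|u|^{p_s}$, using the Hardy--Littlewood--Sobolev inequality \eqref{hlsi} to control cross terms — this is where the restriction $p_s\geq 2$ (equivalently $\mu\leq 4s$, matching the hypothesis of Theorem~\ref{prondgr}) enters, since for $p_s<2$ the nonlinearity is not $C^2$ and the expansion breaks down; and (ii) extracting a \emph{quantitative} spectral gap $c_0>0$ uniform over the manifold — this is handled by the scaling/translation invariance, which reduces it to a single fixed $W$, at which point Theorem~\ref{prondgr} and the explicit spherical-harmonic spectrum give the gap. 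The compactness step itself is by now routine given the classification in \cite{ple} and the profile decomposition developed later in the paper.
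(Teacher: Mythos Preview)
Your proposal is correct and follows essentially the same Bianchi--Egnell scheme as the paper: a local spectral analysis near $\mathcal{M}$ based on the nondegeneracy of $W$ (the paper packages this as the eigenvalue gap $\tilde\lambda_{n+3}>p_s$ in Lemma~\ref{propep}, derived from Theorem~\ref{prondgr}), combined with a global contradiction argument using Lions' concentration-compactness. The only cosmetic difference is that the paper organizes the local step (Lemma~\ref{baowenbei}) through a four-case analysis depending on whether the weighted quantities $\mathcal P_{m,1},\mathcal P_{m,2}$ vanish, whereas your direct spectral-gap bound $Q(\rho)\ge c_0\|\rho\|_{\dot H^s}^2$ on $(T_W\mathcal{M})^\perp$ subsumes all four cases at once.
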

	As a corollary of Theorem \ref{remainder terms},  we will consider a remainder term inequality for all functions $u\in\dot{H}^s(\Omega)\subset\dot{H}^s(\mathbb{R}^n)$ which vanish in $\mathbb{R}^n\setminus\Omega$. In sprite of the work of Br\'{e}zis and Lieb \cite{BE198585}, for each bounded domain $\Omega\subset\mathbb{R}^n$ with $|\Omega|<\infty$, define the weak $L^q$-norm
	\begin{equation}\label{defwn}
		\|u\|_{L^q_w(\Omega)}:=\sup_{D\subset\Omega}|D|^{-\frac{q-1}{q}}\int_{D}|u|dx,\quad\mbox{for}\hspace{2mm}1<q<\infty.
	\end{equation}
	Then we establish the second-type remainder term for inequality (\ref{Prm}) in a bounded domain, which is stated as follows.
	\begin{thm}\label{thmprtb}
		Let $n\in\mathbb{N}$, $s\in(0,\frac{n}{2})$, $\mu\in(0,n)$ with $0<\mu\leq4s$.
		Then there exists a constant $B'>0$ such that for every domain $\Omega\subset \mathbb{R}^n$ with $|\Omega|<\infty$, and every $u\in\dot{H}^s(\Omega)$, it holds that
		\begin{equation}\label{rtbdy46}
			\int_{\mathbb{R}^n}|(-\Delta)^{\frac{s}{2}} u|^2 dx
			-C_{HLS}\left(\int_{\mathbb{R}^n}(|x|^{-\mu} \ast|u|^{p_s})|u|^{p_s}dx\right)^{\frac{1}{p_s}}
			\geq B'|\Omega|^{-\frac{2}{2_{s}^{\ast}}}\big\|u\big\|^2_{L^{\frac{n}{n-2s}}_w(\Omega)},
		\end{equation}
		where $L^{\frac{n}{n-2s}}_w$ denotes the weak $L^{\frac{n}{n-2s}}$-norm as in (\ref{defwn}).
	\end{thm}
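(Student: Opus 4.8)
The plan is to deduce the domain version from Theorem \ref{remainder terms} by estimating, from below, the distance $\dist(u,\mathcal{M})$ of a function $u\in\dot H^s(\Omega)$ to the manifold $\mathcal{M}$ in terms of the weak-$L^{n/(n-2s)}$ norm of $u$, using the fact that bubbles $W[\xi,\lambda]$ are \emph{spread out} and hence cannot be well approximated in $\dot H^s$ by functions supported on a set of small measure. First I would reduce to the case where the left-hand side of \eqref{rtbdy46} is small, say bounded by $\varepsilon_0\|u\|_{\dot H^s}^2$ with $\varepsilon_0$ a small dimensional constant; in the complementary regime the inequality follows trivially from \eqref{Prm}, the continuous embedding $\dot H^s(\mathbb{R}^n)\hookrightarrow L^{2_s^*}(\mathbb{R}^n)$, H\"older on $\Omega$, and the elementary bound $\|u\|_{L^{n/(n-2s)}_w(\Omega)}\le C|\Omega|^{1/2_s^*-1/2+?}\|u\|_{L^{2_s^*}}$ obtained from the definition \eqref{defwn} together with H\"older. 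So the heart of the matter is the small-energy regime.

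In that regime, by Theorem \ref{remainder terms} there exist $c\in\mathbb{R}$, $\xi\in\mathbb{R}^n$, $\lambda\in\mathbb{R}^+$ with $\|u-cW[\xi,\lambda]\|_{\dot H^s}^2\le A_1^{-1}(\text{LHS of }\eqref{rtbdy46})$ small relative to $\|u\|_{\dot H^s}^2$; in particular $|c|\,\|W\|_{\dot H^s}\sim\|u\|_{\dot H^s}$ is comparable to $\|u\|_{\dot H^s}$. Writing $v:=cW[\xi,\lambda]$ and $w:=u-v$, I would split $\mathbb{R}^n=\Omega\cup\Omega^c$. On $\Omega^c$ we have $u=0$, so $v=-w$ there; on $\Omega$ we bound $\|v\|_{L^{n/(n-2s)}_w(\Omega)}\le\|u\|_{L^{n/(n-2s)}_w(\Omega)}+\|w\|_{L^{n/(n-2s)}_w(\Omega)}$. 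The key quantitative lemma to be proved is a lower bound
\[
\|cW[\xi,\lambda]\|_{L^{n/(n-2s)}_w(\mathbb{R}^n)}\ \ge\ \delta\,|c|\,\lambda^{-\frac{n-2s}{2}}\cdot\lambda^{\frac{n-2s}{2}}\ =\ \delta\,|c|
\]
(up to the correct homogeneity normalization so that the weak norm of $W[\xi,\lambda]$ is scaling-invariant, which it is, being $\dot H^s$-critical), while on the other hand $|c|\sim\|u\|_{\dot H^s}$; combined with the smallness of $\|w\|_{\dot H^s}$ and the elementary estimate $|D|^{-1/2_s^*}\int_D|w|\le C|D|^{1/n-1/2+\cdots}\|w\|_{L^{2_s^*}}\le C|\Omega|^{\cdots}\|w\|_{\dot H^s}$ controlling $\|w\|_{L^{n/(n-2s)}_w(\Omega)}$, one absorbs the $w$-terms and concludes
\[
|\Omega|^{-1/2_s^*}\|u\|_{L^{n/(n-2s)}_w(\Omega)}\ \gtrsim\ |c|\ \gtrsim\ \|u\|_{\dot H^s}\ \gtrsim\ \big(\text{LHS of }\eqref{rtbdy46}\big)^{1/2},
\]
which after squaring is exactly \eqref{rtbdy46}.

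The one genuine subtlety — and what I expect to be the main obstacle — is the scaling bookkeeping in the weak-norm estimate: one must check that $\|W[\xi,\lambda]\|_{L^{n/(n-2s)}_w(\mathbb{R}^n)}$ is a finite, positive constant \emph{independent of $\xi$ and $\lambda$}. This is the statement that the exponent $q=\tfrac{n}{n-2s}$ is precisely the one making the weak norm invariant under the $\dot H^s$-critical rescaling $u\mapsto\lambda^{(n-2s)/2}u(\lambda(\cdot-\xi))$: indeed $\|u\|_{L^q_w}$ scales like $\lambda^{-n/q+(n-2s)/2+\cdots}$ and this vanishes exactly when $q=n/(n-2s)$, so translation-dilation invariance pins down the constant, and its finiteness follows since $W[0,1](x)\sim|x|^{-(n-2s)}$ at infinity lies in $L^{n/(n-2s)}_w$ but not in $L^{n/(n-2s)}$. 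The remaining work — the coarse-regime estimate, the comparison $\|w\|_{L^q_w(\Omega)}\lesssim|\Omega|^{\theta}\|w\|_{\dot H^s}$ via H\"older and Sobolev, and the absorption of error terms — is routine once $\varepsilon_0$ is fixed small enough.
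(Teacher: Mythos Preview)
Your final chain of inequalities runs in the wrong direction. You conclude
\[
|\Omega|^{-1/2_s^*}\|u\|_{L^{n/(n-2s)}_w(\Omega)}\ \gtrsim\ |c|\ \gtrsim\ \|u\|_{\dot H^s}\ \gtrsim\ \big(\text{LHS of }\eqref{rtbdy46}\big)^{1/2},
\]
which upon squaring gives $|\Omega|^{-2/2_s^*}\|u\|_{L^{n/(n-2s)}_w(\Omega)}^2\gtrsim\text{LHS}$ --- the \emph{reverse} of \eqref{rtbdy46}. What is needed is an \emph{upper} bound on the weak norm in terms of the deficit, namely $\|u\|_{L^{n/(n-2s)}_w(\Omega)}\lesssim|\Omega|^{1/2_s^*}\dist(u,\mathcal{M})$, which one then chains with Theorem~\ref{remainder terms}. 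Your triangle-inequality step $\|v\|_{L^q_w(\Omega)}\le\|u\|_{L^q_w(\Omega)}+\|w\|_{L^q_w(\Omega)}$ is oriented the wrong way for this: it yields a lower bound on $\|u\|_{L^q_w}$, not an upper one.

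There is also a concrete error in the scaling claim. Under the $\dot H^s$-critical rescaling $u\mapsto\lambda^{(n-2s)/2}u(\lambda\,\cdot)$ the weak $L^q$ norm picks up a factor $\lambda^{(n-2s)/2-n/q}$, which vanishes for $q=2_s^*=\tfrac{2n}{n-2s}$, \emph{not} for $q=\tfrac{n}{n-2s}$. Hence $\|W[\xi,\lambda]\|_{L^{n/(n-2s)}_w(\mathbb{R}^n)}$ genuinely depends on $\lambda$, and your ``key quantitative lemma'' as stated is false; it is the combination $|\Omega|^{-1/2_s^*}\|\cdot\|_{L^{n/(n-2s)}_w(\Omega)}$ that is invariant, not the weak norm alone. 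The paper sidesteps all of this by invoking Proposition~4.1 of Chen--Frank--Weth \cite{CFW13} (restated as Proposition~\ref{shanzhashu}), which gives directly $\|u\|_{L^{n/(n-2s)}_w(\Omega)}\le C_0|\Omega|^{1/2_s^*}\dist(u,\mathcal{M})$ for every $u\in\dot H^s(\Omega)$; Theorem~\ref{thmprtb} then follows in one line from Theorem~\ref{remainder terms}, with no coarse/fine split needed. Your coarse-regime argument is correct (the missing H\"older exponent is $1/2_s^*$), but the fine regime --- equivalently, the CFW proposition --- carries the content, and that is where your plan breaks.
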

	The local stability of inequality \eqref{bsic} for $s=1$ was established by Br\'{e}zis and Lieb in \cite{BE198585}:
	\begin{equation}\label{kouhong}
		\|\nabla u\|^2_{L^2(\Omega)}-S\|u\|^2_{L^{2^*}(\Omega)}\geq C\|u\|^2_{L^{\frac{N}{N-2}}_w(\Omega)}\hspace{3mm}\mbox{for all}\hspace{2mm}u\in \mathcal{D}^{1,2}_0(\Omega),
	\end{equation}
	for some constant $C>0$. Furthermore, Bianchi and Egnell \cite{BE91} gave an alternative proof for inequality \eqref{kouhong} by using the nondegeneracy property of extremal functions. Later on, Chen et al. \cite{CFW13} extended this stability result to all $s\in(0,\frac{n}{2})$. In \cite{DSB213}, Deng et al. also proved that the existence of a weak $L^{\frac{n}{n-2}}$-remainder term for inequality \eqref{Prm} in the case $s=1$ by the rearrangement inequality and Lions' concentration-compactness principle. However, in order to avoid the rearrangement inequality we follow the same idea as in \cite{CFW13} to show Theorem \ref{thmprtb} by combining the conclusion of Theorem \ref{remainder terms}.
	
	Another way to examine the stability issue on inequality \eqref{Prm} is to study the stability of profile decompositions to equation \eqref{ele-1.1} for nonnegative functions. Inspired by the works of Struwe \cite{Struwe-1984}, Palatucci and Pisante \cite{P-P-15} and de Nitti and K\"onig \cite{T-Konig23}, we establish a fractional Hartree-type version of the profile decompositions associated with \eqref{ele-1.1}. More precisely,
	\begin{thm}\label{emm}
		Let $n\in \mathbb{N}$, $s\in(0,\frac{n}{2})$, $\mu\in(0,n)$ with $0<\mu\leq4s$ and $\kappa\geq1$ be positive integers. Let $\{u_m\}_{m=1}^\infty\subseteq \dot{H}^s(\mathbb{R}^n)$ be a sequence of nonnegative functions such that $$(\kappa-\frac{1}{2})C_{HLS}^{\frac{2n-\mu}{n+2s-\mu}}\leq\big\|u_m\big\|_{\dot{H}^s(\mathbb{R}^n)}^2\leq(\kappa+\frac{1}{2})C_{HLS}^{\frac{2n-\mu}{n+2s-\mu}}$$ with $C_{HLS}=C_{HLS}(n,\mu,s)$ defined in \eqref{Prm}, and assume that
		$$
		\Big\|(-\Delta)^{s}u_m-\left(|x|^{-\mu}\ast |u_m|^{p_{s}}\right)|u_m|^{p_{s}-2}u_m\Big\|_{\dot{H}^{-s}(\mathbb{R}^n)}\rightarrow0\hspace{6mm}\mbox{as}\hspace{2mm}m\rightarrow\infty.
		$$
		Then, there exist $\kappa$-tuples of points $\{\xi_{1}^{(m)},\cdots, \xi_{\kappa}^{(m))}\}_{m=1}^\infty$ and positive real numbers $\{\lambda_{1}^{(m)},\cdots,\lambda_{\kappa}^{(m)}\}_{m=1}^\infty$  such that
		\[
		\Big\|u_m-\sum_{i=1}^{\kappa}W[\xi_i^{(m)},\lambda_{i}^{(m)}]\Big\|_{\dot{H}^s(\mathbb{R}^n)}\rightarrow0\hspace{6mm}\mbox{as}\hspace{2mm}m\rightarrow\infty.
		\]
	\end{thm}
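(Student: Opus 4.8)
The plan is to follow the now-classical Struwe compactness scheme, adapted to the nonlocal Hartree setting, exploiting in a crucial way the non-degeneracy result of Theorem~\ref{prondgr} to upgrade qualitative bubbling into the strong $\dot H^s$-convergence claimed. First I would invoke the concentration–compactness / profile decomposition machinery for bounded sequences in $\dot H^s(\mathbb{R}^n)$ (in the spirit of Palatucci–Pisante \cite{P-P-15}): passing to a subsequence, one writes $u_m = u^{(0)} + \sum_{i=1}^{N} W_i^{(m)} + r_m$, where $u^{(0)}$ is the weak limit, $W_i^{(m)} = \lambda_i^{(m)\,\frac{n-2s}{2}} w^{(i)}\big(\lambda_i^{(m)}(\cdot - \xi_i^{(m)})\big)$ are rescaled nonzero profiles solving the limiting equation \eqref{ele-1.1} (or its translate), the ``bubbles'' have mutually divergent scales/centers in the usual sense $\lambda_i^{(m)}/\lambda_j^{(m)} + \lambda_j^{(m)}/\lambda_i^{(m)} + \lambda_i^{(m)}\lambda_j^{(m)}|\xi_i^{(m)}-\xi_j^{(m)}|^2 \to \infty$, and $\|r_m\|_{\dot H^s}\to 0$ in the relevant sense. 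The almost-critical-point hypothesis $\|(-\Delta)^s u_m - (|x|^{-\mu}\ast|u_m|^{p_s})|u_m|^{p_s-2}u_m\|_{\dot H^{-s}}\to 0$ forces each profile $w^{(i)}$ to be a (nonnegative, by the sign assumption on $u_m$) solution of \eqref{ele-1.1}, hence by Le's classification \cite{ple} equal to some $W[0,1]$ up to the symmetries, so $W_i^{(m)} = W[\xi_i^{(m)},\lambda_i^{(m)}]$; moreover $u^{(0)}$ itself must be a nonnegative solution, hence either $0$ or a bubble. The energy accounting — using the asymptotic orthogonality of the $\dot H^s$-norms and of the nonlocal Hartree functionals across bubbles of divergent parameters (a Brezis–Lieb-type splitting for the double convolution integral, which is where the HLS inequality \eqref{hlsi} and $p_s\ge 2$ enter) — together with the sandwich bound $(\kappa-\tfrac12)C_{HLS}^{\frac{2n-\mu}{n+2s-\mu}}\le \|u_m\|_{\dot H^s}^2\le (\kappa+\tfrac12)C_{HLS}^{\frac{2n-\mu}{n+2s-\mu}}$ pins down the total number of bubbles plus the possible nontrivial weak limit to be exactly $\kappa$, and in particular $u^{(0)}=0$, $N=\kappa$, $r_m\to 0$.

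The point where genuine work beyond bookkeeping is needed is turning the weak/local statement ``$u_m$ is a sum of $\kappa$ bubbles plus a remainder that is small in $L^{2_s^*}$ but not obviously small in $\dot H^s$'' into the strong $\dot H^s$-convergence. Here the strategy is the finite-dimensional reduction / Lyapunov–Schmidt-type argument used in \cite{T-Konig23,C-K-L-24}: write $u_m = \sigma_m := \sum_{i=1}^\kappa W[\xi_i^{(m)},\lambda_i^{(m)}] + \rho_m$ with $\rho_m$ orthogonal (in $\dot H^s$) to the $(n+1)\kappa$-dimensional tangent space $T_m := \mathrm{span}\{\partial_\lambda W_i^{(m)},\partial_{\xi_1}W_i^{(m)},\dots,\partial_{\xi_n}W_i^{(m)}: 1\le i\le \kappa\}$, after optimizing the parameters $(\xi_i^{(m)},\lambda_i^{(m)})$ to realize (approximately) the infimum of $\|u_m - \sum W[\xi_i,\lambda_i]\|_{\dot H^s}$; such a choice makes the orthogonality legitimate up to negligible errors. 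Then one tests the almost-critical-point equation against $\rho_m$ itself and estimates: the linearized operator at the multi-bubble $\sigma_m$, restricted to $T_m^\perp$, is uniformly coercive — this is exactly the consequence of the \emph{non-degeneracy} Theorem~\ref{prondgr}, namely that the kernel of the linearization at a single bubble is precisely $T$, so on the orthogonal complement the quadratic form $\|(-\Delta)^{s/2}\varphi\|_2^2 - p_s\int(|x|^{-\mu}\ast(W^{p_s-1}\varphi))W^{p_s-1}\varphi - (p_s-1)\int(|x|^{-\mu}\ast W^{p_s})W^{p_s-2}\varphi^2$ is bounded below by $c\|\varphi\|_{\dot H^s}^2$, and this coercivity survives passing to a well-separated sum of bubbles because the cross-interaction terms among distinct $W_i^{(m)}$ tend to $0$ as the parameters diverge (this ``interaction $\to 0$'' estimate for the nonlocal quadratic form is the most technical lemma, requiring $0<\mu\le 4s$ so that $p_s\ge 2$ and the relevant integral exponents are in the admissible HLS range).

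Putting these together: pairing the (small in $\dot H^{-s}$) residual against $\rho_m$ gives $\langle L_{\sigma_m}\rho_m,\rho_m\rangle = o(1)\|\rho_m\|_{\dot H^s} + (\text{higher-order terms in }\|\rho_m\|_{\dot H^s}) + (\text{interaction errors}\to 0)$, and since the nonlinearity is superquadratic ($2p_s - 1 > 1$), the higher-order terms are $o(\|\rho_m\|_{\dot H^s}^2)$; combined with the uniform coercivity $\langle L_{\sigma_m}\rho_m,\rho_m\rangle \ge c\|\rho_m\|_{\dot H^s}^2 - o(1)$ this yields $\|\rho_m\|_{\dot H^s}^2 \le o(1) + o(\|\rho_m\|_{\dot H^s}^2) + o(\|\rho_m\|_{\dot H^s})$, hence $\|\rho_m\|_{\dot H^s}\to 0$, which is the assertion with $\xi_i^{(m)},\lambda_i^{(m)}$ the optimized parameters. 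Care is needed to make the bootstrap between the crude profile decomposition (needed to know the \emph{number} of bubbles and their approximate location so that the optimization problem $\inf\|u_m-\sum W[\xi_i,\lambda_i]\|$ is attained near a well-separated configuration) and the refined reduction argument; the main obstacle, as in the fractional Sobolev analysis of \cite{C-K-L-24}, is controlling the nonlocal error and interaction terms uniformly — in particular establishing the Brezis–Lieb splitting and the decay of cross-bubble Hartree interactions with precise rates in the regime $0<\mu\le 4s$ and handling the non-locality of $(-\Delta)^s$ in the cut-off/localization arguments that normally underlie Struwe's compactness proof.
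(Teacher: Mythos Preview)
Your proposal is correct and follows the same opening arc as the paper --- profile decomposition for bounded sequences in $\dot H^s$, identification of each nonzero profile as a nonnegative solution of \eqref{ele-1.1} and hence a bubble via Le's classification, and energy counting to pin down exactly $\kappa$ bubbles --- but diverges from the paper in the final step of upgrading the remainder from $L^{2_s^*}$-smallness to $\dot H^s$-smallness. You propose a Lyapunov--Schmidt reduction: optimize the bubble parameters, project off the $(n+1)\kappa$-dimensional tangent space, and use the non-degeneracy Theorem~\ref{prondgr} to get uniform coercivity of the linearized operator on the orthogonal complement, then bootstrap. This works, but it is precisely the machinery the paper deploys later for the \emph{quantitative} Theorem~\ref{Figalli}; for the qualitative Theorem~\ref{emm} it is overkill. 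The paper instead writes $v_m=\sum_{i=1}^\kappa W[\xi_i^{(m)},\lambda_i^{(m)}]$ and expands
\[
\|u_m-v_m\|_{\dot H^s}^2=\langle(-\Delta)^s u_m,u_m-v_m\rangle-\langle(-\Delta)^s v_m,u_m-v_m\rangle
\]
directly: since $u_m$ is an almost-solution by hypothesis and $v_m$ is an almost-solution because the bubbles are asymptotically orthogonal, both $(-\Delta)^s$ terms can be replaced by the corresponding nonlinearities up to $o(1)$ in $\dot H^{-s}$, and what remains is $\int\big[(|x|^{-\mu}*u_m^{p_s})u_m^{p_s-1}-(|x|^{-\mu}*v_m^{p_s})v_m^{p_s-1}\big](u_m-v_m)$, which is $o(1)$ by HLS/H\"older once one knows $u_m-v_m\to 0$ in $L^{2_s^*}$. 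This direct route avoids Theorem~\ref{prondgr} entirely; your spectral argument front-loads the coercivity analysis (Lemma~\ref{estim}) that the paper only needs much later.
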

	For $s=1$, the above theorem is reduced to one obtained by Piccione, Yang and Zhao in \cite{p-y-z24}. Building on this result, recent contributions have focused on quantitative stability analysis for solutions to the Euler-Lagrange equation \eqref{ele-1.1}.
	The first quantitative stability for
	critical points of equation \eqref{ele-1.1} was established in \cite{p-y-z24}
	for one-bubble case ($\kappa=1$) without dimension restrictions and multi-bubbles case ($\kappa\geq2$) if $3\leq n<6-\mu$ and $\mu\in(0,n)$ with $\mu\in(0,4]$. See also \cite{L}. These works generalized the results by Figalli and Glaudo in \cite{FG20} to a nonlocal version of Sobolev inequality \eqref{Prm} for $s=1$ based on the spectrum analysis. Very recently, Yang and Zhao in \cite{YZ25} gave a delicate quantitative stability estimates for the critical points of equation \eqref{ele-1.1} for $s=1$ by means of constructing the weight functions related to parameters $n$ and $\mu$. However, the strategy of \cite{YZ25} leads to the constraint of space dimension, that is, $\frac{n^2-6n}{n-4}<\mu<4\hspace{2mm}\mbox{and}\hspace{2mm}n\geq6-\mu$. Later, Dai et al. \cite{DHP} established an alternative formulation of the quantitative stability of \eqref{ele-1.1} for $s=1$ when the dimensions $n\geq3$ and the number of bubbles $\kappa\geq1$.
	
	To the best of our knowledge, the problem of quantitative stability of fractional nonlocal Sobolev-type inequality \eqref{Prm}, specifically concerning critical points for all $s\in(0,\frac{n}{2})$ with $s\neq1$ and the parameter $\mu\in(0,n)$ with $0<\mu\leq4s$, has not been investigated so far. We shall address this gap by proving the following result.
	\begin{thm}\label{Figalli}
		Let $n\in\mathbb{N}$, $s\in(0,\frac{n}{2})$, $\mu\in(0,n)$ with $0<\mu\leq4s$
		and the number of bubbles $\kappa\in\mathbb{N}$. Then there exist a small constant $\delta=\delta(n,\mu,s,\kappa)>0$ and a large constant $C=C(n,\mu,s,\kappa)>0$ such that the following statement holds. If $u\in \dot{H}^{s}(\mathbb{R}^n)$ satisfies
		\begin{equation}\label{w-tittle}
			\Big\|\ u-\sum_{i=1}^{\kappa}\widetilde{W}_i\Big\|_{\dot{H}^s(\mathbb{R}^n)}\leq\delta
		\end{equation}
		for some $\delta$-interacting family
		$\{\widetilde{W}_i\}_{i=1}^{\kappa}$,
		then there is a family $\{W_i\}_{i=1}^{\kappa}$ of bubbles such that
		\begin{equation}\label{tnu}
			\Big\|u-\sum_{i=1}^{\kappa}W_i\Big\|_{\dot{H}^s(\mathbb{R}^n)}\lesssim
			\left\lbrace
			\begin{aligned}
				&			\Gamma(u),\hspace{23mm}if\hspace{2mm}n>2s,\hspace{2mm}0<\mu\leq4s\hspace{2mm}\mbox{with}\hspace{2mm}\kappa=1,\\
				&
				\Gamma(u)|\log\Gamma(u)|^{\frac{1}{2}},\hspace{4.5mm}if\hspace{2mm}n=6s,\hspace{2mm}\mu=4s\hspace{2mm}\mbox{with}\hspace{2mm}\kappa\geq2,\\
				&				\Gamma(u),\hspace{23mm}if\hspace{2mm}n=6s,\hspace{2mm}\mu\in(0,4s)\hspace{2mm}\text{or}\hspace{2mm}n\neq6s,\hspace{2mm}\mu\in(0,4s]\hspace{2mm}\mbox{with}\hspace{2mm}\kappa\geq2,\\
			\end{aligned}
			\right.
		\end{equation}
		for $\Gamma(u)=\|(-\Delta)^{s}u-\left(|x|^{-\mu}\ast |u|^{p_{s}}\right)|u|^{p_{s}-2}u\|_{\dot{H}^{-s}(\mathbb{R}^n)}$. Moreover, for any $i\neq j$, the interaction between the bubbles can be estimated as
		\begin{equation}\label{moreover}
			\int_{\mathbb{R}^n}\big(|x|^{-\mu}\ast W_i^{p_{s}}\big)W_i^{p_s-1}W_j\lesssim \Big\|(-\Delta)^{s}u-\left(|x|^{-\mu}\ast |u|^{p_{s}}\right)|u|^{p_{s}-2}u\Big\|_{\dot{H}^{-s}(\mathbb{R}^n)}^{\min\{\frac{n-2s}{\mu},1\}}.
		\end{equation}
	\end{thm}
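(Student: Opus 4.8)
The plan is to follow the now-standard finite-dimensional reduction scheme developed in \cite{FG20,DSW21,C-K-L-24} and adapted to the nonlocal setting in \cite{p-y-z24,DHP}, but with the extra care required by the fractional order $s$ and the Riesz potential. The starting point is Theorem~\ref{emm}: since $u$ is $\delta$-close to a $\delta$-interacting family of bubbles, the profile decomposition (or rather its quantitative version, obtained by a standard compactness-and-contradiction argument from Theorem~\ref{emm}) produces a family $\{W_i\}_{i=1}^\kappa=\{W[\xi_i,\lambda_i]\}$, with parameters depending on $u$, that minimizes $\|u-\sum_i W_i\|_{\dot H^s}$ over all $\kappa$-bubble configurations. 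Writing $\sigma:=\sum_{i=1}^\kappa W_i$ and $\rho:=u-\sigma$, the minimality gives the $n+1$ orthogonality conditions $\langle \rho,\partial_{\lambda_i}W_i\rangle_{\dot H^s}=\langle\rho,\partial_{\xi_i}W_i\rangle_{\dot H^s}=0$ for each $i$, and the task is to show $\|\rho\|_{\dot H^s}\lesssim \Gamma(u)$ (with the logarithmic correction in the borderline case).

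The key steps, in order, are: (1) Establish the \emph{bubble-interaction estimates}. Set $Q_{ij}:=\int_{\R^n}(|x|^{-\mu}\ast W_i^{p_s})W_i^{p_s-1}W_j$ and $q:=\max_{i\neq j}Q_{ij}$. A direct integral computation (using the explicit form \eqref{defU} and the asymptotics of the Riesz potential of a bubble) yields that $q$ is comparable to the "interaction" $\min_{i\ne j}\big(\lambda_i/\lambda_j+\lambda_j/\lambda_i+\lambda_i\lambda_j|\xi_i-\xi_j|^2\big)^{-(n-2s)/2}$ up to lower-order corrections, together with the crucial nonlinear-term expansion
\[
\int_{\R^n}\Big(|x|^{-\mu}\ast\sigma^{p_s}\Big)\sigma^{p_s}\,dx=\sum_i\int\big(|x|^{-\mu}\ast W_i^{p_s}\big)W_i^{p_s}+c\sum_{i\ne j}Q_{ij}+o(q),
\]
for an explicit $c>0$; this is where the condition $\mu\le 4s$ enters, guaranteeing $p_s\ge 2$ so that the cross terms are genuinely of order $q$ rather than $q^{p_s/2}$. (2) \emph{Linearize} the equation: expand $\Gamma(u)$ by testing $(-\Delta)^s u-(|x|^{-\mu}\ast|u|^{p_s})|u|^{p_s-2}u$ against $\rho/\|\rho\|_{\dot H^s}$, producing the schematic identity
\[
\|\rho\|_{\dot H^s}^2 = \mathcal L[\rho,\rho] + \langle \text{error}(\sigma),\rho\rangle + \langle\Gamma\text{-term},\rho\rangle,
\]
where $\mathcal L$ is the linearized operator at $\sigma$. (3) \emph{Coercivity}: invoke Theorem~\ref{prondgr} (non-degeneracy), which combined with the orthogonality conditions gives $\mathcal L[\rho,\rho]\ge c_0\|\rho\|_{\dot H^s}^2$ on the relevant subspace, after the now-routine argument that the spectral gap for a single bubble survives for a sum of well-separated bubbles, the loss being $O(q)\|\rho\|_{\dot H^s}^2$, hence absorbable for $\delta$ small. (4) \emph{Estimate the error terms}: $\|(-\Delta)^s\sigma-(|x|^{-\mu}\ast\sigma^{p_s})\sigma^{p_s-1}\|_{\dot H^{-s}}\lesssim q$ (and $\lesssim q|\log q|^{1/2}$ precisely when $n=6s,\ \mu=4s,\ \kappa\ge 2$, because of a borderline divergent integral $\int W_i^{p_s}W_j^{p_s-1}$ that sits exactly at the logarithmic threshold when $p_s=2$ and $n-2s=2s$); the remaining cross terms from the nonlinearity are controlled by $\|\rho\|_{\dot H^s}^2+q\|\rho\|_{\dot H^s}$ via the fractional Hardy--Littlewood--Sobolev inequality \eqref{hlsi} and elementary pointwise inequalities for $|a+b|^{p_s-2}(a+b)$.

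Combining (2)--(4) gives $\|\rho\|_{\dot H^s}^2\lesssim \|\rho\|_{\dot H^s}\,\Gamma(u)+\|\rho\|_{\dot H^s}\cdot q + (\text{higher order})$, i.e. $\|\rho\|_{\dot H^s}\lesssim \Gamma(u)+q$; to close the estimate one must bound $q$ itself by $\Gamma(u)$ (up to the log factor), which is done by testing the equation for $u$ against a well-chosen combination of the functions $\partial_{\lambda_i}W_i$ and the bubbles $W_j$ with $j\ne i$ — this produces, after using the orthogonality and the expansions of step (1), a coercive lower bound of the form $q\lesssim \Gamma(u)\|\rho\|_{\dot H^s}+ \Gamma(u)^2 + \Gamma(u)q^{1/2}$ or similar, which bootstraps to $q\lesssim\Gamma(u)$ (respectively $q\lesssim\Gamma(u)|\log\Gamma(u)|^{1/2}$ in the critical case, where the log factor from step (4) feeds back); estimate \eqref{moreover} then follows since $q^{\min\{(n-2s)/\mu,1\}}$ is exactly the power appearing when one re-expresses the separation parameter in terms of $\Gamma(u)$ and tracks the worst interaction term $\int(|x|^{-\mu}\ast W_i^{p_s})W_i^{p_s-1}W_j$ against the gradient of the error. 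I expect the main obstacle to be step (1) together with the final bootstrap: getting the \emph{sharp} power of the interaction in the nonlocal nonlinearity (the Riesz potential couples all bubbles nonlocally, so one cannot simply localize each bubble to its own region), and in particular isolating the precise borderline dimension $n=6s$ with $\mu=4s$ where the interaction integral $\int(|x|^{-\mu}\ast W_i^{p_s})W_j^{p_s}$ transitions from convergent to logarithmically divergent, is the delicate analytic heart of the argument; the fractional-order technicalities (no pointwise chain rule for $(-\Delta)^s$, slower-decaying kernel estimates) complicate every integral estimate but do not change the overall structure.
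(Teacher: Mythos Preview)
Your outline follows the Figalli--Glaudo direct approach: linearize, use coercivity from the non-degeneracy, and bound the bubble-interaction term $q$ by testing against $\partial_{\lambda_i}W_i$. For the non-critical parameter regimes this is essentially correct in spirit (modulo the standard caveat that minimality over bubble parameters only yields orthogonality of $\rho$ to $\partial_\lambda W_i,\partial_\xi W_i$, not to $W_i$ itself, so the spectral inequality does not apply directly to $\rho$; one must first split off the component along $\mathrm{span}\{W_i\}$ and bound those coefficients separately, as the paper does via the decomposition $\varrho_1=\sum_i\gamma^iW_i+\varrho_2$).

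The genuine gap is in the critical case $n=6s$, $\mu=4s$, $\kappa\ge 2$. Here $p_s=2$, $2_s^\ast=3$, and the interaction error $g=(|x|^{-4s}\ast\sigma^2)\sigma-\sum_i(|x|^{-4s}\ast W_i^2)W_i$ has leading cross terms proportional to $W_iW_j$. Since $\int W_i^{3/2}W_j^{3/2}\approx \mathscr{Q}^{3/2}|\log\mathscr{Q}|$, the best dual-norm bound is $\|g\|_{L^{3/2}}\lesssim \mathscr{Q}|\log\mathscr{Q}|^{2/3}$. Your direct coercivity argument then gives only $\|\rho\|_{\dot H^s}\lesssim \Gamma(u)|\log\Gamma(u)|^{2/3}$, not the sharp $|\log\Gamma(u)|^{1/2}$ claimed (and shown optimal in Theorem~\ref{xiuzhengdai}). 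The paper avoids this loss by a two-step decomposition $\varrho=\varrho_0+\varrho_1$: the first approximation $\varrho_0$ solves an auxiliary projected equation and is controlled not in $\dot H^s$ but in a weighted $L^\infty$ norm $\|\cdot\|_\ast$ built from explicit weight functions $S(x),T(x)$ adapted to the bubble configuration. Establishing the a~priori bound $\|\varrho_0\|_\ast\le C\|g\|_{\ast\ast}$ requires a blow-up/contradiction argument on a tree-structured decomposition of $\mathbb{R}^{6s}$ into exterior, core, and neck regions (the entirety of Section~\ref{stabilitysection6}). Once one has the pointwise control $|\varrho_0|\lesssim S$ and $|g|\lesssim T$, the energy identity gives $\|\varrho_0\|_{\dot H^s}^2\lesssim\int TS\lesssim \|T\|_{L^{3/2}}\|S\|_{L^3}\lesssim \mathscr{Q}^2|\log\mathscr{Q}|$, recovering the correct power $1/2$; the remainder $\varrho_1$ is then shown to be higher order. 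This weighted-norm machinery is the analytic heart of the critical case and is absent from your outline; merely noting that an integral is ``logarithmically divergent'' does not supply a mechanism to reach the sharp exponent.

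A smaller correction: in step~(4) the error is not $\lesssim q$ but $\lesssim \mathscr{Q}^{\min\{\mu/(n-2s),1\}}$ (Lemma~\ref{qiegao}); this is what feeds into the final interaction bound and explains the exponent $\min\{(n-2s)/\mu,1\}$ in \eqref{moreover} after inverting $\mathscr{Q}^{\min\{\mu/(n-2s),1\}}\lesssim\|\hat f\|$.
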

	Here we say that the family of bubbles $\{W_i\}_{i=1}^{\kappa}:=\{W[\xi_i,\lambda_i]\}_{i=1}^{\kappa}$ is $\delta$-interacting in the following sense:
	\begin{equation*}
		\mathscr{Q}:=\max\big\{Q_{ij}(\xi_i,\xi_j,\lambda_i,\lambda_j): \hspace{3mm} i,~j=1,\cdots,\kappa\big\}\leq\delta,
	\end{equation*}
	where the quantity is given by
	\begin{equation}\label{mianbao}	Q_{ij}(\xi_i,\xi_j,\lambda_i,\lambda_j)=\min\Big(\frac{\lambda_i}{\lambda_j}+\frac{\lambda_j}{\lambda_i}+\lambda_i\lambda_j|\xi_i-\xi_j|^2\Big)^{-\frac{n-2s}{2}}.
	\end{equation}
	
	By using perturbation methods as those in \cite{CFM18}, it is easy to verify that this power $1$ in Theorem \ref{Figalli} is sharp when $\kappa=1$. In what follows, we shall establish the sharpness of our result in \eqref{tnu} for $n=6s$ and $\mu=4s$ with $\kappa\geq2$.
	\begin{thm}\label{xiuzhengdai}
		Assume that $n=6s$ and $\mu=4s$. For sufficiently large $\mathscr{R}>0$, there exists some $\varrho$ such that if $u=W[-\mathscr{R}e_1,1]+W[\mathscr{R}e_1,1]+\varrho$ where $e_1=(1,0,\cdots,0)\in\mathbb{R}^n$, then
		\begin{equation*}
			\inf\limits_{\xi_1,\xi_2,\cdots,\xi_\kappa\in\mathbb{R}^n,
				\lambda_1,\lambda_2,\cdots,\lambda_{\kappa}\in\mathbb{R}^+}\big\|u-\sum_{i=1}^2 W[\xi_i,\lambda_i]\big\|_{\dot{H}^s(\mathbb{R}^n)}
			\gtrsim\Gamma(u)|\log\Gamma(u)|^\frac{1}{2},
		\end{equation*}
		for $\Gamma(u)=\|(-\Delta)^{s}u-\left(|x|^{-\mu}\ast |u|^{p_{s}}\right)|u|^{p_{s}-2}u\|_{\dot{H}^{-s}(\mathbb{R}^n)}$.
	\end{thm}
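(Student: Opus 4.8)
The plan is to construct the remainder $\varrho$ explicitly as a suitable truncated and rescaled bubble-derivative, mimicking the sharpness construction of Figalli--Glaudo \cite{FG20} in the local case $s=1$ and its Hartree analogue in \cite{p-y-z24,YZ25}, but now in the critical regime $n=6s$, $\mu=4s$ where the interaction integral and the $\dot H^s$-distance decouple only up to a logarithmic factor. Fix the two bubbles $W_1=W[-\mathscr{R}e_1,1]$ and $W_2=W[\mathscr{R}e_1,1]$, so that the interaction parameter is $\mathscr{Q}\sim\mathscr{R}^{-(n-2s)}=\mathscr{R}^{-4s}$ and, by the interaction asymptotics (the same Hartree interaction estimates used to prove Theorem \ref{Figalli}), one has $\int(|x|^{-\mu}\ast W_1^{p_s})W_1^{p_s-1}W_2\sim \mathscr{Q}\,|\log\mathscr{Q}|$ precisely because $n=6s$, $\mu=4s$ forces a borderline logarithmically-divergent convolution integral. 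I would then take $\varrho$ to solve, to leading order, the linearized equation with right-hand side equal to the ``error'' $f:=(-\Delta)^s(W_1+W_2)-(|x|^{-\mu}\ast(W_1+W_2)^{p_s})(W_1+W_2)^{p_s-1}$, i.e. $\varrho$ is the $\dot H^s$-orthogonal-to-the-kernel part of $L^{-1}f$, where $L$ is the linearized operator at $W_1+W_2$; the nondegeneracy Theorem \ref{prondgr} guarantees $L$ is invertible on the orthogonal complement of $\mathrm{span}\{\partial_\lambda W_i,\partial_{\xi_k}W_i\}$.

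The key steps, in order, are: (1) compute the size of the error $\|f\|_{\dot H^{-s}}$ and show $\|f\|_{\dot H^{-s}}\sim \mathscr{Q}$ up to constants, using the pointwise bubble-interaction expansion and the Hardy--Littlewood--Sobolev inequality to control the nonlocal term; (2) show that $\varrho:=\Pi^\perp L^{-1}f$ satisfies $\|\varrho\|_{\dot H^s}\sim \mathscr{Q}$ as well, again via nondegeneracy; (3) set $u=W_1+W_2+\varrho$ and estimate $\Gamma(u)=\|(-\Delta)^s u-(|x|^{-\mu}\ast|u|^{p_s})|u|^{p_s-2}u\|_{\dot H^{-s}}$ from above by expanding the nonlinearity around $W_1+W_2$: the linear-in-$\varrho$ term is absorbed by the definition of $\varrho$ (it cancels $f$ modulo the kernel), leaving the quadratic-in-$\varrho$ remainder plus the projection of $f$ onto the kernel. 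The crucial computation is that the projection $\langle f,\partial_{\lambda_i}W_i\rangle$ and $\langle f,\partial_{\xi}W_i\rangle$ are of order $\mathscr{Q}$ \emph{without} the logarithm (these are the ``first-order'' interaction integrals, which in dimension $n=6s$ remain bounded by $\mathscr{Q}$), while the genuine obstruction — the quantity forcing the logarithm — is the quadratic term $Q(\varrho,\varrho)$ paired against $W_i^{p_s-1}$-type weights, whose natural size is $\mathscr{Q}^2|\log\mathscr{Q}|$. Hence $\Gamma(u)\lesssim \mathscr{Q}$ while $\mathrm{dist}(u,\mathcal{M}_\kappa)\gtrsim\mathscr{Q}$, and one must sharpen this: I would show $\Gamma(u)\sim \mathscr{Q}/|\log\mathscr{Q}|^{1/2}$ by choosing $\varrho$ one order more carefully (adding a second-order correction $\varrho_2$ that kills the $\mathscr{Q}$-order part of the kernel projection), so that $\mathscr{Q}\sim \mathrm{dist}\sim \Gamma(u)|\log\Gamma(u)|^{1/2}$ after inverting the relation $\Gamma\sim\mathscr{Q}|\log\mathscr{Q}|^{-1/2}$.

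The final step is the lower bound on the distance: one shows $\inf\|u-\sum_{i=1}^2 W[\xi_i,\lambda_i]\|_{\dot H^s}\gtrsim \mathscr{Q}$. This follows because $u$ is by construction at distance $\sim\mathscr{Q}$ from the particular pair $(W_1,W_2)$, and a quantitative implicit-function / coercivity argument — using again the nondegeneracy in Theorem \ref{prondgr} together with the $\delta$-interacting assumption so that cross terms between distinct bubbles are negligible — shows that no other choice of two bubbles can do essentially better; more precisely, the quadratic form $u\mapsto \|u-\sum W[\xi_i,\lambda_i]\|^2$ is, near the manifold, comparable to $\|\varrho\|_{\dot H^s}^2$ plus the squared mismatch in parameters, both bounded below by $c\mathscr{Q}^2$. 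Combining with $\Gamma(u)\lesssim \mathscr{Q}|\log\mathscr{Q}|^{-1/2}$ and expressing $\mathscr{Q}$ in terms of $\Gamma(u)$ yields the claimed bound $\inf\|u-\sum_{i=1}^2 W[\xi_i,\lambda_i]\|_{\dot H^s}\gtrsim \Gamma(u)|\log\Gamma(u)|^{1/2}$ for $\mathscr{R}$ (equivalently $\mathscr{Q}^{-1}$) large.

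I expect the main obstacle to be the precise bookkeeping of the logarithmic factor: one must verify that the logarithm genuinely appears at the level of the quadratic interaction term (this is where $n=6s$, $\mu=4s$ is exactly borderline — the relevant integral $\int_{\mathbb{R}^n}(|x|^{-\mu}\ast W_1^{p_s-1}\varrho)W_2^{p_s-1}\varrho$-type expression diverges logarithmically in $\mathscr{R}$) and does \emph{not} get absorbed or cancelled when one optimizes over the bubble parameters. Controlling the nonlocal convolution terms in $\Gamma(u)$ — in particular the cross interactions $(|x|^{-\mu}\ast W_1^{p_s})W_2^{p_s-1}$ and the mixed $\varrho$-$W_i$ terms — uniformly in $\mathscr{R}$ via the HLS inequality, and separating the part of size $\mathscr{Q}$ from the part of size $\mathscr{Q}^2|\log\mathscr{Q}|$, is the delicate technical heart of the argument. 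The nondegeneracy (Theorem \ref{prondgr}) and the Hartree interaction estimates established en route to Theorem \ref{Figalli} are the two inputs that make the scheme go through.
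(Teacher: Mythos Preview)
Your proposal has the mechanism backwards, and the first claim is simply false. By Lemma~\ref{p1-00} one has $|x|^{-\mu}\ast W_1^{p_s}=\widetilde{\alpha}_{n,\mu,s}W_1^{2_s^\ast-p_s}$ identically, so the interaction integral $\int(|x|^{-\mu}\ast W_1^{p_s})W_1^{p_s-1}W_2=\widetilde{\alpha}_{n,\mu,s}\int W_1^{2_s^\ast-1}W_2\approx Q_{12}$ by Lemma~\ref{FPU1} (for $n=6s$, $2_s^\ast=3$, exponents $2$ and $1$), with \emph{no} logarithm. Consequently your scaling $\|\varrho\|_{\dot H^s}\sim\mathscr{Q}$ and $\Gamma(u)\sim\mathscr{Q}|\log\mathscr{Q}|^{-1/2}$ is wrong; the paper's construction gives the inverse picture, $\Gamma(u)\lesssim\mathscr{Q}$ and $\|\varrho\|_{\dot H^s}\sim\mathscr{Q}|\log\mathscr{Q}|^{1/2}$. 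The logarithm does not sit in any bubble-interaction integral but rather in the $\dot H^s$-norm of the first approximation $\varrho_0$: one needs $\varrho$ to solve the \emph{full nonlinear} projected equation \eqref{suannai} (via the weighted-norm contraction of Section~\ref{stabilitysection6}), so that $(-\Delta)^s u-(|x|^{-4s}\ast u^2)u=\sum_{i,a}c_a^iI_{6s,4s,s}[W_i,\mathcal{Z}_i^a]$ exactly, with $|c_a^i|\lesssim\mathscr{Q}$ by Lemma~\ref{cll}; hence $\Gamma(u)\lesssim\mathscr{Q}$. The pointwise bound $|\varrho|\lesssim S$ then forces $\|\varrho\|_{\dot H^s}^2\lesssim\int ST\lesssim\mathscr{Q}^2|\log\mathscr{Q}|$ because the weight $s_{i,1}\sim\tau(z_i)^{-2s}\mathscr{R}^{-4s}$ has $s_{i,1}^3\sim\tau^{-n}\mathscr{R}^{-12s}$, which is borderline non-integrable on $\{|z_i|\le\mathscr{R}^2\}$ and produces the $\log\mathscr{R}$ (Lemma~\ref{cll-0-0}).

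Your scheme of taking $\varrho=\Pi^\perp L^{-1}f$ only (linearized inverse) cannot yield $\Gamma(u)\ll\mathscr{Q}$: after cancelling $L\varrho$ against $\Pi^\perp f$, the residual is $\Pi_{\mathrm{ker}}f+N(\varrho)$, and the kernel projection $\Pi_{\mathrm{ker}}f$ is of order $\mathscr{Q}$ (these are exactly the integrals $\int W_i^{2_s^\ast-1}\mathcal{Z}_j^a\approx Q_{ij}$), which no correction orthogonal to the kernel can remove. The ``second-order correction $\varrho_2$'' you propose therefore does not exist as described. What is actually missing from your outline is (i) the weighted-norm machinery of Section~\ref{stabilitysection6} to control $\varrho$ pointwise, and (ii) a separate \emph{lower} bound $\|\varrho\|_{\dot H^s}\gtrsim\Gamma(u)|\log\Gamma(u)|^{1/2}$ (Lemma~\ref{shubiao}, imported from \cite{DSW21}), which is where the genuine logarithmic gain is established; your quadratic-interaction heuristic does not supply this. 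The final step---showing $\inf\|u-\widetilde W_1-\widetilde W_2\|_{\dot H^s}\gtrsim\|\varrho\|_{\dot H^s}$ via Taylor expansion of the bubbles and the orthogonality of $\varrho$ to all $\mathcal{Z}_i^a$---is the one part of your sketch that matches the paper.
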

	As a direct consequence of Theorem \ref{emm} and Theorem \ref{Figalli}, we can prove the following corollary.
	\begin{cor}\label{Figalli2}
		Let $n\in\mathbb{N}$, $s\in(0,\frac{n}{2})$, $\mu\in(0,n)$ with $0<\mu\leq4s$
		and the number of bubbles $\kappa\in\mathbb{N}$. Then there exists a constant $C=C(n,\mu,s,\kappa)>0$ such that the following statement holds. For any nonnegative function $u\in \dot{H}^{s}(\mathbb{R}^n)$ such that
		\begin{equation*}
			\big(\kappa-\frac{1}{2})C_{HLS}^{\frac{2n-\mu}{n+2s-\mu}}\leq\|u\|_{\dot{H}^{s}(\mathbb{R}^n)}^2\leq\big(\kappa+\frac{1}{2}\big)C_{HLS}^{\frac{2n-\mu}{n+2s-\mu}},
		\end{equation*}
		then there exist $\kappa$ bubbles $\{W_i\}_{i=1}^{\kappa}$ such that
		\begin{equation*}
			\Big\|u-\sum_{i=1}^{\kappa}W_i\Big\|_{\dot{H}^s(\mathbb{R}^n)}\lesssim
			\left\lbrace
			\begin{aligned}
				&			\Gamma(u),\hspace{23mm}if\hspace{2mm}n>2s,\hspace{2mm}0<\mu\leq4s\hspace{2mm}\mbox{with}\hspace{2mm}\kappa=1,\\
				&
				\Gamma(u)|\log\Gamma(u)|^{\frac{1}{2}},\hspace{4.5mm}if\hspace{2mm}n=6s,\hspace{2mm}\mu=4s\hspace{2mm}\mbox{with}\hspace{2mm}\kappa\geq2,\\
				&				\Gamma(u),\hspace{23mm}if\hspace{2mm}n=6s,\hspace{2mm}\mu\in(0,4s)\hspace{2mm}\text{or}\hspace{2mm}n\neq6s,\hspace{2mm}\mu\in(0,4s]\hspace{2mm}\mbox{with}\hspace{2mm}\kappa\geq2,\\
			\end{aligned}
			\right.
		\end{equation*}
		for $\Gamma(u)=\|(-\Delta)^{s}u-\left(|x|^{-\mu}\ast u^{p_{s}}\right)u^{p_{s}-1}\|_{\dot{H}^{-s}(\mathbb{R}^n)}$.
		Furthermore, for any $i\neq j$, the interaction between the bubbles can be estimated as
		\begin{equation*}
			\int_{\mathbb{R}^n}\big(|x|^{-\mu}\ast W_i^{p_s}\big)W_i^{p_s-1}W_j\lesssim \Big\|(-\Delta)^{s} u-\left(|x|^{-\mu}\ast u^{p_{s}}\right)u^{p_s-1}\Big\|_{\dot{H}^{-s}(\mathbb{R}^n)}^{\min\{\frac{n-2s}{\mu},1\}}.
		\end{equation*}
	\end{cor}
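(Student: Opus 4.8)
The plan is to combine the qualitative profile decomposition of Theorem \ref{emm} with the quantitative estimate of Theorem \ref{Figalli} in a two-step argument. First I would observe that the hypotheses of Corollary \ref{Figalli2} are precisely tailored so that Theorem \ref{emm} applies: given a single nonnegative $u\in\dot H^s(\mathbb R^n)$ with $(\kappa-\tfrac12)C_{HLS}^{(2n-\mu)/(n+2s-\mu)}\le\|u\|_{\dot H^s}^2\le(\kappa+\tfrac12)C_{HLS}^{(2n-\mu)/(n+2s-\mu)}$, I apply Theorem \ref{emm} to the constant sequence $u_m\equiv u$. Since $\Gamma(u)=\|(-\Delta)^s u-(|x|^{-\mu}\ast u^{p_s})u^{p_s-1}\|_{\dot H^{-s}}$ is a fixed number, the hypothesis ``$\Gamma(u_m)\to 0$'' of Theorem \ref{emm} does not literally hold unless $\Gamma(u)=0$; so instead I argue by contradiction along a sequence. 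Suppose the conclusion of the corollary fails: then there is a sequence $u_m$ satisfying the energy bounds with $\Gamma(u_m)\to 0$ (this is forced, since otherwise $\Gamma(u_m)$ stays bounded below and the right-hand side of the claimed inequality is comparable to $1$, making the inequality trivially true for $C$ large) but for which
\[
\Big\|u_m-\sum_{i=1}^\kappa W_i\Big\|_{\dot H^s}\;>\;C\,\omega_n(\Gamma(u_m))
\]
for every family of $\kappa$ bubbles, where $\omega_n(t)$ denotes the relevant modulus ($t$, or $t|\log t|^{1/2}$ in the borderline case). By Theorem \ref{emm} there exist bubbles $\widetilde W_i^{(m)}=W[\xi_i^{(m)},\lambda_i^{(m)}]$ with $\|u_m-\sum_i\widetilde W_i^{(m)}\|_{\dot H^s}\to 0$.

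The second step is to feed this decomposition into Theorem \ref{Figalli}. For $m$ large the quantity $\delta_m:=\|u_m-\sum_i\widetilde W_i^{(m)}\|_{\dot H^s}$ is smaller than the threshold $\delta(n,\mu,s,\kappa)$ of Theorem \ref{Figalli}, \emph{provided} the family $\{\widetilde W_i^{(m)}\}$ is $\delta$-interacting in the sense of \eqref{mianbao}. Establishing this $\delta$-interaction is the one genuinely nontrivial point: a priori Theorem \ref{emm} only gives weak convergence of the profiles and does not guarantee that the bubble parameters are mutually well-separated. I would argue that if $\mathscr Q_m=\max_{i\ne j}Q_{ij}$ did \emph{not} tend to $0$ along a subsequence, then two of the bubbles $\widetilde W_i^{(m)}$, $\widetilde W_j^{(m)}$ would have comparable concentration scales and bounded relative centers, so that $\|\widetilde W_i^{(m)}+\widetilde W_j^{(m)}\|_{\dot H^s}^2$ would be strictly less than $2\,\|W\|_{\dot H^s}^2$ by the strict sub-additivity of the energy under merging of bubbles (equivalently, the cross term $\int(|x|^{-\mu}\ast(\widetilde W_i^{(m)})^{p_s})(\widetilde W_i^{(m)})^{p_s-1}\widetilde W_j^{(m)}$ stays bounded below); combined with $u_m\approx\sum_i\widetilde W_i^{(m)}$ this contradicts the lower energy bound $\|u_m\|_{\dot H^s}^2\ge(\kappa-\tfrac12)C_{HLS}^{(2n-\mu)/(n+2s-\mu)}$ together with $\|W\|_{\dot H^s}^2=C_{HLS}^{(2n-\mu)/(n+2s-\mu)}$. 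Hence $\mathscr Q_m\to 0$, i.e. $\{\widetilde W_i^{(m)}\}$ is $\delta$-interacting for large $m$.

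With $\delta$-interaction in hand, Theorem \ref{Figalli} applies to $u_m$ with $\widetilde W_i=\widetilde W_i^{(m)}$ and produces a family $\{W_i^{(m)}\}$ of bubbles with
\[
\Big\|u_m-\sum_{i=1}^\kappa W_i^{(m)}\Big\|_{\dot H^s}\;\lesssim\;\omega_n\big(\Gamma(u_m)\big),
\]
with the three-case modulus exactly as in \eqref{tnu}, and with the interaction bound \eqref{moreover} for the new bubbles; note that the functional $\Gamma$ here coincides with the one in Theorem \ref{Figalli} because $u_m\ge 0$, so $|u_m|^{p_s}=u_m^{p_s}$ and $|u_m|^{p_s-2}u_m=u_m^{p_s-1}$. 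This directly contradicts the standing assumption that $\|u_m-\sum_i W_i\|_{\dot H^s}>C\,\omega_n(\Gamma(u_m))$ for \emph{every} choice of $\kappa$ bubbles, once $C$ is taken larger than the implied constant in Theorem \ref{Figalli}. The contradiction proves the estimate; the interaction inequality for $i\ne j$ is then just \eqref{moreover} restated. The only real obstacle, as indicated, is the verification that the profiles extracted in Theorem \ref{emm} automatically form a $\delta$-interacting family — everything else is bookkeeping linking the two theorems and checking that the nonnegativity of $u$ lets us drop the absolute values in $\Gamma(u)$.
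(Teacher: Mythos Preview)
Your overall strategy---contradiction along a sequence, Theorem~\ref{emm} to extract bubbles, then Theorem~\ref{Figalli} to upgrade to the quantitative bound---is exactly what the paper does (its proof is in fact much terser than yours). You are also right that the only point requiring care is the $\delta$-interaction of the extracted bubbles.

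However, your energy argument for $\delta$-interaction has a sign error and does not close. The cross term
\[
\langle \widetilde W_i^{(m)},\widetilde W_j^{(m)}\rangle_{\dot H^s}
=\int\big(|x|^{-\mu}\ast(\widetilde W_i^{(m)})^{p_s}\big)(\widetilde W_i^{(m)})^{p_s-1}\widetilde W_j^{(m)}
\]
is \emph{positive}, so $\|\widetilde W_i^{(m)}+\widetilde W_j^{(m)}\|_{\dot H^s}^2>2\|W\|_{\dot H^s}^2$, not less; and even with the correct sign, a cross term merely bounded away from zero need not exceed $\tfrac12\|W\|_{\dot H^s}^2$, so neither the upper nor the lower energy constraint is violated in general. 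The fix is simpler than what you propose: the profile decomposition underlying Theorem~\ref{emm} (the G\'erard decomposition, see \eqref{Q1} in its proof) already furnishes the asymptotic orthogonality
\[
\Big|\log\frac{\lambda_i^{(m)}}{\lambda_j^{(m)}}\Big|+\lambda_i^{(m)}\big|\xi_i^{(m)}-\xi_j^{(m)}\big|\to\infty\qquad(i\neq j),
\]
which is equivalent to $Q_{ij}^{(m)}\to 0$. Hence $\{\widetilde W_i^{(m)}\}$ is automatically $\delta$-interacting for large $m$, and no separate energy comparison is needed. Once you invoke this, the rest of your argument goes through verbatim.
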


	\subsection{Structure of the paper}
	The paper is organized as follows. In section \ref{section2}, we first prove Theorem \ref{prondgr}. Furthermore, we analysis spectral properties of the linear operator, and ultimately derive a spectral inequality. In section \ref{section3}, we establish Theorem \ref{remainder terms} by combining Lions's concentration-compactness principle and the discrete spectral information of the linear operator. Based on this result, we then complete the proof of Theorem \ref{thmprtb}. In section \ref{section4}, we establish a profile decomposition for nonnegative solutions of the critical Hartree-type equation \eqref{ele-1.1}. In section \ref{sangshen}, we prove the existence of first approximation for the case $n=6s,\mu\in(0,4s)$ or $n\neq6s,\mu\in(0,4s]$. The remaining case, $n=6s$ and $\mu=4s$, is addressed in section \ref{stabilitysection6}. Finally, in section \ref{section7}, we establish the quantitative stability estimates for $n>2s$, that is, Theorem \ref{Figalli} and Corollary \ref{Figalli2}. In particular, we provide an example to illustrate the sharpness of our result for $n=6s$ and $\mu=4s$ in section \ref{example}. Several technical computations required in the previous sections are provided in the appendix.
	
	Throughout this paper, $c$ and $C$ are indiscriminately used to denote various absolutely positive constants. We say that $a\lesssim b$ if $a\leq Cb$, $a\approx b$ if $a\lesssim b$ and $\gtrsim b$. For $z\in\mathbb{R}^n$, we write $\tau(z)=(1+|z|^2)^{1/2}$. For $x\in\mathbb{R}$, denote the piece-wise function
	\begin{equation}\label{hanshu}
		\mathcal{K}_{n,\mu,s}(x):=
		\left\lbrace
		\begin{aligned}
			&			x,\hspace{17.5mm}if\hspace{2mm}n>2s,\hspace{2mm}0<\mu\leq4s\hspace{2mm}\mbox{with}\hspace{2mm}\kappa=1,\\
			&
			x|\log x|^{\frac{1}{2}},\hspace{4.5mm}if\hspace{2mm}n=6s,\hspace{2mm}\mu=4s\hspace{2mm}\mbox{with}\hspace{2mm}\kappa\geq2,\\
			&				x,\hspace{18mm}if\hspace{2mm}n=6s,\hspace{2mm}\mu\in(0,4s)\hspace{2mm}\text{or}\hspace{2mm}n\neq6s,\hspace{2mm}\mu\in(0,4s]\hspace{2mm}\mbox{with}\hspace{2mm}\kappa\geq2.
		\end{aligned}
		\right.
	\end{equation}
	
	\section{Spectrum of the linear operator}\label{section2}
	\subsection{Non-degeneracy result}
	In this section, we first prove the nondegeneracy of positive solutions $W[\xi,\lambda]$ of equation \eqref{ele-1.1} for all $s\in(0,\frac{n}{2})$. For the simplicity of notations, we write $W$ instead of $W[\xi,\lambda]$ in the sequel. The following key estimate will be used a couple of times in the paper to conclude the proof of main results.
	\begin{lem}\label{B4-1}
		For $n>2s$, $\mu\in(0,n)$ and $0<\mu\leq4$, there exists constant $C>0$
		such that for all $i\in\{1,\cdots,\kappa\}$, there holds
		\begin{equation*}
			\frac{1}{\big|x\big|^{\mu}}\ast \frac{\lambda^{n-\mu/2}}{\tau(z)^{2n-\mu}}=\int	\frac{1}{\big|y\big|^{\mu}}\frac{\lambda^{n-\mu/2}}{(1+\lambda^2|x-\xi-y|^2)^{(2n-\mu)/2}}dy \leq C\frac{\lambda^{\mu/2}}{\tau(z)^{\mu}},
		\end{equation*}
		where $\tau(z)=(1+|z|^2)^{1/2}$ with $z=\lambda(x-\xi)$, and $\int\cdots=\int_{\mathbb{R}^n}\cdots$.
	\end{lem}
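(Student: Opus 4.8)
The plan is to reduce the claimed bound to a single scale‑invariant convolution estimate, and then to prove that estimate by splitting the domain of integration according to whether the integration variable is close to or far from the point $z$.

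First I would normalize by scaling. Substituting $y=w/\lambda$ in the integral on the left‑hand side yields, with $z=\lambda(x-\xi)$,
$$\frac{1}{|x|^{\mu}}\ast\frac{\lambda^{n-\mu/2}}{\tau(z)^{2n-\mu}}=\lambda^{\mu/2}\int_{\mathbb{R}^n}\frac{dw}{|w|^{\mu}\,(1+|z-w|^2)^{(2n-\mu)/2}},$$
so that, after dividing by $\lambda^{\mu/2}$ and using $\tau(z)^{-\mu}=(1+|z|^2)^{-\mu/2}$, the lemma becomes equivalent to the $\lambda$‑ and $\xi$‑free statement
$$I(z):=\int_{\mathbb{R}^n}\frac{dw}{|w|^{\mu}\,(1+|z-w|^2)^{(2n-\mu)/2}}\ \lesssim\ (1+|z|^2)^{-\mu/2}\qquad\text{for all }z\in\mathbb{R}^n,$$
which makes manifest that the constant depends only on $n$ and $\mu$. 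At the outset I would record the two consequences of the standing hypothesis $\mu<n$ on which the whole argument rests: $|w|^{-\mu}$ is locally integrable, and $2n-\mu>n$, so that $\int_{\mathbb{R}^n}(1+|v|^2)^{-(2n-\mu)/2}\,dv<\infty$.

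Next I would treat the bounded range $|z|\le 2$, where it suffices to bound $I(z)$ by an absolute constant: on $\{|w|\le 4\}$ one estimates $(1+|z-w|^2)^{-(2n-\mu)/2}\le 1$ and integrates $|w|^{-\mu}$, while on $\{|w|\ge 4\}$ one uses $|z-w|\ge|w|-|z|\ge|w|/2$ to bound the integrand by a multiple of $|w|^{-2n}$, which is integrable at infinity. For the remaining range $|z|\ge 2$ I would split $\mathbb{R}^n=\{|w|\le|z|/2\}\cup\{|w|\ge|z|/2\}$. On the first piece $|z-w|\ge|z|/2$, so the second factor is $\gtrsim|z|^{-(2n-\mu)}$ while $\int_{|w|\le|z|/2}|w|^{-\mu}\,dw\approx|z|^{n-\mu}$, giving a contribution $\lesssim|z|^{-n}\le|z|^{-\mu}$ (using $\mu<n$ and $|z|\ge1$). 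On the second piece $|w|^{-\mu}\le 2^{\mu}|z|^{-\mu}$, and pulling this factor out leaves the finite constant $\int_{\mathbb{R}^n}(1+|z-w|^2)^{-(2n-\mu)/2}\,dw$, so the contribution is $\lesssim|z|^{-\mu}$. Since $|z|^{-\mu}\approx(1+|z|^2)^{-\mu/2}$ for $|z|\ge 2$, combining the two ranges gives the claim.

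This lemma is essentially a routine weighted convolution estimate, so I do not expect a genuine obstacle; the only points requiring care are checking that the region decompositions are exhaustive and that every piece converges, which comes down entirely to the inequality $\mu<n$ (the stronger restriction $\mu\le 4s$ used elsewhere in the paper plays no role here, and the estimate is trivially independent of the index $i$). The one mildly delicate bookkeeping point is ensuring the bound on $\{|z|\le 2\}$ is uniform in $z$, which is why I keep the cut‑off radius $|w|=4$ fixed rather than tied to $|z|$.
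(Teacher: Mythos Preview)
Your proof is correct and follows the same standard region-splitting strategy that the paper points to (the paper omits all details and simply cites \cite{YZ25}, whose Lemma 2.6 proceeds by exactly this kind of decomposition around the singularities of the two convolved kernels). One small slip: in the first piece of the $|z|\ge 2$ case you wrote ``the second factor is $\gtrsim|z|^{-(2n-\mu)}$'' where you mean $\lesssim$; the subsequent computation is consistent with the correct direction, so this is just a typo.
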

	\begin{proof}
		It can be solved in the same manner similar to the proof of Lemma 2.6 in  \cite{YZ25}. Here we omit the details.
	\end{proof}
	We can now prove Theorem \ref{prondgr}.
	\begin{proof}[Proof of Theorem \ref{prondgr}]
		Given $n\in\mathbb{N}$ and $s\in(0,\frac{n}{2})$, there holds
		\begin{equation}\label{nondege}
			v=\frac{\alpha_{n,s}}{|x|^{n-2s}}\ast\big[p_s\left(|x|^{-\mu} \ast (W^{p_s-1}v)\right)W^{p_s-1}
			+(p_s-1)\left(|x|^{-\mu} \ast W^{p_s}\right)W^{p_s-2}v\big]\hspace{4mm}\mbox{in}\hspace{2mm}\mathbb{R}^n,
		\end{equation}
		for some positive constant $\alpha_{n,s}$.
		Following the similar process to \cite{XLi,DLYZ24,Zhang} with minor modifications by exploiting the spherical harmonic decomposition and the Funk-Hecke formula of spherical harmonic functions, it suffices to check that $v\in L^{\infty}(\mathbb{R}^n)$ for all $s\in(0,\frac{n}{2})$.
		
		It is straightforward to verify that there exists a positive constant $C$ such that
		\begin{equation}\label{Non1}
			\big|v(x)\big|\leq\int\frac{C}{|x-w|^{n-2s}}\Big[\int\frac{|v(y)|}{|w-y|^\mu\tau(y)^{n-\mu+2s}} \frac{1}{\tau(w)^{n-\mu+2s}}dy
			+\int\frac{1}{|w-y|^\mu\tau(y)^{2n-\mu}}\frac{|v(y)|}{\tau(w)^{4s-\mu}}dy\Big]dw.
		\end{equation}
		
		$\bullet$ If $n>6s$, denote $$g(w)=\int\frac{v(y)}{|w-y|^\mu\tau(y)^{n-\mu+2s}}dy.$$
		Then by the Hardy-Littlewood-Sobolev inequality we get
		\begin{equation}\label{Non2}
			\begin{split}
				\big\|v(x)\big\|_{L^t(\mathbb{R}^n)}&\lesssim C\Big(\Big\|\int\frac{1}{|x-w|^{n-2s}} \frac{|g(w)|}{\tau(w)^{n-\mu+2s}}dw\Big\|_{L^t(\mathbb{R}^n)}+\Big\|\int\frac{1}{|x-w|^{n-2s}} \frac{|v(w)|}{\tau(w)^{4s}}dw\Big\|_{L^t(\mathbb{R}^n)}\Big)\\&\lesssim
				C\Big(\Big\|\frac{|g(w)|}{\tau(w)^{n-\mu+2s}}\Big\|_{L^{r}(\mathbb{R}^n)}+\big\|v(w)\big\|_{L^{r}(\mathbb{R}^n)}\Big)
			\end{split}
		\end{equation}
		for any $r\in[\frac{2n}{n-2s},\frac{n}{2s})$ and $t=\frac{nr}{n-2sr}$.
		One can easily compute
		\begin{equation}\label{Non3}
			\Big\|\frac{|g(w)|}{\tau(w)^{n-\mu+2s}}\Big\|_{L^{r}(\mathbb{R}^n)}\lesssim C\big\|g(w)\big\|_{L^{t}(\mathbb{R}^n)}\Big\|\frac{1}{\tau(w)^{n-\mu+2s}}\Big\|_{L^{\zeta}(\mathbb{R}^n)}\lesssim C\big\|g(w)\big\|_{L^{t}(\mathbb{R}^n)}
		\end{equation}
		for $t=\frac{nr}{n-2sr}$, $r\in[\frac{2n}{n-2s},\frac{n}{2s})$ and $\zeta=\frac{tr}{t-r}$. Combining \eqref{Non2} and \eqref{Non3}, we get
		\begin{equation}\label{4nn}
			\begin{split}
				\big\|v(x)\big\|_{L^t(\mathbb{R}^n)}&\lesssim C\Big(\Big\|\frac{1}{|x|^{\mu}} \ast\big|\frac{v}{\tau^{n-\mu+2s}}\big|\Big\|_{L^{t}(\mathbb{R}^n)}+\Big\|\frac{1}{|x|^{n-2s}}\ast \big|\frac{v}{\tau^{4s}}\big|\Big\|_{L^{t}(\mathbb{R}^n)}\Big)\\&\lesssim
				C\Big(\big\|v(w)\big\|_{L^{\zeta^{\ast}}(\mathbb{R}^n)}\Big\|\frac{1}{\tau(w)^{n-\mu+2s}}\Big\|_{L^{\beta}(\mathbb{R}^n)}+\big\|v(w)\big\|_{L^{r}(\mathbb{R}^n)}\Big)
			\end{split}
		\end{equation}
		for any $r\in[\frac{2n}{n-2s},\frac{n}{2s})$, $t=\frac{nr}{n-2sr}$ and $\zeta^{\ast}=\frac{2n}{n-2s}$, $\beta=\frac{\zeta\alpha}{\zeta-\alpha}$ with $\alpha=\frac{nr}{(n-\mu)r+n-2sr}\in[\frac{2n}{3n-2\mu-6s},\frac{n}{n-\mu})$.
		
		Note that we can start the iteration. 
		Choosing $r=r_1:=\frac{2n}{n-2s}$, then by \eqref{4nn} we have $v\in L^{r_2}(\mathbb{R}^n)$ for $t=r_2:=\frac{nr_1}{n-2sr_1}$, which means that $v\in L^{r_0^{\prime}}(\mathbb{R}^n)$ for all $r_0^{\prime}\in[r_1,r_2]$.
		In the following, we distinguish two cases depending on whether $r_2\geq\frac{n}{2s}$ or not. If $r_2\geq\frac{n}{2s}$, then we get $v\in L^{r_0^{\prime}}(\mathbb{R}^n)$ for all $r_0^{\prime}\geq\frac{2n}{n-2s}$. If $r_2<\frac{n}{2s}$, then we get $v\in L^{r_0^{\prime}}(\mathbb{R}^n)$ for all $r_0^{\prime}\in[r_2,r_3]$ where $r_3:=\frac{nr_1}{n-2sr_1}$. By iterating the above arguments finite times, then we eventually get $v\in L^{r_0^{\prime}}(\mathbb{R}^n)$ for all $r_0^{\prime}\geq\frac{2n}{n-2s}$, since $r_{k+1}\geq\frac{n-2s}{n-6s}r_k$.
		
		Denote $\gamma=\frac{2n}{2n-\mu}$, then for some fixed $r\gg1$ large enough, together with the Hardy-Littlewood-Sobolev inequality and H\"{o}lder inequality, we can compute
		\begin{equation}\label{v4nv}
			\begin{split}
				\int&\frac{1}{|x-w|^{n-2s}}\int\frac{|v(y)|}{|w-y|^\mu\tau(y)^{n-\mu+2s}} \frac{1}{\tau(w)^{n-\mu+2s}}dydw\\&
				\lesssim\Big(\int\frac{1}{|x-\omega|^{(n-2s)\gamma}}\frac{1}{\tau(w)^{(n-\mu+2s)\gamma}}dw\Big)^{\frac{1}{\gamma}}\Big(\int\frac{1}{\tau(w)^{2n}}dw\Big)^{\frac{n-\mu+2s}{2n}}\big\|v\big\|_{L^{\zeta^{\ast}}(\mathbb{R}^n)}
				\lesssim1\hspace{4mm}\mbox{for}\hspace{2mm}x\in\mathbb{R}^n.
			\end{split}
		\end{equation}
		By Lemma \ref{B4-1} and the H\"{o}lder inequality, we get for $x\in\mathbb{R}^n$,
		\begin{equation}\label{4n1}
			\begin{split}
				\int\frac{1}{|x-w|^{n-2s}}\int\frac{1}{|w-y|^\mu\tau(y)^{2n-\mu}}\frac{|v(w)|}{\tau(w)^{4s-\mu}}dydw\lesssim
				\Big(\int\frac{1}{|x-w|^{(n-2s)r^{\prime}}}\frac{1}{\tau(w)^{4sr^{\prime}}}dw\Big)^{\frac{1}{r^{\prime}}}\big\|v\big\|_{L^{r}(\mathbb{R}^n)}\lesssim1,
			\end{split}
		\end{equation}
		where $r^{\prime}$ is the H\"{o}lder conjugate of $r$. In order to guarantee that the estimates of \eqref{v4nv}-\eqref{4n1} hold true, we establish the following key estimates:
		\begin{equation}\label{w4n}
			\begin{split}
				\int\frac{1}{|x-w|^{(n-2s)\gamma}}\frac{1}{\tau(w)^{(n-\mu+2s)\gamma}}dw\lesssim\frac{1}{\tau(x)^{(2n-\mu)\gamma-n}}\hspace{4mm}\mbox{for}\hspace{2mm}x\in\mathbb{R}^n,
			\end{split}
		\end{equation}
		\begin{equation}\label{xwxw}
			\begin{split}
				\int\frac{1}{|x-w|^{(n-2s)r^{\prime}}}\frac{1}{\tau(w)^{4sr^{\prime}}}dw\lesssim\frac{1}{\tau(x)^{(n+2s)r^{\prime}-n}}\hspace{4mm}\mbox{for}\hspace{2mm}x\in\mathbb{R}^n.
			\end{split}
		\end{equation}
		Indeed, by considering the cases: $A^{\prime}:=\{w:|w|\leq d~\mbox{for}~d:=\frac{1}{2}|x|\geq1\}$, $A^{\prime\prime}:=\{w:|\omega-x|\leq d~\mbox{for}~d\geq1\}$ and $\{w:~\mathbb{R}^n\setminus(A^{\prime}\cup A^{\prime\prime})\}$ separately, we can deduce that \eqref{w4n} holds by the same manner similar to Lemma \ref{B4-1} (cf. \cite{YZ25}) and \eqref{xwxw} can follow the same argument as \eqref{w4n}.
		
		$\bullet$ If $n=6s$, then we have $r\in[3,3)=\emptyset$. By the Hardy-Littlewood-Sobolev inequality and Sobolev inequality we deduce that
		\begin{equation}\label{n6s}
			\begin{split}
				\big\|v(x)\big\|_{L^t(\mathbb{R}^n)}&\lesssim C\Big(\Big\|\int\frac{1}{|x-w|^{4s}} \frac{|g(w)|}{\tau(w)^{8s-\mu}}dw\Big\|_{L^t(\mathbb{R}^n)}+\Big\|\int\frac{1}{|x-w|^{4s}} \frac{|v(w)|}{\tau(w)^{4s}}dw\Big\|_{L^t(\mathbb{R}^n)}\Big)\\&\lesssim
				C\Big(\Big\|\frac{|g(w)|}{\tau(w)^{n-\mu+2s}}\Big\|_{L^{\eta_1}(\mathbb{R}^n)}+
				\Big\|\frac{|v(w)|}{\tau(w)^{4s}}\Big\|_{L^{\eta_1}(\mathbb{R}^n)}\Big)\\&
				\lesssim C\big(\big\|v(w)\big\|_{L^{3}(\mathbb{R}^n)}\Big\|\frac{1}{\tau(w)^{n-\mu+2s}}\Big\|_{L^{\eta_2}(\mathbb{R}^n)}+\big\|v(w)\big\|_{L^{3}(\mathbb{R}^n)}\big)\Big\|\frac{1}{\tau(w)^{4s}}\Big\|_{L^{\eta_2}(\mathbb{R}^n)}
			\end{split}
		\end{equation}
		for $t=\frac{3\eta_1}{3-\eta_1}$ with $\eta_1=\frac{3\eta_2}{3+\eta_2}$, and $t=\frac{3\eta_1}{3-\eta_1}\in(3,\infty)$. Thus, we have $v\in L^{r_0^{\prime}}(\mathbb{R}^n)$ for all $r_0^{\prime}\geq\frac{2n}{n-2s}.$
		
		$\bullet$ If $2s<n<6s$, then a direct computation yields that
		\begin{equation*}
			\begin{split}
				\big\|v(x)\big\|_{L^t(\mathbb{R}^n)}&\lesssim C\Big(\Big\|\frac{1}{|x|^{n-2s}} \ast\big|\frac{g(w)}{\tau^{n-\mu+2s}}\big|\Big\|_{L^{t}(\mathbb{R}^n)}+\Big\|\frac{1}{|x|^{n-2s}}\ast \big|\frac{v}{\tau^{4s}}\big|\Big\|_{L^{r}(\mathbb{R}^n)}\Big)\\&\lesssim
				C\big\|v(w)\big\|_{L^{\zeta^{\ast}}(\mathbb{R}^n)}\Big(\Big\|\frac{1}{\tau(w)^{n-\mu+2s}}\Big\|_{L^{\frac{n}{n-\mu}}(\mathbb{R}^n)}\Big\|\frac{1}{\tau(w)^{n-\mu+2s}}\Big\|_{L^{\theta}(\mathbb{R}^n)}
				+\Big\|\frac{1}{\tau(w)^{4s}}\Big\|_{L^{\theta}(\mathbb{R}^n)}\Big)
			\end{split}
		\end{equation*}
		for $\zeta^{\ast}=\frac{2n}{n-2s}$, $\theta\in(\frac{n}{2s},\frac{2n}{6s-n})$ and $t=\frac{n\zeta^{\ast}\theta}{n\zeta^{\ast}+(n-2s\zeta^\ast)\theta}\in(\frac{2n}{n-2s},\infty)$.
		Hence, we deduce that $v\in L^{r_0^{\prime}}(\mathbb{R}^n)$ for all $r_0^{\prime}\geq\frac{2n}{n-2s}.$
		Combining this bound with \eqref{v4nv}, \eqref{4n1}, \eqref{n6s}, and \eqref{Non1}, we conclude that $v\in L^{\infty}(\mathbb{R}^n)$ and the proof is finished.
	\end{proof}

	\subsection{Existence of discrete spectrums}\label{sectevp1}
	Define the linear operators
	\begin{equation}\label{defanndg}
		L[u]:=(-\Delta)^s u+\Big(|x|^{-\mu} \ast W^{p_s}\Big)W^{p_s-2}u
	\end{equation}
	and
	\begin{equation*}
		R[u]:=\Big(|x|^{-\mu} \ast \big(W^{p_s-1}u\big)\Big)W^{p_s-1}
		+\Big(|x|^{-\mu} \ast W^{p_s}\Big)W^{p_s-2}u.
	\end{equation*}
	We first investigate the following eigenvalue problem:
	\begin{eqnarray}\label{Pwhlep}
		L[u]=\tilde{\lambda}R[u],\quad u\in \dot{H}^{s}(\mathbb{R}^n).
	\end{eqnarray}
	
	Denote
	\begin{equation*}
		X:=\biggl\{u\in\dot{H}^s(\mathbb{R}^n) :  \int|(-\Delta)^\frac{s}{2}u|^2+\int(|x|^{-\mu} \ast W^{p_s})W^{p_s-2}u^2<\infty\biggr\}
	\end{equation*}
	and
	\begin{equation*}
		Y:=\biggl\{u\in\dot{H}^s(\mathbb{R}^n) : \int(|x|^{-\mu}\ast (W^{p_s-1}u))W^{p_s-1}u+ \int(|x|^{-\mu} \ast W^{p_s})W^{p_s-2}u^2<\infty\biggr\}.
	\end{equation*}
	We now show that the space $X$ compactly embeds into $Y$. It is worth noting that related results can be found in \cite{DLYZ24} for $s\in(0,1)$ with $s\neq\frac{1}{2}$. However, we emphasize that our results are valid for all fractional exponents $s\in(0,\frac{n}{2})$.
	\begin{lem}\label{niunai}
		The space $X$ compactly embeds into $Y$ for all $s\in(0,\frac{n}{2})$.
	\end{lem}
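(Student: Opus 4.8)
The plan is to argue by the direct method. Writing $B_1[u]:=\int_{\mathbb{R}^n}(|x|^{-\mu}\ast W^{p_s})W^{p_s-2}u^2$ and $B_2[u]:=\int_{\mathbb{R}^n}(|x|^{-\mu}\ast(W^{p_s-1}u))W^{p_s-1}u$, the $Y$-seminorm is $\big(B_1[u]+B_2[u]\big)^{1/2}$, and $B_1,B_2$ are nonnegative quadratic forms whose polar bilinear forms $b_i$ satisfy $|b_i(v,w)|\le B_i[v]^{1/2}B_i[w]^{1/2}$. Once one shows (Step 2 below) that $b_i$ is bounded on $\dot H^s(\mathbb{R}^n)$, the identity $B_i[u_k-u]=B_i[u_k]-2b_i(u_k,u)+B_i[u]$ together with $b_i(\cdot,u)\in(\dot H^s)^\ast$ reduces the whole lemma to the following: for every sequence $u_k\rightharpoonup u$ weakly in $\dot H^s(\mathbb{R}^n)$ one has $B_i[u_k]\to B_i[u]$, $i=1,2$. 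So I would fix a bounded sequence in $X$, pass to a weakly convergent subsequence, and prove this convergence.

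First I would record the weight estimates. With $z=\lambda(x-\xi)$, Lemma \ref{B4-1} gives $|x|^{-\mu}\ast W^{p_s}\lesssim \lambda^{\mu/2}\tau(z)^{-\mu}$, and since $W^{p_s-2}=\alpha_{n,\mu,s}^{p_s-2}\lambda^{(4s-\mu)/2}\tau(z)^{-(4s-\mu)}$ (here the hypothesis $0<\mu\le 4s$ keeps the exponent nonnegative), the weight $V:=(|x|^{-\mu}\ast W^{p_s})W^{p_s-2}$ obeys $V\lesssim \lambda^{2s}\tau(z)^{-4s}$, so $V\in L^\infty(\mathbb{R}^n)\cap L^{n/2s}(\mathbb{R}^n)$. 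Likewise $\Phi:=W^{p_s-1}=\alpha_{n,\mu,s}^{p_s-1}\lambda^{(n-\mu+2s)/2}\tau(z)^{-(n-\mu+2s)}$ lies in $L^\infty(\mathbb{R}^n)\cap L^{2n/(n-\mu+2s)}(\mathbb{R}^n)$. By the fractional Sobolev inequality \eqref{bsic} and Hölder, $B_1[u]\le\|V\|_{L^{n/2s}}\|u\|_{L^{2^*_s}}^2\lesssim\|u\|_{\dot H^s}^2$, and by the HLS inequality \eqref{hlsi} with $r=t=\frac{2n}{2n-\mu}$, $B_2[u]\le C_{n,\mu}\|\Phi u\|_{L^{2n/(2n-\mu)}}^2\lesssim\|u\|_{\dot H^s}^2$ (using $\Phi\in L^{2n/(n-\mu+2s)}$ and $\frac{n-2s}{2n}+\frac{n-\mu+2s}{2n}=\frac{2n-\mu}{2n}$). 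In particular $X=\dot H^s(\mathbb{R}^n)$ with an equivalent norm, so a bounded sequence in $X$ is bounded in $\dot H^s$ and weak $X$-convergence is weak $\dot H^s$-convergence.

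Then I would prove the convergence by a truncation-at-infinity argument. Given $\varepsilon>0$, choose $R$ so large that $\|V\mathbf{1}_{\{|x-\xi|>R\}}\|_{L^{n/2s}}<\varepsilon$ and $\|\Phi\mathbf{1}_{\{|x-\xi|>R\}}\|_{L^{2n/(n-\mu+2s)}}<\varepsilon$; by Hölder and HLS the contributions of $\{|x-\xi|>R\}$ to $B_1[u_k]$, $B_1[u]$ and (via the bilinear form) to $B_2[u_k]$, $B_2[u]$ are then $O(\varepsilon)$ uniformly in $k$, since $\|u_k\|_{L^{2^*_s}}$ is bounded. On the ball $B_R(\xi)$, weak convergence $u_k\rightharpoonup u$ in $\dot H^s(\mathbb{R}^n)$ forces $u_k\to u$ strongly in $L^2(B_R(\xi))$ — by a standard cutoff reducing to the compact embedding $H^s(B_R)\hookrightarrow L^2(B_R)$, which holds for every $s\in(0,\frac n2)$. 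Since $V$ and $\Phi$ are bounded on $B_R(\xi)$, this gives $\int_{B_R(\xi)}V(u_k^2-u^2)\to0$ (using that $\{u_k\}$ is bounded in $L^2(B_R)$) and $\|\Phi(u_k-u)\mathbf{1}_{B_R(\xi)}\|_{L^{2n/(2n-\mu)}}\to0$ (using $\frac{2n}{2n-\mu}<2$ and $|B_R(\xi)|<\infty$), whence the local part of $b_2(u_k-u,u_k)+b_2(u,u_k-u)$ tends to $0$ by the HLS bound. Letting $k\to\infty$ and then $\varepsilon\to0$ yields $B_i[u_k]\to B_i[u]$, $i=1,2$, and the proof is complete.

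The hard part, and the place where this argument improves on the earlier treatments, is the uniform-in-$k$ smallness of the tails: it rests entirely on the sharp pointwise decay of $|x|^{-\mu}\ast W^{p_s}$ supplied by Lemma \ref{B4-1}, which is exactly what forces the restriction $0<\mu\le 4s$ — for $\mu>4s$ the factor $W^{p_s-2}$ grows at infinity and $V$ need no longer be globally bounded or in $L^{n/2s}$. One must also check that the local Rellich step (weak $\dot H^s$-convergence implying strong $L^2_{\rm loc}$-convergence) goes through uniformly across the whole range $s\in(0,\frac n2)$, which is the technical point absent from the treatments valid only for $s\in(0,1)$, $s\neq\frac12$.
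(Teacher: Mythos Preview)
Your proof is correct and follows the same tail-smallness-plus-local-compactness scheme as the paper. The one substantive difference is in how the weight $V=(|x|^{-\mu}\ast W^{p_s})W^{p_s-2}$ is controlled: the paper invokes the \emph{exact} identity $|x|^{-\mu}\ast W^{p_s}=\widetilde\alpha_{n,\mu,s}\,W^{2_s^\ast-p_s}$ from Lemma~\ref{p1-00}, so that $V=c\,W^{2_s^\ast-2}$ on the nose, and then reduces both $B_1$ and $B_2$ (the latter via HLS and H\"older) to compactness of the single embedding $\dot H^s(\mathbb R^n)\hookrightarrow L^2(\mathbb R^n,W^{2_s^\ast-2}dx)$; you instead use only the pointwise upper bound of Lemma~\ref{B4-1} and handle $B_1,B_2$ by separate tail estimates. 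Both routes work; the paper's is a bit more streamlined. One small correction to your closing remark: the restriction $\mu\le4s$ is not actually forced by this lemma --- with the exact identity one has $V=c\,W^{2_s^\ast-2}\in L^{n/2s}$ for every $\mu\in(0,n)$, so the compactness argument here goes through regardless; the constraint $\mu\le4s$ enters the paper through the requirement $p_s\ge2$ needed in the nondegeneracy and spectral analysis, not through this embedding.
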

	\begin{proof}
		We first claim that the embedding $\dot{H}^s(\mathbb{R}^n)\hookrightarrow L^2(\mathbb{R}^n,W^{2_s^\ast-2}dx)$ is compact. Since $C_0^\infty(\mathbb{R}^n)$ is dense in $\dot{H}^s(\mathbb{R}^n)$, we can assume that $u\in C_0^\infty(\mathbb{R}^n)$. Then by the H\"older inequality, it follows
		\begin{equation}\label{shanshanjia}
			\int_{\mathbb{R}^n}W^{2_s^\ast-2}u^2
			\leq\bigg(\int_{\mathbb{R}^n}W^{2_s^\ast}\bigg)^\frac{2s}{n}\bigg(\int_{\mathbb{R}^n}u^{2_s^\ast}\bigg)^\frac{n-2s}{n}
			\leq C\|u\|_{\dot{H}^s(\mathbb{R}^n)}^2.
		\end{equation}
		For any $\rho\in(0,1)$, using the H\"older inequality again we have
		\begin{equation*}
			\int_{B_\rho}W^{2_s^\ast-2}u^2
			\leq\bigg(\int_{B_\rho}W^{2_s^\ast}\bigg)^\frac{2s}{n}\bigg(\int_{\mathbb{R}^n}u^{2_s^\ast}\bigg)^\frac{n-2s}{n}
			\leq C\rho^{2s}\|u\|_{\dot{H}^s(\mathbb{R}^n)}^2
		\end{equation*}
		and
		\begin{equation*}
			\begin{split}
				\int_{\mathbb{R}^n\setminus B_\frac{1}{\rho}}W^{2_s^\ast-2}u^2
				&\leq C\int_{\mathbb{R}^n\setminus B_\frac{1}{\rho}}\frac{u^2}{(1+|x|^2)^{2s}}       \\
				&\leq C\rho^s\int_{\mathbb{R}^n\setminus B_\frac{1}{\rho}}\frac{u^2}{|x|^{3s}}
				\leq C\rho^s\bigg(\int_{\mathbb{R}^n\setminus B_\frac{1}{\rho}}|x|^{-\frac{3n}{2}}\bigg)^\frac{2s}{n}\|u\|_{\dot{H}^s(\mathbb{R}^n)}^2
				\leq C\rho^{2s}\|u\|_{\dot{H}^s(\mathbb{R}^n)}^2.
			\end{split}
		\end{equation*}
		Next, we follow the arguments as those in \cite[Proposition 3.2]{F-Z-22}. Let $\{u_m\}$ be a sequence of functions in $\dot{H}^s(\mathbb{R}^n)$ with uniformly bounded norm. It follows from \eqref{shanshanjia} that $\{u_m\}$ is uniformly bounded in $L^2(\mathbb{R}^n,W^{2_s^\ast-2}dx)$ as well. By the Rellich-Kondrachov theorem and a digonal argument we deduce that, up to a subsequence, $\{u_m\}$ converges to some function $u$ both weakly in $\dot{H}^s(\mathbb{R}^n)\cap L^2(\mathbb{R}^n,W^{2_s^\ast-2}dx)$ and strongly in $L_{\text{loc}}^2(\mathbb{R}^n,W^{2_s^\ast-2}dx)$. Defining the compact set $E_\rho:=\overline{B_\frac{1}{\rho}}\setminus B_\rho$ and applying the strong convergency of $\{u_m\}$ inside $E_\rho$, we conclude the claim by choosing the arbitrarily small $\rho$.
		
		\par Now we assume that $u_m\rightharpoonup0$ in $X$. Thus $u_m\rightharpoonup0$ in $\dot{H}^s(\mathbb{R}^n)$ and $u_m\rightarrow0$ in $L^2(\mathbb{R}^n,W^{2_s^\ast-2}dx)$ respectively. By Lemma \ref{p1-00}, we get
		\begin{equation*}
			\int(|x|^{-\mu} \ast W^{p_s})W^{p_s-2}u_m^2
			=C\int W^{2_s^\ast-2}u_m^2
			\rightarrow0.
		\end{equation*}
		Moreover, by the Hardy-Littlewood-Sobolev inequality we obtain
		\begin{equation*}
			\int(|x|^{-\mu}\ast (W^{p_s-1}u_m))W^{p_s-1}u_m
			\leq C\bigg(\int W^\frac{2n(n+2s-\mu)}{(n-2s)(2n-\mu)}u_m^\frac{2n}{2n-\mu}\bigg)^\frac{2n-\mu}{n}
			\leq C\bigg(\int W^{2_s^\ast}\bigg)^\frac{n-\mu}{n}\int W^{2_s^\ast-2}u_m^2
			\rightarrow0.
		\end{equation*}
		As a consequence, we achieve $u_m\rightarrow0$ in $Y$, which completes the proof.
	\end{proof}
	
	Lemma \ref{niunai} indicates that the eigenvalues of problem (\ref{Pwhlep}) are discrete, which can be defined as follows:
	\begin{Def}\label{defevp}
		The Rayleigh quotient characterization of first eigenvalue implies
		\begin{equation}\label{deffev1}
			\tilde{\lambda}_1:=\inf_{v\in\dot{H}^{s}(\mathbb{R}^n)\backslash\{0\}}
			\frac{\int|(-\Delta)^\frac{s}{2}v|^2+\int\big(|x|^{-\mu} \ast W^{p_s}\big)W^{p_s-2}v^2}
			{\int\big(|x|^{-\mu} \ast W^{p_s-1}v\big)W^{p_s-1}v+ \int(|x|^{-\mu} \ast W^{p_s})W^{p_s-2}v^2}.
		\end{equation}
		In addition, for any $k\in\mathbb{N}$ the eigenvalues can be characterized as follows:
		\begin{equation}\label{deffevk}
			\tilde{\lambda}_{k+1}:=\inf_{v\in \mathcal{H}_{k+1}\backslash\{0\}}
			\frac{\int|(-\Delta)^\frac{s}{2}v|^2+\int\big(|x|^{-\mu} \ast W^{p_s}\big)W^{p_s-2}v^2}
			{\int\big(|x|^{-\mu}\ast W^{p_s-1}v\big)W^{p_s-1}v+ \int\big(|x|^{-\mu} \ast W^{p_s}\big)W^{p_s-2}v^2},
		\end{equation}
		where
		\begin{equation}\label{defczs}
			\mathcal{H}_{k+1}:=\left\{v\in \dot{H}^{s}(\mathbb{R}^n): \int_{\mathbb{R}^n}(-\Delta)^{\frac{s}{2}} v \cdot (-\Delta)^{\frac{s}{2}} v_j=0\quad \mbox{for all}\quad j=1,\ldots,k\right\},
		\end{equation}
		and $v_j$ is the corresponding eigenfunction to $\tilde{\lambda}_j$.
	\end{Def}
	
	We have the following discrete spectral information of operator $L[u]$ (see \eqref{defanndg}).
	\begin{lem}\label{propep}
		Let $\tilde{\lambda}_j$, $j=1,2,\ldots,$ denote the eigenvalues of (\ref{Pwhlep}) in increasing order as in Definition \ref{defevp}. Then the operator $L[u]$ defined in \eqref{defanndg} has a discrete spectrum such that
		\begin{itemize}
			\item[$(i)$] The first eigenvalue
			$\tilde{\lambda}_1=1$ with the eigenfunction space $\mathcal{H}_1=span\left\{W\right\}$.
			\item[$(ii)$] The second eigenvalue
			$\tilde{\lambda}_2=p_s$ with the eigenfunction space
			$$ \mathcal{H}_2=span
			\left\{\partial_{\mathcal{\xi}_1}W,\ldots,\partial_{ \mathcal{\xi}_n}W,\quad x\cdot\nabla W+\frac{n-2}{2}W\right\}.$$
		\end{itemize}
		Furthermore, $\tilde{\lambda}_{n+3}>\tilde{\lambda}_2=p_s$.
	\end{lem}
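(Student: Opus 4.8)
The plan is to locate the bottom of the spectrum of \eqref{Pwhlep} by combining three ingredients: the explicit form of $W$ and of the kernel of the linearized Hartree operator supplied by Theorem~\ref{prondgr}; the sharp fractional Sobolev and Hardy--Littlewood--Sobolev inequalities; and the non-negativity of the second variation of the extremal problem behind \eqref{Prm}. Throughout I abbreviate by $\mathcal{L}(v,v)$ and $\mathcal{R}(v,v)$ the numerator and denominator of the Rayleigh quotient \eqref{deffev1}. First I would record the two candidate eigenfunctions. Since $W$ solves \eqref{ele-1.1}, a one-line computation gives $L[W]=2\big(|x|^{-\mu}\ast W^{p_s}\big)W^{p_s-1}=R[W]$, so $1$ is an eigenvalue and $\tilde{\lambda}_1\le\mathcal{L}(W,W)/\mathcal{R}(W,W)=1$. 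On the other hand, rewriting the relation $L[u]=p_sR[u]$ shows that it is \emph{exactly} the linearized equation of Theorem~\ref{prondgr}; hence the eigenspace attached to the value $p_s$ is the $(n+1)$-dimensional kernel $\mathrm{span}\{\partial_\lambda W,\partial_{\xi_1}W,\dots,\partial_{\xi_n}W\}$, which is the space displayed in $(ii)$ because $\partial_\lambda W|_{\lambda=1}=x\cdot\nabla W+\tfrac{n-2s}{2}W$.

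To upgrade the statement that $1$ is an eigenvalue to the equality $\tilde{\lambda}_1=1$, I would prove that $\mathcal{L}(v,v)\ge\mathcal{R}(v,v)$ for all $v\in\dot{H}^s(\mathbb{R}^n)$, equivalently $\int|(-\Delta)^{s/2}v|^2\ge\int\big(|x|^{-\mu}\ast(W^{p_s-1}v)\big)W^{p_s-1}v$. The idea is to chain the diagonal HLS inequality \eqref{hlsi}, H\"older's inequality built on the exponent identity $(p_s-1)\cdot\tfrac{2n}{n+2s-\mu}=2_s^\ast$ (so that $\|W^{p_s-1}\|_{L^{2n/(n+2s-\mu)}}^2=(\int W^{2_s^\ast})^{(n+2s-\mu)/n}$), and the fractional Sobolev inequality \eqref{bsic}, which yields $\int\big(|x|^{-\mu}\ast(W^{p_s-1}v)\big)W^{p_s-1}v\le K\|v\|_{\dot{H}^s(\mathbb{R}^n)}^2$ with $K:=C_{n,\mu}(\int W^{2_s^\ast})^{(n+2s-\mu)/n}/S_{n,s}$. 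Since $W^{p_s}$ is a multiple of the HLS optimizer, since $W^{(p_s-1)\cdot 2n/(n+2s-\mu)}=W^{2_s^\ast}$ pointwise, and since $W$ is a Sobolev extremal, all three inequalities become equalities at $v=W$; evaluating the bound at $v=W$ and using $\|W\|_{\dot{H}^s}^2=\int(|x|^{-\mu}\ast W^{p_s})W^{p_s}$ (again from \eqref{ele-1.1}) then forces $K=1$. The same equality discussion shows that any minimizer of \eqref{deffev1} must saturate the H\"older step, i.e. $|v|^{2_s^\ast}\propto W^{2_s^\ast}$, whence $v\in\mathrm{span}\{W\}$ and $\mathcal{H}_1=\mathrm{span}\{W\}$.

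For the second eigenvalue, the key point is that, by the sharp inequality \eqref{Prm}, $W$ minimizes $Q(u):=\int|(-\Delta)^{s/2}u|^2$ under the constraint $F(u):=\iint|x-y|^{-\mu}|u|^{p_s}|u|^{p_s}\,dx\,dy=F(W)$. Since $p_s\ge2$ the functional $F$ is of class $C^2$ on $\dot{H}^s(\mathbb{R}^n)$, the Lagrange multiplier equals $1/p_s$ by \eqref{ele-1.1}, and the second variation is non-negative on $\{v:F'(W)[v]=0\}$; writing out $Q''(W)$ and $F''(W)$ this becomes $\mathcal{L}(v,v)\ge p_s\mathcal{R}(v,v)$ for all $v$ with $\int(|x|^{-\mu}\ast W^{p_s})W^{p_s-1}v=0$, that is, for all $v\perp W$ in $\dot{H}^s(\mathbb{R}^n)$. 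As $v_1$ is a multiple of $W$, the Courant--Fischer formula \eqref{deffevk} then gives $\tilde{\lambda}_2\ge p_s$. The matching upper bound comes from testing \eqref{deffevk} with $\partial_{\xi_i}W$, which is orthogonal to $W$ (the $\dot{H}^s$-norm of $W[\xi,\lambda]$ does not depend on $\xi$), has finite weighted quadratic forms, and satisfies $L[\partial_{\xi_i}W]=p_sR[\partial_{\xi_i}W]$, so that its Rayleigh quotient is exactly $p_s$; hence $\tilde{\lambda}_2=p_s$. Finally, since the eigenspace of $p_s$ is the $(n+1)$-dimensional kernel from Theorem~\ref{prondgr} and, by Lemma~\ref{niunai}, the spectrum of \eqref{Pwhlep} is discrete with finite multiplicities, the ordered eigenvalues must satisfy $\tilde{\lambda}_1=1<\tilde{\lambda}_2=\cdots=\tilde{\lambda}_{n+2}=p_s<\tilde{\lambda}_{n+3}$, since otherwise the eigenspace of $p_s$ would have dimension at least $n+2$.

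I expect the main obstacle to be the spectral gap $\tilde{\lambda}_2\ge p_s$: it rests on the rigorous Fr\'echet differentiability and second-variation analysis of the nonlocal energy $F$ on $\dot{H}^s(\mathbb{R}^n)$ and on the finiteness of the weighted integrals defining $\mathcal{L}$ and $\mathcal{R}$ along $\partial_{\xi_i}W$ and $\partial_\lambda W$. As an alternative to this variational route, one could diagonalize $L$ and $R$ by passing to the sphere $S^n$ via the stereographic projection and the Funk--Hecke formula, as in the proof of Theorem~\ref{prondgr}, and then verify the monotonicity of the resulting eigenvalues in the spherical-harmonic degree; the identifications of $\mathcal{H}_1$, $\mathcal{H}_2$ and the strict inequality $\tilde{\lambda}_{n+3}>p_s$ follow from either approach.
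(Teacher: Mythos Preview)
Your proposal is correct and matches the approach the paper has in mind: the paper's own proof is omitted and simply refers to \cite{DSB213,DLYZ24}, and those references establish the lemma by exactly the variational route you outline---the sharp HLS/H\"older/Sobolev chain giving $\tilde{\lambda}_1=1$ with equality only at multiples of $W$, the nonnegativity of the second variation of the constrained extremal problem for \eqref{Prm} at $W$ giving $\tilde{\lambda}_2\ge p_s$, and the nondegeneracy Theorem~\ref{prondgr} supplying the $(n+1)$-dimensional eigenspace and hence $\tilde{\lambda}_{n+3}>p_s$. Your identification $\partial_\lambda W|_{\lambda=1}=x\cdot\nabla W+\tfrac{n-2s}{2}W$ also silently corrects the typo $\tfrac{n-2}{2}$ in the statement.
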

	\begin{proof}
		The result follows from the nondegeneracy of bubbles for linearized  equation of (\ref{ele-1.1}) at $W$. The proof can be solved in the same manner similar to the argument as in \cite{DSB213,DLYZ24} except minor modifications. So we omit the details.
	\end{proof}
	
	\subsection{Spectral inequality}
	For any $1\leq i\leq\kappa$, let $\varrho\in\dot{H}^s(\mathbb{R}^n)$ satisfy the following orthogonal conditions:
	\begin{equation*}
		\langle\varrho, W_i\rangle_{\dot{H}^s(\mathbb{R}^n)}=\langle\varrho, \partial_{\lambda} W_i\rangle_{\dot{H}^s(\mathbb{R}^n)}=\langle\varrho, \partial_{\xi_j}W_i\rangle_{\dot{H}^s(\mathbb{R}^n)}=0\hspace{6mm}\text{for any}\hspace{2mm}1\leq j\leq n.
	\end{equation*}
	In view of Lemma \ref{propep}, we know that the functions $W_i$, $\partial_{\lambda}W_i$ and $\partial_{\xi_j}W_i$ are eigenfunctions for the eigenvalue problem \eqref{Pwhlep}.
	Then the above orthogonal conditions are equivalent to
	\begin{equation}\label{EP1}
		\int\Big(|x|^{-\mu} \ast \big(W_i^{p_s-1}\varrho\big)\Big)W_i^{p_s}=
		\int I_{n,\mu,s}[W_i,\varrho]\partial_{\lambda} W_i=
		\int I_{n,\mu,s}[W_i,\varrho]\partial_{\xi_j}W_i=0,
	\end{equation}
	for any $1\leq i\leq\kappa$ and $1\leq j\leq n$.
	
	We need the following spectral inequality which plays a crucial role in the proof of Lemma \ref{Ni-1-3}.
	\begin{lem}\label{estim}
		Let $n>2s$, $\mu\in(0,n)$, $0<\mu\leq4$ and $\kappa\in\mathbb{N}$. There exists a positive constant $\delta=\delta(n,s,\kappa,\mu)>0$ such that if $\sigma=\sum\limits_{i=1}^{\kappa}W_i$ is a linear combination of $\delta$-interacting bubbles and $\varrho\in\dot{H}^s(\mathbb{R}^n)$ satisfies \eqref{EP1}. Then we have that
		\begin{equation}\label{ps1ps}
			(p_{s}-1)\int\Big(|x|^{-\mu}\ast \sigma^{p_{s}}\Big)\sigma^{p_{s}-2}\varrho^2
			+p_{s}\int\Big(|x|^{-\mu}\ast\big(\sigma^{p_{s}-1}\varrho\big)\Big)\sigma^{p_s-1}\varrho
			\leq\tau_0\big\|\varrho\big\|_{\dot{H}^s}^2,
		\end{equation}
		where $\tau_0$ is a constant strictly less than $1$ which depends only on $n$, $s$, $\kappa$ and $\mu$.
	\end{lem}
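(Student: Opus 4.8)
\emph{Strategy and the single-bubble case.} The plan is to write $\mathcal B_\sigma(\varrho)$ for the left-hand side of \eqref{ps1ps} — it is the quadratic form $\int\widetilde R_\sigma[\varrho]\,\varrho$ of the linearized Hartree operator $\widetilde R_\sigma[\varrho]=p_s(|x|^{-\mu}\ast(\sigma^{p_s-1}\varrho))\sigma^{p_s-1}+(p_s-1)(|x|^{-\mu}\ast\sigma^{p_s})\sigma^{p_s-2}\varrho$ — to prove the case $\kappa=1$ directly from the spectral data of Lemma \ref{propep}, and then reduce $\kappa\ge2$ to it by a contradiction/profile-decomposition argument decoupling the bubbles. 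For $\kappa=1$: using the bubble equation $(-\Delta)^sW=(|x|^{-\mu}\ast W^{p_s})W^{p_s-1}$ one obtains, with $L,R$ as in \eqref{defanndg}, the identity $\mathcal B_W(\varrho)=\|\varrho\|_{\dot H^s}^2-(\langle L[\varrho],\varrho\rangle-p_s\langle R[\varrho],\varrho\rangle)$ (since $p_s\langle R[\varrho],\varrho\rangle-\mathcal B_W(\varrho)=\int(|x|^{-\mu}\ast W^{p_s})W^{p_s-2}\varrho^2=\langle L[\varrho],\varrho\rangle-\|\varrho\|_{\dot H^s}^2$). By the discussion preceding \eqref{EP1}, the relations \eqref{EP1} at $W$ are equivalent to $\langle\varrho,W\rangle_{\dot H^s}=\langle\varrho,\partial_\lambda W\rangle_{\dot H^s}=\langle\varrho,\partial_{\xi_j}W\rangle_{\dot H^s}=0$, i.e. $\varrho\in\mathcal H_{n+3}$, so \eqref{deffevk} yields $\langle L[\varrho],\varrho\rangle\ge\widetilde\lambda_{n+3}\langle R[\varrho],\varrho\rangle$. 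Since $\langle R[\varrho],\varrho\rangle\ge0$ (positivity of the Riesz kernel) and $\langle L[\varrho],\varrho\rangle\ge\|\varrho\|_{\dot H^s}^2$, splitting $\langle L[\varrho],\varrho\rangle=\theta\langle L[\varrho],\varrho\rangle+(1-\theta)\langle L[\varrho],\varrho\rangle$ with $\theta=1-p_s/\widetilde\lambda_{n+3}\in(0,1)$ (recall $\widetilde\lambda_{n+3}>p_s$) and using $(1-\theta)\widetilde\lambda_{n+3}=p_s$ gives $\langle L[\varrho],\varrho\rangle-p_s\langle R[\varrho],\varrho\rangle\ge\theta\langle L[\varrho],\varrho\rangle\ge\theta\|\varrho\|_{\dot H^s}^2$, hence $\mathcal B_W(\varrho)\le(p_s/\widetilde\lambda_{n+3})\|\varrho\|_{\dot H^s}^2$, which is \eqref{ps1ps} with $\tau_0=p_s/\widetilde\lambda_{n+3}<1$ and any $\delta>0$.

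\emph{Reduction of $\kappa\ge2$ and the decoupling estimate.} Fix $\tau_0=\tfrac12(1+p_s/\widetilde\lambda_{n+3})<1$ and assume \eqref{ps1ps} fails for every $\delta>0$; this produces $\delta_m\downarrow0$, $\delta_m$-interacting families $\{W_i^{(m)}\}_{i=1}^\kappa$ with $\sigma_m=\sum_iW_i^{(m)}$, and $\varrho_m$ with $\|\varrho_m\|_{\dot H^s}=1$ satisfying \eqref{EP1} but $\mathcal B_{\sigma_m}(\varrho_m)>\tau_0$. Let $T_i^{(m)}$ be the $\dot H^s$-unitary rescaling normalizing $W_i^{(m)}$ to $W:=W[0,1]$, put $\widetilde\varrho_m^{(i)}=T_i^{(m)}\varrho_m$ (so $\|\widetilde\varrho_m^{(i)}\|_{\dot H^s}=1$), and pass to a subsequence with $\widetilde\varrho_m^{(i)}\rightharpoonup\varrho_\infty^{(i)}$ in $\dot H^s$. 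Since \eqref{EP1} is invariant under $T_i^{(m)}$, each $\widetilde\varrho_m^{(i)}$, and hence $\varrho_\infty^{(i)}$, is $\dot H^s$-orthogonal to $W,\partial_\lambda W,\partial_{\xi_j}W$; so $\varrho_\infty^{(i)}$ satisfies \eqref{EP1} at $W$. The crucial step is then to establish
\[
\mathcal B_{\sigma_m}(\varrho_m)=\sum_{i=1}^\kappa\mathcal B_W(\widetilde\varrho_m^{(i)})+o(1).
\]
This I would obtain by expanding $\sigma_m^q=\sum_i(W_i^{(m)})^q+O\big(\sum_{i\ne j}[(W_i^{(m)})^{q-1}W_j^{(m)}+W_i^{(m)}(W_j^{(m)})^{q-1}]\big)$ for $q\in\{p_s,p_s-1,p_s-2\}$ (valid since $p_s\ge2$), recognizing the diagonal part as $\sum_i\mathcal B_{W_i^{(m)}}(\varrho_m)=\sum_i\mathcal B_W(\widetilde\varrho_m^{(i)})$ by invariance of $\mathcal B$ under $T_i^{(m)}$, and bounding each of the (numerous) off-diagonal terms, via the Hardy--Littlewood--Sobolev and Hölder inequalities together with the pointwise bound of Lemma \ref{B4-1} and the standard bubble-interaction estimates, by a quantity $\lesssim\mathscr Q_m\to0$ (using $\|\varrho_m\|_{\dot H^s}=1$).

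\emph{Passage to the limit and conclusion.} As $\mathcal B_W(\cdot)$ is a fixed linear combination of the two quadratic forms making up $\|\cdot\|_Y^2$, and $\widetilde\varrho_m^{(i)}$ is bounded in $\dot H^s$ (equivalently in $X$, the two norms being equivalent, cf. the proof of Lemma \ref{niunai}) with $\widetilde\varrho_m^{(i)}\rightharpoonup\varrho_\infty^{(i)}$, the compact embedding $X\hookrightarrow\hookrightarrow Y$ of Lemma \ref{niunai} forces $\widetilde\varrho_m^{(i)}\to\varrho_\infty^{(i)}$ strongly in $Y$, hence $\mathcal B_W(\widetilde\varrho_m^{(i)})\to\mathcal B_W(\varrho_\infty^{(i)})$. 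Moreover $\mathscr Q_m\to0$ is exactly the statement that $T_1^{(m)},\dots,T_\kappa^{(m)}$ are pairwise asymptotically orthogonal on $\dot H^s$ ($\lambda_i^{(m)}/\lambda_j^{(m)}+\lambda_j^{(m)}/\lambda_i^{(m)}+\lambda_i^{(m)}\lambda_j^{(m)}|\xi_i^{(m)}-\xi_j^{(m)}|^2\to\infty$ for $i\ne j$), so by the standard profile-decomposition orthogonality $\sum_{i=1}^\kappa\|\varrho_\infty^{(i)}\|_{\dot H^s}^2\le\liminf_m\|\varrho_m\|_{\dot H^s}^2=1$. Applying the $\kappa=1$ bound to each $\varrho_\infty^{(i)}$,
\[
\tau_0\le\lim_{m\to\infty}\mathcal B_{\sigma_m}(\varrho_m)=\sum_{i=1}^\kappa\mathcal B_W(\varrho_\infty^{(i)})\le\frac{p_s}{\widetilde\lambda_{n+3}}\sum_{i=1}^\kappa\|\varrho_\infty^{(i)}\|_{\dot H^s}^2\le\frac{p_s}{\widetilde\lambda_{n+3}}<\tau_0,
\]
a contradiction, which yields the required $\delta=\delta(n,s,\kappa,\mu)>0$ with $\tau_0<1$. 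I expect the main obstacle to be the decoupling estimate of the second paragraph: one must show that all the off-diagonal convolution ("interaction") terms in $\mathcal B_{\sigma_m}(\varrho_m)$ vanish — considerably more intricate than in the local Sobolev case because the Hartree nonlinearity generates many mixed products — while simultaneously pinning down the asymptotic orthogonality $\sum_i\|\varrho_\infty^{(i)}\|_{\dot H^s}^2\le1$, which is precisely what prevents $\sum_i\mathcal B_W(\widetilde\varrho_m^{(i)})$ from degenerating into the useless bound $\kappa\,p_s/\widetilde\lambda_{n+3}$.
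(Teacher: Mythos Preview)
Your proposal is correct and follows essentially the same route the paper points to: the paper's own proof simply says ``The case $\kappa=1$ is clear. For $\kappa\geq2$, the proof is similar to \cite{DHP,C-K-L-24} except minor modifications due to the parameters $s$'' and omits all details, and your contradiction/rescaling/profile-decomposition argument is precisely that standard scheme, with the $\kappa=1$ spectral computation made explicit (and yielding the clean constant $\tau_0=p_s/\widetilde\lambda_{n+3}$). One small caution on the decoupling step: your pointwise expansion $\sigma_m^{q}=\sum_i(W_i^{(m)})^{q}+O\big(\sum_{i\ne j}(W_i^{(m)})^{q-1}W_j^{(m)}\big)$ is only literally valid for $q\ge1$, whereas $q=p_s-2$ can lie in $(0,1)$ when $2<p_s<3$; in that range one should instead argue region-by-region (on $\{W_i=\max_j W_j\}$ write $\sigma_m^{p_s-2}=W_i^{p_s-2}+O(W_i^{p_s-3}\sum_{j\ne i}W_j)$ if $p_s\ge3$, or use $|\sigma_m^{p_s-2}-W_i^{p_s-2}|\le C\,W_i^{-1}\sum_{j\ne i}W_j\cdot W_i^{p_s-2}$ from the mean-value inequality if $2<p_s<3$) before applying HLS/H\"older, which is the way the cited references handle it.
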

	\begin{proof}
		The case $\kappa=1$ is clear. For $\kappa\geq2$, the proof is similar to \cite{DHP,C-K-L-24} except minor modifications due to the parameters $s$. Hence, we omit the details.
	\end{proof}

	\section{Remainder terms of the fractional nonlocal Sobolev-type inequality}\label{section3}
	This section aims to establish both the gradient type remainder term and the remainder term in the weak $L^\frac{n}{n-2s}$-norm by utilizing the non-degeneracy property of the extremal functions.
	\subsection{Proof of Theorem \ref{remainder terms}}
	The main ingredient of the proof of Theorem \ref{remainder terms} is contained in Lemma \ref{baowenbei}, where the behavior of the sequences near $\mathcal{M}=\{cW[\xi,\lambda]: c\in\mathbb{R}, \lambda>0, \xi\in\mathbb{R}^n\}$ is investigated.
	
	\begin{lem}\label{baowenbei}
		Let $s\in(0,\frac{n}{2})$, $n>2s$, $\mu(0,n)$ with $0<\mu\leq4s$. For any sequence $\{u_m\}\subset\dot{H}^s(\mathbb{R}^n)\setminus\mathcal{M}$ satisfying
		\begin{equation*}
			\inf_m\|u_m\|_{\dot{H}^s(\mathbb{R}^n)}>0\hspace{4mm}\text{and}\hspace{4mm}\dist(u_m,\mathcal{M})\rightarrow0,
		\end{equation*}
		then we have
		\begin{equation}\label{caomei1}
			\liminf_{m\rightarrow\infty}\frac{\int|(-\Delta)^{\frac{s}{2}} u_m|^2 dx
				-C_{HLS}\left(\int(|x|^{-\mu} \ast|u_m|^{p_s})|u_m|^{p_s}dx\right)^{\frac{1}{p_s}}}{\dist(u_m,\mathcal{M})^2}
			\geq A_1
		\end{equation}
		and
		\begin{equation}\label{caomei2}
			\limsup_{m\rightarrow\infty}\frac{\int|(-\Delta)^{\frac{s}{2}} u_m|^2 dx
				-C_{HLS}\left(\int(|x|^{-\mu} \ast|u_m|^{p_s})|u_m|^{p_s}dx\right)^{\frac{1}{p_s}}}{\dist(u_m,\mathcal{M})^2}\leq1.
		\end{equation}
	\end{lem}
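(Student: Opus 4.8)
The strategy is the classical Bianchi--Egnell second-order expansion, carried out near $\mathcal M$, with the coercivity of the Hessian supplied by the spectral inequality of Lemma~\ref{estim} (and ultimately by the non-degeneracy of $W$, Theorem~\ref{prondgr}). Throughout write $E(u)=\|u\|_{\dot{H}^s(\mathbb{R}^n)}^2-C_{HLS}N(u)^{1/p_s}$ with $N(u)=\int(|x|^{-\mu}\ast|u|^{p_s})|u|^{p_s}$, so that $E(u)/\dist(u,\mathcal M)^2$ is the quantity to be estimated. \emph{Reduction.} Since $\|tu\|_{\dot{H}^s}^2=t^2\|u\|_{\dot{H}^s}^2$, $N(tu)=|t|^{2p_s}N(u)$, and $\mathcal M$ is a cone invariant under translations, dilations $u\mapsto\lambda^{(n-2s)/2}u(\lambda\cdot)$ and $u\mapsto-u$, the quotient $E(u)/\dist(u,\mathcal M)^2$ is invariant under all these operations; hence I would first normalize $\|u_m\|_{\dot{H}^s}=\|W\|_{\dot{H}^s}$, where $W:=W[0,1]$. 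A standard argument --- two bubbles become asymptotically $\dot{H}^s$-orthogonal as their parameters separate, so a minimizing configuration for $\dist(u_m,\mathcal M)$ cannot run off to a degenerate bubble once $\dist(u_m,\mathcal M)$ is small while $\inf_m\|u_m\|_{\dot{H}^s}>0$ --- shows that $\dist(u_m,\mathcal M)$ is attained. Applying the dilation/translation carrying the optimal bubble to $W$, we may write $u_m=c_mW+\rho_m$ with
\begin{equation*}
\langle\rho_m,W\rangle_{\dot{H}^s}=\langle\rho_m,\partial_\lambda W\rangle_{\dot{H}^s}=\langle\rho_m,\partial_{\xi_j}W\rangle_{\dot{H}^s}=0\quad(1\le j\le n),\qquad \|\rho_m\|_{\dot{H}^s}=\dist(u_m,\mathcal M)=:d_m\to0 .
\end{equation*}
From $\|u_m\|_{\dot{H}^s}^2=c_m^2\|W\|_{\dot{H}^s}^2+d_m^2$ and the normalization, $|c_m|^2=1-d_m^2/\|W\|_{\dot{H}^s}^2$, so replacing $u_m$ by $-u_m$ if necessary, $c_m\to1$ with $|c_m-1|=O(d_m^2)$.

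\emph{Expansion.} Since $p_s\ge2$, $N$ is $C^2$ on $\dot{H}^s$ (by HLS and $\dot{H}^s\hookrightarrow L^{2_s^\ast}$), with $N'(u)[\phi]=2p_s\int(|x|^{-\mu}\ast|u|^{p_s})|u|^{p_s-2}u\,\phi$ and $N''(W)[\phi,\phi]=2p_s(p_s-1)\int(|x|^{-\mu}\ast W^{p_s})W^{p_s-2}\phi^2+2p_s^2\int(|x|^{-\mu}\ast(W^{p_s-1}\phi))W^{p_s-1}\phi$. Writing $u_m=W+w_m$ with $w_m=(c_m-1)W+\rho_m$, one has $\|w_m\|_{\dot{H}^s}^2=(c_m-1)^2\|W\|_{\dot{H}^s}^2+\|\rho_m\|_{\dot{H}^s}^2=\|\rho_m\|_{\dot{H}^s}^2(1+o(1))$; using the Euler--Lagrange identity $(-\Delta)^sW=(|x|^{-\mu}\ast W^{p_s})W^{p_s-1}$ (whence $N'(W)[\rho_m]=2p_s\langle W,\rho_m\rangle_{\dot{H}^s}=0$ and $N''(W)[W,\rho_m]=2p_s(2p_s-1)\langle W,\rho_m\rangle_{\dot{H}^s}=0$), the homogeneities $N'(W)[W]=2p_sN(W)$, $N''(W)[W,W]=2p_s(2p_s-1)N(W)$, and the extremality relations $\|W\|_{\dot{H}^s}^2=N(W)=C_{HLS}N(W)^{1/p_s}$ (so $C_{HLS}N(W)^{1/p_s-1}=1$), a direct Taylor expansion of $N(u_m)$ and of $t\mapsto t^{1/p_s}$ makes all the lower-order terms cancel and yields
\begin{equation*}
E(u_m)=\|\rho_m\|_{\dot{H}^s}^2-\mathcal B(\rho_m)+o(d_m^2),\qquad \mathcal B(\rho):=\tfrac1{2p_s}N''(W)[\rho,\rho],
\end{equation*}
the remainder being $o(d_m^2)$ because $c_m$ stays near $1$, $N''$ is continuous, and the cross terms produced by the $(c_m-1)W$ piece contribute only $O((c_m-1)^2)=O(d_m^4)$.

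\emph{Conclusion.} The orthogonality relations above are precisely the conditions \eqref{EP1} for the single bubble $\sigma=W$ (cf. the identity $\langle\varrho,W\rangle_{\dot{H}^s}=\int(|x|^{-\mu}\ast(W^{p_s-1}\varrho))W^{p_s}$), so Lemma~\ref{estim} with $\kappa=1$ gives $\mathcal B(\rho_m)\le\tau_0\|\rho_m\|_{\dot{H}^s}^2$ with $\tau_0=\tau_0(n,s,\mu)<1$; hence $E(u_m)\ge(1-\tau_0)d_m^2(1+o(1))$, which is \eqref{caomei1} with $A_1:=1-\tau_0>0$. Conversely $\mathcal B(\rho_m)\ge0$, since $(|x|^{-\mu}\ast W^{p_s})W^{p_s-2}\rho_m^2\ge0$ pointwise and $\int(|x|^{-\mu}\ast(W^{p_s-1}\rho_m))W^{p_s-1}\rho_m\ge0$ by positive-definiteness of the Riesz kernel $|x|^{-\mu}$; therefore $E(u_m)\le d_m^2(1+o(1))$, which is \eqref{caomei2}.

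I expect the first step to require the most care: checking that $\dist(u_m,\mathcal M)$ is attained with an \emph{exactly} orthogonal remainder, and controlling the Taylor remainder $o(\|\rho_m\|^2)$ uniformly as the base bubble varies. The genuinely nonlocal/fractional features enter only through already-established tools --- the convolution bound of Lemma~\ref{B4-1}, the HLS inequality, the spectral gap of Lemma~\ref{estim}, and Theorem~\ref{prondgr} --- so no new difficulty is anticipated there. One could also bypass Lemma~\ref{estim} by diagonalizing the compact self-adjoint operator $A$ on $\dot{H}^s$ defined by $\langle A\rho,\phi\rangle_{\dot{H}^s}=\tfrac1{2p_s}N''(W)[\rho,\phi]$: its largest eigenvalue $2p_s-1$ is simple with eigenfunction $W$ (a Perron--Frobenius argument), its next eigenvalue equals $1$ with eigenspace $\mathrm{span}\{\partial_\lambda W,\partial_{\xi_1}W,\dots,\partial_{\xi_n}W\}$ by Theorem~\ref{prondgr}, and the remaining eigenvalues are $<1$ --- which is exactly what Lemma~\ref{propep} and Lemma~\ref{estim} encode.
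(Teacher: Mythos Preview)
Your proposal is correct and follows essentially the same Bianchi--Egnell strategy as the paper: attain the distance, write $u_m=c_mW_m+d_mv_m$ with $v_m$ orthogonal to the tangent space of $\mathcal M$, expand the nonlocal energy to second order, and invoke the spectral gap (the paper uses $\tilde\lambda_{n+3}>p_s$ from Lemma~\ref{propep}, you use the equivalent Lemma~\ref{estim} with $\kappa=1$). The only notable difference is presentational: the paper expands $N(u_m)$ first and then $t\mapsto t^{1/p_s}$, leading to a four-case split according to whether $\mathcal P_{m,1}=\int(|x|^{-\mu}\ast W_m^{p_s-1}v_m)W_m^{p_s-1}v_m$ and $\mathcal P_{m,2}=\int(|x|^{-\mu}\ast W_m^{p_s})W_m^{p_s-2}v_m^2$ are $o(1)$ or bounded below, whereas your full Taylor expansion of $E$ around $W$ (exploiting $|c_m-1|=O(d_m^2)$) absorbs all of this into a single clean identity $E(u_m)=\|\rho_m\|_{\dot H^s}^2-\mathcal B(\rho_m)+o(d_m^2)$ and avoids the case analysis entirely.
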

	\begin{proof}
		Let $\dist(u_m,\mathcal{M}):=\inf_{ c\in\mathbb{R}, \lambda>0, \xi\in\mathbb{R}^n}\|u_m-cW[\xi,\lambda]\|_{\dot{H}^s(\mathbb{R}^n)}\rightarrow0$ as $m\rightarrow\infty$. Then for any $u_m\in\dot{H}^s(\mathbb{R}^n)$, $d_m$ can be attained by $(c_m,\lambda_m,\xi_m)\in\mathbb{R}\times\mathbb{R}^+\times\mathbb{R}^n$, that is
		\begin{equation*}
			d_m=\|u_m-c_mW[\xi_m,\lambda_m]\|_{\dot{H}^s(\mathbb{R}^n)}.
		\end{equation*}
		Form Lemma \ref{propep}, the tangential space at $(c_m,\lambda_m,\xi_m)$ is given by
		\begin{equation*}
			T_{c_m,\lambda_m,\xi_m}
			=\text{span}
			\left\{W[\xi_m,\lambda_m], \frac{\partial W[\xi_m,\lambda]}{\partial \lambda}\Big|_{\lambda=\lambda_m},\frac{\partial W[\xi,\lambda_m]}{\partial x_1}\Big|_{x=\xi_m},\ldots,\frac{\partial W[\xi,\lambda_m]}{\partial x_n}\Big|_{x=\xi_m}\right\}
		\end{equation*}
		and $u_m-c_mW[\xi_m,\lambda_m]$ is perpendicular to $T_{c_m,\lambda_m,\xi_m}$. In particular,
		\begin{equation*}		\int(-\Delta)^\frac{s}{2}W_m\cdot(-\Delta)^\frac{s}{2}(u_m-c_mW_m)=0.
		\end{equation*}
		In what follows, the notation $W_m$ will represent $W[\xi_m,\lambda_m]$ for simplicity. Therefore we may assume that $u_m=c_mW_m+d_mv_m$ and $v_m$ is perpendicular to $T_{c_m,\lambda_m,\xi_m}$ with $\|v_m\|_{\dot{H}^s(\mathbb{R}^n)}=1$. This imples that
		\begin{equation*}
			\|u_m\|_{\dot{H}^s(\mathbb{R}^n)}^2
			=c_m^2\|W_m\|_{\dot{H}^s(\mathbb{R}^n)}^2+d_m^2.
		\end{equation*}
		Since $p_s\geq2$ and the orthogonality from above, we get
		\begin{equation}\label{kuangquanshui}
			\begin{split}
				\int\big(|x|^{-\mu} \ast |u_m|^{p_s}\big)|u_m|^{p_s}
				= & c_m^{2\cdot p_s}\int\big(|x|^{-\mu} \ast W_m^{p_s}\big)W_m^{p_s}
				+ o(d_m^2)
				\\&
				+ p_s(p_s-1)c_m^{2(p_s-1)}d_m^2 \int\big(|x|^{-\mu} \ast W_m^{p_s}\big)W_m^{p_s-2}v_m^2    \\
				& + p_s^2c_m^{2(p_s-1)}d_m^2 \int\big(|x|^{-\mu} \ast W_m^{p_s-1}v_m\big)W_m^{p_s-1}v_m.
			\end{split}
		\end{equation}
		
		\par Denote
		\begin{equation*}
			\mathcal{P}_{m,1}
			:=\int\big(|x|^{-\mu} \ast W_m^{p_s-1}v_m\big)W_m^{p_s-1}v_m\hspace{4mm}\text{and}\hspace{4mm}\mathcal{P}_{m,2}
			:=\int\big(|x|^{-\mu} \ast W_m^{p_s}\big)W_m^{p_s-2}v_m^2.
		\end{equation*}
		In the following, the proof is divided into four cases.
		\par \textbf{Case 1 :} $\mathcal{P}_{m,1}=\mathcal{P}_{m,2}=o(1)$. By \eqref{kuangquanshui}, in this case we have
		\begin{equation*}
			\int\big(|x|^{-\mu} \ast |u_m|^{p_s}\big)|u_m|^{p_s}
			\leq c_m^{2\cdot p_s}\int_{\mathbb{R}^n}\big(|x|^{-\mu} \ast W_m^{p_s}\big)W_m^{p_s}
			+ o(d_m^2).
		\end{equation*}
		Thus it follows that
		\begin{equation*}
			\Big(\int\big(|x|^{-\mu} \ast |u_m|^{p_s}\big)|u_m|^{p_s}\Big)^\frac{1}{p_s}
			\leq \Big(c_m^{2\cdot p_s}\|W_m\|_{\dot{H}^s(\mathbb{R}^n)}^2+o(d_m^2)\Big)^\frac{1}{p_s}
			\leq c_m^2\|W_m\|_{\dot{H}^s(\mathbb{R}^n)}^\frac{2}{p_s}+o(d_m^2)
		\end{equation*}
		and
		\begin{equation*}
			\begin{split}
				\|u_m\|_{\dot{H}^s(\mathbb{R}^n)}^2
				-C_{HLS}\Big(\int\big(|x|^{-\mu} \ast |u_m|^{p_s}\big)|u_m|^{p_s}\Big)^\frac{1}{p_s}
				&\geq c_m^2\|W_m\|_{\dot{H}^s(\mathbb{R}^n)}^2+d_m^2-c_m^2C_{HLS}\|W_m\|_{\dot{H}^s(\mathbb{R}^n)}^\frac{2}{p_s}+o(d_m^2)     \\
				&\geq d_m^2+o(d_m^2),
			\end{split}
		\end{equation*}
		here we use the fact $C_{HLS}=\|W_m\|_{\dot{H}^s(\mathbb{R}^n)}^{2-\frac{2}{p_s}}$. Choosing $d_m$ small enough we have $1>C>0$ such that
		\begin{equation*}
			\|u_m\|_{\dot{H}^s(\mathbb{R}^n)}^2
			-C_{HLS}\Big(\int\big(|x|^{-\mu} \ast |u_m|^{p_s}\big)|u_m|^{p_s}\Big)^\frac{1}{p_s}
			\geq Cd_m^2.
		\end{equation*}
		This proves \eqref{caomei1}.
		
		\par \textbf{Case 2 :} $\mathcal{P}_{m,1}\geq \tilde{C}>0$, $\mathcal{P}_{m,2}\geq \tilde{C}>0$. By the HLS inequality, H\"older inequality and Sobolev inequality, we can obtain that
		\begin{equation*}
			\begin{split}
				\int\big(|x|^{-\mu} \ast W_m^{p_s}\big)W_m^{p_s-2}v_m^2
				&\leq C(n,\mu)\|W_m^{p_s-1}v_m\|_{L^\frac{2n}{2n-\mu}(\mathbb{R}^n)}^2  \\
				&\leq C(n,\mu)\|W_m\|_{L^{2_s^\ast}(\mathbb{R}^n)}^{2(p_s-1)}\|v_m\|_{\dot{H}^s(\mathbb{R}^n)}^2
				\leq\|v_m\|_{\dot{H}^s(\mathbb{R}^n)}^2
				\leq1
			\end{split}
		\end{equation*}
		and
		\begin{equation*}
			\int\big(|x|^{-\mu} \ast W_m^{p_s-1}v_m\big)W_m^{p_s-1}v_m
			\leq\|v_m\|_{\dot{H}^s(\mathbb{R}^n)}^2
			\leq1.
		\end{equation*}
		Furthermore, the definition of $\tilde{\lambda}_{n+3}$ implies that
		\begin{equation*}
			1
			\geq\tilde{\lambda}_{n+3}\int\big(|x|^{-\mu} \ast W_m^{p_s-1}v_m\big)W_m^{p_s-1}v_m
			+(\tilde{\lambda}_{n+3}-1)\int\big(|x|^{-\mu} \ast W_m^{p_s}\big)W_m^{p_s-2}v_m^2.
		\end{equation*}
		Then from \eqref{kuangquanshui} and $\tilde{\lambda}_{n+3}>p_s$ we can derive
		\begin{equation*}
			\begin{split}
				\int\big(|x|^{-\mu} \ast |u_m|^{p_s}\big)|u_m|^{p_s}
				\leq& c_m^{2\cdot p_s}\int\big(|x|^{-\mu} \ast W_m^{p_s}\big)W_m^{p_s}
				+ o(d_m^2)      \\
				&+p_sc_m^{2(p_s-1)}d_m^2(p_s-\tilde{\lambda}_{n+3})\Big[\int\big(|x|^{-\mu} \ast W_m^{p_s}\big)W_m^{p_s-2}v_m^2        \\
				&+\int\big(|x|^{-\mu} \ast W_m^{p_s-1}v_m\big)W_m^{p_s-1}v_m \Big]          \\
				&+p_sc_m^{2(p_s-1)}d_m^2\Big[\tilde{\lambda}_{n+3}\int\big(|x|^{-\mu} \ast W_m^{p_s-1}v_m\big)W_m^{p_s-1}v_m        \\
				&+(\tilde{\lambda}_{n+3}-1)\int\big(|x|^{-\mu} \ast W_m^{p_s}\big)W_m^{p_s-2}v_m^2\Big]     \\
				\leq&c_m^{2\cdot p_s}\|W_m\|_{\dot{H}^s(\mathbb{R}^n)}^2				+p_sc_m^{2(p_s-1)}d_m^2\big[2\tilde{C}(p_s-\tilde{\lambda}_{n+3})+1\big]+ o(d_m^2).
			\end{split}
		\end{equation*}
		Thus we have
		\begin{equation*}
			\begin{split}
				\Big(\int\big(|x|^{-\mu} \ast |u_m|^{p_s}\big)|u_m|^{p_s}\Big)^\frac{1}{p_s}
				&\leq c_m^2\Big(\|W_m\|_{\dot{H}^s(\mathbb{R}^n)}^2+p_sc_m^{-2}d_m^2\big[2\tilde{C}(p_s-\tilde{\lambda}_{n+3})+1\big]\Big)^\frac{1}{p_s}
				+o(d_m^2)      \\
				&\leq c_m^2\|W_m\|_{\dot{H}^s(\mathbb{R}^n)}^\frac{2}{p_s}
				+d_m^2[2\tilde{C}(p_s-\tilde{\lambda}_{n+3})+1\big]\|W_m\|_{\dot{H}^s(\mathbb{R}^n)}^{\frac{2}{p_s}-2}
				+o(d_m^2),
			\end{split}
		\end{equation*}
		which leads to
		\begin{equation*}
			\begin{split}
				&\|u_m\|_{\dot{H}^s(\mathbb{R}^n)}^2
				-C_{HLS}\Big(\int\big(|x|^{-\mu} \ast |u_m|^{p_s}\big)|u_m|^{p_s}\Big)^\frac{1}{p_s}        \\
				\geq&c_m^2\|W_m\|_{\dot{H}^s(\mathbb{R}^n)}^2+d_m^2
				-C_{HLS}\Bigl\{c_m^2\|W_m\|_{\dot{H}^s(\mathbb{R}^n)}^\frac{2}{p_s}
				+d_m^2[2\tilde{C}(p_s-\tilde{\lambda}_{n+3})+1\big]\|W_m\|_{\dot{H}^s(\mathbb{R}^n)}^{\frac{2}{p_s}-2}
				+o(d_m^2)\Bigr\}     \\
				=&d_m^2\Big(1-[2\tilde{C}(p_s-\tilde{\lambda}_{n+3})+1\big]C_{HLS}\|W_m\|_{\dot{H}^s(\mathbb{R}^n)}^{\frac{2}{p_s}-2}\Big)
				+c_m^2\|W_m\|_{\dot{H}^s(\mathbb{R}^n)}^2\Big(1-C_{HLS}\|W_m\|_{\dot{H}^s(\mathbb{R}^n)}^{\frac{2}{p_s}-2}\Big)    \\
				=&2\tilde{C}(\tilde{\lambda}_{n+3}-p_s)d_m^2+o(d_m^2).
			\end{split}
		\end{equation*}
		By choosing $\tilde{C}$ small enough such that $2\tilde{C}(\tilde{\lambda}_{n+3}-p_s)<1$, we can conclude \eqref{caomei1} for sufficiently small $d_m$.
		
		\par \textbf{Case 3 :} $\mathcal{P}_{m,1}=o(1)$, $\mathcal{P}_{m,2}\geq C>0$. By applying the same reasoning as in Case 2, we obtain \eqref{caomei1}.
		
		\par \textbf{Case 4 :} $\mathcal{P}_{m,1}\geq C>0$, $\mathcal{P}_{m,2}=o(1)$. By applying the same reasoning as in Case 2, we obtain \eqref{caomei1}.\\
		Combining all cases, we have proved \eqref{caomei1}.
		
		\par Next we show that \eqref{caomei2} holds true. Thanks to \eqref{kuangquanshui}, we know
		\begin{equation*}
			\bigg(\int\Big(|x|^{-\mu} \ast |u_m|^{p_s}\Big)|u_m|^{p_s} dx\bigg)^\frac{1}{p_s}
			\geq c_m^2\|W_m\|_{\dot{H}^s(\mathbb{R}^n)}^\frac{2}{p_s}
			+ o(d_m^2).
		\end{equation*}
		Therefore,
		\begin{equation*}
			\begin{split}
				&\|u_m\|_{\dot{H}^s(\mathbb{R}^n)}^2
				-C_{HLS}\bigg(\int\Big(|x|^{-\mu} \ast |u_m|^{p_s}\Big)|u_m|^{p_s} dx\bigg)^\frac{1}{p_s}     \\
				\leq& d_m^2+o(d_m^2)				+c_m^2\|W_m\|_{\dot{H}^s(\mathbb{R}^n)}^2\bigg(1-C_{HLS}\|W_m\|_{\dot{H}^s(\mathbb{R}^n)}^{\frac{2}{p_s}-2}\bigg)
				=\big(1+o(1)\big)d_m^2,
			\end{split}
		\end{equation*}
		which proves \eqref{caomei2}.
	\end{proof}
	
	\begin{proof}[Proof of Theorem \ref{remainder terms}]
		By contradiction, we may assume that there exists a sequence $\{u_m\}\subset\dot{H}^s(\mathbb{R}^n)\setminus\mathcal{M}$ such that
		\begin{equation}\label{chelizi1}
			\frac{\int|(-\Delta)^{\frac{s}{2}} u_m|^2 dx
				-C_{HLS}\left(\int(|x|^{-\mu} \ast|u_m|^{p_s})|u_m|^{p_s}dx\right)^{\frac{1}{p_s}}}{\dist(u_m,\mathcal{M})^2}\rightarrow+\infty
		\end{equation}
		or
		\begin{equation}\label{chelizi2}
			\frac{\int|(-\Delta)^{\frac{s}{2}} u_m|^2 dx
				-C_{HLS}\left(\int(|x|^{-\mu} \ast|u_m|^{p_s})|u_m|^{p_s}dx\right)^{\frac{1}{p_s}}}{\dist(u_m,\mathcal{M})^2}\rightarrow0.
		\end{equation}
		By homogeneity, we can assume that $\|u_m\|_{\dot{H}^s(\mathbb{R}^n)}=1$. Since
		\begin{equation*}
			\dist(u_m,\mathcal{M})
			=\inf_{ c\in\mathbb{R}, \lambda>0, \xi\in\mathbb{R}^n}\|u_m-cW[\xi,\lambda]\|_{\dot{H}^s(\mathbb{R}^n)}
			\leq\|u_m\|_{\dot{H}^s(\mathbb{R}^n)}=1,
		\end{equation*}
		there exists a subsequence such that $\dist(u_m,\mathcal{M})\rightarrow\varpi\in[0,1]$.
		
		\par If \eqref{chelizi1} holds, it necessarily follows that $\varpi=0$ which contradicts Lemma \ref{baowenbei}. Moreover, if \eqref{chelizi2} holds, it also leads to a contradiction with Lemma \ref{baowenbei} when $\varpi=0$. Consequently, the only remaining possibility is that \eqref{chelizi2} holds and $0<\varpi\leq1$, that is
		\begin{equation*}
			\dist(u_m,\mathcal{M})\rightarrow\varpi>0\hspace{4mm}\text{and}\hspace{4mm}\|u_m\|_{\dot{H}^s(\mathbb{R}^n)}^2
			-C_{HLS}\bigg(\int\Big(|x|^{-\mu} \ast |u_m|^{p_s}\Big)|u_m|^{p_s} dx\bigg)^\frac{1}{p_s}\rightarrow0.
		\end{equation*}
		Then we must have
		\begin{equation*}
			\bigg(\int_{\mathbb{R}^n}\Big(|x|^{-\mu} \ast |u_m|^{p_s}\Big)|u_m|^{p_s} dx\bigg)^\frac{1}{p_s}\rightarrow\frac{1}{C_{HLS}}\hspace{4mm}\text{and}\hspace{4mm}\|u_m\|_{\dot{H}^s(\mathbb{R}^n)}=1.
		\end{equation*}
		By Lions's concentration-compactness principle \cite{Lion85-1,Lion85-2}, there exist $\xi_m\in\mathbb{R}^n$ and $\lambda_m\in\mathbb{R}^+$ such that
		\begin{equation*}
			\lambda_m^\frac{n-2s}{2}u_m\big(\lambda_m(x-\xi_m)\big)\rightarrow W_0\hspace{4mm}\text{in}\hspace{2mm}\dot{H}^s(\mathbb{R}^n),\hspace{4mm}\text{as}\hspace{2mm}m\rightarrow\infty,
		\end{equation*}
		for some $W_0\in\mathcal{M}$, and
		\begin{equation*}
			\dist(u_m,\mathcal{M})
			=\dist\bigg(	\lambda_m^\frac{n-2s}{2}u_m\big(\lambda_m(x-\xi_m)\big),\mathcal{M}\bigg)
			\rightarrow0\hspace{4mm}\text{as}\hspace{2mm}m\rightarrow\infty.
		\end{equation*}
		This is impossible.
	\end{proof}
	
	\subsection{Proof of Theorem \ref{thmprtb}}
	Theorem \ref{thmprtb} follows immediately from the Proposition 4.1 in \cite{CFW13}.
	\begin{Prop}\label{shanzhashu}
		There exists a constant $C_0$ depending only on $n$ and $s\in(0,\frac{n}{2})$ such that
		\begin{equation*}
			\|u\|_{L_w^\frac{n}{n-2s}(\Omega)}
			\leq C_0|\Omega|^\frac{1}{2_s^\ast}d(u,\mathcal{M})
		\end{equation*}
		for all subdomains $\Omega\subset\mathbb{R}^n$ with $|\Omega|<\infty$ and all $u\in\dot{H}^s(\Omega)$.
	\end{Prop}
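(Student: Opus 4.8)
The plan is to follow the strategy of \cite[Proposition 4.1]{CFW13}, reducing the weak-norm estimate to the triangle inequality in the norm $\|\cdot\|_{L^q_w}$ with $q=\tfrac{n}{n-2s}$, combined with the fractional Sobolev embedding $\dot{H}^s(\mathbb{R}^n)\hookrightarrow L^{2_s^\ast}(\mathbb{R}^n)$ from \eqref{bsic} and the fact that a function $u\in\dot{H}^s(\Omega)$ vanishes on $\mathbb{R}^n\setminus\Omega$. First I would fix $u\in\dot{H}^s(\Omega)$ with $d(u,\mathcal{M})>0$ (the case $d(u,\mathcal{M})=0$ forces $u\equiv0$ since bubbles have full support) and choose a near-optimal decomposition $u=cW[\xi,\lambda]+\varphi$ with $\|\varphi\|_{\dot{H}^s(\mathbb{R}^n)}\le 2\,d(u,\mathcal{M})$. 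Since the claimed inequality is invariant under $u\mapsto\lambda^{(n-2s)/2}u(\lambda(\cdot-\xi))$ (both sides pick up the same power of $\lambda$, and $|\Omega|\mapsto\lambda^{-n}|\Omega|$), I may assume $\xi=0$, $\lambda=1$ and write $W=W[0,1]$. As $|D|^{-(q-1)/q}=|D|^{-2s/n}$, the averaged definition \eqref{defwn} is manifestly a norm, so
$$\|u\|_{L^q_w(\Omega)}\le |c|\,\|W\|_{L^q_w(\Omega)}+\|\varphi\|_{L^q_w(\Omega)}.$$

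For the remainder term I would apply H\"older on each $D\subset\Omega$: since $-\tfrac{2s}{n}+1-\tfrac{1}{2_s^\ast}=\tfrac{1}{2_s^\ast}\ge0$ and $D\subset\Omega$,
$$|D|^{-\frac{2s}{n}}\int_D|\varphi|\le |D|^{\frac{1}{2_s^\ast}}\|\varphi\|_{L^{2_s^\ast}(\mathbb{R}^n)}\le |\Omega|^{\frac{1}{2_s^\ast}}\|\varphi\|_{L^{2_s^\ast}(\mathbb{R}^n)},$$
so by \eqref{bsic}, $\|\varphi\|_{L^q_w(\Omega)}\le S^{-1/2}|\Omega|^{1/2_s^\ast}\|\varphi\|_{\dot{H}^s(\mathbb{R}^n)}\le 2S^{-1/2}|\Omega|^{1/2_s^\ast}d(u,\mathcal{M})$.

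The bubble term is the core of the argument. Because $u\equiv0$ on $\mathbb{R}^n\setminus\Omega$ one has $\varphi=-cW$ there, hence $|c|\,\|W\|_{L^{2_s^\ast}(\Omega^c)}=\|\varphi\|_{L^{2_s^\ast}(\Omega^c)}\le S^{-1/2}\|\varphi\|_{\dot{H}^s(\mathbb{R}^n)}\le 2S^{-1/2}d(u,\mathcal{M})$, so it suffices to prove the purely geometric bound $\|W\|_{L^q_w(\Omega)}\le C|\Omega|^{1/2_s^\ast}\|W\|_{L^{2_s^\ast}(\Omega^c)}$. I would exploit that $W$ is radially decreasing: by the Hardy--Littlewood rearrangement inequality, for $D\subset\Omega$ with $\omega_n r_D^n=|D|\le|\Omega|=:\omega_n R^n$ one gets $\int_D W\le\int_{B(0,r_D)}W$, and a direct computation shows that $r\mapsto r^{-2s}\int_{B(0,r)}W$ is $\approx r^{n-2s}$ for $r\le1$ and uniformly bounded for $r\ge1$, whence $\|W\|_{L^q_w(\Omega)}\lesssim\min\{|\Omega|^{2/2_s^\ast},1\}$; dually, $\int_{\Omega^c}W^{2_s^\ast}\ge\int_{\{|x|>R\}}W^{2_s^\ast}$ yields $\|W\|_{L^{2_s^\ast}(\Omega^c)}\gtrsim\min\{1,|\Omega|^{-1/2_s^\ast}\}$. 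Dividing and splitting into the regimes $|\Omega|\le\omega_n$ and $|\Omega|\ge\omega_n$ gives the geometric bound, and combining the two terms yields $\|u\|_{L^q_w(\Omega)}\le C_0|\Omega|^{1/2_s^\ast}d(u,\mathcal{M})$.

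The main obstacle — indeed the only genuinely delicate step — is this last geometric comparison of $W$ on $\Omega$ against $W$ on $\Omega^c$: one must use the hypothesis $u\in\dot{H}^s(\Omega)$ to transfer control of the coefficient $c$ through the complement, and then carefully track the mismatch between the exponents $q=\tfrac{n}{n-2s}$ and $2_s^\ast=\tfrac{2n}{n-2s}$ across the two scaling regimes of $|\Omega|$. Everything else reduces to H\"older's inequality, the Sobolev inequality \eqref{bsic}, and rearrangement; note that, unlike the argument in \cite{DSB213} for $s=1$, this route avoids Lions' concentration--compactness and relies only on the already-established remainder estimate of Theorem \ref{remainder terms} for the finiteness of the quantities involved.
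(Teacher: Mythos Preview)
Your argument is correct and is precisely the strategy of \cite[Proposition~4.1]{CFW13}, which is all the paper does here: it states the proposition and cites that reference without reproducing the details. Your reconstruction---decompose $u=cW+\varphi$ near-optimally, control $\|\varphi\|_{L^q_w(\Omega)}$ by H\"older and Sobolev, control $|c|$ through $\varphi=-cW$ on $\Omega^c$, and finish with the rearrangement-based geometric comparison $\|W\|_{L^q_w(\Omega)}\lesssim|\Omega|^{1/2_s^\ast}\|W\|_{L^{2_s^\ast}(\Omega^c)}$---matches that proof line for line, and your splitting into the regimes $R\lesssim1$ and $R\gtrsim1$ is exactly how the geometric bound is verified there.

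One small inaccuracy in your closing remark: the proposition does not actually rely on Theorem~\ref{remainder terms} at all; it is a self-contained inequality relating $\|u\|_{L^q_w(\Omega)}$ directly to $\dist(u,\mathcal{M})$, and Theorem~\ref{remainder terms} only enters afterwards, in the deduction of Theorem~\ref{thmprtb}. This does not affect the validity of your proof.
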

	\begin{proof}[Proof of Theorem \ref{thmprtb}]
		The conclusion now directly follows  from the combination of Theorem \ref{remainder terms} and Proposition \ref{shanzhashu}.
	\end{proof}
	
	\section{A nonlocal version of profile decompositions}\label{section4}
	Building upon the renowned profile decomposition results of Struwe in \cite{Struwe-1984} for the equation (\ref{bec}) with $s=1$, we draw inspiration and leverage these findings to establish a profile decomposition for nonnegative solutions of the nonlocal Hartree-type equation \eqref{ele-1.1}.
	\begin{proof}[Proof of Theorem \ref{emm}]
		Since
		\begin{equation}\label{UM}
			(\kappa-\frac{1}{2})C_{HLS}^{\frac{2n-\mu}{n+2s-\mu}}\leq\big\|u_m\big\|_{\dot{H}^{s}(\mathbb{R}^n)}^2\leq(\kappa+\frac{1}{2})C_{HLS}^{\frac{2n-\mu}{n+2s-\mu}},
		\end{equation}
		the sequence $\{u_m\}$ is bounded in $\dot{H}^{s}(\mathbb{R}^n)$.
		Then there exist functions $\{\psi_j\}_{j\in\mathbb{N}}\subset\dot{H}^{s}(\mathbb{R}^n)$, sequences $\{h_m^{(j)}\}_{j\in\mathbb{N}}\subset(0,\infty)$ and points $\{x_m^{(j)}\}_{j\in\mathbb{N}}\subset\mathbb{R}^n$, such that for a renumbered subsequence $\{u_m\}$, there holds
		\begin{equation}\label{Q1}
			\Big|\log\big(\frac{h_m^{(i)}}{h_m^{(j)}}\big)\Big|+\Big|\frac{x_{m}^{(i)}-x_{m}^{(j)}}{h_m^{(i)}}\Big|\rightarrow\infty\hspace{6mm}\mbox{as}\hspace{2mm}m\rightarrow\infty,\hspace{2mm}\mbox{for all}\hspace{2mm}i\neq j,
		\end{equation}
		\begin{equation}\label{Q2}
			u_{m}=\sum_{j=1}^l(h_{m}^{(j)})^{-\frac{n-2s}{2}}\psi_j\big(\frac{x-x_{m}^{(j)}}{h_{m}^{(j)}}\big)+r_{m}^{(l)},
		\end{equation}
		where
		\begin{equation}\label{Q3}
			\begin{split}
				&\limsup \limits_{m\rightarrow\infty}\big\|r_{m}^{(l)}\big\|_{L^{2_{s}^{\ast}}(\mathbb{R}^n)}\rightarrow0\hspace{6mm}\mbox{as}\hspace{2mm}l\rightarrow\infty,\\&
				\big\|u_{m}\big\|_{\dot{H}^s(\mathbb{R}^n)}^{2}=\sum_{j=1}^l\big\|\psi_{j}\big\|_{\dot{H}^s(\mathbb{R}^n)}^{2}+\big\|r_{m}^{(l)}\big\|_{\dot{H}^s(\mathbb{R}^n)}^{2} \hspace{6mm}\mbox{as}\hspace{2mm}l\rightarrow\infty,
			\end{split}
		\end{equation}
		(see e.g. \cite[Theorem 1.1]{GGG}).
		Due to $\|u_m\|_{\dot{H}^{s}(\mathbb{R}^n)}$ is bounded from below by \eqref{UM}, then we may assume that, up to suitable rescaling, $\psi_1\not\equiv0$. By the definition of profile, for fixed $j\in J$ (where set of indices $J\subset\mathbb{N}$) there exist sequences $\{h_m^{(j)}\}_{j\in\mathbb{N}}\subset(0,\infty)$ and $\{x_m^{(j)}\}_{j\in\mathbb{N}}\subset\mathbb{R}^n$ such that $u_m(h_m^{(j)}\cdot+x_m^{(j)})\rightharpoonup \psi_j(\cdot)$ in $\dot{H}^{s}(\mathbb{R}^n)$ as $m\rightarrow\infty$. Thus, $\psi_1$ can be regarded as the profile corresponding to $h_m^{(1)}=1$ and $x_m^{(1)}=0$.
		
		Next, we are going to prove that if the convergence is not strong then $u_m$ contains further profiles (except possibly $\psi_1$). We prove this result by contradiction. Assuming that the statement is false and following the same arguments as those in \cite[Proposition 1]{PP14}, we obtain $u_m$ converges strongly to $\psi_1$ in $L^{2_s^{\ast}}(\mathbb{R}^n)$. Moreover, in view of weak limit and the fact
		\begin{equation}\label{Wm}
			\Big\|(-\Delta)^{s}u_m-\left(|x|^{-\mu}\ast |u_m|^{p_{s}}\right)|u_m|^{p_{s}-2}u_m\Big\|_{\dot{H}^{-s}(\mathbb{R}^n)}\rightarrow0\quad \mbox{as}\quad m\rightarrow\infty,
		\end{equation}
		some elementary estimates give us that
		\begin{equation*}
			\begin{split}
				\big\|u_m-\psi_1\big\|_{\dot{H}^s(\mathbb{R}^n)}^2&=\big\langle(-\Delta)^s u_m,~u_m-\psi_1\big\rangle-\big\langle(-\Delta)^s\psi_1,~u_m-\psi_1\big\rangle\\&
				=\big\langle(-\Delta)^{s}u_m-\left(|x|^{-\mu}\ast |u_m|^{p_{s}}\right)|u_m|^{p_{s}-2}u_m,~u_m-\psi_1\big\rangle\\&-\big\langle(-\Delta)^{s}\psi_1-\left(|x|^{-\mu}\ast |\psi_1|^{p_{s}}\right)|\psi_1|^{p_{s}-2}\psi_1,~u_m-\psi_1\big\rangle\\&
				+\int\big[\left(|x|^{-\mu}\ast |u_m|^{p_{s}}\right)|u_m|^{p_{s}-2}u_m-\left(|x|^{-\mu}\ast |\psi_1|^{p_{s}}\right)|\psi_1|^{p_{s}-2}\psi_1\big]\big(u_m-\psi_1\big)=o(1),
			\end{split}
		\end{equation*}
		which is a contradiction. Here $\langle u,v\rangle$ denotes the dual bracket between $\dot{H}^s(\mathbb{R}^n)$ and $\dot{H}^{-s}(\mathbb{R}^n)$ for any $v,u\in\dot{H}^s(\mathbb{R}^n)$.
		
		Next, we claim that there holds
		\begin{equation}\label{FJ}
			(-\Delta)^s \psi_j-(|x|^{-\mu}\ast |\psi_j|^{p_s})|\psi_j|^{p_s-2}\psi_j=0,\quad \mbox{for every}~~\psi_j\in \dot{H}^s(\mathbb{R}^n),
		\end{equation}
		in the weak sense. Indeed, for $\phi\in \dot{H}^{s}(\mathbb{R}^n)$,
		an easy change of variables with the test function $\phi_m^{(j)}:=(h_{m}^{(j)})^{-\frac{n-2s}{2}}\phi\big((x-x_m^{(j)})/h_{m}^{(j)}\big)$, together with the asymptotic orthogonality of the scaled
		profiles \eqref{Q1} and \eqref{Wm}, the scaling invariance of $\dot{H}^s(\mathbb{R}^n)$ norm and a direct computation, imply that
		\begin{equation*}
			\begin{split}
				o(1)=&\big\langle(-\Delta)^s u_m-(|x|^{-\mu}\ast |u_m|^{p_s})|u_m|^{p_s-2}u_m, \phi_m^{(j)}\big\rangle\\=&
				\big\langle(-\Delta)^s \Big((h_{m}^{(j)})^{-\frac{n-2s}{2}}\psi_j(\frac{x-x_k^{(j)}}{h_m^{(j)}})\Big)\\&-(|x|^{-\mu}\ast \big|(h_{m}^{(j)})^{-\frac{n-2s}{2}}\psi_j(\frac{x-x_k^{(j)}}{h_m^{(j)}})\big|^{p_s})\big|(h_{m}^{(j)})^{-\frac{n-2s}{2}}\psi_j(\frac{x-x_k^{(j)}}{h_m^{(j)}})\big|^{p_s-2}(h_{m}^{(j)})^{-\frac{n-2s}{2}}\psi_j(\frac{x-x_k^{(j)}}{h_m^{(j)}}),~\phi_m^{(j)}\big\rangle+o(1)\\
				=&\big\langle(-\Delta)^s \psi_j-(|x|^{-\mu}\ast |\psi_j|^{p_s})|\psi_j|^{p_s-2}\psi_j,~\phi\big\rangle+o(1).
			\end{split}
		\end{equation*}
		Thus, one has $\big\langle(-\Delta)^s \psi_j-(|x|^{-\mu}\ast |\psi_j|^{p_s})|\psi_j|^{p_s-2}\psi_j,~\phi\big\rangle=0$ for every $\phi\in \dot{H}^{s}(\mathbb{R}^n)$ and the claim follows.
		Furthermore, by the nonlocal Sobolev inequality \eqref{Prm} and the fact $\|\psi_j\|_{\dot{H}^s(\mathbb{R}^n)}^2=\int(|x|^{-\mu} \ast|\psi_j|^{p_s})|\psi_j|^{p_s}dx$, we obtain
		\begin{equation*}
			\begin{split}		
				C_{HLS}\leq\frac{\|\psi_j\|_{\dot{H}^s(\mathbb{R}^n)}^2}{\left(\int(|x|^{-\mu} \ast|\psi_j|^{p_s})|\psi_j|^{p_s}dx\right)^{\frac{1}{p_s}}}=\big\|\psi_j\big\|_{\dot{H}^s(\mathbb{R}^n)}^{2(1-\frac{1}{p_s})}, \quad \mbox{i.e.}~~\big\|\psi_j\big\|_{\dot{H}^s(\mathbb{R}^n)}^{2}\geq C_{HLS}^{\frac{2n-\mu}{n+2s-\mu}}.
			\end{split}
		\end{equation*}
		As a consequence, the profiles $\psi_j$ are in finite number $l=\kappa$ by \eqref{UM} and \eqref{Q3}. It follows from \eqref{Q2}-\eqref{Q3} that
		\begin{equation}\label{Q22}
			\big\|u_{m}-\sum_{j=1}^{\kappa}(h_{m}^{(j)})^{-\frac{n-2s}{2}}\psi_j\big(\frac{x-x_{m}^{(j)}}{h_{m}^{(j)}}\big)\big\|_{L^{2_{s}^{\ast}}(\mathbb{R}^n)}\rightarrow0\hspace{6mm}\mbox{as}\hspace{2mm}m\rightarrow\infty.
		\end{equation}
		By the nonnegativity of functions $u_m$, one can easily check that $\psi_j\geq0$. Since $W$ is the unique nonnegative solution of \eqref{ele-1.1} in $\dot{H}^s(\mathbb{R}^n)$ for $s\in(0,\frac{n}{2})$, then we have $\psi_j=W[\xi_j^{(m)},\lambda_j^{(m)}]$ for $\lambda_j^{(m)}\in\mathbb{R}^{+}$ and $\xi_j^{(m)}\in\mathbb{R}^n$.
		
		Finally, we are going to show that the sequence $\{r_m^{(\kappa)}\}$ converges strongly to $0$ in $\dot{H}^s(\mathbb{R}^n)$. Introducing $v_m=\sum_{j=1}^{\kappa}(h_{m}^{(j)})^{-\frac{n-2s}{2}}\psi_j\big((x-x_{m}^{(j)})/h_{m}^{(j)}\big)$, then it follows from \eqref{Wm} and \eqref{Q22} that
		\begin{equation*}
			\begin{split}
				\big\|u_{m}-v_m\big\|_{\dot{H}^{s}(\mathbb{R}^n)}^2
				=&\big\langle(-\Delta)^su_{m},~u_{m}-v_m\big\rangle-\big\langle(-\Delta)^sv_m,~u_{m}-v_m\big\rangle\\
				=&\big\langle(|x|^{-\mu}\ast u_{m}^{p_s})u_{m}^{p_s-1},~u_{m}-v_m\big\rangle-\big\langle(|x|^{-\mu}\ast v_{m}^{p_s})v_{m}^{p_s-1},~u_{m}-v_m\big\rangle  \\&+O\Big(\big\|(-\Delta)^{s}u_m-\left(|x|^{-\mu}\ast u_m^{p_{s}}\right)u_m^{p_{s}-1}\big\|_{\dot{H}^{-s}(\mathbb{R}^n)}\big\|u_{m}-v_m\big\|_{\dot{H}^{s}(\mathbb{R}^n)}\Big)
				\\&+O\Big(\big\|(-\Delta)^{s}v_m-\left(|x|^{-\mu}\ast v_m^{p_{s}}\right)v_m^{p_{s}-1}\big\|_{\dot{H}^{-s}(\mathbb{R}^n)}\big\|u_{m}-v_m\big\|_{\dot{H}^{s}(\mathbb{R}^n)}\Big)\\
				=&\big\langle(|x|^{-\mu}\ast u_{m}^{p_s})u_{m}^{p_s-1},~u_{m}-v_m\big\rangle-\big\langle(|x|^{-\mu}\ast v_{m}^{p_s})v_{m}^{p_s-1},~u_{m}-v_m\big\rangle+o(1)=o(1).
			\end{split}
		\end{equation*}
		Here we also used the fact that the boundedness of sequences $u_m$ and $v_m$ in $L^{2_{s}^{\ast}}(\mathbb{R}^n)$ and some elementary inequalities. This concludes the proof.
	\end{proof}

	\section{The existence of first approximation for $n=6s,~\mu\in(0,4s)$ or $n\neq6s,~\mu\in(0,4s]$}	\label{sangshen}
	The proof of Theorem \ref{Figalli} follows by adapting the strategy outlined in \cite{DSW21,C-K-L-24,DHP}. In order to establish the existence of first approximation, we divide the proof into two cases. In this section, we consider the case when $n=6s,\mu\in(0,4s)$ or $n\neq6s,\mu\in(0,4s]$. The remaining case, $n=6s$ and $\mu=4s$, will be addressed in the next section. Firstly, let the error between $u$ and the best approximation $\sigma=\sum_{i=1}^{\kappa}W_i$ denoted by $\varrho$, i.e. $u=\sigma+\varrho.$ We begin from the following decomposition:
	\begin{equation}\label{u-0}
		(-\Delta)^s \phi-I_{n,\mu,s}[\sigma,\phi]-g-N(\phi)-(-\Delta)^s u+\big(|x|^{-\mu}\ast |u|^{p_s}\big)|u|^{p_s-2}u=0,
	\end{equation}
	where
	\begin{equation}\label{I-FAI-1}
		I_{n,\mu,s}[\sigma,\phi]:=p_s\Big(|x|^{-\mu}\ast \sigma^{p_s-1}\phi\Big)
		\sigma^{p_s-1}+(p_s-1)\Big(|x|^{-\mu}\ast\sigma^{p_s}\Big)
		\sigma^{p_s-2}\phi,
	\end{equation}
	\begin{equation}\label{u-1}
		\begin{split}
			g:=\Big(|x|^{-\mu}\ast\sigma^{p_s}\Big)\sigma^{p_s-1}-\sum_{i=1}^{\kappa}\Big(|x|^{-\mu}\ast W_{i}^{p_s}\Big)W_{i}^{p_s-1},
		\end{split}
	\end{equation}
	\begin{equation}\label{u-2}
		\begin{split}
			N(\phi)
			&:=\Big(|x|^{-\mu}\ast(\sigma+\phi)^{p_s}\Big)(\sigma+\phi)^{p_s-1}
			-\Big(|x|^{-\mu}\ast\sigma^{p_s}\Big)\sigma^{p_s-1}\\&
			~~~-p_s
			\Big(|x|^{-\mu}\ast\sigma^{p_s-1}\phi\Big)
			\sigma^{p_s-1}-(p_s-1)
			\Big(|x|^{-\mu}\ast\sigma^{p_s}\Big)
			\sigma^{p_s-2}\phi.
		\end{split}
	\end{equation}
	It is noticing that \eqref{w-tittle} implies $\|\varrho\|_{\dot{H}^{s}(\mathbb{R}^n)}\leq\delta$. We further decompose $\varrho=\varrho_0+\varrho_1$ and then prove the existence of first approximation $\varrho_0$, which solves the following system
	\begin{equation*}
		\left\{\begin{array}{l}
			\displaystyle (-\Delta)^s \varrho_0-\Big[\Big(|x|^{-\mu}\ast(\sigma+\varrho_0)^{p_s}\Big)(\sigma+\varrho_0)^{p_s-1}
			-\Big(|x|^{-\mu}\ast\sigma^{p_s}\Big)\sigma^{p_s-1}\Big]\\ =\Big(|x|^{-\mu}\ast\sigma^{p_s}\Big)\sigma^{p_s-1}-\sum_{i=1}^{\kappa}\Big(|x|^{-\mu}\ast W_{i}^{p_s}\Big)W_{i}^{p_s-1}+	\sum_{i=1}^{\kappa}\sum_{a=1}^{n+1}c_{a}^{i}I_{n,\mu,s}[W_{i},\mathcal{Z}^{a}_i]\hspace{4mm}\mbox{in}\hspace{2mm} \mathbb{R}^n,\\
			\varrho_0\in \dot{H}^{s}(\mathbb{R}^n), \hspace{2mm}c_{a}^{1},\cdots,c_{a}^{n+1}\in\mathbb{R},\\
			\displaystyle 	\int I_{n,\mu,s}[W_{i},\mathcal{Z}^{a}_i]\varrho_0=0,\hspace{4mm}i=1,\cdots, \kappa; ~a=1,\cdots,n+1,
		\end{array}
		\right.
	\end{equation*}
	where $\{c_a^i\}$ is a family of scalars and $\mathcal{Z}_i^a$ are the rescaled derivative of $W[\xi_i,\lambda_i]$ defined as follows:
	\begin{equation}\label{qta}
		\begin{split}
			&\mathcal{Z}_i^a=\frac{1}{\lambda_i}\frac{\partial W[\xi,\lambda_i]}{\partial \xi^{a}}\Big|_{\xi=\xi_i}=(2s-n)W[\xi_i,\lambda_i] \frac{\lambda_i(\cdot^{a}-\xi^a)}{1+\lambda^2|\cdot-x|^2}, \hspace{3mm}\mbox{for}\hspace{2mm}i=1,\cdots,\kappa,\\&
			\mathcal{Z}_{i}^{n+1}=\lambda_{i}\frac{\partial W[\xi_{i},\lambda]}{\partial \lambda}\Big|_{\lambda=\lambda_i}=\frac{n-2s}{2}W[\xi_i,\lambda_i]
			\frac{1-\lambda_i^2|\cdot-x|^2}{1+\lambda_i^2|\cdot-x|^2},\hspace{2mm}\mbox{for}\hspace{2mm}i=1,\cdots,\kappa.
		\end{split}
	\end{equation}
	Here $\xi^{a}$ is the $a$-th component of $\xi$ for $a=1,\cdots,n$.
	
	\begin{Def}\label{del-1}
		Let $W_i$ and $W_j$ be two bubbles, if $\mathscr{R}_{ij}=\sqrt{\lambda_i\lambda_j}|\xi_i-\xi_j|$, then we call them a bubble cluster, otherwise call them a bubble tower.
		We also set
		\begin{equation}\label{R1}
			\mathscr{R}_{ij}=\max\Big\{\sqrt{\lambda_i/\lambda_j}, \sqrt{\lambda_j/\lambda_i}, \sqrt{\lambda_i\lambda_j}|\xi_i-\xi_j|\Big\}\quad\text{if}\quad i\neq j\in I,
		\end{equation}
		and
		$$\mathscr{R}:=\frac{1}{2}\min\limits_{i\neq j}\big\{\mathscr{R}_{ij}:~i,j=1,\cdots,\kappa,~i\neq j\big\}.$$
		Furthermore, we denote
		\begin{equation*}
			\mathscr{Q}:=\max\big\{Q_{ij}(\xi_i,\xi_j,\lambda_i,\lambda_j):~~ i,~j=1,\cdots,\kappa\big\}\leq\delta,
		\end{equation*}
		where $Q_{ij}$ can be found in \eqref{mianbao}.
	\end{Def}

	For all $i=1,\cdots, \kappa; ~a=1,\cdots,n+1$, consider the elliptic problem with the linear operator
	\begin{equation}\label{coefficients1}
		\left\{\begin{array}{l}
			\displaystyle (-\Delta)^s \phi-I_{n,\mu,s}[\sigma,\phi]=g+	\sum_{i=1}^{\kappa}\sum_{a=1}^{n+1}c_{a}^{i}I_{n,\mu,s}[W_{i},\mathcal{Z}^{a}_i]\hspace{4mm}\mbox{in}\hspace{2mm} \mathbb{R}^n,\\
			\phi\in \dot{H}^{s}(\mathbb{R}^n), \hspace{2mm}c_{a}^{1},\cdots,\hspace{2mm}c_{a}^{n+1}\in\mathbb{R},\\
			\displaystyle 	\int I_{n,\mu,s}[W_{i},\mathcal{Z}^{a}_i]\phi=0,\hspace{4mm}i=1,\cdots, \kappa; ~a=1,\cdots,n+1.
		\end{array}
		\right.
	\end{equation}
	In order to conclude the proof of Theorem \ref{Figalli}, we establish the following key lemma.
	
	\begin{lem}\label{qiegao}
		Let $n=6s$ and $0<\mu<4s$ or $n\neq6s$ and $0<\mu\leq4s$. There exists a large constant $C=C(n,\kappa,\mu,s)$ such that
		\begin{equation}
			\Big\|\Big(|x|^{-\mu}\ast\sigma^{p_s}\Big)\sigma^{p_s-1}-\sum_{i=1}^{\kappa}\Big(|x|^{-\mu}\ast W_{i}^{p_s}\Big)W_{i}^{p_s-1}\Big\|_{L^{(2_s^{\ast})^{\prime}}(\mathbb{R}^n)}\leq C\mathscr{Q}^{\min\{\frac{\mu}{n-2s},1\}},
		\end{equation}
		where we denote by $(2_s^{\ast})^{\prime}=\frac{2n}{n+2s}$ the H\"{o}lder conjugate of $2_s^{\ast}$ and $C$ depends only on $n$, $s$, $\kappa$ and $\mu$.
	\end{lem}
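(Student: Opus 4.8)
The plan is to estimate the difference $\big(|x|^{-\mu}\ast\sigma^{p_s}\big)\sigma^{p_s-1}-\sum_{i}\big(|x|^{-\mu}\ast W_i^{p_s}\big)W_i^{p_s-1}$ in $L^{(2_s^\ast)'}$ by splitting it into two pieces: a ``nonlinear convolution'' discrepancy coming from $|x|^{-\mu}\ast\sigma^{p_s}$ versus $\sum_i |x|^{-\mu}\ast W_i^{p_s}$, and a ``pointwise power'' discrepancy coming from $\sigma^{p_s-1}$ versus $\sum_i W_i^{p_s-1}$. Concretely, I would write
\begin{equation*}
\Big(|x|^{-\mu}\ast\sigma^{p_s}\Big)\sigma^{p_s-1}-\sum_{i}\Big(|x|^{-\mu}\ast W_i^{p_s}\Big)W_i^{p_s-1}
=\underbrace{\Big(|x|^{-\mu}\ast\sigma^{p_s}\Big)\Big(\sigma^{p_s-1}-\sum_i W_i^{p_s-1}\Big)}_{(\mathrm{I})}
+\underbrace{\sum_i\Big(|x|^{-\mu}\ast\big(\sigma^{p_s}-W_i^{p_s}\big)\Big)W_i^{p_s-1}}_{(\mathrm{II})},
\end{equation*}
and bound each term separately. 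The elementary inequality $|(\sum a_i)^{q}-\sum a_i^{q}|\lesssim \sum_{i\neq j} a_i^{\min(q-1,1)}a_j^{\max(q-1,1)/\cdots}$ for nonnegative $a_i$ (applied with $q=p_s-1$ in (I) when $p_s\geq 2$, and with $q=p_s$ for the convolution argument in (II)) reduces everything to estimating cross-interaction integrals of the form $\int (|x|^{-\mu}\ast W_i^{p_s}) W_i^{a}W_j^{b}$ or $W_i^{a}W_j^{b}$ with appropriate exponents.

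The key analytic inputs are: (a) the pointwise decay $W_i=W[\xi_i,\lambda_i]\approx \alpha_{n,\mu,s}\lambda_i^{(n-2s)/2}/\tau(\lambda_i(x-\xi_i))^{n-2s}$; (b) Lemma \ref{B4-1}, which gives $|x|^{-\mu}\ast\big(\lambda^{n-\mu/2}\tau(z)^{-(2n-\mu)}\big)\lesssim \lambda^{\mu/2}\tau(z)^{-\mu}$, so that $|x|^{-\mu}\ast W_i^{p_s}\lesssim \lambda_i^{2s}\tau(\lambda_i(x-\xi_i))^{-(4s-\mu)}\cdot(\dots)$, i.e. $\big(|x|^{-\mu}\ast W_i^{p_s}\big)\lesssim W_i^{2s/(n-2s)}$-type bound with the extra $\tau$ weight; and (c) the standard interaction estimates
\begin{equation*}
\int_{\mathbb{R}^n} W_i^{\alpha}W_j^{\beta}\,dx\lesssim Q_{ij}^{\min(\alpha,\beta)\frac{n-2s}{2}\cdot\frac{2}{n-2s}}\cdot(\text{possible log}),\qquad \alpha+\beta=2_s^\ast,
\end{equation*}
which for $\delta$-interacting bubbles give powers of $\mathscr{Q}$. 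Assembling these via Hölder's inequality with the HLS inequality to handle the convolution in term (II), one arrives at a bound by a sum of terms $\mathscr{Q}^{\theta_k}$; the worst exponent is $\theta=\min\{\frac{\mu}{n-2s},1\}$, and this is exactly where the hypothesis $\mu\leq 4s$ (equivalently $p_s\geq 2$, so that the power $p_s-1\geq 1$ and no extra loss occurs in expanding $\sigma^{p_s-1}$) and the exclusion of the borderline case $n=6s,\mu=4s$ (where a logarithmic factor would appear and the clean power $\min\{\frac{\mu}{n-2s},1\}$ is not attained) enter.

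I would carry this out in the following order: first fix the pointwise bounds for $W_i$ and for $|x|^{-\mu}\ast W_i^{p_s}$ using Lemma \ref{B4-1}; second, treat term (I) by the algebraic inequality for $\sigma^{p_s-1}-\sum W_i^{p_s-1}$ and reduce to $\int(|x|^{-\mu}\ast\sigma^{p_s})^{(2_s^\ast)'}\cdot(\text{cross terms})$, controlled by Hölder and the interaction integrals; third, treat term (II) by bounding $|x|^{-\mu}\ast(\sigma^{p_s}-W_i^{p_s})$ pointwise via the same algebraic trick applied inside the convolution together with HLS, then multiply by $W_i^{p_s-1}$ and integrate; fourth, collect exponents of $\mathscr{Q}$ and verify the minimum is $\min\{\frac{\mu}{n-2s},1\}$. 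The main obstacle I anticipate is the bookkeeping of exponents in the cross-interaction integrals — ensuring the right balance between the decay rates of $W_i$, the Riesz kernel $|x|^{-\mu}$, and the exponent $(2_s^\ast)'$ so that every integral converges and produces a power of $\mathscr{Q}$ no smaller than $\min\{\frac{\mu}{n-2s},1\}$ — and in particular checking that in the regime $n\geq 6s$ (where $\frac{\mu}{n-2s}<1$ is possible) the weaker power is genuinely achieved while in $n<6s$ one gets the full power $1$; the subtle point is that the non-sharp borderline $n=6s$, $\mu=4s$ must be excluded precisely because there the interaction integral $\int W_i^{p_s}W_j^{p_s-1}\cdot(\dots)$ picks up a $|\log\mathscr{Q}|$ that breaks the clean estimate, which is consistent with the statement's hypothesis.
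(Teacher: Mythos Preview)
Your decomposition and plan are correct and coincide with the route taken in the paper, which omits the details and refers to \cite[Lemma 4.2]{DHP}, noting only that the exponents change with $s$. One sharpening: instead of the pointwise upper bound of Lemma~\ref{B4-1}, use the exact identity of Lemma~\ref{p1-00}, $|x|^{-\mu}\ast W_i^{p_s}=\widetilde{\alpha}_{n,\mu,s}\,W_i^{2_s^\ast-p_s}$, to convert every convolution factor into a pure bubble power; this makes the reduction to the interaction integrals of Lemma~\ref{FPU1} immediate (the relevant cross term is $W_j^{2_s^\ast-p_s}W_i^{p_s-1}$, whose $L^{(2_s^\ast)'}$-norm is $\approx Q_{ij}^{\min(2_s^\ast-p_s,\,p_s-1)}$) and avoids an extra HLS step. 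Your reading of the minimal exponent and of why $n=6s$, $\mu=4s$ must be excluded---the equal-exponent logarithm in Lemma~\ref{FPU1} when $2_s^\ast-p_s=p_s-1=1$---is accurate.
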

	\begin{proof}
		The only difference is that the exponents have been modified by the parameter $s$. Thus one can follow the same proof as in \cite[Lemma 4.2]{DHP} and we omit it.
	\end{proof}
	
	\begin{lem}\label{wanfan}
		Assume that $n=6s$ and $0<\mu<4s$ or $n\neq6s$ and $0<\mu\leq4s$. Let $\phi$, $g$ and $c_b^j$ satisfy the system \eqref{coefficients1} and  $\sigma=\sum_{i=1}^{\kappa}W_i$ is a family of $\delta$-interacting bubbles, then there holds\\
		$$|c_b^j|\lesssim\|g\|_{L^{(2_s^{\ast})^{\prime}}(\mathbb{R}^n)}+\mathscr{Q}^{\min\{\frac{\mu}{n-2s},1\}}\|\phi\|_{\dot{H}^s(\mathbb{R}^n)},\hspace{3mm}j=1,\cdots, \kappa~\text{and}\hspace{2mm}b=1,\cdots,n+1.
		$$
	\end{lem}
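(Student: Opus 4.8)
The plan is to test the equation in \eqref{coefficients1} against each $\mathcal{Z}_b^j$ and exploit the near-orthogonality of the functions $\{I_{n,\mu,s}[W_i,\mathcal{Z}_i^a]\}$ together with the spectral gap from Lemma \ref{propep}. First I would pair both sides of the first equation in \eqref{coefficients1} with $\mathcal{Z}_b^j$ in the $L^2$ duality. On the left, the term $\langle(-\Delta)^s\phi-I_{n,\mu,s}[\sigma,\phi],\mathcal{Z}_b^j\rangle$ is handled by moving the self-adjoint operator onto $\mathcal{Z}_b^j$: since each $\mathcal{Z}_i^a$ solves the linearized equation at the single bubble $W_i$, we have $(-\Delta)^s\mathcal{Z}_i^a = I_{n,\mu,s}[W_i,\mathcal{Z}_i^a]$, so after the pairing the leading contribution is $\langle \phi, (-\Delta)^s\mathcal{Z}_b^j - I_{n,\mu,s}[\sigma,\mathcal{Z}_b^j]\rangle = \langle\phi, I_{n,\mu,s}[W_j,\mathcal{Z}_b^j] - I_{n,\mu,s}[\sigma,\mathcal{Z}_b^j]\rangle$, which by the orthogonality condition $\int I_{n,\mu,s}[W_j,\mathcal{Z}_b^j]\phi = 0$ reduces to an interaction term bounded by $\mathscr{Q}^{\min\{\mu/(n-2s),1\}}\|\phi\|_{\dot H^s}$ (using the HLS and Hölder estimates together with the decay of $W_i$, exactly as in the estimate of $g$ in Lemma \ref{qiegao}).

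Next I would analyze the right-hand side after pairing: $\langle g,\mathcal{Z}_b^j\rangle + \sum_{i,a} c_a^i \int I_{n,\mu,s}[W_i,\mathcal{Z}_i^a]\mathcal{Z}_b^j$. The term $\langle g,\mathcal{Z}_b^j\rangle$ is estimated by Hölder's inequality: $|\langle g,\mathcal{Z}_b^j\rangle|\le \|g\|_{L^{(2_s^\ast)'}}\|\mathcal{Z}_b^j\|_{L^{2_s^\ast}}\lesssim \|g\|_{L^{(2_s^\ast)'}}$, since each $\mathcal{Z}_b^j$ is a fixed rescaled derivative of a bubble with $\|\mathcal{Z}_b^j\|_{L^{2_s^\ast}}$ bounded uniformly. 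For the matrix $M_{(i,a),(j,b)}:=\int I_{n,\mu,s}[W_i,\mathcal{Z}_i^a]\mathcal{Z}_b^j$, the diagonal blocks $i=j$ give, by the nondegeneracy (Theorem \ref{prondgr}) and the explicit form of the linearized operator, a fixed positive-definite $(n+1)\times(n+1)$ matrix (up to the common rescaling), while the off-diagonal blocks $i\neq j$ are interaction terms of size $O(\mathscr{Q}^{\min\{\mu/(n-2s),1\}})$, hence negligible once $\delta$ is small. Therefore $M$ is invertible with $\|M^{-1}\|\lesssim 1$, and solving the linear system for $(c_a^i)$ yields
\[
|c_b^j|\lesssim \|g\|_{L^{(2_s^\ast)'}(\mathbb{R}^n)} + \mathscr{Q}^{\min\{\frac{\mu}{n-2s},1\}}\|\phi\|_{\dot H^s(\mathbb{R}^n)},
\]
as claimed.

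The main obstacle I anticipate is bookkeeping the interaction estimates uniformly in the $\delta$-interacting regime: one must verify that all the cross terms — both $\langle \phi, I_{n,\mu,s}[W_j,\mathcal{Z}_b^j]-I_{n,\mu,s}[\sigma,\mathcal{Z}_b^j]\rangle$ and the off-diagonal entries of $M$ — are genuinely controlled by $\mathscr{Q}^{\min\{\mu/(n-2s),1\}}$, which requires the pointwise convolution bound of Lemma \ref{B4-1} and careful splitting of $\mathbb{R}^n$ into regions where one bubble dominates. The exponent $\min\{\mu/(n-2s),1\}$ comes precisely from whether $\mu\lessgtr n-2s$ in those integral estimates, mirroring \cite[Lemma 4.2]{DHP}; since, as noted there, only the $s$-dependent exponents change, the argument goes through verbatim with the obvious modifications, and I would simply indicate this and refer to Lemma \ref{qiegao} rather than reproduce the computation.
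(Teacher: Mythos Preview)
Your proposal is correct and follows essentially the same route as the paper: test \eqref{coefficients1} against $\mathcal{Z}_j^b$, use the orthogonality $\int I_{n,\mu,s}[W_j,\mathcal{Z}_j^b]\phi=0$ to kill $\langle(-\Delta)^s\phi,\mathcal{Z}_j^b\rangle$, bound the remaining interaction $\int(I_{n,\mu,s}[\sigma,\phi]-I_{n,\mu,s}[W_j,\phi])\mathcal{Z}_j^b$ via Lemma~\ref{qiegao}, estimate $\langle g,\mathcal{Z}_j^b\rangle$ by H\"older, and invert the diagonally dominant coefficient matrix (Lemma~\ref{armidale} gives off-diagonal entries $O(Q_{ij})$, slightly sharper than your $O(\mathscr{Q}^{\min\{\mu/(n-2s),1\}})$, but either suffices). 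The reference to the spectral gap of Lemma~\ref{propep} is unnecessary here; nondegeneracy enters only through Lemma~\ref{armidale}/\ref{wwc101}.
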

	\begin{proof}
		Multiplying \eqref{coefficients1} by $\mathcal{Z}_j^b$ and integrating by parts we get
		\begin{equation}\label{pingjiehuoguo}
			\int I_{n,\mu,s}[\sigma,\phi]\mathcal{Z}_j^b+\int g\mathcal{Z}_j^b+	\sum_{i=1}^{\kappa}\sum_{a=1}^{n+1}c_{a}^{i}\int I_{n,\mu,s}[W_{i},\mathcal{Z}^{a}_i]\mathcal{Z}_j^b=0,
		\end{equation}
		for any $1\leq j\leq n$, $1\leq b\leq n+1$. Here we use the orthogonal condition in \eqref{coefficients1}.
		
		\par By Lemma \ref{armidale}, for $1\leq a,b\leq n+1$, there exist some constants $\gamma^b>0$ such that
		\begin{equation*}
			\sum_{i=1}^{\kappa}\sum_{a=1}^{n+1}c_{a}^{i}\int I_{n,\mu,s}[W_{i},\mathcal{Z}^{a}_i]\mathcal{Z}_j^b+c_b^j\gamma^b+\sum_{i\neq j}\sum_{a=1}^{n+1}c_{a}^{i}O(Q_{ij})=0.
		\end{equation*}
		Plugging in the above estimates to \eqref{pingjiehuoguo}, we see that $\{c_b^j\}$ satisfies the linear system
		\begin{equation}\label{kaihui}
			c_b^j\gamma^b+\sum_{i\neq j}\sum_{a=1}^{n+1}c_{a}^{i}O(Q_{ij})
			=\int I_{n,\mu,s}[\sigma,\phi]\mathcal{Z}_j^b+\int g\mathcal{Z}_j^b.
		\end{equation}
		Denote $\vec{c}^j := (c_1^j, \cdots, c_{n+1}^j) \in \mathbb{R}^{n+1}$ for $j = 1, \cdots, \kappa$. We concatenate these vectors to $\vec{c} = (\vec{c}^1, \cdots, \vec{c}^\kappa) \in \mathbb{R}^{\kappa(n+1)}$ and think of the above equations as a linear system on $\vec{c}$. Since $Q_{ij} \leq \mathscr{Q}\leq \delta$, the coefficient matrix is diagonally dominant and hence solvable. It remains to estimate the terms on the right-hand side.
		
		\par For each $j$ and $b$, by the orthogonal condition in \eqref{coefficients1} again, we have
		\begin{equation*}
			\int I_{n,\mu,s}[\sigma,\phi]\mathcal{Z}_j^b
			=\int \bigg(p_s\big(|x|^{-\mu}\ast\sigma^{p_s-1}\phi\big)\sigma^{p_s-1}+(p_s-1)\big(|x|^{-\mu}\ast\sigma^{p_s}\big)\sigma^{p_s-2}\phi\bigg)\mathcal{Z}_j^b=:J_1+J_2,
		\end{equation*}
		where
		\begin{equation*}
			J_1=p_s\int \big(|x|^{-\mu}\ast\sigma^{p_s-1} \mathcal{Z}_j^b\big)\sigma^{p_s-1}\phi-\big(|x|^{-\mu}\ast W_j^{p_s-1}\mathcal{Z}_j^b\big)W_j^{p_s-1}\phi,
		\end{equation*}
		\begin{equation*}
			J_2=(p_s-1)\int\big(|x|^{-\mu}\ast\sigma^{p_s}\big)\sigma^{p_s-2}\mathcal{Z}_j^b\phi-\big(|x|^{-\mu}\ast W_j^{p_s}\big)W_j^{p_s-2}\mathcal{Z}_j^b\phi.
		\end{equation*}
		
		Thanks to the fact that $(\sigma^{p_s-1}-W_i^{p_s-1})W_i\geq0$ for each $i$, we have
		\begin{equation*}
			(\sigma^{p_s-1}-W_j^{p_s-1})W_j\leq\sum_{i=1}^\kappa(\sigma^{p_s-1}-W_i^{p_s-1})W_i=\sigma^{p_s}-\sum_{i=1}^\kappa W_i^{p_s}
		\end{equation*}
		and
		\begin{equation*}
			(\sigma^{p_s-2}-W_j^{p_s-2})W_j\leq\sum_{i=1}^\kappa(\sigma^{p_s-2}-W_i^{p_s-2})W_i=\sigma^{p_s-1}-\sum_{i=1}^\kappa W_i^{p_s-1}.
		\end{equation*}
		Using Lemma \ref{qiegao} and $|\mathcal{Z}_j^b| \lesssim W_j$, we deduce that
		\begin{equation}\label{qk1}
			\begin{split}
				|J_1|				&\lesssim\Big|\int\big(|x|^{-\mu}\ast\sigma^{p_s-1}\mathcal{Z}_j^b\big)\big(\sigma^{p_s-1}-W_j^{p_s-1}\big)\phi\Big|
				+\Big|\int\big(|x|^{-\mu}\ast(\sigma^{p_s-1}-W_j^{p_s-1})\mathcal{Z}_j^b\big)W_j^{p_s-1}\phi\Big|        \\
				&\lesssim\Big\|\Big(|x|^{-\mu}\ast\sigma^{p_s}\Big)\sigma^{p_s-1}-\sum_{i=1}^{\kappa}\Big(|x|^{-\mu}\ast W_{i}^{p_s}\Big)W_{i}^{p_s-1}\Big\|_{L^{(2_s^{\ast})^{\prime}}(\mathbb{R}^n)}\|\phi\|_{L^{2_s^\ast}(\mathbb{R}^n)}
				\lesssim\mathscr{Q}^{\min\{\frac{\mu}{n-2s},1\}}\|\phi\|_{\dot{H}^s(\mathbb{R}^n)},
			\end{split}
		\end{equation}
		\begin{equation}\label{qk2}
			|J_2|
			\lesssim\Big|\int\big(|x|^{-\mu}\ast\sigma^{p_s}\big)\big(\sigma^{p_s-2}-W_j^{p_s-2}\big)W_j\phi\Big|
			+\Big|\int\big(|x|^{-\mu}\ast(\sigma^{p_s}-W_j^{p_s})\big)W_j^{p_s-1}\phi\Big|
			\lesssim\mathscr{Q}^{\min\{\frac{\mu}{n-2s},1\}}\|\phi\|_{\dot{H}^s(\mathbb{R}^n)}.
		\end{equation}
		By the H\"older inequality, we also have
		\begin{equation}\label{qk4}
			\Big|\int g\mathcal{Z}_j^b\Big|
			\leq\int|g|W_j
			\leq\|g\|_{L^{(2_s^{\ast})^{\prime}}(\mathbb{R}^n)}\|W_j\|_{L^{2_s^\ast}(\mathbb{R}^n)}
			\lesssim\|g\|_{L^{(2_s^{\ast})^{\prime}}(\mathbb{R}^n)}.
		\end{equation}
		Applying the estimates \eqref{qk1}-\eqref{qk4}, it follows by \eqref{kaihui} that
		\begin{equation*}		|c_b^j|\lesssim\|g\|_{L^{(2_s^{\ast})^{\prime}}(\mathbb{R}^n)}+\mathscr{Q}^{\min\{\frac{\mu}{n-2s},1\}}\|\phi\|_{\dot{H}^s(\mathbb{R}^n)},\hspace{3mm}j=1,\cdots, \kappa~\text{and}~b=1,\cdots,n+1.
		\end{equation*}
		The desired estimate is obtained.
	\end{proof}
	
	\begin{lem}\label{2estimate2}
		Assume that $n=6s$ and $0<\mu<4s$ or $n\neq6s$ and $0<\mu\leq4s$. Let $\phi$ be the solution to problem \eqref{coefficients1}. Then it holds that
		\begin{equation}\label{beibingyang}
			\|\phi\|_{\dot{H}^s(\mathbb{R}^n)}
			\leq C \|g\|_{L^{(2_s^{\ast})^{\prime}}(\mathbb{R}^n)}.
		\end{equation}
	\end{lem}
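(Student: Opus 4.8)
The plan is the standard a priori estimate for a linearized, projected equation, the only real novelty being that the coercivity must be extracted from the spectral inequality of Lemma \ref{estim} after discarding the directions on which it is not available. First I would pair the first line of \eqref{coefficients1} with $\phi$ in the $\dot H^{s}$--$\dot H^{-s}$ duality. Because $\phi$ satisfies $\int I_{n,\mu,s}[W_i,\mathcal Z_i^a]\phi=0$ for all $i=1,\dots,\kappa$ and $a=1,\dots,n+1$, the Lagrange--multiplier sum drops out and we are left with the identity
\[
\|\phi\|_{\dot H^{s}(\mathbb R^n)}^{2}-\int I_{n,\mu,s}[\sigma,\phi]\,\phi=\int g\,\phi ,
\]
whose quadratic term, by \eqref{I-FAI-1}, equals $p_s\int(|x|^{-\mu}\ast(\sigma^{p_s-1}\phi))\sigma^{p_s-1}\phi+(p_s-1)\int(|x|^{-\mu}\ast\sigma^{p_s})\sigma^{p_s-2}\phi^{2}$. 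The right-hand side is harmless: by H\"older's inequality and the fractional Sobolev inequality \eqref{bsic}, $\bigl|\int g\phi\bigr|\le\|g\|_{L^{(2_s^{\ast})^{\prime}}(\mathbb R^n)}\|\phi\|_{L^{2_s^{\ast}}(\mathbb R^n)}\lesssim\|g\|_{L^{(2_s^{\ast})^{\prime}}(\mathbb R^n)}\|\phi\|_{\dot H^{s}(\mathbb R^n)}$. So \eqref{beibingyang} reduces to the coercive bound
\[
\|\phi\|_{\dot H^{s}(\mathbb R^n)}^{2}-\int I_{n,\mu,s}[\sigma,\phi]\,\phi\ \ge\ c_0\|\phi\|_{\dot H^{s}(\mathbb R^n)}^{2}-C\|g\|_{L^{(2_s^{\ast})^{\prime}}(\mathbb R^n)}^{2}
\]
for some $c_0=c_0(n,s,\kappa,\mu)>0$; inserting it together with the previous estimate into the identity and absorbing by Young's inequality gives the claim.

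To prove this coercivity I would split off the part of $\phi$ along the bubbles. The quadratic form $B(\psi):=\|\psi\|_{\dot H^{s}}^{2}-\int I_{n,\mu,s}[\sigma,\psi]\psi$ is \emph{negative} on each direction $W_i$ (this is the eigenvalue $\tilde\lambda_1=1<p_s=\tilde\lambda_2$ in Lemma \ref{propep}), so the constraint set of \eqref{coefficients1} is not by itself enough for positivity. Write $\phi=\phi^{\flat}+\phi^{\sharp}$, with $\phi^{\flat}$ the $\dot H^{s}$-orthogonal projection of $\phi$ onto $\mathrm{span}\{W_1,\dots,W_\kappa\}$ and $\phi^{\sharp}$ orthogonal to every $W_i$. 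The component $\phi^{\flat}$ is recovered directly from the equation: pairing \eqref{coefficients1} with $W_j$, using $\int I_{n,\mu,s}[\sigma,W_j]\phi=(2p_s-1)\langle W_j,\phi\rangle_{\dot H^{s}}+O(\mathscr Q\|\phi\|_{\dot H^{s}})$, the relations $W_j\perp\mathcal Z_j^a$, the coefficient bound of Lemma \ref{wanfan}, and $2p_s-2\ge 2>0$, one gets
\[
\|\phi^{\flat}\|_{\dot H^{s}(\mathbb R^n)}\ \lesssim\ \|g\|_{L^{(2_s^{\ast})^{\prime}}(\mathbb R^n)}+\mathscr Q^{\min\{\frac{\mu}{n-2s},1\}}\|\phi\|_{\dot H^{s}(\mathbb R^n)} .
\]
On $\phi^{\sharp}$, which now satisfies the full orthogonality set \eqref{EP1} up to $O(\mathscr Q)$ errors, the spectral inequality of Lemma \ref{estim} gives $p_s\int(|x|^{-\mu}\ast(\sigma^{p_s-1}\phi^{\sharp}))\sigma^{p_s-1}\phi^{\sharp}+(p_s-1)\int(|x|^{-\mu}\ast\sigma^{p_s})\sigma^{p_s-2}(\phi^{\sharp})^{2}\le\tau_0\|\phi^{\sharp}\|_{\dot H^{s}(\mathbb R^n)}^{2}$ with $\tau_0<1$ depending only on $n,s,\kappa,\mu$. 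Expanding $B(\phi)=B(\phi^{\flat})+B(\phi^{\sharp})+(\text{mixed terms})$, estimating the mixed terms with the Hardy--Littlewood--Sobolev and H\"older inequalities, and choosing $\delta$ (hence $\mathscr Q$) small enough to absorb all $\mathscr Q$-contributions, yields the displayed coercivity. If one also wants the existence of $\phi$, the same coercivity makes the bilinear form coercive on the closed subspace $\{\phi:\int I_{n,\mu,s}[W_i,\mathcal Z_i^a]\phi=0\}$, so Lax--Milgram applies.

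The main obstacle is precisely this peeling-off-and-absorbing procedure. In the local Sobolev problem a free scalar factor in front of each bubble renders the relevant subspace orthogonal to the $W_i$; here no such factor is available, so one must retrieve the $W_i$-component of $\phi$ from the equation itself and show it is of lower order ($\lesssim\|g\|_{L^{(2_s^{\ast})^{\prime}}}+\mathscr Q^{\varepsilon}\|\phi\|_{\dot H^{s}}$). The positivity of $B$ on the orthogonal complement rests on the strict spectral gap $\tilde\lambda_{n+3}>p_s$ of Lemma \ref{propep} --- which is the analytic content of the non-degeneracy Theorem \ref{prondgr} --- whereas the $\delta$-interaction hypothesis is what keeps the bubble--bubble cross terms subordinate throughout. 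Carrying out this interaction bookkeeping uniformly over all admissible configurations (clusters versus towers, arbitrary relative scales), as measured by $\mathscr Q$, is the only genuinely technical part, and proceeds exactly as in \cite{DHP,C-K-L-24} with the exponents shifted by $s$.
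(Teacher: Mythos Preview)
Your argument is correct but takes a genuinely different route from the paper's. The paper proves \eqref{beibingyang} by contradiction: assuming a sequence $(\phi_m,g_m)$ with $\|\phi_m\|_{\dot H^s}=1$, $\|g_m\|_{L^{(2_s^\ast)'}}\to 0$ and $\frac1m$-interacting bubbles, it invokes Lemma~\ref{wanfan} to kill the coefficients $c_{a,m}^i$, then pairs \eqref{ccients} with $\phi_m$ to obtain $\int I_{n,\mu,s}[\sigma_m,\phi_m]\phi_m\to 1$, and finally reaches a contradiction through a soft compactness/non-degeneracy argument (Theorem~\ref{prondgr}) showing that the same quantity must tend to $0$. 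Your proof is instead fully quantitative: you split $\phi=\phi^{\flat}+\phi^{\sharp}$, recover $\|\phi^{\flat}\|_{\dot H^s}\lesssim\|g\|+\mathscr Q^{\min\{\mu/(n-2s),1\}}\|\phi\|$ by testing against $W_j$ (here the factor $2-2p_s\neq 0$ is what makes the bubble component solvable), and then feed $\phi^{\sharp}$ into the spectral inequality of Lemma~\ref{estim} to extract the coercivity constant $1-\tau_0>0$.

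What each buys: the paper's route is shorter and leans only on the qualitative non-degeneracy, deferring the use of Lemma~\ref{estim} until Section~\ref{section7}; your route is more explicit, gives $C$ in terms of $\tau_0$, and avoids compactness altogether, at the cost of the bookkeeping you describe. One small point to tighten: $\phi^{\sharp}$ satisfies the $\mathcal Z_i^a$-orthogonality of \eqref{EP1} only up to $O(\mathscr Q\|\phi^{\flat}\|)$, whereas Lemma~\ref{estim} is stated for exact orthogonality; you should either project once more onto $\mathrm{span}\{\mathcal Z_i^a\}^\perp$ (the resulting correction has norm $O(\mathscr Q\|\phi^{\flat}\|)$ and is absorbed) or note that \eqref{ps1ps} is stable under $O(\mathscr Q)$ perturbations of the constraints. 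Also, the error in your identity $\int I_{n,\mu,s}[\sigma,W_j]\phi=(2p_s-1)\langle W_j,\phi\rangle_{\dot H^s}+\text{(error)}$ is actually of size $\mathscr Q^{\min\{\mu/(n-2s),1\}}\|\phi\|_{\dot H^s}$ (cf.\ \eqref{qk1}--\eqref{qk2}), not $O(\mathscr Q)$, which you use correctly in the displayed bound for $\phi^{\flat}$ anyway.
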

	\begin{proof}
		It follows directly from Lemma \ref{wanfan} that
		\begin{equation}\label{caI}
			\sum_{i=1}^{\kappa}\sum_{a=1}^{n+1}|c_{a}^{i}|
			\leq c\bigg(\mathscr{Q}^{\min\{\frac{\mu}{n-2s},1\}}\|\phi\|_{\dot{H}^s(\mathbb{R}^n)}+\|g\|_{L^{(2_s^{\ast})^{\prime}}(\mathbb{R}^n)}\bigg).
		\end{equation}
		By contradiction, if \eqref{beibingyang} does not hold true, there exists a sequence of functions
		$g=g_m$ with $\Vert g_m\Vert_{L^{(2_s^{\ast})^{\prime}}(\mathbb{R}^n)}\rightarrow0$ as $m\rightarrow\infty$, and $\phi=\phi_m$ with $\Vert\phi_m\Vert_{\dot{H}^s}=1$ solving the equation
		\begin{equation}\label{ccients}
			\left\{\begin{array}{l}
				\displaystyle (-\Delta)^s \phi_m-I_{n,\mu,s}[\sigma_m,\phi_m]=g_m+	\sum_{i=1}^{\kappa}\sum_{a=1}^{n+1}c_{a,m}^{i}I_{n,\mu,s}[W_{i}^{(m)},\mathcal{Z}^{a}_{i,m}]\hspace{4mm}\mbox{in}\hspace{2mm} \mathbb{R}^n,\\
				\displaystyle 	\int I_{n,\mu,s}[W_{i}^{(m)},\mathcal{Z}^{a}_{i,m}]\phi_m=0,\hspace{4mm}i=1,\cdots, \kappa; ~a=1,\cdots,n+1,
			\end{array}
			\right.
		\end{equation}
		with $\frac{1}{m}$-interacting bubbles $\big\{W_i^{\left(m\right)}=W[\xi_i^{\left(m\right)},\lambda_i^{\left(m\right)}]:~i=1,\cdots,\kappa\big\}_{m=1}^{\infty}$, and scalars $\{c_{a,m}^{i}\}_{m=1}^{\infty}$. By \eqref{caI}, we have
		\begin{equation}\label{AIC}
			\sum_{i=1}^{\kappa}\sum_{a=1}^{n+1}|c_{a,m}^{i}|\rightarrow0\hspace{3mm}\mbox{as}\hspace{2mm}m\rightarrow\infty.
		\end{equation}
		Then, using \eqref{ccients}-\eqref{AIC}, the Hardy-Littlewood-Sobolev inequality and H\"{o}lder inequality, we get
		$$
		\int I_{n,\mu,s}[\sigma_m,\phi_m]\phi_m=\|\phi_m\|_{\dot{H}^s(\mathbb{R}^n)}^2+O\big(\|g_m\|_{L^{(2_s^{\ast})^{\prime}}(\mathbb{R}^n)}+	\sum_{i=1}^{\kappa}\sum_{a=1}^{n+1}|c_{a,m}^{i}|\big)\rightarrow1\hspace{4mm}\mbox{as}\hspace{2mm}m\rightarrow\infty.
		$$
		On the other hand, noticing that the right-hand side of \eqref{ccients} vanishes in $\dot{H}^{-s}(\mathbb{R}^n)$, and since $\phi_m$ is perpendicular to the kernel of linearized equation in $\dot{H}^s(\mathbb{R}^n)$ with $\|\phi_m\|_{\dot{H}^{s}}=1$, by the nondegeneracy result in Theorem \ref{prondgr}, we arrive at
		$$\int I_{n,\mu,s}[\sigma_m,\phi_m]\phi_m\rightarrow0\hspace{3mm}\mbox{as}\hspace{2mm}m\rightarrow\infty.$$
		This leads to a contradiction, and the conclusion follows.
	\end{proof}
	
	From Lemma \ref{wanfan} and Lemma \ref{2estimate2}, using a standard argument as in the proof of Proposition 4.1 in \cite{delPino-1}, we can establish the following result.
	\begin{lem}\label{chenwending}
		Assume that $n=6s$ and $0<\mu<4s$ or $n\neq6s$ and $0<\mu\leq4s$. There exist positive constants $\delta_0$ and $C$, independent of $\delta$, such that for all $\delta\leq\delta_0$ and all $g$ with $\|g\|_{L^{(2_s^{\ast})^{\prime}}(\mathbb{R}^n)}<\infty$, the system \eqref{coefficients1} has a unique solution $\phi \equiv \mathcal{L}_\delta(g)$. Besides,
		\begin{equation*}
			\big\|\mathcal{L}_\delta(g)\big\|_{\dot{H}^s(\mathbb{R}^n)}
			\leq C\big\|g\big\|_{L^{(2_s^{\ast})^{\prime}}(\mathbb{R}^n)},\quad \big|c_a^i\big|\leq C\delta\big\|g\big\|_{L^{(2_s^{\ast})^{\prime}}(\mathbb{R}^n)}.
		\end{equation*}
	\end{lem}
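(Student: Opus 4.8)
The plan is to recast \eqref{coefficients1} as a linear operator equation on $\dot{H}^s(\mathbb{R}^n)$ and invoke the Fredholm alternative, using Lemma \ref{2estimate2} to exclude a nontrivial kernel. First I would use the Riesz representation theorem on $\dot{H}^s(\mathbb{R}^n)$ to replace $(-\Delta)^s$ by the identity: define the bounded operator $A_\delta\colon\dot{H}^s(\mathbb{R}^n)\to\dot{H}^s(\mathbb{R}^n)$ by $\langle A_\delta\phi,\psi\rangle_{\dot{H}^s}=\int I_{n,\mu,s}[\sigma,\phi]\psi$ and the element $\tilde g\in\dot{H}^s(\mathbb{R}^n)$ by $\langle\tilde g,\psi\rangle_{\dot{H}^s}=\int g\psi$; by the Sobolev embedding $\dot{H}^s\hookrightarrow L^{2_s^\ast}$ and duality, $\|\tilde g\|_{\dot{H}^s}\lesssim\|g\|_{L^{(2_s^\ast)'}}$. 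Arguing as in Lemma \ref{niunai}, now with $\sigma=\sum_i W_i$ in place of a single bubble (harmless, since each $W_i$ has the same decay and the interaction terms are controlled by Lemma \ref{B4-1}), together with the Hardy--Littlewood--Sobolev inequality, one checks that $A_\delta$ is \emph{compact}. Writing $\zeta_i^a\in\dot{H}^s(\mathbb{R}^n)$ for the Riesz representative of $\psi\mapsto\int I_{n,\mu,s}[W_i,\mathcal{Z}_i^a]\psi$, the orthogonality constraints in \eqref{coefficients1} amount to $\phi\perp\zeta_i^a$, so they define the closed subspace $H:=\mathrm{span}\{\zeta_i^a:\,i,a\}^{\perp}$; since by Lemma \ref{armidale} the Gram matrix of $\{\zeta_i^a\}$ is diagonally dominant for $\delta$ small, these vectors are linearly independent and $H$ has codimension $\kappa(n+1)$. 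Letting $\Pi$ be the orthogonal projection onto $H$, the system \eqref{coefficients1} is then equivalent to solving $(\mathrm{Id}-\Pi A_\delta)\phi=\Pi\tilde g$ for $\phi\in H$, after which the scalars $c_a^i$ are uniquely recovered as the coordinates of $(\mathrm{Id}-A_\delta)\phi-\tilde g\in H^{\perp}$ in the basis $\{\zeta_i^a\}$.

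Since $\Pi A_\delta$ is compact on the Hilbert space $H$, the operator $\mathrm{Id}-\Pi A_\delta$ is Fredholm of index zero by Riesz--Schauder, so unique solvability reduces to injectivity. If $\phi\in H$ and $(\mathrm{Id}-\Pi A_\delta)\phi=0$, then $\phi$ solves \eqref{coefficients1} with $g\equiv0$ and the associated multipliers, whence Lemma \ref{2estimate2} gives $\|\phi\|_{\dot{H}^s}\le C\|0\|_{L^{(2_s^\ast)'}}=0$; thus $\phi=0$, $\mathrm{Id}-\Pi A_\delta$ is invertible, and we may set $\mathcal{L}_\delta(g):=(\mathrm{Id}-\Pi A_\delta)^{-1}\Pi\tilde g$, which is the unique solution. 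This requires $\delta\le\delta_0$ with $\delta_0$ small enough for Lemmas \ref{wanfan}--\ref{2estimate2} and for the diagonal dominance above to hold, and $\delta_0$, like $C$, is independent of $g$. The quantitative bounds claimed in the statement then follow immediately: $\|\mathcal{L}_\delta(g)\|_{\dot{H}^s}\le C\|g\|_{L^{(2_s^\ast)'}}$ is precisely Lemma \ref{2estimate2}, and inserting this bound into Lemma \ref{wanfan} yields the asserted control on $|c_a^i|$ in terms of $\|g\|_{L^{(2_s^\ast)'}}$, with all constants independent of $\delta$ by the uniformity built into those lemmas.

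The genuinely substantive input is the a priori estimate of Lemma \ref{2estimate2} (proved by contradiction, rescaling around one bubble while the others escape, and using the non-degeneracy Theorem \ref{prondgr} to force the limiting profile to vanish): that is where the hypothesis $0<\mu\le 4s$ and the $\delta$-interaction are really used, and the present lemma is essentially a clean functional-analytic packaging of it in the spirit of \cite[Proposition 4.1]{delPino-1}. The only points needing care here are the compactness of $A_\delta$ uniformly over $\delta$-interacting families (this is exactly where Lemma \ref{niunai} and the pointwise bound of Lemma \ref{B4-1} enter, to treat the weighted terms $(|x|^{-\mu}\ast\sigma^{p_s})\sigma^{p_s-2}\phi$ and $(|x|^{-\mu}\ast(\sigma^{p_s-1}\phi))\sigma^{p_s-1}$) and the elementary linear algebra guaranteeing that $H^{\perp}=\mathrm{span}\{\zeta_i^a\}$ with the $\zeta_i^a$ independent. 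Given the earlier lemmas, I expect no essential obstacle beyond this bookkeeping.
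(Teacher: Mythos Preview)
Your proposal is correct and follows essentially the same approach as the paper, which simply cites ``a standard argument as in the proof of Proposition 4.1 in \cite{delPino-1}'' built on Lemmas \ref{wanfan} and \ref{2estimate2}; your Riesz-representation/Fredholm packaging is exactly that standard argument. One minor remark: Lemma \ref{wanfan} as stated only yields $|c_a^i|\lesssim\|g\|_{L^{(2_s^\ast)'}}$ (the $\|g\|$ term there carries no smallness factor), so the extra $\delta$ in the bound $|c_a^i|\le C\delta\|g\|_{L^{(2_s^\ast)'}}$ appears to be a slight overstatement in the lemma's conclusion rather than a gap in your argument.
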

	
	With the aid of the above linear theory, we can solve the following nonlinear equation:
	\begin{equation}\label{coefficients11}
		\left\{\begin{array}{l}
			\displaystyle (-\Delta)^s \phi
			-\Big(|x|^{-\mu}\ast \big(\sigma+\phi\big)^{p_s}\Big)
			\big(\sigma+\phi\big)^{p_s-1}
			+\sum_{i=1}^\kappa\Big(|x|^{-\mu}\ast W_i^{p_s}\Big)W_i^{p_s-1}
			\displaystyle =
			\sum_{i=1}^{\kappa}\sum_{a=1}^{n+1}c_{a}^{i}I_{n,\mu,s}[W_{i},\mathcal{Z}^{a}_i]\hspace{4.14mm}\mbox{in}\hspace{1.14mm} \mathbb{R}^n,\\
			\displaystyle \int I_{n,\mu,s}[W_{i},\mathcal{Z}^{a}_i]\phi=0,\hspace{4mm}i=1,\cdots, \kappa; ~a=1,\cdots,n+1.
		\end{array}
		\right.
	\end{equation}
	Recalling $g$ and $N(\phi)$ from \eqref{u-1}-\eqref{u-2}, then \eqref{coefficients11} can be written as
	\begin{equation}\label{AA}
		\begin{split}
			(-\Delta)^s\phi-I_{n,\mu,s}[\sigma,\phi]-g-N(\phi)
			=\sum_{i=1}^{\kappa}\sum_{a=1}^{n+1}c_{a}^{i}I_{n,\mu,s}[W_{i},\mathcal{Z}^{a}_i].
		\end{split}
	\end{equation}
	
	\begin{lem}\label{zhaji}
		Assume that $n=6s$ and $0<\mu<4s$ or $n\neq6s$ and $0<\mu\leq4s$. Then there exist $\varrho_0 \in\dot{H}^s(\mathbb{R}^n)$ and a family of scalars $\{c_{a}^{i}\}$ which solve \eqref{coefficients11} such that for $\delta$ is small enough, there holds
		\begin{equation*}
			\|\varrho_{0}\|_{\dot{H}^s(\mathbb{R}^n)}\leq c\mathscr{Q}^{\min\{\frac{\mu}{n-2s},1\}}.
		\end{equation*}
	\end{lem}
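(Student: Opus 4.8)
The plan is to recast the system \eqref{coefficients11}, equivalently \eqref{AA}, as a fixed-point problem and solve it by the Banach contraction principle on a small ball in $\dot{H}^s(\mathbb{R}^n)$, using the linear theory already available from Lemma \ref{chenwending}. Indeed, Lemma \ref{chenwending} provides the bounded linear map $\mathcal{L}_\delta$ which inverts the operator $(-\Delta)^s-I_{n,\mu,s}[\sigma,\cdot]$ on the closed subspace cut out by the orthogonality conditions $\int I_{n,\mu,s}[W_i,\mathcal{Z}^a_i]\phi=0$, with the estimate $\|\mathcal{L}_\delta(h)\|_{\dot{H}^s(\mathbb{R}^n)}\leq C\|h\|_{L^{(2_s^{\ast})^{\prime}}(\mathbb{R}^n)}$ and $|c_a^i|\leq C\delta\|h\|_{L^{(2_s^{\ast})^{\prime}}(\mathbb{R}^n)}$. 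Recalling $g$ and $N(\phi)$ from \eqref{u-1}--\eqref{u-2}, solving \eqref{AA} (with the scalars $c_a^i$ then determined by $\mathcal{L}_\delta$) is equivalent to finding a fixed point of
\[
\mathcal{T}(\phi):=\mathcal{L}_\delta\big(g+N(\phi)\big),
\]
so it suffices to show that $\mathcal{T}$ is a contraction on $B:=\{\phi\in\dot{H}^s(\mathbb{R}^n):\ \|\phi\|_{\dot{H}^s(\mathbb{R}^n)}\leq C_0\mathscr{Q}^{\min\{\frac{\mu}{n-2s},1\}}\}$ intersected with that subspace, for a large constant $C_0$ and $\delta$ small.

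Two ingredients are needed. First, the size of the error term $g$: by Lemma \ref{qiegao}, $\|g\|_{L^{(2_s^{\ast})^{\prime}}(\mathbb{R}^n)}\leq C\mathscr{Q}^{\min\{\frac{\mu}{n-2s},1\}}$. Second, a superlinear control of the nonlocal nonlinearity $N(\phi)$: since $N$ collects precisely the terms of \eqref{coefficients11} that are at least quadratic in $\phi$, a combination of the Hardy--Littlewood--Sobolev inequality, Hölder's inequality and the fractional Sobolev embedding $\dot{H}^s(\mathbb{R}^n)\hookrightarrow L^{2_s^{\ast}}(\mathbb{R}^n)$ yields, in the regime $n=6s,\ \mu\in(0,4s)$ or $n\neq6s,\ \mu\in(0,4s]$ (so that $p_s\geq2$ and all the intervening Lebesgue exponents are admissible), estimates of the form
\[
\|N(\phi)\|_{L^{(2_s^{\ast})^{\prime}}(\mathbb{R}^n)}\lesssim\|\phi\|_{\dot{H}^s(\mathbb{R}^n)}^{1+\theta},\qquad
\|N(\phi_1)-N(\phi_2)\|_{L^{(2_s^{\ast})^{\prime}}(\mathbb{R}^n)}\lesssim\big(\|\phi_1\|_{\dot{H}^s(\mathbb{R}^n)}+\|\phi_2\|_{\dot{H}^s(\mathbb{R}^n)}\big)^{\theta}\|\phi_1-\phi_2\|_{\dot{H}^s(\mathbb{R}^n)}
\]
on bounded subsets of $\dot{H}^s(\mathbb{R}^n)$, for some $\theta=\theta(n,s,\mu)>0$. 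The excluded case $n=6s,\ \mu=4s$ is exactly the borderline $p_s=2$ where a logarithmic correction appears; it is handled separately in Section \ref{stabilitysection6}.

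Granting these bounds, the contraction argument is standard. On $B$ one has $\|\mathcal{T}(\phi)\|_{\dot{H}^s(\mathbb{R}^n)}\leq C\|g\|_{L^{(2_s^{\ast})^{\prime}}(\mathbb{R}^n)}+C\|N(\phi)\|_{L^{(2_s^{\ast})^{\prime}}(\mathbb{R}^n)}\leq C\mathscr{Q}^{\min\{\frac{\mu}{n-2s},1\}}+C\big(C_0\mathscr{Q}^{\min\{\frac{\mu}{n-2s},1\}}\big)^{1+\theta}$, so for $C_0$ large and $\delta$ (hence $\mathscr{Q}$) small, $\mathcal{T}$ maps $B$ into itself; likewise $\|\mathcal{T}(\phi_1)-\mathcal{T}(\phi_2)\|_{\dot{H}^s(\mathbb{R}^n)}\leq C\big(\|\phi_1\|_{\dot{H}^s(\mathbb{R}^n)}+\|\phi_2\|_{\dot{H}^s(\mathbb{R}^n)}\big)^{\theta}\|\phi_1-\phi_2\|_{\dot{H}^s(\mathbb{R}^n)}\leq\tfrac12\|\phi_1-\phi_2\|_{\dot{H}^s(\mathbb{R}^n)}$ on $B$ for $\delta$ small. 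The Banach fixed-point theorem then gives a unique $\varrho_0\in B$ with $\varrho_0=\mathcal{T}(\varrho_0)$, which, together with the scalars $c_a^i$ produced by $\mathcal{L}_\delta$, solves \eqref{coefficients11}, and by construction $\|\varrho_0\|_{\dot{H}^s(\mathbb{R}^n)}\leq C_0\mathscr{Q}^{\min\{\frac{\mu}{n-2s},1\}}$, as claimed. The main obstacle is the second ingredient: because $N(\phi)$ involves a double convolution with $|x|^{-\mu}$ and non-integer powers $p_s-1,\ p_s-2$, one must split $N(\phi)$ into several pieces, bound each convolution factor in a judiciously chosen Lebesgue space by Hardy--Littlewood--Sobolev, and then balance the Hölder exponents so the result lands in $L^{(2_s^{\ast})^{\prime}}(\mathbb{R}^n)$ — and it is precisely in verifying the admissibility of all these exponents that the hypothesis on $(n,\mu,s)$, in particular the exclusion of $n=6s,\ \mu=4s$, is used; the linear inversion and the iteration itself are then routine, as in \cite{delPino-1,DSW21,C-K-L-24,DHP}.
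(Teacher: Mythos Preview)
Your proposal is correct and follows essentially the same fixed-point argument as the paper: rewrite \eqref{coefficients11} as $\phi=\mathcal{L}_\delta(g+N(\phi))$, use Lemma \ref{qiegao} for $\|g\|_{L^{(2_s^{\ast})^{\prime}}}\lesssim\mathscr{Q}^{\min\{\frac{\mu}{n-2s},1\}}$ and superlinear bounds on $N(\phi)$ via HLS/H\"older/Sobolev (the paper takes $\theta=1$ when $\mu=4s$ and $\theta=\min\{p_s-2,1\}$ when $0<\mu<4s$, invoking Lemma \ref{gaojiexiang}), then contract on a ball of radius $\approx\mathscr{Q}^{\min\{\frac{\mu}{n-2s},1\}}$. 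One small correction: the exclusion of $n=6s,\ \mu=4s$ is not due to the exponents in the $N(\phi)$ estimate (indeed $p_s=2$ whenever $\mu=4s$, and the paper handles $\mu=4s,\ n\neq6s$ here without difficulty) but rather enters through Lemma \ref{qiegao}, where the bound on $g$ acquires a logarithmic loss in that borderline case.
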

	\begin{proof}
		Observe that \eqref{coefficients11} is equivalent to
		\begin{equation*}
			\phi=\mathcal{A}(\phi):=\mathcal{L}_{\delta}(N(\phi))+\mathcal{L}_{\delta}(g),
		\end{equation*}
		where $\mathcal{L}_\delta$ is defined in Lemma \ref{chenwending}. In the following, we are going to show that $\mathcal{A}$ is a contraction mapping. First we claim that
		\begin{itemize}
			\item if $\mu=4s$, then
			\begin{equation}\label{kele1}
				\|N(\phi)\|_{L^{(2_s^{\ast})^{\prime}}(\mathbb{R}^n)}\leq L_0\|\phi\|_{\dot{H}^s(\mathbb{R}^n)}^2;
			\end{equation}
			\item if $0<\mu<4s$, then
			\begin{equation}\label{kele2}
				\|N(\phi)\|_{L^{(2_s^{\ast})^{\prime}}(\mathbb{R}^n)}\leq L_0\|\phi\|_{\dot{H}^s(\mathbb{R}^n)}^{\min\{p_s-1,2\}}.
			\end{equation}
		\end{itemize}
		Indeed, for $\mu=4s$ we have
		\begin{equation*}
			N(\phi)
			=2\big(|x|^{-4s}\ast\sigma\phi\big)\phi
			+\big(|x|^{-4s}\ast\phi^2\big)\sigma
			+\big(|x|^{-4s}\ast\phi^2\big)\phi
		\end{equation*}
		By the H\"older inequality, Sobolev inequlity and Hardy-Littlewood-Sobolev inequality, we can derive that \eqref{kele1} holds true.
		For $0<\mu<4s$, we can apply Lemma \ref{gaojiexiang} to achieve \eqref{kele2} as $\delta$ is small enough. Then we may choose $L_0>0$ sufficiently large in \eqref{kele1} and \eqref{kele2} to guarantee that
		\begin{equation*}
			\big\|\mathcal{L}_\delta(g)\big\|_{\dot{H}^s(\mathbb{R}^n)}\leq L_0\big\|g\big\|_{L^{(2_s^{\ast})^{\prime}}(\mathbb{R}^n)}.
		\end{equation*}
		Moreover it follows from Lemma \ref{qiegao} that there exists a positive constant $\zeta_0$ such that
		\begin{equation*}
			\big\|g\big\|_{L^{(2_s^{\ast})^{\prime}}(\mathbb{R}^n)}
			\leq\zeta_0\mathscr{Q}^{\min\{\frac{\mu}{n-2s},1\}}.
		\end{equation*}
		
		\par Set
		\begin{equation*}
			\mathcal{E}=\Big\{w:w\in C(\mathbb{R}^n)\cap \dot{H}^s(\mathbb{R}^n),\|w\|_{\dot{H}^s(\mathbb{R}^n)}\leq (\zeta_0L_0+1)\mathscr{Q}^{\min\{\frac{\mu}{n-2s},1\}}\Big\}.
		\end{equation*}
		We will show that $\mathcal{A}$ is a contraction map from $\mathcal{E}$ to $\mathcal{E}$. When $\mu=4s$, choose $\delta$ sufficiently small such that $L_0^2(\zeta_0L_0+1)^2\mathscr{Q}^{\min\{\frac{\mu}{n-2s},1\}}\leq1$, then we have
		\begin{equation*}
			\begin{split}
				\|\mathcal{A}(\phi)\|_{\dot{H}^s(\mathbb{R}^n)}
				&\leq L_0\|N(\phi)\|_{L^{(2_s^{\ast})^{\prime}}(\mathbb{R}^n)}
				+L_0\|g\|_{L^{(2_s^{\ast})^{\prime}}(\mathbb{R}^n)}            \\
				&\leq L_0^2(\zeta_0L_0+1)^2\mathscr{Q}^{2\min\{\frac{\mu}{n-2s},1\}}+\zeta_0L_0\mathscr{Q}^{\min\{\frac{\mu}{n-2s},1\}}
				\leq(\zeta_0L_0+1)\mathscr{Q}^{\min\{\frac{\mu}{n-2s},1\}}.
			\end{split}
		\end{equation*}
		For $0<\mu<4s$, choosing $\delta>0$ sufficiently small such that $L_0^2(\zeta_0L_0+1)^{\min\{p_s-1,2\}}\mathscr{Q}^{\min\{\frac{\mu}{n-2s},1\}}\leq1$.
		Then we get
		\begin{equation*}
			\begin{split}
				\|\mathcal{A}(\phi)\|_{\dot{H}^s(\mathbb{R}^n)}
				&\leq L_0^2(\zeta_0L_0+1)^{\min\{p_s-1,2\}}\mathscr{Q}^{\min\{\frac{\mu}{n-2s},1\}}+\zeta_0L_0\mathscr{Q}^{\min\{\frac{\mu}{n-2s},1\}}    \\
				&\leq(\zeta_0L_0+1)\mathscr{Q}^{\min\{\frac{\mu}{n-2s},1\}}.
			\end{split}
		\end{equation*}
		Hence, $\mathcal{A}$ maps $\mathcal{E}$ to $\mathcal{E}$.
		On the other hand, taking $\phi_1$ and $\phi_2$ in $\mathcal{E}$, for $\mu=4s$ we see that
		\begin{equation*}
			\|\mathcal{A}(\phi_1)-\mathcal{A}(\phi_2)\|_{\dot{H}^s(\mathbb{R}^n)}
			\leq L_0\|N(\phi_1)-N(\phi_2)\|_{L^{(2_s^\ast)^\prime}(\mathbb{R}^n)}.
		\end{equation*}
		The main goal, therefore, is to estimate each term on the right-hand side.
		\begin{equation*}
			\begin{split}
				|N(\phi_1)-N(\phi_2)|
				\lesssim&\big(|x|^{-4s}\ast\sigma|\phi_1|\big)|\phi_1-\phi_2|
				+\big(|x|^{-4s}\ast\sigma|\phi_1-\phi_2|\big)|\phi_2|
				+\big(|x|^{-4s}\ast|\phi_1-\phi_2||\phi_1+\phi_2|\big)\sigma    \\
				&+\big(|x|^{-4s}\ast\phi_1^2\big)|\phi_1-\phi_2|
				+\big(|x|^{-4s}\ast|\phi_1-\phi_2||\phi_1+\phi_2|\big)|\phi_2|,
			\end{split}
		\end{equation*}
		which implies
		\begin{equation*}
			\|\mathcal{A}(\phi_1)-\mathcal{A}(\phi_2)\|_{\dot{H}^s(\mathbb{R}^n)}
			\leq C(\zeta_0L_0+1)\mathscr{Q}^{\min\{\frac{\mu}{n-2s},1\}}\|\phi_1-\phi_2\|_{\dot{H}^s(\mathbb{R}^n)}.
		\end{equation*}
		For $0<\mu<4s$ we exploit the estimates from \cite[Lemma 6.3]{YZ25}, together with Lemma \ref{gaojiexiang}, to obtain
		\begin{equation*}
			\|\mathcal{A}(\phi_1)-\mathcal{A}(\phi_2)\|_{\dot{H}^s(\mathbb{R}^n)}
			\leq C(\zeta_0L_0+1)\mathscr{Q}^{\min\{\frac{\mu}{n-2s},1\}(p_s-2)}\|\phi_1-\phi_2\|_{\dot{H}^s(\mathbb{R}^n)}.
		\end{equation*}
		Therefore, we can deduce that
		\begin{equation*}
			\|\mathcal{A}(\phi_{1})-\mathcal{A}(\phi_{2})\|_{\dot{H}^s(\mathbb{R}^n)}
			\leq \frac{1}{2}\|\phi_{1}-\phi_{2}\|_{\dot{H}^s(\mathbb{R}^n)},
		\end{equation*}
		provide that $\delta>0$ is small enough which means that $\mathcal{A}$ is a contraction mapping from $\mathcal{E}$ into itself. Consequently, there exists a unique $\varrho_0\in\mathcal{E}$ such that $\varrho_0=\mathcal{A}(\varrho_0)$. Moreover, it follows from Lemma \ref{qiegao} and Lemma \ref{chenwending} that
		\begin{equation*}
			\big\|\varrho_{0}\big\|_{\dot{H}^s(\mathbb{R}^n)}
			\leq c\mathscr{Q}^{\min\{\frac{\mu}{n-2s},1\}},
		\end{equation*}
		which concludes the proof.
	\end{proof}

	\section{The existence of first approximation for $n=6s$ and $\mu=4s$}\label{stabilitysection6}
	To show the existence of first approximation when $n=6s$ and $\mu=4s$, we carry out the Lyapunov-Schmidt reduction argument in a weighted space.
	\subsection{Constructing the weighted space and norm}
	For simplicity of notation, we define the following weight functions:
	\begin{equation*}
		\begin{split}
			&s_{i,1}(x,\mathscr{R}):=\frac{\lambda_i^{2s}}{\tau(z_i)^{2s}\mathscr{R}^{4s}}\chi_{\{|z_i|\leq\mathscr{R}^2\}},\hspace{6mm}
			s_{i,2}(x,\mathscr{R})=\frac{\lambda_{i}^{2s}}{\tau(z_i)^{3s}\mathscr{R}^{2s}}\chi_{\{|z_i|>\mathscr{R}^2\}},\\&
			t_{i,1}(x,\mathscr{R}):=\frac{\lambda_i^{4s}}{\tau(z_i)^{4s}\mathscr{R}^{4s}}\chi_{\{|z_i|\leq\mathscr{R}^2\}},\hspace{6mm}
			t_{i,2}(x,\mathscr{R}):=\frac{\lambda_{i}^{4s}}{\tau(z_i)^{5s}\mathscr{R}^{2s}}\chi_{\{|z_i|>\mathscr{R}^2\}}.
		\end{split}
	\end{equation*}
	Here $\tau(z_i)=(1+|z_i|^2)^{1/2}$ with $z_i=\lambda_i(x-\xi_i)$.
	For the functions $\phi$ and $g$,
	we define the following weighted $\|\cdot\|_{\ast}$ and $\|\cdot\|_{\ast\ast}$ norms that will help us to capture the behavior of the interaction term $g$.
	\begin{Def}\label{st-11}
		For $n=6s$ and $\mu=4s$, define the norm $\|\cdot\|_{\ast}$ as
		\begin{equation}\label{f-ai}
			\|\phi\|_{\ast}=\sup_{x\in\mathbb{R}^{6s}}\big|\phi(x)\big|S^{-1}(x)
		\end{equation}
		and the norm $\|\cdot\|_{\ast\ast}$ as
		\begin{equation}\label{h}
			\|g\|_{\ast\ast}=\sup_{x\in\mathbb{R}^{6s}}\big|g(x)\big|T^{-1}(x)
		\end{equation}
		with the weights
		\begin{equation*}
			S(x)=\sum_{i=1}^{\kappa}\big[s_{i,1}(x,\mathscr{R})
			+s_{i,2}(x,\mathscr{R})\big]\hspace{4mm}\text{and}\hspace{4mm}T(x)=\sum_{i=1}^{\kappa}\big[t_{i,1}(x,\mathscr{R})
			+t_{i,2}(x,\mathscr{R})\big].
		\end{equation*}
	\end{Def}
	Due to the above norms, we will find a function $\phi$ which solvs the following system
	\begin{equation}\label{c1}
		\left\{\begin{array}{l}
			\displaystyle (-\Delta)^s \phi-I_{6s,4s,s}[\sigma,\phi]=g+	\sum_{i=1}^{\kappa}\sum_{a=1}^{n+1}c_{a}^{i}I_{6s,4s,s}[W_{i},\mathcal{Z}_i^a]\hspace{4mm}\mbox{in}\hspace{2mm} \mathbb{R}^{6s},\\
			\displaystyle 	\int I_{6s,4s,s}[W_{i},\mathcal{Z}_i^a]\phi=0,\hspace{4mm}i=1,\cdots, \kappa; ~a=1,\cdots,n+1,
		\end{array}
		\right.
	\end{equation}
	where $\{c_{a}^{i}\}$ is a family of scalars and $\mathcal{Z}_i^a$ are defined in \eqref{qta}. To this end, our purpose in what follows is to prove the following result.
	\begin{lem}\label{ww10}
		Assume that $n=6s$ and $\mu=4s$. There exist positive constants $\delta_0$ and $C$, independent of $\delta$, such that for all $\delta\leq\delta_0$ and all $g$ with $\|g\|_{\ast\ast}<\infty$, the system \eqref{c1} has a unique solution $\phi \equiv \mathcal{L}_\delta(g)$. Besides,
		\begin{equation*}
			\|\mathcal{L}_\delta(g)\|_{\ast}\leq C\|g\|_{\ast\ast},\quad |c_a^i|\leq C\delta\|g\|_{\ast\ast}.
		\end{equation*}
	\end{lem}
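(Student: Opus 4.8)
The plan is to carry out the usual Lyapunov--Schmidt linear theory for the projected linearized operator, adapted to the weighted spaces $(\|\cdot\|_{\ast},\|\cdot\|_{\ast\ast})$ of Definition \ref{st-11}, in the spirit of \cite{DSW21,DHP,delPino-1}; recall that in the regime $n=6s$, $\mu=4s$ one has $p_s=2$, so $I_{6s,4s,s}[\sigma,\phi]=2\big(|x|^{-4s}\ast\sigma\phi\big)\sigma+\big(|x|^{-4s}\ast\sigma^2\big)\phi$. The core of the proof is the a priori estimate: there exist $\delta_0,C>0$ such that for every $\delta\le\delta_0$, every $\delta$-interacting family $\sigma=\sum_{i=1}^{\kappa}W_i$ and every solution $(\phi,\{c_a^i\})$ of \eqref{c1}, one has $\|\phi\|_{\ast}\le C\|g\|_{\ast\ast}$. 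I would argue by contradiction: suppose there are sequences $\delta_m\to0$ (equivalently $\mathscr{R}_m\to\infty$), $\tfrac1m$-interacting bubbles $W_i^{(m)}$, functions $g_m$ with $\|g_m\|_{\ast\ast}\to0$ and $\phi_m$ with $\|\phi_m\|_{\ast}=1$. First one records the \emph{preliminary multiplier bound} $|c_a^i|\lesssim\delta\big(\|g\|_{\ast\ast}+\|\phi\|_{\ast}\big)$, obtained by testing \eqref{c1} against $\mathcal{Z}_j^b$, using the orthogonality conditions, the almost-diagonal invertibility of the matrix $\big(\int I_{6s,4s,s}[W_i,\mathcal{Z}_i^a]\mathcal{Z}_j^b\big)$ from Lemma \ref{armidale}, and the bounds $\int T\,W_j\lesssim\delta$, $\int|I_{6s,4s,s}[\sigma,\mathcal{Z}_j^b]-I_{6s,4s,s}[W_j,\mathcal{Z}_j^b]|\,|\phi|\lesssim\delta\|\phi\|_{\ast}$; hence $|c_{a,m}^i|\to0$ along our sequences.

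Next, writing $\phi_m=(-\Delta)^{-s}\big[I_{6s,4s,s}[\sigma_m,\phi_m]+g_m+\sum_{i,a}c_{a,m}^i I_{6s,4s,s}[W_i^{(m)},\mathcal{Z}_{i,m}^a]\big]$, I would use the pointwise weighted comparison estimates for the fractional Riesz potential --- of the form $\big|(-\Delta)^{-s}[T]\big|\lesssim S$ and a gain $\big|(-\Delta)^{-s}\big(I_{6s,4s,s}[\sigma,\phi]\,\chi_{\{\,\mathrm{dist\ from\ all\ bubbles\ large}\,\}}\big)\big|\lesssim o(1)\,S\,\|\phi\|_{\ast}$ --- which are proved by splitting $\mathbb{R}^{6s}$ into the regions $\{|z_i|\le\mathscr{R}^2\}$ and $\{|z_i|>\mathscr{R}^2\}$ around each bubble and estimating exactly as in Lemma \ref{B4-1} (these are among the technical computations deferred to the appendix). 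Together with $|c_{a,m}^i|\to0$ this forces $\phi_m(x)S^{-1}(x)\to0$ uniformly outside a finite union of bounded neighbourhoods, in rescaled coordinates, of the concentration points, so that the supremum defining $\|\phi_m\|_{\ast}=1$ is attained at a point $x_m$ lying within bounded rescaled distance of some bubble $W_{i_m}^{(m)}$. Rescaling $\widetilde\phi_m(y):=\mathscr{R}_m^{4s}\lambda_{i_m}^{-\frac{n-2s}{2}}\phi_m\big(\xi_{i_m}+y/\lambda_{i_m}\big)$ as dictated by the weight $S$, and passing to the limit (the remaining bubbles drift to infinity by $\delta$-interaction, and one uses the compactness from Lemma \ref{niunai}), one obtains a bounded nonzero solution $\widetilde\phi$ of the single-bubble linearized equation $(-\Delta)^s\widetilde\phi=I_{n,\mu,s}[W,\widetilde\phi]$ satisfying the orthogonality conditions inherited from $\int I_{6s,4s,s}[W_i^{(m)},\mathcal{Z}_{i,m}^a]\phi_m=0$. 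By the nondegeneracy result of Theorem \ref{prondgr}, $\widetilde\phi$ is a linear combination of the $\mathcal{Z}^a$, and the orthogonality forces $\widetilde\phi\equiv0$, contradicting $|\widetilde\phi|\ge\tfrac12$ at the limit of the $x_m$.

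With the a priori estimate in hand, existence and uniqueness follow by a standard Fredholm argument: restricting to $H_\perp:=\{\phi\in\dot H^s(\mathbb{R}^n):\int I_{6s,4s,s}[W_i,\mathcal{Z}_i^a]\phi=0\ \forall\,i,a\}$, the system \eqref{c1} becomes $\phi-\mathcal{K}\phi=\widehat g$, where $\mathcal{K}$ is the composition of $(-\Delta)^{-s}$, the projection onto $H_\perp$, and $\phi\mapsto I_{6s,4s,s}[\sigma,\phi]$, which is compact by Lemma \ref{niunai}; the homogeneous problem has only the trivial solution (put $g=0$ in the a priori estimate), so \eqref{c1} is uniquely solvable, and an elliptic-regularity argument in the weighted norm shows $\|g\|_{\ast\ast}<\infty\Rightarrow\|\phi\|_{\ast}<\infty$, whence $\|\mathcal{L}_\delta(g)\|_{\ast}\le C\|g\|_{\ast\ast}$. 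Finally, the sharp multiplier bound $|c_a^i|\le C\delta\|g\|_{\ast\ast}$ is obtained by feeding $\|\phi\|_{\ast}\le C\|g\|_{\ast\ast}$ back into the preliminary multiplier bound of the first paragraph.

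I expect the main obstacle to be the weighted $L^\infty$ blow-up scheme of Step~1, and within it the pointwise comparison estimates for the fractional Poisson kernel against the weights $S$ and $T$: showing that $(-\Delta)^{-s}$ turns $T$ into something controlled by $S$, and that the nonlocal term $I_{6s,4s,s}[\sigma,\phi]$ yields an $o(1)$ gain away from the bubbles, is delicate because in the critical regime $n=6s$, $\mu=4s$ both the Riesz kernel and the bubbles decay only algebraically, so the argument hinges on a careful region-splitting around each $\{|z_i|\le\mathscr{R}^2\}$ versus $\{|z_i|>\mathscr{R}^2\}$ and precise bookkeeping of the powers of $\mathscr{R}$ --- precisely the content of the appendix computations that makes the whole scheme close up.
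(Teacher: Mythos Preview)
Your overall scheme matches the paper's: contradiction with $\|\phi_m\|_\ast=1$, Green's representation for $(-\Delta)^{-s}$, pointwise weighted estimates, blow-up and nondegeneracy, then Fredholm alternative; and your multiplier bound is exactly Lemma \ref{cll}. However, there is a genuine gap in your blow-up step.

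You write that after rescaling around $W_{i_m}^{(m)}$ ``the remaining bubbles drift to infinity by $\delta$-interaction''. This is false in general: for a \emph{bubble tower} one has $\lambda_j^{(m)}/\lambda_{i_m}^{(m)}\to\infty$ while $\xi_{i_mj}^{(m)}:=\lambda_{i_m}^{(m)}(\xi_j^{(m)}-\xi_{i_m}^{(m)})$ stays bounded, so in the rescaled coordinates around $W_{i_m}^{(m)}$ the bubble $W_j^{(m)}$ does \emph{not} drift away but concentrates at a finite point $\xi_{i_mj}^{(\infty)}$. Consequently $\bar\sigma_m$ does not converge to $W[0,1]$ on compact sets; it blows up at the points $\{\xi_{i_mj}^{(\infty)}:j\in\mathscr{H}(i_m)\}$, and the rescaled $\bar\phi_m$ is only uniformly bounded \emph{away} from those points, with a singular upper bound $\sum_j|z-\xi_{i_mj}^{(\infty)}|^{-3s}$ near them. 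The paper handles this by introducing the \emph{tree structure} of the bubbles, decomposing $\mathbb{R}^{6s}$ into an exterior region $\mathcal{C}_{ext,M,1}$, a core region $\mathcal{C}_{core,M,2}$ (annuli around each bubble excising small balls around the descendants' concentration points), and a neck region $\mathcal{C}_{neck,M,3}$; the exterior and neck regions are treated by direct Green-function estimates giving $|\phi_m|/S<1$ there, while on the core one passes to the limit in $C^0_{loc}\big(\mathbb{R}^{6s}\setminus\{\xi_{i_mj}^{(\infty)}\}\big)$, proves a removable-singularity result so that the limit $\bar\phi_\infty$ extends to a bounded entire solution, and only then invokes nondegeneracy. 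Without this tree/neck machinery your limit equation is ill-defined at the concentration points and the contradiction does not close.
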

	
	The key point here is to prove the following priori estimate for $\|\phi\|_{\ast}$ in Lemma \ref{estimate2} and estimate of coefficients $c_j^b$ in Lemma \ref{cll}. Once this is done, a well-known standard argument (cf. \cite{delPino-1}) shows that Lemma \ref{ww10} holds true. However, for clarity and coherence, the proof of Lemma \ref{estimate2} will be deferred in Subsection \ref{wutang}.
	\begin{lem}\label{estimate2}
		Assume that $n=6s$ and $\mu=4s$. Let $\phi$ be the solution to problem \eqref{c1}. Then it holds that
		$$\|\phi\|_{\ast}\leq C \|g\|_{\ast\ast}, $$
		where the norms $\|\cdot\|_{\ast}$ and $\|\cdot\|_{\ast\ast}$ are the same as in Definition \ref{st-11}.
	\end{lem}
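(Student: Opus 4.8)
The plan is to argue by contradiction via a rescaling (blow-up) analysis around each concentration point, in the spirit of \cite{delPino-1,DSW21,DHP}. Suppose the estimate fails. Then there exist $\tfrac1m$-interacting families $\{W_i^{(m)}=W[\xi_i^{(m)},\lambda_i^{(m)}]\}_{i=1}^{\kappa}$, functions $g_m$ with $\|g_m\|_{\ast\ast}\to0$, scalars $c_{a,m}^i$, and solutions $\phi_m$ of \eqref{c1} with $\|\phi_m\|_{\ast}=1$. Since $n=6s$ and $\mu=4s$ force $p_s=2$, the operator simplifies to
$(-\Delta)^s\phi_m-2\big(|x|^{-4s}\ast(\sigma_m\phi_m)\big)\sigma_m-\big(|x|^{-4s}\ast\sigma_m^2\big)\phi_m$,
which streamlines the convolution bookkeeping below. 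First I would record the coefficient bound
$\sum_{i,a}|c_{a,m}^i|\lesssim\|g_m\|_{\ast\ast}+\mathscr{Q}^{\min\{\mu/(n-2s),1\}}\|\phi_m\|_{\ast}$, which is Lemma~\ref{cll}: one tests \eqref{c1} against $\mathcal{Z}_j^b$, uses $|\mathcal{Z}_j^b|\lesssim W_j$, the orthogonality relations in \eqref{c1}, Lemma~\ref{armidale} for the almost-diagonal Gram matrix $\int I_{n,\mu,s}[W_i,\mathcal{Z}_i^a]\mathcal{Z}_j^b$, and Lemma~\ref{B4-1} for the nonlocal cross terms. In particular $\sum_{i,a}|c_{a,m}^i|=o(1)$.

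Next comes the pointwise estimate. Writing
$\phi_m=c_{n,s}\,|x|^{-(n-2s)}\ast\big(2(|x|^{-4s}\ast(\sigma_m\phi_m))\sigma_m+(|x|^{-4s}\ast\sigma_m^2)\phi_m+g_m+\sum_{i,a}c_{a,m}^iI_{n,\mu,s}[W_i^{(m)},\mathcal{Z}_{i,m}^a]\big)$,
I would bound each piece against the weight $S$. The heart of this step is a family of weighted convolution inequalities, of the type $|x|^{-(n-2s)}\ast T(x)\lesssim S(x)$ and $|x|^{-(n-2s)}\ast\big[(|x|^{-4s}\ast(\sigma_m S))\sigma_m\big]\lesssim\varepsilon_{\mathscr{R}}\,S(x)$ with $\varepsilon_{\mathscr{R}}\to0$ as $\mathscr{R}\to\infty$; these are exactly what dictated the cut-offs at $|z_i|\lessgtr\mathscr{R}^2$ in the definitions of $s_{i,1},s_{i,2},t_{i,1},t_{i,2}$, and they follow from Lemma~\ref{B4-1}, the elementary potential bounds \eqref{w4n}--\eqref{xwxw}, and a region decomposition ($|z_i|\le1$, $1<|z_i|\le\mathscr{R}^2$, $|z_i|>\mathscr{R}^2$, and the region far from all centres). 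Feeding these in, and absorbing the term carrying $\varepsilon_{\mathscr{R}}+o(1)$, one obtains for every fixed $\rho$
\[
\|\phi_m\|_{\ast}\lesssim\|g_m\|_{\ast\ast}+\big(\varepsilon_{\mathscr{R}}+o(1)\big)\|\phi_m\|_{\ast}+\sum_{i=1}^{\kappa}\sup_{|\lambda_i^{(m)}(x-\xi_i^{(m)})|\le\rho}\big|\phi_m(x)\big|S(x)^{-1},
\]
so that, after absorption and using $\|\phi_m\|_{\ast}=1$, the supremum defining $\|\phi_m\|_{\ast}$ is, up to $o(1)$, attained at points $x_m$ within bounded rescaled distance of one centre, say $\xi_{i_0}^{(m)}$.

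Then I would blow up: set $\widehat\phi_m(y):=\mathscr{R}^{4s}(\lambda_{i_0}^{(m)})^{-2s}\phi_m\big(\xi_{i_0}^{(m)}+y/\lambda_{i_0}^{(m)}\big)$, so that $\|\widehat\phi_m\|_{L^\infty(B_\rho)}\approx1$ while $|\widehat\phi_m(y)|\lesssim\tau(y)^{-2s}$ globally by the pointwise bound. Because the family is $\tfrac1m$-interacting, the contributions of the other bubbles decouple, and $\widehat\phi_m$ solves a rescaled equation whose right-hand side tends to $0$ in the relevant norm; by the regularity theory for $(-\Delta)^s$ and Arzelà--Ascoli, $\widehat\phi_m\to\phi_\infty$ locally uniformly, where $\phi_\infty$ solves the limiting linearized Hartree equation $(-\Delta)^s\phi_\infty=2(|x|^{-4s}\ast(W\phi_\infty))W+(|x|^{-4s}\ast W^2)\phi_\infty$ with $W=W[0,1]$, satisfies $|\phi_\infty(y)|\lesssim\tau(y)^{-2s}$, hence lies in $\dot{H}^s(\mathbb{R}^n)$, and inherits from \eqref{c1} the orthogonality $\int I_{n,\mu,s}[W,\mathcal{Z}^a]\phi_\infty=0$ for $a=1,\dots,n+1$, i.e. $\phi_\infty\perp\mathcal{Z}^a$ in $\dot H^s$. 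By Theorem~\ref{prondgr}, the kernel of this operator is spanned exactly by the $\mathcal{Z}^a$ (the rescaled $\partial_\lambda W,\partial_{\xi_1}W,\dots,\partial_{\xi_n}W$), so $\phi_\infty\equiv0$, contradicting $|\widehat\phi_m(y_m)|S(\cdot)^{-1}\to$ a positive constant. This contradiction proves the estimate.

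The main obstacle is the second paragraph: verifying the weighted convolution inequalities — both the single Riesz potential of $T$ against $S$ and the doubly nonlocal term generated by $I_{n,\mu,s}[\sigma,\cdot]$ — with the crucial gain $\varepsilon_{\mathscr{R}}\to0$, since this is precisely what makes the blow-up argument close and what forced the delicate shape of the weights. A secondary technical point is extracting the decay $|\widehat\phi_m(y)|\lesssim\tau(y)^{-2s}$ uniformly in $m$, needed both to pass to the limit and to guarantee $\phi_\infty\in\dot{H}^s(\mathbb{R}^n)$ so that Theorem~\ref{prondgr} applies.
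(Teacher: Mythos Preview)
Your contradiction/blow-up skeleton is the right one, and the use of Lemma~\ref{cll}, the Green representation, and Theorem~\ref{prondgr} are all correct ingredients. However, there is a genuine gap in the multi-bubble case: you implicitly assume that once $x_m$ lies within bounded rescaled distance of a single centre $\xi_{i_0}^{(m)}$, the remaining bubbles ``decouple'' under the blow-up. This is false when the family contains \emph{bubble towers}, i.e.\ indices $j$ with $\lambda_j^{(m)}/\lambda_{i_0}^{(m)}\to\infty$ and $\lambda_{i_0}^{(m)}|\xi_j^{(m)}-\xi_{i_0}^{(m)}|$ bounded. For such $j$ the rescaled bubble $W[\xi_{i_0 j}^{(m)},\lambda_j^{(m)}/\lambda_{i_0}^{(m)}]$ concentrates at the point $\xi_{i_0 j}^{(\infty)}$ rather than vanishing, so near that point $S(x)$ is dominated by $s_{j,\cdot}$ and your claimed uniform bound $|\widehat\phi_m(y)|\lesssim\tau(y)^{-2s}$ breaks down; the limit $\phi_\infty$ is a priori only defined on $\mathbb{R}^{6s}\setminus\{\xi_{i_0 j}^{(\infty)}:j\in\mathscr{H}(i_0)\}$ and may blow up like $|y-\xi_{i_0 j}^{(\infty)}|^{-3s}$ there. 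Likewise your weighted convolution inequality with gain $\varepsilon_{\mathscr{R}}\to0$ is not available uniformly: the paper only obtains $|\phi_m|\le\tfrac16 S$ on the \emph{exterior} region $\{|z_i|>M\ \forall i\}$, with the gain coming from $\bar S/S\le 2M^{-s}\log M$ there, not globally.

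What the paper actually does to close this is: (i) introduce a tree partial order $\prec$ on the bubbles and the descendant sets $\mathscr{H}(i)$; (ii) decompose $\mathbb{R}^{6s}$ into exterior, core, and \emph{neck} regions $\mathcal{C}_{ext,M,1}\cup\mathcal{C}_{core,M,2}\cup\mathcal{C}_{neck,M,3}$; (iii) in the core region, perform the blow-up you describe but allow the limit $\bar\phi_\infty$ to have isolated singularities at the descendant points $\xi_{i_0 j}^{(\infty)}$, and then run a bootstrap/removable-singularity argument via the Green representation to upgrade $|\bar\phi_\infty|\lesssim\sum_j|z-\xi_{i_0 j}^{(\infty)}|^{-3s}+M^{2s}$ to $\bar\phi_\infty\in L^\infty$, after which nondegeneracy applies; (iv) handle the neck region by a separate direct estimate yielding $|\phi_m|\le\tfrac23 S$. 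Without the tree structure and the neck/removable-singularity analysis, your argument only covers the case where all bubbles are essentially in a cluster configuration (mutually far apart in rescaled coordinates), which is not the generic $\delta$-interacting family.
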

	\begin{lem}\label{cll}
		Assume that $n=6s$ and $\mu=4s$. Let $\phi$, $g$ and $c_b^j$ satisfy the system \eqref{c1} and  $\sigma=\sum_{i=1}^{\kappa}W_i$ is a family of $\delta$-interacting bubbles, then there holds:\\
		$$|c_b^j|\lesssim\mathscr{Q}\|g\|_{\ast\ast}+\mathscr{Q}^2|\log\mathscr{Q}|\|\phi\|_{\ast},\hspace{3mm}j=1,\cdots, \kappa~\text{and}~b=1,\cdots,n+1.
		$$
	\end{lem}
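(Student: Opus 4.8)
The plan is to adapt the proof of Lemma \ref{wanfan} to the weighted setting. Since $n=6s$ and $\mu=4s$ force the critical exponent to be $p_s=\frac{2n-\mu}{n-2s}=2$, the operator reduces to $I_{6s,4s,s}[\sigma,\phi]=2\big(|x|^{-4s}\ast\sigma\phi\big)\sigma+\big(|x|^{-4s}\ast\sigma^2\big)\phi$, and in particular the bilinear form $(\phi,\psi)\mapsto\int I_{6s,4s,s}[\sigma,\phi]\psi$ is symmetric. First I would test \eqref{c1} against $\mathcal{Z}_j^b$, integrate, and use the orthogonality condition $\int I_{6s,4s,s}[W_i,\mathcal{Z}_i^a]\phi=0$ to obtain
\begin{equation*}
	\int I_{6s,4s,s}[\sigma,\phi]\mathcal{Z}_j^b+\int g\,\mathcal{Z}_j^b+\sum_{i=1}^{\kappa}\sum_{a=1}^{n+1}c_a^i\int I_{6s,4s,s}[W_i,\mathcal{Z}_i^a]\mathcal{Z}_j^b=0 .
\end{equation*}
By the almost-orthogonality estimates for the kernel functions $\mathcal{Z}_i^a$ (Lemma \ref{armidale}), the matrix $\big(\int I_{6s,4s,s}[W_i,\mathcal{Z}_i^a]\mathcal{Z}_j^b\big)$ equals a fixed invertible diagonal part plus an off-diagonal error of size $O(Q_{ij})$; since $Q_{ij}\le\mathscr{Q}\le\delta$ it is diagonally dominant, hence
\begin{equation*}
	|c_b^j|\lesssim\max_{j,b}\left(\left|\int I_{6s,4s,s}[\sigma,\phi]\mathcal{Z}_j^b\right|+\left|\int g\,\mathcal{Z}_j^b\right|\right).
\end{equation*}

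It then remains to estimate the two source terms. For the $g$-term, the pointwise bounds $|\mathcal{Z}_j^b|\lesssim W_j$ and $|g(x)|\le\|g\|_{\ast\ast}T(x)$ give $|\int g\,\mathcal{Z}_j^b|\lesssim\|g\|_{\ast\ast}\int T(x)W_j\,dx$; changing variables $z_i=\lambda_i(x-\xi_i)$ in each of the pieces $t_{i,1},t_{i,2}$ of $T$ shows that the diagonal piece $\int t_{j,1}W_j$ contributes $\mathscr{R}^{-4s}\int_{|z|\le\mathscr{R}^2}\tau(z)^{-8s}\,dz\lesssim\mathscr{R}^{-4s}$ (the integrand is integrable over $\mathbb{R}^{6s}$) while every remaining piece is $O(\mathscr{R}^{-8s})$, so $\int T(x)W_j\,dx\lesssim\mathscr{R}^{-4s}\approx\mathscr{Q}$ and therefore $|\int g\,\mathcal{Z}_j^b|\lesssim\mathscr{Q}\|g\|_{\ast\ast}$. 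For the $\phi$-term, I would exploit the symmetry of the bilinear form together with the orthogonality condition to rewrite it as $\int\big(I_{6s,4s,s}[\sigma,\mathcal{Z}_j^b]-I_{6s,4s,s}[W_j,\mathcal{Z}_j^b]\big)\phi$; using $p_s=2$, $\sigma-W_j=\sum_{i\ne j}W_i$ and $|\mathcal{Z}_j^b|\lesssim W_j$, this difference is pointwise dominated by a finite sum of genuine interaction terms, schematically $\big(|x|^{-4s}\ast\sigma W_j\big)\big(\sum_{i\ne j}W_i\big)$, $\big(|x|^{-4s}\ast(\sum_{i\ne j}W_i)W_j\big)W_j$ and $\big(|x|^{-4s}\ast(\sum_{i\ne j}W_i)\sigma\big)W_j$. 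Inserting $|\phi(x)|\le\|\phi\|_{\ast}S(x)$ and invoking the weighted interaction estimates collected in the appendix, each of these contributes at most $\mathscr{Q}^2|\log\mathscr{Q}|\,\|\phi\|_{\ast}$, which combined with the $g$-estimate and the diagonal-dominance bound above gives the claimed inequality for all $j$ and $b$.

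The main obstacle is the last group of estimates. In the critical regime $n=6s$, $\mu=4s$ the relevant weighted convolution integrals $\int\big(|x|^{-4s}\ast W_iW_j\big)W_k\,S(x)\,dx$ with $i\neq j$ fail to be uniformly $O(\mathscr{Q}^2)$: integrating over the neck region between two bubbles produces exactly one logarithmic factor, which is precisely why the weights $s_{i,1},s_{i,2}$ (and $t_{i,1},t_{i,2}$) are designed with the support splitting $\{|z_i|\le\mathscr{R}^2\}$ versus $\{|z_i|>\mathscr{R}^2\}$. Carrying out this splitting region by region, matching the decay rates of $S$ and $T$ to the bubble profiles, is what yields the sharp bound $\mathscr{Q}^2|\log\mathscr{Q}|$ rather than a worse power or a stray extra logarithm; these computations are the content of the relevant lemmas in the appendix and are exactly the place where the logarithmic correction in Theorem \ref{Figalli} originates.
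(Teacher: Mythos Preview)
Your proposal is correct and follows essentially the same strategy as the paper: test \eqref{c1} against $\mathcal{Z}_j^b$, invoke Lemma~\ref{armidale} for diagonal dominance, bound $\int g\,\mathcal{Z}_j^b$ via $\int TW_j\lesssim\mathscr{Q}$, and handle the $\phi$-term by subtracting off the orthogonal piece $\int I_{6s,4s,s}[W_j,\mathcal{Z}_j^b]\phi=0$.

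The one place where the paper proceeds differently is the final interaction estimate. Rather than carrying out a region-by-region analysis of integrals like $\int\big(|x|^{-4s}\ast W_iW_j\big)W_k\,S(x)\,dx$, the paper observes that the difference $I_{6s,4s,s}[\sigma,\mathcal{Z}_j^b]-I_{6s,4s,s}[W_j,\mathcal{Z}_j^b]$, after using $|\mathcal{Z}_j^b|\lesssim W_j$ and $(\sigma-W_j)W_j\le\sigma^2-\sum_iW_i^2$, is pointwise dominated by the $\|\cdot\|_{\ast\ast}$-controlled quantity of Lemma~\ref{estimate1}, hence by $T(x)$. The whole $\phi$-term then reduces to $\|\phi\|_\ast\int S\,T$, and a single H\"older step together with Lemma~\ref{cll-0-0} gives $\int ST\le\|S\|_{L^3}\|T\|_{L^{3/2}}\lesssim\mathscr{R}^{-8s}\log\mathscr{R}\approx\mathscr{Q}^2|\log\mathscr{Q}|$. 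This packages the logarithmic loss into the $L^p$ norms of the weight functions and avoids the case analysis you anticipate; the relevant lemmas are in Section~\ref{stabilitysection6}, not the appendix. Your direct approach would also succeed, but the paper's route is considerably shorter.
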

	Note that, the proof of Lemma \ref{cll} heavily relies on the following Lemma \ref{estimate1} and Lemma \ref{cll-0-0}.
	\begin{lem}\label{estimate1}
		Let $n=6s$, $\mu=4s$ and $\kappa\in\mathbb{N}$. There exist a positive constant $\delta=\delta(\kappa,s)>0$ and large constant $C=C(\kappa,s)$ such that
		\begin{equation}\label{eq3.7}
			\Big\|\Big(|x|^{-4s}\ast\sigma^2\Big)\sigma-\sum_{i=1}^{\kappa}\Big(|x|^{-4s}\ast W_{i}^2\Big)W_{i}\Big\|_{\ast\ast}\leq C,
		\end{equation}
		where $C$ depends only on $\kappa$ and $s$.
	\end{lem}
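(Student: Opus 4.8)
The plan is to exploit the structural feature that $n=6s$ and $\mu=4s$ force $p_s=\frac{2n-\mu}{n-2s}=2$, so that the interaction term $g$ is \emph{bilinear} in the bubbles. Writing $\sigma=\sum_{\ell=1}^{\kappa}W_\ell$ and expanding the square,
\[
g=\Big(|x|^{-4s}\ast\sigma^{2}\Big)\sigma-\sum_{i=1}^{\kappa}\Big(|x|^{-4s}\ast W_i^{2}\Big)W_i=\sum_{i=1}^{\kappa}\ \sum_{(j,k)\neq(i,i)}\Big(|x|^{-4s}\ast(W_jW_k)\Big)W_i,
\]
where $(j,k)$ runs over $\{1,\dots,\kappa\}^{2}$, so every summand involves at least two distinct $\delta$-interacting bubbles. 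Since $\|g\|_{\ast\ast}=\sup_{x}|g(x)|T^{-1}(x)$ and there are only finitely many triples, it suffices to prove a pointwise bound $\big(|x|^{-4s}\ast(W_jW_k)\big)(x)\,W_i(x)\lesssim T(x)$ for each such triple, uniformly in the configuration; recall that here $\mathscr{Q}\approx\mathscr{R}^{-4s}$, and that for $n=6s$ the bubble has the explicit form $W_\ell(x)=\alpha_{n,\mu,s}\,\lambda_\ell^{2s}\tau(z_\ell)^{-4s}$ with $z_\ell=\lambda_\ell(x-\xi_\ell)$. A first reduction: Lemma~\ref{B4-1}, applied with the exponents $n-\mu/2=4s$ and $2n-\mu=8s$, gives $|x|^{-4s}\ast W_\ell^{2}\lesssim\lambda_\ell^{2s}\tau(z_\ell)^{-4s}\approx W_\ell$, which handles every summand with $j=k\neq i$ by turning it into the product $W_jW_i$ of two distinct bubbles.

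For the remaining summands I would invoke the pointwise interaction estimates for $\delta$-interacting bubbles collected in the appendix (the fractional analogues of the estimates in \cite{DSW21,YZ25}, differing only in the numerical values of the exponents through $s$): estimates bounding the product $W_jW_k$, and the convolution $|x|^{-4s}\ast(W_jW_k)$, by weighted quantities carrying the decay rate $\tau(z_i)^{-4s}$ in the core $\{|z_i|\le\mathscr{R}^2\}$ of $W_i$ and the strictly faster rate $\tau(z_i)^{-5s}$ in the exterior $\{|z_i|>\mathscr{R}^2\}$, with the correct power of $\mathscr{R}$. The verification is then carried out region by region against the two constituents of $T$: on $\{|z_i|\le\mathscr{R}^2\}$, where each of $W_j,W_k$ is small relative to $W_i$ and $\tau(z_i)\le\mathscr{R}^2$, one checks the bound $\lesssim\lambda_i^{4s}\tau(z_i)^{-4s}\mathscr{R}^{-4s}=t_{i,1}(x,\mathscr{R})$; on $\{|z_i|>\mathscr{R}^2\}$ one uses $\tau(z_i)>\mathscr{R}^{2}$ to exchange a factor $\tau(z_i)^{-s}$ for $\mathscr{R}^{-2s}$ and obtains $\lesssim\lambda_i^{4s}\tau(z_i)^{-5s}\mathscr{R}^{-2s}=t_{i,2}(x,\mathscr{R})$; and the two weights agree on $\{|z_i|=\mathscr{R}^2\}$. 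For the genuinely three-bubble summands (with $i,j,k$ pairwise distinct) I would reduce to the two-bubble case by applying the pair interaction estimates twice and invoking the bubble-tower/bubble-cluster dichotomy of Definition~\ref{del-1}, so that $(|x|^{-4s}\ast(W_jW_k))W_i$ is dominated by a two-bubble interaction term already controlled above.

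The main obstacle is the bookkeeping in the pointwise control of the \emph{nonlocal} factor $|x|^{-4s}\ast(W_jW_k)$ across all regions: the region where $x$ lies in the core of some bubble other than $W_i$, the far region where $x$ is away from every bubble (so only the slowly decaying tails of $W_j,W_k$ contribute and the kernel singularity is absent), and the degenerate configurations where two bubbles form a tight cluster or a tower and several length scales collapse. In each case the point is to extract precisely one factor $\mathscr{R}^{-4s}\approx\mathscr{Q}$ — the estimate is exactly critical here, which is ultimately what forces the logarithm in Theorem~\ref{Figalli} at a later stage, so one cannot hope for a power of $\mathscr{Q}$ with positive exponent as in Lemma~\ref{qiegao} — whence, summing the finitely many triples, $|g(x)|\lesssim T(x)$ with a constant depending only on $\kappa$ and $s$. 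Since the fractional exponent $s$ enters only through the exponents and not through the structure of the argument, the detailed computation follows \cite{DSW21,YZ25} with the obvious substitutions and is deferred to the appendix.
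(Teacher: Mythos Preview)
Your proposal is correct and takes essentially the same approach as the paper. The paper's own proof consists of a single sentence deferring to \cite[Lemma~4.1]{DHP} with the remark that only the exponents change through~$s$; your outline spells out the bilinear expansion of $g$ (valid precisely because $p_s=2$), the reduction of the diagonal terms via $|x|^{-4s}\ast W_j^{2}\approx W_j$ (Lemma~\ref{p1-00}), and the region-by-region pointwise comparison of each two- or three-bubble product against the weight $T$, which is exactly the content of the cited argument in \cite{DHP} (itself an adaptation of \cite{DSW21}).
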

	\begin{proof}
		The only difference is that the exponents have been modified by the parameter $s$. Thus one can follow the same proof as in \cite[Lemma 4.1]{DHP} and we omit it.
	\end{proof}
	\begin{lem}\label{cll-0-0}
		Assume that $n=6s$ and $\mu=4s$,  we have that
		\begin{equation}\label{selqa}
			\big\|S\big\|_{L^3(\mathbb{R}^{6s})}\lesssim\mathscr{R}^{-4s}(\log\mathscr{R})^\frac{1}{3}  \hspace{4mm}\text{and}\hspace{4mm}
			\big\|T\big\|_{L^\frac{3}{2}(\mathbb{R}^{6s})}\lesssim\mathscr{R}^{-4s}(\log\mathscr{R})^\frac{2}{3} .
		\end{equation}
	\end{lem}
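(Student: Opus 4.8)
The plan is to estimate $\|S\|_{L^3}$ and $\|T\|_{L^{3/2}}$ by the triangle inequality in $L^3$, respectively $L^{3/2}$, which reduces the problem to bounding each summand $s_{i,1},s_{i,2},t_{i,1},t_{i,2}$ separately. For a single index, the natural move is the change of variables $z=\lambda_i(x-\xi_i)$: since $dx=\lambda_i^{-6s}\,dz$ and every weight is a function of $z_i=\lambda_i(x-\xi_i)$ of the correct homogeneity in $\lambda_i$, this eliminates both the concentration parameter $\lambda_i$ and the translation $\xi_i$, leaving a universal integral of $\tau(z)^{-\gamma}=(1+|z|^2)^{-\gamma/2}$ over a ball or its complement of radius $\mathscr{R}^2$.

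For the inner pieces, using $n=6s$ one gets
\[
\int_{\mathbb{R}^{6s}}s_{i,1}(x,\mathscr{R})^3\,dx=\mathscr{R}^{-12s}\int_{\{|z|\le\mathscr{R}^2\}}\frac{dz}{\tau(z)^{6s}},\qquad
\int_{\mathbb{R}^{6s}}t_{i,1}(x,\mathscr{R})^{3/2}\,dx=\mathscr{R}^{-6s}\int_{\{|z|\le\mathscr{R}^2\}}\frac{dz}{\tau(z)^{6s}}.
\]
The crucial dimensional fact is that $\tau(z)^{-6s}=(1+|z|^2)^{-n/2}$ is exactly log-critical; splitting $\{|z|\le\mathscr{R}^2\}$ into $\{|z|\le1\}$ and $\{1<|z|\le\mathscr{R}^2\}$ and passing to polar coordinates gives $\int_{\{|z|\le\mathscr{R}^2\}}\tau(z)^{-6s}\,dz\approx\log(\mathscr{R}^2)\approx\log\mathscr{R}$. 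Hence $\|s_{i,1}\|_{L^3}\lesssim\mathscr{R}^{-4s}(\log\mathscr{R})^{1/3}$ and $\|t_{i,1}\|_{L^{3/2}}\lesssim\mathscr{R}^{-4s}(\log\mathscr{R})^{2/3}$.

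For the outer pieces, the same substitution yields
\[
\int_{\mathbb{R}^{6s}}s_{i,2}(x,\mathscr{R})^3\,dx=\mathscr{R}^{-6s}\int_{\{|z|>\mathscr{R}^2\}}\frac{dz}{\tau(z)^{9s}},\qquad
\int_{\mathbb{R}^{6s}}t_{i,2}(x,\mathscr{R})^{3/2}\,dx=\mathscr{R}^{-3s}\int_{\{|z|>\mathscr{R}^2\}}\frac{dz}{\tau(z)^{15s/2}},
\]
and since $9s>6s=n$ and $15s/2>6s=n$ both tail integrals converge, behaving like $(\mathscr{R}^2)^{6s-9s}=\mathscr{R}^{-6s}$ and $(\mathscr{R}^2)^{6s-15s/2}=\mathscr{R}^{-3s}$ respectively. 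Therefore $\|s_{i,2}\|_{L^3}\lesssim\mathscr{R}^{-4s}$ and $\|t_{i,2}\|_{L^{3/2}}\lesssim\mathscr{R}^{-4s}$, which are dominated by the logarithmically enhanced inner contributions. Summing the $\kappa$ terms delivers exactly $\|S\|_{L^3(\mathbb{R}^{6s})}\lesssim\mathscr{R}^{-4s}(\log\mathscr{R})^{1/3}$ and $\|T\|_{L^{3/2}(\mathbb{R}^{6s})}\lesssim\mathscr{R}^{-4s}(\log\mathscr{R})^{2/3}$.

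There is no genuine obstacle here: the estimate is a power-counting computation, and the appearance of the logarithmic factor is precisely the signature of the resonant case $n=6s$, $\mu=4s$, which is why the reduction scheme of Section \ref{stabilitysection6} must be carried out in the weighted norms of Definition \ref{st-11} rather than in $L^{(2_s^\ast)'}$. The only point requiring a little care is bookkeeping of the cutoffs $\chi_{\{|z_i|\lessgtr\mathscr{R}^2\}}$, so that the truncated integral is genuinely cut at radius $\mathscr{R}^2$ (producing $\log(\mathscr{R}^2)\approx\log\mathscr{R}$) and the tail integral is taken over $\{|z_i|>\mathscr{R}^2\}$; the constants absorbed in $\lesssim$ depend only on $\kappa$ and $s$.
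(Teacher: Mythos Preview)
Your proposal is correct and follows essentially the same approach as the paper, which simply refers to \cite[Lemma~3.7]{DSW21} (the case $s=1$) and observes that only the exponents change. Your change of variables $z=\lambda_i(x-\xi_i)$ together with the log-critical computation $\int_{\{|z|\le\mathscr{R}^2\}}\tau(z)^{-6s}\,dz\approx\log\mathscr{R}$ is exactly the argument behind that reference, and your bookkeeping of the inner and outer pieces is accurate.
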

	\begin{proof}
		The only difference is that the exponents have been modified by the parameter $s$. Thus one can follow the same proof as in \cite[Lemma 3.7]{DSW21} and we omit it.
	\end{proof}
	Now we can prove Lemma \ref{cll}.
	\begin{proof}[Proof of Lemma \ref{cll}]
		Multiplying \eqref{c1} by $\mathcal{Z}_j^b$ and integrating we get
		\begin{equation}\label{xiayu}
			\int_{\mathbb{R}^{6s}}I_{6s,4s,s}[\sigma,\phi]\mathcal{Z}_j^b+\int_{\mathbb{R}^{6s}}g\mathcal{Z}_j^b+	\sum_{i=1}^{\kappa}\sum_{a=1}^{n+1}c_{a}^{i}\int_{\mathbb{R}^{6s}}I_{6s,4s,s}[W_{i},\mathcal{Z}^{a}_i]\mathcal{Z}_j^b=0,
		\end{equation}
		for any $1\leq j\leq n$, $1\leq b\leq n+1$. Here we used the orthogonal condition in \eqref{c1}.
		
		\par By Lemma \ref{armidale}, for $1\leq a,b\leq n+1$, there exist some constants $\gamma^b>0$ such that
		\begin{equation*}
			\sum_{i=1}^{\kappa}\sum_{a=1}^{n+1}c_{a}^{i}\int_{\mathbb{R}^{6s}}I_{6s,4s,s}[W_{i},\mathcal{Z}^{a}_i]\mathcal{Z}_j^b+c_b^j\gamma^b+\sum_{i\neq j}\sum_{a=1}^{n+1}c_{a}^{i}O(Q_{ij})=0.
		\end{equation*}
		Plugging in the above estimates to \eqref{xiayu}, we see that $\{c_b^j\}$ satisfies the linear system
		\begin{equation}\label{laoniangjiu}
			c_b^j\gamma^b+\sum_{i\neq j}\sum_{a=1}^{n+1}c_{a}^{i}O(Q_{ij})
			=\int_{\mathbb{R}^{6s}}I_{6s,4s,s}[\sigma,\phi]\mathcal{Z}_j^b+\int_{\mathbb{R}^{6s}}g\mathcal{Z}_j^b.
		\end{equation}
		Denote $\vec{c}^j := (c_1^j, \cdots, c_{n+1}^j) \in \mathbb{R}^{n+1}$ for $j = 1, \cdots, \kappa$. We concatenate these vectors to $\vec{c} = (\vec{c}^1, \cdots, \vec{c}^\kappa) \in \mathbb{R}^{\kappa(n+1)}$ and think of the above equations as a linear system on $\vec{c}$. Since $Q_{ij} \leq \mathscr{Q} \leq \delta$, the coefficient matrix is diagonally dominant and hence solvable. It remains to estimate the terms on the right-hand side.
		
		\par For each $j$ and $b$, by the orthogonal condition in \eqref{c1} we have
		\begin{equation*}
			\begin{split}
				\int_{\mathbb{R}^{6s}}I_{6s,4s,s}[\sigma,\phi]\mathcal{Z}_j^b
				&=\int_{\mathbb{R}^{6s}}\bigg(2\big(|x|^{-4s}\ast\sigma\phi\big)\sigma+\big(|x|^{-4s}\ast\sigma^2\big)\phi\bigg)\mathcal{Z}_j^b     \\
				&=2\int_{\mathbb{R}^{6s}}\big(|x|^{-4s}\ast\sigma \mathcal{Z}_j^b\big)\sigma\phi
				-\big(|x|^{-4s}\ast W_j\mathcal{Z}_j^b\big)W_j\phi
				+\int_{\mathbb{R}^{6s}}\big(|x|^{-4s}\ast(\sigma^2-W_j^2)\big)\mathcal{Z}_j^b\phi       \\
				&=:J_1+J_2.
			\end{split}
		\end{equation*}
		Thanks to the fact that $(\sigma-W_i)W_i\geq0$ for each $i$, we have
		\begin{equation*}
			(\sigma-W_j)W_j\leq\sum_{i=1}^\kappa(\sigma-W_i)W_i=\sigma^2-\sum_{i=1}^\kappa W_i^2.
		\end{equation*}
		Using Lemma \ref{estimate1}, Lemma \ref{cll-0-0} and $|\mathcal{Z}_j^b| \lesssim W_j$, we can obtain
		\begin{equation}\label{bz1}
			\begin{split}
				|J_1|
				&\lesssim\Big|\int_{\mathbb{R}^{6s}}\big(|x|^{-4s}\ast\sigma \mathcal{Z}_j^b\big)\big(\sigma-W_j\big)\phi\Big|
				+\Big|\int_{\mathbb{R}^{6s}}\big(|x|^{-4s}\ast(\sigma^2-\sum_{i=1}^\kappa W_i)\big)W_j\phi\Big|        \\
				&\lesssim\|\phi\|_\ast\int_{\mathbb{R}^{6s}}ST
				\lesssim\|\phi\|_\ast\|S\|_{L^3}\|T\|_{L^\frac{3}{2}}
				\lesssim\mathscr{Q}^2|\log\mathscr{Q}|\|\phi\|_{\ast}
			\end{split}
		\end{equation}
		and
		\begin{equation}\label{bz2}
			|J_2|
			\lesssim\|\phi\|_\ast\int_{\mathbb{R}^{6s}}ST
			\lesssim\|\phi\|_\ast\|S\|_{L^3}\|T\|_{L^\frac{3}{2}}
			\lesssim\mathscr{Q}^2|\log\mathscr{Q}|\|\phi\|_{\ast}.
		\end{equation}
		Taking \cite[Lemma 3.18]{Aryan} into account, we also have
		\begin{equation}\label{bz4}
			\bigg|\int_{\mathbb{R}^{6s}}g\mathcal{Z}_j^b\bigg|
			\leq\int_{\mathbb{R}^{6s}}|g|W_j
			\leq\|g\|_{\ast\ast}\int_{\mathbb{R}^{6s}}TW_j
			\lesssim\mathscr{Q}\|g\|_{\ast\ast}.
		\end{equation}
		Using the estimates \eqref{bz1}-\eqref{bz4}, it follows from \eqref{laoniangjiu} that
		\begin{equation*}			|c_b^j|\lesssim\mathscr{Q}\|g\|_{\ast\ast}+\mathscr{Q}^2|\log\mathscr{Q}|\|\phi\|_{\ast},\hspace{3mm}j=1,\cdots, \kappa~\text{and}~b=1,\cdots,n+1,
		\end{equation*}
		which concludes the proof.
	\end{proof}
	
	\subsection{Existence}
	With Lemma \ref{estimate2} in hand, we can solve the following nonlinear equation by applying the contraction mapping principle.
	\begin{equation}\label{suannai}
		\left\{\begin{array}{l}
			\displaystyle (-\Delta)^s \phi
			-\Big(|x|^{-\mu}\ast \big(\sigma+\phi\big)^2\Big)
			\big(\sigma+\phi\big)
			+\sum_{i=1}^\kappa\Big(|x|^{-\mu}\ast W_i^2\Big)W_i
			\displaystyle =
			\sum_{i=1}^{\kappa}\sum_{a=1}^{n+1}c_{a}^{i}I_{6s,4s,s}[W_{i},\mathcal{Z}^{a}_i]\hspace{4.14mm}\mbox{in}\hspace{1.14mm} \mathbb{R}^{6s},\\
			\displaystyle \int I_{6s,4s,s}[W_{i},\mathcal{Z}^{a}_i]\phi=0,\hspace{4mm}i=1,\cdots, \kappa; ~a=1,\cdots,n+1.
		\end{array}
		\right.
	\end{equation}
	\begin{lem}\label{xianyuxian}
		Assume that $n=6s$ and $\mu=4s$. Then there exist $\varrho_0 \in\dot{H}^s(\mathbb{R}^{6s})$ and a family of scalars $\{c_{a}^{i}\}$ which solve \eqref{suannai} such that for $\delta$ is small enough, there holds
		\begin{equation*}
			\|\varrho_{0}\|_\ast\leq c.
		\end{equation*}
	\end{lem}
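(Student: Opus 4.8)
The plan is to solve the nonlinear system \eqref{suannai} by a contraction mapping argument built on the linear solution operator $\mathcal{L}_\delta$ furnished by Lemma \ref{ww10} (which in turn rests on the a priori bound of Lemma \ref{estimate2} and the coefficient estimate of Lemma \ref{cll}). Since $n=6s$ and $\mu=4s$ force $p_s=\frac{2n-\mu}{n-2s}=2$, the remainder $N(\phi)$ defined in \eqref{u-2} takes the explicit form
\begin{equation*}
N(\phi)=2\big(|x|^{-4s}\ast\sigma\phi\big)\phi+\big(|x|^{-4s}\ast\phi^2\big)\sigma+\big(|x|^{-4s}\ast\phi^2\big)\phi,
\end{equation*}
and, with $g$ as in \eqref{u-1}, the system \eqref{suannai} is equivalent to the fixed-point equation
\begin{equation*}
\phi=\mathcal{A}(\phi):=\mathcal{L}_\delta\big(g+N(\phi)\big),
\end{equation*}
the scalars $\{c_a^i\}$ being read off simultaneously from $\mathcal{L}_\delta$. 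By Lemma \ref{ww10}, $\|\mathcal{L}_\delta(h)\|_\ast\le C\|h\|_{\ast\ast}$ uniformly for $\delta\le\delta_0$, and by Lemma \ref{estimate1} the datum is of bounded weighted size, $\|g\|_{\ast\ast}\le C_0$ with $C_0=C_0(n,s,\kappa)$.

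The core of the proof is a weighted quadratic estimate for $N$: for $\phi_1,\phi_2$ with $\|\phi_j\|_\ast\le 1$ one should prove
\begin{equation*}
\|N(\phi_1)-N(\phi_2)\|_{\ast\ast}\lesssim \mathscr{R}^{-\theta}\big(\|\phi_1\|_\ast+\|\phi_2\|_\ast+\|\phi_1\|_\ast^2+\|\phi_2\|_\ast^2\big)\|\phi_1-\phi_2\|_\ast
\end{equation*}
for some $\theta=\theta(s)>0$; in particular $\|N(\phi)\|_{\ast\ast}\lesssim\mathscr{R}^{-\theta}\|\phi\|_\ast^2$ for $\|\phi\|_\ast\le 1$. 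Using the pointwise bound $|\phi_j(x)|\le\|\phi_j\|_\ast S(x)$ and the triangle inequality, this reduces to controlling the three model quantities
\begin{equation*}
\big(|x|^{-4s}\ast(\sigma S)\big)S,\qquad \big(|x|^{-4s}\ast S^2\big)\sigma,\qquad \big(|x|^{-4s}\ast S^2\big)S
\end{equation*}
pointwise by $\mathscr{R}^{-\theta}\,T$. Here one splits $\mathbb{R}^{6s}$ into the bubble regions $\{|z_i|\le\mathscr{R}^2\}$ and $\{|z_i|>\mathscr{R}^2\}$, applies convolution bounds of the type of Lemma \ref{B4-1} against the explicit profiles defining $S$ and $T$ in Definition \ref{st-11}, and exploits that $S$ carries the small factors $\mathscr{R}^{-2s}$ and $\mathscr{R}^{-4s}$, so that $S^2$ (and $\sigma S$) is pointwise $O(\mathscr{R}^{-\theta}T)$ and convolution with $|x|^{-4s}$ preserves this gain. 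Carrying out this pointwise bookkeeping --- formally identical to \cite{DSW21,DHP} once the exponents are shifted by $s$ --- is the main obstacle.

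Granting these inputs, fix the closed set $\mathcal{E}:=\{w\in C(\mathbb{R}^{6s})\cap\dot H^s(\mathbb{R}^{6s}):\|w\|_\ast\le 2CC_0\}$ and choose $\delta_0$ small enough (equivalently $\mathscr{R}$ large enough) that $C'\mathscr{R}^{-\theta}\le\tfrac12$, where $C'=C'(n,s,\kappa)$ absorbs the constants $C,C_0$ above. Then for $\phi\in\mathcal{E}$,
\begin{equation*}
\|\mathcal{A}(\phi)\|_\ast\le C\|g\|_{\ast\ast}+C\|N(\phi)\|_{\ast\ast}\le CC_0+\tfrac12\|\phi\|_\ast\le 2CC_0,
\end{equation*}
so $\mathcal{A}(\mathcal{E})\subseteq\mathcal{E}$, while the quadratic estimate gives $\|\mathcal{A}(\phi_1)-\mathcal{A}(\phi_2)\|_\ast\le\tfrac12\|\phi_1-\phi_2\|_\ast$, i.e. $\mathcal{A}$ is a contraction of $\mathcal{E}$ into itself. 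Banach's fixed-point theorem then yields a unique $\varrho_0\in\mathcal{E}$ with $\varrho_0=\mathcal{A}(\varrho_0)$, which together with the associated scalars $\{c_a^i\}$ solves \eqref{suannai} and satisfies $\|\varrho_0\|_\ast\le 2CC_0=:c$. Finally $\varrho_0\in\dot H^s(\mathbb{R}^{6s})$ is automatic since $\mathcal{L}_\delta$ takes values in $\dot H^s(\mathbb{R}^{6s})$; alternatively, $\|\varrho_0\|_\ast<\infty$ together with Lemma \ref{cll-0-0} gives $\varrho_0\in L^{2_s^\ast}(\mathbb{R}^{6s})$, whence $\varrho_0\in\dot H^s(\mathbb{R}^{6s})$ via the equation.
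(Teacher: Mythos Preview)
Your approach is correct and essentially identical to the paper's: both set up the fixed-point problem $\phi=\mathcal{L}_\delta(g+N(\phi))$ in the $\|\cdot\|_\ast$-norm and obtain the quadratic bound $\|N(\phi)\|_{\ast\ast}\lesssim\mathscr{Q}\,\|\phi\|_\ast^2$ from the pointwise inequality $S^2\lesssim\mathscr{Q}\,T$ (together with $|x|^{-4s}\ast T\lesssim S$ and $\sigma S\lesssim T$), after which the contraction is routine. One small correction to your sketch: $\sigma S$ is only $O(T)$, not $O(\mathscr{R}^{-\theta}T)$---the smallness in the term $(|x|^{-4s}\ast\sigma S)\,S$ enters through the \emph{outer} $S$ factor, via $(|x|^{-4s}\ast T)\,S\lesssim S^2\lesssim\mathscr{Q}\,T$.
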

	\begin{proof}
		It is easy to see that \eqref{suannai} is equivalent to
		\begin{equation*}
			\phi=\mathcal{B}(\phi):=\mathcal{L}_{\delta}(N(\phi))+\mathcal{L}_{\delta}(g),
		\end{equation*}
		where $\mathcal{L}_\delta$ is defined in Lemma \ref{ww10}. In the following, we will prove that $\mathcal{B}$ is a contraction mapping. First we claim that $\|N(\phi)\|_{\ast\ast}\leq L_1\|\phi\|_\ast S^2T^{-1}$. Then for the weight functions, we have
		\begin{equation*}
			\frac{s_{i,1}^2}{t_{i,1}}
			=\mathscr{R}^{-4s}
			\approx\mathscr{Q}\hspace{6mm}if\hspace{2mm}|z_i|\leq\mathscr{R}\hspace{4mm}\text{and}\hspace{4mm}
			\frac{s_{i,2}^2}{t_{i,2}}
			\leq\mathscr{R}^{-2s}
			\approx\mathscr{Q}\hspace{6mm}if\hspace{2mm} |z_i|>\mathscr{R}.
		\end{equation*}
		Thanks to the fact $\frac{\sum_{i}a_i}{\sum_{i}b_i}\leq\max_i\{\frac{a_i}{b_i}\}$, we know that $S^2T^{-1}\leq\mathscr{Q}$. Also we may choose $L_1>0$ sufficiently large to guarantee that
		\begin{equation*}
			\|\mathcal{L}_\delta(g)\|_\ast\leq L_1\|g\|_{\ast\ast}.
		\end{equation*}
		Moreover, Lemma \ref{estimate1} tells us that there exists a positive constant $\zeta_1$ such that $\|g\|_{\ast\ast}\leq\zeta_1$.
		
		\par Set
		\begin{equation*}
			\mathcal{E}=\Big\{w:w\in C(\mathbb{R}^n)\cap \dot{H}^s(\mathbb{R}^n),\|w\|_{\dot{H}^s(\mathbb{R}^{6s})}\leq \zeta_1L_1+1\Big\}.
		\end{equation*}
		We will prove that $\mathcal{B}$ is a contraction map from $\mathcal{E}$ to $\mathcal{E}$. Choosing $\delta$ sufficiently small such that $L_1^2(\zeta_1L_1+1)\mathscr{Q}\leq1$, then we have
		\begin{equation*}
			\|\mathcal{B}(\phi)\|_{\ast}
			\leq L_1\|N(\phi)\|_{\ast\ast}
			+L_1\|g\|_{\ast\ast}
			\leq L_1^2(\zeta_1L_1+1)\mathscr{Q}+\zeta_1L_1
			\leq\zeta_1L_1+1.
		\end{equation*}
		Hence, $\mathcal{B}$ maps $\mathcal{E}$ to $\mathcal{E}$.
		On the other hand, taking $\phi_1$ and $\phi_2$ in $\mathcal{E}$ we see that
		\begin{equation*}
			\|\mathcal{B}(\phi_1)-\mathcal{B}(\phi_2)\|_\ast
			\leq L_1\|N(\phi_1)-N(\phi_2)\|_\ast
			\leq\zeta_1L_1\mathscr{Q}\|\phi_1-\phi_2\|_\ast.
		\end{equation*}
		Therefore if $\delta$ small, we can deduce that
		\begin{equation*}
			\|\mathcal{B}(\phi_{1})-\mathcal{B}(\phi_{2})\|_\ast
			\leq \frac{1}{2}\|\phi_{1}-\phi_{2}\|_\ast,
		\end{equation*}
		provide that $\delta>0$ is small enough which means that $\mathcal{B}$ is a contraction mapping from $\mathcal{E}$ into itself. Consequently, there exists a unique $\varrho_0\in\mathcal{E}$ such that $\varrho_0=\mathcal{B}(\varrho_0)$. Furthermore, by applying Lemma \ref{ww10} and Lemma \ref{eq3.7}, we obtain that $\|\varrho_{0}\|_{\ast} \leq c$. Thus, we complete the proof.
	\end{proof}	
	
	\subsection{Proof of Lemma \ref{estimate2}}\label{wutang}
	This subsection is devoted to derive a rough $C^0$ estimate of solution to problem \eqref{c1} by exploiting the Green's function representation. In order to prove Lemma \ref{estimate2}, we are going to introduce the tree structure of $\delta$-interacting bubbles as $\delta\rightarrow0$. The concept of bubble-trees was investigated in \cite{DSW21,Druet,Parker,G-Tian}.
	
	\textbf{Tree structure of weak interacting bubbles.}
	To reformulate the tree structure, we introduce a sequence of $\kappa$ bubbles  $\big\{W_i^{\left(m\right)}:=W[\xi_i^{\left(m\right)},\lambda_i^{\left(m\right)}],~i=1,\cdots,\kappa\big\}_{m=1}^{\infty}$ satisfying
	\begin{equation}\label{TTT}
		\left\lbrace
		\begin{aligned}
			&\lambda_1^{(m)}\leq\cdots\leq\lambda_{\kappa}^{(m)}\hspace{2mm}\mbox{for all}\hspace{2mm}m\in\mathbb{N},\\&
			\mathscr{R}_m:=\frac{1}{2}\min\limits_{i\neq j}\big\{\mathscr{R}_{ij}^{(m)}:~i,j=1,\cdots,\kappa,~i\neq j\big\},\\&
			\mbox{either}\hspace{2mm}\lim\limits_{m\rightarrow\infty}\xi_{ij}^{(m)}=\xi_{ij}^{(\infty)}\in\mathbb{R}^n\hspace{2mm}\mbox{or}\hspace{2mm}\lim\limits_{m\rightarrow\infty}\xi_{ij}^{(m)}\rightarrow\infty,
		\end{aligned}
		\right.
	\end{equation}
	where $\xi_{ij}^{(m)}=\lambda_{i}^{(m)}(\xi_{j}^{(m)}-\xi_{i}^{(m)})$.
	\begin{Def}
		Let  $\preceq$ be a strict partial
		order on a set $\widetilde{T}$, and $\prec$ the corresponding strict partial order on a set $\widetilde{T}$.
		\begin{itemize}
			\item[$\bullet$]
			We say that a partially ordered set $(\widetilde{T},\prec)$ is a tree if for any $\tilde{t}\in\widetilde{T}$ such that the set $\{\tilde{s}\in\widetilde{T}:~\tilde{s}\prec\tilde{t}\}$ is well-ordered by the relation $\prec$.
			\item[$\bullet$]
			A descendant of $\tilde{s}\in\widetilde{T}$ is any element $\tilde{t}\in\widetilde{T}$ such that $\tilde{s}\prec\tilde{t}$.
		\end{itemize}
	\end{Def}
	The next lemma can be found in \cite{DSW21}.
	\begin{lem}
		For any sequence of $\{W_{i}^{(m)}\}$ satisfying \eqref{TTT}, we set a relation
		$\prec$ on $I=\{1,\cdots,\kappa\}$ as
		$$i\prec j~\Leftrightarrow~i<j\hspace{2mm}\mbox{and}\hspace{2mm}\hspace{2mm}\lim\limits_{m\rightarrow\infty}\xi_{ij}^{(m)}=\xi_{ij}^{(\infty)}\in\mathbb{R}^n.$$
		Then $\prec$ is a strict partial order and there exists $\kappa^{\ast}\in\{1,\cdots,\kappa\}$ such that $I$ can be be expressed as a tree.
	\end{lem}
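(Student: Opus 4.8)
The plan is to verify separately that $\prec$ is irreflexive, asymmetric and transitive, and then that the strict down-set of every index is a chain, so that $(I,\prec)$ is a forest; the integer $\kappa^{\ast}$ will be the number of its $\prec$-minimal elements (the roots), and adjoining a formal minimum turns $I$ into one rooted tree. Before anything else I would pass to a further subsequence of $\{W_i^{(m)}\}$ along which, besides \eqref{TTT}, every ratio $\lambda_i^{(m)}/\lambda_j^{(m)}$ with $i\le j$ converges to a limit in $[0,1]$; this is a harmless finite diagonal extraction. The only algebraic input is the cocycle identity, an immediate consequence of $\xi_{ij}^{(m)}=\lambda_i^{(m)}(\xi_j^{(m)}-\xi_i^{(m)})$:
\begin{equation*}
\xi_{ik}^{(m)}=\xi_{ij}^{(m)}+\frac{\lambda_i^{(m)}}{\lambda_j^{(m)}}\,\xi_{jk}^{(m)}
\qquad\text{and}\qquad
\xi_{i_1i_2}^{(m)}=\xi_{i_1 j}^{(m)}-\frac{\lambda_{i_1}^{(m)}}{\lambda_{i_2}^{(m)}}\,\xi_{i_2 j}^{(m)}.
\end{equation*}

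For irreflexivity and asymmetry there is nothing to do: $i\prec j$ presupposes $i<j$, so $i\prec i$ never occurs and $i\prec j$, $j\prec i$ cannot hold simultaneously. For transitivity, suppose $i\prec j$ and $j\prec k$; then $i<j<k$, hence $i<k$. Since $i<j$ we have $0<\lambda_i^{(m)}/\lambda_j^{(m)}\le1$, while $\xi_{ij}^{(m)},\xi_{jk}^{(m)}$ converge to points of $\mathbb{R}^n$, so the first identity shows that $\{\xi_{ik}^{(m)}\}_m$ is bounded in $\mathbb{R}^n$. By the dichotomy in \eqref{TTT} a bounded sequence of this form cannot diverge to infinity, so it converges to a point of $\mathbb{R}^n$, that is $i\prec k$.

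For the forest property, fix $j$ and pick $i_1,i_2$ with $i_1\prec j$, $i_2\prec j$ and $i_1<i_2$ (equality being trivial). Then $i_1<i_2<j$, so $\lambda_{i_1}^{(m)}/\lambda_{i_2}^{(m)}\in(0,1]$, while $\xi_{i_1 j}^{(m)},\xi_{i_2 j}^{(m)}$ have finite limits; the second identity then makes $\{\xi_{i_1i_2}^{(m)}\}_m$ bounded, hence convergent in $\mathbb{R}^n$ by \eqref{TTT}, i.e. $i_1\prec i_2$. Thus $\{i\in I:\ i\prec j\}$ is totally ordered, and being finite it is well-ordered, so $(I,\prec)$ is a forest in the order-theoretic sense. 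Writing $\kappa^{\ast}\in\{1,\cdots,\kappa\}$ for the number of its $\prec$-minimal elements and adjoining an artificial root $0$ with $0\prec i$ for every $i\in I$, the set $I\cup\{0\}$ becomes a single rooted tree carrying $\kappa^{\ast}$ subtrees below $0$; this is the asserted structure, identical to the one constructed in \cite{DSW21}.

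There is no genuine obstacle here: everything reduces to the two one-line identities above together with the systematic use of the ``converge or escape to infinity'' dichotomy of \eqref{TTT} to promote boundedness to convergence, plus one innocuous subsequence extraction. For this reason one may equally well invoke \cite{DSW21} verbatim, since this combinatorics is entirely unaffected by allowing a general $s\in(0,\frac{n}{2})$.
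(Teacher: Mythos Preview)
Your argument is correct, and in fact the paper does not prove this lemma at all: it simply states that ``the next lemma can be found in \cite{DSW21}'' and moves on. Your reconstruction via the cocycle identities $\xi_{ik}^{(m)}=\xi_{ij}^{(m)}+(\lambda_i^{(m)}/\lambda_j^{(m)})\xi_{jk}^{(m)}$ and $\xi_{i_1i_2}^{(m)}=\xi_{i_1 j}^{(m)}-(\lambda_{i_1}^{(m)}/\lambda_{i_2}^{(m)})\xi_{i_2 j}^{(m)}$, together with the ordering $\lambda_1^{(m)}\le\cdots\le\lambda_\kappa^{(m)}$ and the converge-or-escape dichotomy of \eqref{TTT}, is exactly the argument of \cite{DSW21} that the paper is citing, so your final remark that one may invoke \cite{DSW21} verbatim is precisely what the paper does.
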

	For each $i\in I$, we set $\mathscr{H}(i)$ be the set of descendants
	of $i\in I$, that is, $\mathscr{H}(i)=\{j\in I:~i\prec j\}$, or equivalently $$\mathscr{H}(i):=\{j\in I:\hspace{2mm} i<j,\hspace{2mm}\lim\limits_{m\rightarrow\infty}\xi_{ij}^{(m)}=\xi_{ij}^{(\infty)}\in\mathbb{R}^n\}.$$
	
	We need the following interaction estimates.
	\begin{lem}
		Suppose $\lambda_{i}\leq\lambda_{j}$. When the dimension $n=6s$, we have
		\begin{equation}\label{1}
		s_{i,1}	W_j\lesssim\mathscr{R}^{-2s}[t_{j,1}+t_{j,2}+t_{i,1}],
		\end{equation}
		\begin{equation}\label{2}
			s_{i,2}W_j\lesssim\mathscr{R}^{-2s}[t_{j,1}+t_{j,2}+t_{i,2}],
		\end{equation}
		\begin{equation}\label{3}
			s_{j,1}W_i\lesssim\tau(\xi_{ij})^{-2s}[t_{i,1}+t_{j,1}]+\mathscr{R}^{-2s}\tau(\xi_{ij})^{-s}t_{i,2},
		\end{equation}
		\begin{equation}\label{4}
			s_{j,2}W_i\lesssim\tau(\xi_{ij})^{-s}t_{i,1}+\mathscr{R}^{-2s}t_{i,2}+\tau(\xi_{ij})^{-2s}t_{j,2}.
		\end{equation}
		For any $0<\varepsilon<1$ and $M>1$, we also have the following
		\begin{equation}\label{5}
			\big(s_{j,1}+s_{j,2}\big)W_i\lesssim\bigg[\bigg(\frac{\lambda_{i}}{\lambda_{j}}\bigg)^2+\varepsilon^2\bigg]^s(t_{j,1}+t_{j,2})\hspace{6mm}if\hspace{2mm}|z_i-\xi_{ij}|\leq\varepsilon,
		\end{equation}
		\begin{equation}\label{6}
			s_{j,1}+s_{j,2}
			\lesssim\tau(\xi_{ij})^{5s}\varepsilon^{-3s}[\varepsilon^{-\frac{3s}{2}}t_{i,1}+\varepsilon^st_{j,2}]\hspace{6mm}if\hspace{2mm}|z_i-\xi_{ij}|\geq\varepsilon.
		\end{equation}
	\end{lem}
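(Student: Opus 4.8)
The plan is to reduce everything to the single pointwise convolution bound of Lemma \ref{B4-1}, namely
\[
\frac{1}{|x|^{\mu}}\ast\frac{\lambda^{n-\mu/2}}{\tau(z)^{2n-\mu}}\lesssim\frac{\lambda^{\mu/2}}{\tau(z)^{\mu}},
\]
specialized to $n=6s$, $\mu=4s$ (so $2n-\mu=8s$, $n-\mu/2=4s$, and the bound reads $|x|^{-4s}\ast W_i^{2}\lesssim \lambda_i^{2s}\tau(z_i)^{-4s}$, which is exactly the building block of the weights $t_{i,1},t_{i,2}$). First I would record the elementary pointwise comparisons between a single bubble $W_j=\alpha_{n,\mu,s}\lambda_j^{2s}\tau(z_j)^{-2s}$ (when $n=6s$, $\tfrac{n-2s}{2}=2s$) and the cutoff-weights $s_{i,*},t_{i,*}$, separating into the regions $\{|z_i|\le\mathscr{R}^2\}$ and $\{|z_i|>\mathscr{R}^2\}$ that define the cutoffs. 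The key geometric input is that $\delta$-interaction forces $\mathscr{R}_{ij}$ (hence $\mathscr{R}$) large, so on $\{|z_i|\le\mathscr{R}^2\}$ one has good control of $\tau(z_j)$ in terms of $\tau(\xi_{ij})$ and the ratio $\lambda_i/\lambda_j$, via the triangle inequality $z_j=\tfrac{\lambda_j}{\lambda_i}(z_i-\xi_{ij})$.

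For \eqref{1}--\eqref{2}, since $\lambda_i\le\lambda_j$, the bubble $W_j$ is the more concentrated one; I would split $\mathbb{R}^{6s}$ according to whether $x$ is in the ``core'' of $W_j$ (where $|z_j|\lesssim 1$, so $W_j\approx\lambda_j^{2s}$ and the product is absorbed into $t_{j,1}+t_{j,2}$ after paying a factor $\mathscr{R}^{-2s}$ coming from $s_{i,1}\le\lambda_i^{2s}\mathscr{R}^{-4s}$ and the relation $\lambda_i/\lambda_j\le \mathscr{R}^{-2}$) or outside it (where $W_j\lesssim\lambda_j^{2s}|z_j|^{-2s}$ and one estimates directly, the leftover going into $t_{i,1}$ resp.\ $t_{i,2}$). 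For \eqref{3}--\eqref{4} the roles are reversed: now $s_{j,*}$ carries the concentrated scale $\lambda_j$ and $W_i$ is the flat one; here the natural small parameter is $\tau(\xi_{ij})^{-1}$ (distance between centers, rescaled), and again one splits into $|z_j|\le\mathscr{R}^2$ vs.\ $>\mathscr{R}^2$, using $\tau(z_i)\approx\tau(\xi_{ij})$ on the core of $W_j$ when the bubbles are ``close'' and $\tau(z_i)\gtrsim\mathscr{R}^{?}$ otherwise. Estimates \eqref{5}--\eqref{6} are the refined versions tuned for the region near the center $\xi_{ij}$ of $W_j$ (parametrized by $\varepsilon$): on $\{|z_i-\xi_{ij}|\le\varepsilon\}$ one has $\tau(z_i)\approx\tau(\xi_{ij})$ and the product of weights reduces to comparing $\lambda_i^{2s}\tau(\xi_{ij})^{-2s}$-type quantities, yielding the factor $[(\lambda_i/\lambda_j)^2+\varepsilon^2]^{s}$; on the complement one loses a negative power of $\varepsilon$ but gains in the fast decay of $s_{j,*}$.

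The main obstacle I anticipate is bookkeeping the case analysis uniformly in $\mathscr{R}$: one must keep track simultaneously of (i) which of $\tau(z_i)$, $\tau(z_j)$ is small, (ii) which characteristic function $\chi_{\{|z_i|\le\mathscr{R}^2\}}$ or $\chi_{\{|z_i|>\mathscr{R}^2\}}$ is active on each side, and (iii) the inequality $\lambda_i/\lambda_j\le \mathscr{R}^{-2}$ (or $\tau(\xi_{ij})\ge\mathscr{R}$ in the tower case) that converts scale ratios into powers of $\mathscr{R}$. Each individual bound is a short computation, but ensuring the implied constants depend only on $n=6s$, $s$ and $\kappa$ — not on $\mathscr{R}$ — requires carefully ruling out the borderline regimes $|z_i|\sim\mathscr{R}^2$. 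Since all of these estimates are the exact analogues (with $\tfrac{n-2s}{2}=2s$ replacing the classical exponent) of the $s=1$, $n=6$ computations in \cite{DSW21} and their Hartree counterpart in \cite{YZ25,DHP}, I would carry out the representative cases \eqref{1} and \eqref{3} in detail and remark that \eqref{2}, \eqref{4}, \eqref{5}, \eqref{6} follow by the same splitting with the roles of the cutoffs and of $\mathscr{R}^{-2s}$ vs.\ $\tau(\xi_{ij})^{-s}$ interchanged, invoking Lemma \ref{B4-1} at each step.
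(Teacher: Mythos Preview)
Your overall plan---split according to which characteristic functions are active, use the change of variables $z_j=\tfrac{\lambda_j}{\lambda_i}(z_i-\xi_{ij})$, and convert scale ratios into powers of $\mathscr{R}$ via the $\delta$-interaction hypothesis---is exactly the approach of \cite[Lemma 4.2]{DSW21}, which is precisely what the paper does (it simply cites that reference and notes that only the exponents change under $s\neq 1$). So in substance you and the paper agree.

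There is, however, one genuine confusion in your framing: Lemma~\ref{B4-1} plays no role here. That lemma is a \emph{convolution} bound, $|x|^{-\mu}\ast(\cdots)\lesssim(\cdots)$, whereas every inequality \eqref{1}--\eqref{6} is a \emph{pointwise product} estimate---there are no integrals anywhere in the statement. The weights $s_{i,\cdot}$, $t_{i,\cdot}$ and the bubble $W_j\approx\lambda_j^{2s}\tau(z_j)^{-4s}$ are all explicit rational functions of $\tau(z_i)$, $\tau(z_j)$, $\lambda_i$, $\lambda_j$, $\mathscr{R}$, so each bound is pure algebra once you fix which region you are in. Your sentence ``invoking Lemma~\ref{B4-1} at each step'' should be dropped; the only ingredients you need are the triangle-inequality comparison $\tau(z_j)\approx 1+\tfrac{\lambda_j}{\lambda_i}|z_i-\xi_{ij}|$ and the dichotomy $\mathscr{R}_{ij}\approx\max\{\sqrt{\lambda_j/\lambda_i},\sqrt{\lambda_i\lambda_j}|\xi_i-\xi_j|\}$ from Definition~\ref{del-1}. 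With that correction, your case-by-case outline for \eqref{1} and \eqref{3} is sound, and the remark that \eqref{2}, \eqref{4}--\eqref{6} follow by parallel splittings is accurate.
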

	\begin{proof}
		The only difference is that the exponents have been modified by the parameter $s$. Thus one can follow the same proof as in \cite[Lemma 4.2]{DSW21} and we omit it.
	\end{proof}
	
	For each $i\in I=\left\{1,\cdots,\kappa\right\}$, recall that
	\begin{equation*}		z_i^{\left(m\right)}=\lambda_i^{\left(m\right)}(x-\xi_i^{\left(m\right)})\hspace{4mm} \mbox{and}\hspace{4mm} \xi_{ij}^{\left(m\right)}
		=\lambda_i^{\left(m\right)}\left(\xi_j^{\left(m\right)}-\xi_i^{\left(m\right)}\right), ~~~\mbox{for all}~ j\neq i.
	\end{equation*}
	Furthermore, we can assume that
	\begin{equation}\label{AA-00}
		C^{\star}:=1+\max\limits_{i,j\in I, m\geq0}\{|\xi_{ij}^{(m)}|:\hspace{2mm}i<j\hspace{4mm}\mbox{and}\hspace{4mm}\lim\limits_{m\rightarrow\infty}\xi_{ij}^{(m)}\in\mathbb{R}^n\}<\infty.
	\end{equation}
	
	\textbf{Decomposition of $\mathbb{R}^{6s}$.}
	Let us define the sets
	\begin{equation*}
		\begin{split}
			\Omega^{\left(m\right)}
			:=\mathop{\bigcup}\limits_{i\in I}\left\{x:\vert z_i^{\left(m\right)}\vert\leq M\right\}\hspace{2mm}\mbox{and}\hspace{2mm}
			\Omega_i^{\left(m\right)}
			:=\left\{x:\vert z_i^{\left(m\right)}\vert\leq M,
			\vert z_i^{\left(m\right)}-\xi_{ij}^{\left(m\right)}\vert\geq\varepsilon_0,\forall j\in \mathscr{H}(i)\right\},
		\end{split}
	\end{equation*}
	where $M=M(s,\kappa)>0$ is a large constant and $\varepsilon_0=\varepsilon_0(s,\kappa)>0$ is a small constant to be determined later. Then the decomposition of $\mathbb{R}^{6s}$ is given by
	$$\mathbb{R}^{6s}=\mathcal{C}_{ext,M,1}+\mathcal{C}_{core,M,2}+\mathcal{C}_{neck,M,3}$$
	with
	$$\mathcal{C}_{ext,M,1}:=\bigcap\limits_{i\in I}\left\{x:\vert z_i^{\left(m\right)}\vert>M\right\},\hspace{2mm}\mathcal{C}_{core,M,2}:=\bigcup\limits_{i\in I}\Omega_i^{\left(m\right)}\hspace{2mm}\mbox{and}\hspace{2mm} \mathcal{C}_{neck,M,3}:=\bigcup\limits_{i\in I}D_i^{(m)},$$
	where $D_i^{(m)}$ is given by
	\begin{equation*}
		D_i^{(m)}:=\mathop{\bigcup}\limits_{j\in \mathscr{H}(i)}
		\left\{x:
		\vert z_i^{(m)}-\xi_{ij}^{(m)}\vert\leq\varepsilon_0
		\right\}\setminus\Big(\mathop{\bigcup}\limits_{j\in \mathscr{H}(i)}\left\{x:\vert z_j^{(m)}\vert< M\right\}\Big).
	\end{equation*}
	
	\textbf{Conclusion.}
	Assume that Lemma \ref{estimate2} does not hold true, in other words, up to a subsequence, there exists a sequence of functions
	$g=g_m$ with $\Vert g_m\Vert_{\ast\ast}\rightarrow0$ and $\phi=\phi_m$ with $\Vert\phi_m\Vert_\ast=1$ and $\frac{1}{m}$-interacting bubbles $\big\{W_i^{\left(m\right)}: i\in I\big\}_{m=1}^\infty$ solving the equation
	\begin{equation}\label{00}
		\left\{\begin{array}{l}
			\displaystyle (-\Delta)^s \phi_m- I_{6s,4s,s}[\sigma_m,\phi_m]
			=g_m+\sum_{i=1}^{\kappa}\sum_{a=1}^{n+1}c_{a,m}^{i}I_{6s,4s,s}[W_{i}^{(m)},\mathcal{Z}_{i,m}^a]\hspace{4.14mm}\mbox{in}\hspace{1.14mm} \mathbb{R}^{6s},\\
			\displaystyle \int_{\mathbb{R}^{6s}} I_{6s,4s,s}[W_{i}^{(m)},\mathcal{Z}^{a}_{i,m}]\phi_m=0,\hspace{4mm}i=1,\cdots, \kappa; ~a=1,\cdots,n+1.
		\end{array}
		\right.
	\end{equation}
	In order to achieve the desired contradiction with $\|\phi_{m}\|_{\ast}=1$ for any $m\in\mathbb{N}$, we divide the proof of Lemma \ref{estimate2} into several steps.
	\begin{step}\label{step5.1}
		When $\left\{x_m\right\}\subset\mathcal{C}_{ext,M,1}$, we have that
		\begin{equation}\label{n-m-1}
			|\phi_{m}|(x)<\frac{1}{6}S(x)\hspace{4mm}\mbox{as}\hspace{2mm}m\rightarrow+\infty.
		\end{equation}
	\end{step}
	\begin{proof}[Proof of Step \ref{step5.1}]
			We claim that there exists $C=C(\kappa,s)>0$ such that the following inequality holds for any $K>1$,
			\begin{equation*}
				\frac{|\phi_m|(x)}{S(x)}\leq C\bigg(\mathscr{R}_m^{-2s}+\|g_m\|_{\ast\ast}+K^{8s}\frac{\overline{S}(x)}{S(x)}+K^{-s}+K^{4s}\mathscr{R}_m^{-2s}\bigg),
			\end{equation*}
			where $\overline{S}(x)=\sum_{i\neq j}\bar{s}_{i,1}(x)+\bar{s}_{i,2}(x)$ is defined by
			\begin{equation*}
				\bar{s}_{i,1}(x,\mathscr{R})
				=\frac{\lambda_{i}^{2s}}{\tau(z_i)^{3s}\mathscr{R}^{4s}}(1+\log\tau(z_i))\chi_{\{|z_i|\leq\mathscr{R}^2\}}\hspace{4mm}\text{and}\hspace{4mm}\bar{s}_{i,2}(x,\mathscr{R})
				=\frac{\lambda_{i}^{2s}}{\tau(z_i)^{4s}\mathscr{R}^{2s}}\log\tau(z_i)\chi_{\{|z_i|>\mathscr{R}^2\}}.
			\end{equation*}
			
			In fact, by the Green's representation, we have
			\begin{equation}\label{gelinhanshu}
				\phi_m(\tilde{x})
				=C\int_{\mathbb{R}^{6s}}\frac{1}{|\tilde{x}-x|^{4s}}\bigg(I_{6s,4s,s}[\sigma_m,\phi_m]+g_m+\sum_{i=1}^{\kappa}\sum_{a=1}^{n+1}c_{a}^{i}I_{6s,4s,s}[W_{i}^{(m)},\mathcal{Z}_{i,m}^a]\bigg)dx.
			\end{equation}
			Applying $\int|\tilde{x}-x|^{2s-n}T(x)dx\lesssim S(\tilde{x})$ from \cite[Lemma 4.5]{C-K-L-24}, we get
			\begin{equation}\label{diyige}
				\Big|\int_{\mathbb{R}^{6s}}\frac{1}{|\tilde{x}-x|^{4s}}\bigg(g_m+\sum_{i=1}^{\kappa}\sum_{a=1}^{n+1}c_{a}^{i}I_{6s,4s,s}[W_{i}^{(m)},\mathcal{Z}_{i,m}^a]\bigg)dx\Big|
				\lesssim\big(\|g_m\|_{\ast\ast}+\mathscr{R}_m^{-4s}\big)S(\tilde{x}).
			\end{equation}
			Next we consider
			\begin{equation}\label{hongzao1}
				\begin{split}
					\int_{\mathbb{R}^{6s}}\frac{1}{|\tilde{x}-x|^{4s}}I_{6s,4s,s}[\sigma_m,\phi_m](x)dx
					&=\int_{\mathbb{R}^{6s}}\frac{1}{|\tilde{x}-x|^{4s}}\bigg(\int_{\mathbb{R}^{6s}}\frac{\sigma_m^2(y)}{|x-y|^{4s}}dy\phi_m(x)+\int_{\mathbb{R}^{6s}}\frac{2\sigma_m(y)\phi_m(y)}{|x-y|^{4s}}dy\sigma_m(x)\bigg)dx     \\
					&=:J_1+J_2.
				\end{split}
			\end{equation}
			For $J_1$. Using Lemma \ref{p1-00}, we achieve
			\begin{equation*}
				|J_1|
				\lesssim\sum_{i,j=1}^\kappa\int_{\mathbb{R}^{6s}}\frac{1}{|\tilde{x}-x|^{4s}}W_j^{(m)}(x)\big[s_{i,1}(x,\mathscr{R}_m)+s_{i,2}(x,\mathscr{R}_m)\big]dx
				=:\sum_{i,j=1}^\kappa \big(\mathcal{P}_{ij}^{(1)}+\mathcal{P}_{ij}^{(2)}\big)(\tilde{x}).
			\end{equation*}
			
			Consider $\mathcal{P}_{ii}^{(1)}$ (that is $i=j$). Applying Lemma \ref{B3}, we can obtain
			\begin{equation*}
				\begin{split}
					\mathcal{P}_{ii}^{(1)}(\tilde{x})
					&\approx\lambda_{i}^{2s}\mathscr{R}_m^{-4s}\int_{|z_i|\leq\mathscr{R}_m^2}\frac{1}{|\tilde{z_i}-z_i|^{4s}}\frac{1}{\tau(z_i)^{6s}}dz_i    \\
					&\lesssim\lambda_{i}^{2s}\mathscr{R}_m^{-4s}\bigg[\tau(\tilde{z_i})^{-4s}(1+\log\sqrt{1+|\tilde{z_i}|^2})\chi_{\{|\tilde{z_i}|\leq\mathscr{R}_m^2\}}+(\log\mathscr{R}_m)\tau(\tilde{z_i})^{-4s}\chi_{\{|\tilde{z_i}|>\mathscr{R}_m^2\}}\bigg]   \\
					&\lesssim\bar{s}_{i,1}(\tilde{x})+\bar{s}_{i,2}(\tilde{x}).
				\end{split}
			\end{equation*}
			Similarly, applying Lemma \ref{B3}, we have
			\begin{equation*}
				\begin{split}
					\mathcal{P}_{ii}^{(2)}(\tilde{x})
					&\approx\lambda_{i}^{2s}\mathscr{R}_m^{-2s}\int_{|z_i|>\mathscr{R}_m^2}\frac{1}{|\tilde{z_i}-z_i|^{4s}}\frac{1}{\tau(z_i)^{7s}}dz_i   \\
					&\lesssim\lambda_{i}^{2s}\mathscr{R}_m^{-2s}\bigg[\mathscr{R}_m^{-8s}\chi_{\{|\tilde{z_i}|\leq\mathscr{R}_m^2\}}+\tau(\tilde{z_i})^{-4s}(1+\log\sqrt{1+|\tilde{z_i}|^2})\chi_{\{|\tilde{z_i}|>\mathscr{R}_m^2\}}\bigg]     \\
					&\lesssim\bar{s}_{i,1}(\tilde{x})+\bar{s}_{i,2}(\tilde{x}).
				\end{split}
			\end{equation*}
			
			Consider the case $i\neq j$. If $\lambda_{i}<\lambda_{j}$, then using \eqref{1}, \eqref{2} and $\int|\tilde{x}-x|^{2s-n}T(x)dx\lesssim S(\tilde{x})$ we have
			\begin{equation*}
				\big(\mathcal{P}_{ij}^{(1)}+\mathcal{P}_{ij}^{(2)}\big)(\tilde{x})
				\lesssim\mathscr{R}_m^{-2s}S(\tilde{x}).
			\end{equation*}
			If $\lambda_{i}\geq\lambda_{j}$ and $\tau(\xi_{ij})>M$, then using \eqref{3} and \eqref{4} we have
			\begin{equation*}
				\big(\mathcal{P}_{ij}^{(1)}+\mathcal{P}_{ij}^{(2)}\big)(\tilde{x})
				\lesssim(\mathscr{R}_m^{-2s}+K^{-s})S(\tilde{x}).
			\end{equation*}
			If $\lambda_{i}\geq\lambda_{j}$ and $\tau(\xi_{ij})\leq M$, we apply \eqref{5} and \eqref{6} taking $\varepsilon=K^{-1}$ to get
			\begin{equation*}
				\big(\mathcal{P}_{ij}^{(1)}+\mathcal{P}_{ij}^{(2)}\big)(\tilde{x})
				\lesssim(K^{4s}\mathscr{R}_m^{-4s}+K^{-2s})S(\tilde{x})+K^{8s}\overline{S}(\tilde{x}).
			\end{equation*}
			Consolidating the estimates of $\mathcal{P}_{ij}^{(1)}$ and $\mathcal{P}_{ij}^{(2)}$, we can deduce that
			\begin{equation}\label{hongzao2}
				|J_1|
				\lesssim\big(\mathscr{R}_m^{-2s}+K^{-s}+K^{4s}\mathscr{R}_m^{-2s}\big)S(\tilde{x})+K^{8s}\overline{S}(\tilde{x}).
			\end{equation}
			
			Next we compute $|J_2|$. First we have
			\begin{equation}\label{jiaozi1}
				\begin{split}
					\Big|\int_{\mathbb{R}^{6s}}\frac{2\sigma_m(x)\phi_m(x)}{|\tilde{x}-x|^{4s}}dx\sigma_m(\tilde{x})\Big|
					&\lesssim\sum_{l=1}^{\kappa}\sum_{j=1}^{\kappa}\sum_{i=1}^{\kappa}\int_{\mathbb{R}^{6s}}\frac{W_l^{(m)}(x)[s_{i,1}(x,\mathscr{R}_m)+s_{i,2}(x,\mathscr{R}_m)]}{|\tilde{x}-x|^{4s}}dxW_j^{(m)}(\tilde{x})     \\
					&=:\sum_{l=1}^{\kappa}\sum_{j=1}^{\kappa}\sum_{i=1}^{\kappa}\bigg(\mathcal{Q}_{il}^{(1)}+\mathcal{Q}_{il}^{(2)}\bigg)(\tilde{x})W_j^{(m)}(\tilde{x}).
				\end{split}
			\end{equation}
			
			Consider $\mathcal{Q}_{il}^{(1)}$. In this case, we use the fact $(1+|z|^2)^{1/2}\approx1+|z|$ to get
			\begin{equation*}
				\begin{split}
					\mathcal{Q}_{il}^{(1)}(\tilde{x})
					&\approx\lambda_{l}^{2s}\mathscr{R}_m^{-4s}\int_{\{|z_i|\leq\mathscr{R}_m^2\}}\frac{1}{|\tilde{z_i}-z_i|^{4s}}\frac{1}{\tau(z_i)^{2s}}\frac{1}{\tau(z_l)^{4s}}dz_i     \\
					&\approx\lambda_{l}^{2s}\mathscr{R}_m^{-4s}\int_{\{|z_i|\leq\mathscr{R}_m^2\}}\frac{1}{|\tilde{z_i}-z_i|^{4s}}\frac{1}{\tau(z_i)^{2s}}\frac{1}{\big(1+\frac{\lambda_{l}}{\lambda_{i}}|z_i-\xi_{il}|\big)^{4s}}dz_i.
				\end{split}
			\end{equation*}
			Following the reasoning presented in \cite{DHP} we obtain
			\begin{equation}\label{jiaozi2}
				\mathcal{Q}_{il}^{(1)}(\tilde{x})W_j^{(m)}(\tilde{x})
				\lesssim \mathscr{R}_m^{-4s}T(\tilde{x})+\sum_{j=1}^{\kappa}\frac{\lambda_{j}^{4s}\mathscr{R}_m^{-4s}\log\sqrt{1+|z_j|^2}}{\tau(z_j)^{8s}}.
			\end{equation}
			
			Considering
			\begin{equation*}
				\mathcal{Q}_{il}^{(2)}
				\approx\lambda_{l}^{2s}\mathscr{R}_m^{-2s}\int_{\{|z_i|>\mathscr{R}_m^2\}}\frac{1}{|\tilde{z_i}-z_i|^{4s}}\frac{1}{\tau(z_i)^{3s}}\frac{1}{\big(1+\frac{\lambda_{l}}{\lambda_{i}}|z_i-\xi_{il}|\big)^{4s}}dz_i,
			\end{equation*}
			similarly, we have
			\begin{equation}\label{jiaozi3}
				\mathcal{Q}_{il}^{(2)}(\tilde{x})W_j^{(m)}(\tilde{x})
				\lesssim \mathscr{R}_m^{-4s}T(\tilde{x})+\sum_{j=1}^{\kappa}\frac{\lambda_{j}^{4s}\mathscr{R}_m^{-4s}\log\sqrt{1+|z_j|^2}}{\tau(z_j)^{8s}}.
			\end{equation}
			By combining \eqref{jiaozi1}-\eqref{jiaozi3}, we arrive at
			\begin{equation}\label{hongzao3}
				|J_2|
				\lesssim\int_{\mathbb{R}^{6s}}\frac{1}{|\tilde{x}-x|^{4s}}\bigg(\mathscr{R}_m^{-4s}T(\tilde{x})+\sum_{j=1}^{\kappa}\frac{\lambda_{j}^{4s}\mathscr{R}_m^{-4s}\log\sqrt{1+|z_j|^2}}{\tau(z_j)^{8s}}\bigg)dx
				\lesssim\big(\mathscr{R}_m^{-4s}+K^{-s}\big)S(\tilde{x})+\overline{S}(\tilde{x}).
			\end{equation}
			As a consequence of \eqref{hongzao1}, \eqref{hongzao2} and \eqref{hongzao3}, it follows that
			\begin{equation*}
				\Big|\int_{\mathbb{R}^{6s}}\frac{1}{|\tilde{x}-x|^{4s}}I_{6s,4s,s}[\sigma_m,\phi_m](x)dx\Big|
				\lesssim\big(\mathscr{R}_m^{-2s}+K^{-s}+K^{4s}\mathscr{R}_m^{-2s}\big)S(\tilde{x})+K^{8s}\overline{S}(\tilde{x}),
			\end{equation*}
			which together with \eqref{diyige}, our claim follows immediately.
		
		Now let us choose $K=K(\kappa,s)$ sufficiently large such that $CK^{-s}<(100\kappa)^{-1}$. Observe that he functions $\bar{s}_{i,1}$ and $\bar{s}_{i,2}$ decay faster than $s_{i,1}$ and $s_{i,2}$, respectively. More precisely, in $\mathcal{C}_{ext,M,1}$ we have
		\begin{equation*}
			\bar{s}_{i,1}\leq2M^{-s}(\log M)s_{i,1},\hspace{4mm}\bar{s}_{i,2}\leq2M^{-s}(\log M)s_{i,2}.
		\end{equation*}
		Then taking $M=M(\kappa,s,C^\star)$ sufficiently large satisfying
		\begin{equation*}
			2CK^{8s}M^{-s}(\log M)\leq(100\kappa)^{-1},
		\end{equation*}
		it follows that
		\begin{equation*}
			\frac{|\phi_m|(x)}{S(x)}\leq\frac{1}{50\kappa}+o(1)<\frac{1}{6}\hspace{6mm}\text{on}\hspace{2mm}\mathcal{C}_{ext,M,1},\hspace{2mm}\text{as}\hspace{2mm}m\rightarrow\infty,
		\end{equation*}
		because we assume that $\Vert g_m\Vert_{\ast\ast}\rightarrow0$.
	\end{proof}
	
	\par Let the blow-up sequences $\bar{\phi}_{m}$, $\bar{g}_{m}$ and $\bar{\sigma}_m$ be given by
	\begin{equation*}
		\left\lbrace
		\begin{aligned}
			&\bar{\phi}_m(z):=\phi_m(z/\lambda_{i_0}^{(m)}+\xi_{i_0j}^{(m)})S^{-1}(x_m),     \\  &\bar{g}_m(z):=(\lambda_{i_0}^{(m)})^{-2s}\bigg[g_m+\sum_{i=1}^{\kappa}\sum_{a=1}^{n+1}c_{a,m}^{i}I_{6s,4s,s}[W_i^{(m)},\mathcal{Z}^{a}_{i,m}]\bigg](z/\lambda_{i_0}^{(m)}+\xi_{i_0j}^{(m)})S^{-1}(x_m),    \\
			&\bar{\sigma}_m(z):=W[0,1](z)+\sum_{j\in I\setminus\{i_0\}}W[\xi_{i_0j}^{(m)},\lambda_{j}^{(m)}/\lambda_{i_0}^{(m)}](z),
		\end{aligned}
		\right.
	\end{equation*}
	with $z=\lambda_{i_0}^{(m)}(x-\xi_{i_0j}^{(m)})$, so that from \eqref{c1} $\bar{\phi}_m$ satisfies
	\begin{equation}\label{fai}
		\left\{\begin{array}{l}
			\displaystyle (-\Delta)^s \bar{\phi}_m(z)-I_{6s,4s,s}[\bar{\sigma}_m(z),\bar{\phi}_m(z)]
			=\bar{g}_m(z)\hspace{4.14mm}\mbox{in}\hspace{1.14mm} \mathbb{R}^{6s},\\
			\displaystyle \int_{\mathbb{R}^{6s}} I_{6s,4s,s}[W,\mathcal{Z}^{a}]\bar{\phi}_m=0,\hspace{4mm}i=1,\cdots, \kappa; ~a=1,\cdots,n+1.
		\end{array}
		\right.
	\end{equation}
	Here $W=W[0,1](z)$ and $\mathcal{Z}^a=\mathcal{Z}^a(z)=\mathcal{Z}_i^a(z)$. Let $\xi_{i_0j}^{(\infty)}:=\lim_{m\rightarrow\infty}\xi_{i_0j}^{\left(m\right)}$ and define
	\begin{equation}\label{nongfushanquan}
		\begin{split}
			&\mathcal{X}_L:=\mathop{\bigcap}\limits_{j\in\mathscr{H}(i)}
			\left\{z:\vert z-\xi_{i_0j}^{(\infty)}\vert\geq1/L\right\}.
		\end{split}
	\end{equation}
	Let $\mathcal{Y}_L:=\left\{z:\vert z\vert\leq L\right\}\cap \mathcal{X}_L$. If we choose $L\geq2\max\left\{M,\varepsilon^{-1}\right\}$, it is easy to see that $\Omega_i^{\left(m\right)}\subset\subset \mathcal{Y}_L$ for $m$ large enough.	
	
	In light of \cite[Lemma 4.7]{DSW21}, we can establish the following estimates.
	\begin{lem}\label{cll-11-2}
		Assume that $n=6s$, $\mu=4s$ and $j\notin\mathscr{H}(i)$. For any $\widetilde{A}>0$, we have that
		$W_j^{(m)}(x)=o(1)W_i^{(m)}(x)$, and
		\begin{equation*}
			s_{j,1}+s_{j,2}=o(1)s_{i,1};\hspace{4mm}t_{j,1}+t_{j,2}=o(1)t_{i,1}
		\end{equation*}
		uniformly for $x\in B(0,\widetilde{A})$ where  $B(0,\widetilde{A}):=\{x:|\xi_i^{(m)}|\leq\widetilde{A}\}$.
	\end{lem}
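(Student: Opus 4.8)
The plan is to reduce all three assertions to one elementary pointwise estimate on the ball $B(0,\widetilde A)$ and then to read off the weight-function statements from the explicit definitions of $s_{j,\bullet}$ and $t_{j,\bullet}$, following the scheme of \cite[Lemma 4.7]{DSW21}. The structural inputs are: the pointwise form $W_l^{(m)}=\alpha_{n,\mu,s}(\lambda_l^{(m)})^{2s}\tau(z_l^{(m)})^{-4s}$, which holds because $n=6s$; the scaling ordering $\lambda_1^{(m)}\le\cdots\le\lambda_\kappa^{(m)}$ from \eqref{TTT} together with the definition of the descendant set $\mathscr H(i)$; and the $\frac1m$-interaction hypothesis, which is equivalent to $\mathscr R_{ij}^{(m)}\to\infty$ for all $i\ne j$ (so in particular $\mathscr R_m\to\infty$). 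None of these depends on the precise value of $s$, which is why the argument carries over from the case $s=1$.

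First I would normalise near the $i$-th bubble. On $B(0,\widetilde A)$ one has $|z_i^{(m)}|\le\widetilde A$, hence $\tau(z_i^{(m)})\approx1$, so $W_i^{(m)}\approx(\lambda_i^{(m)})^{2s}$, and since $\mathscr R_m^2\to\infty$ the cut-offs $\chi_{\{|z_i^{(m)}|\le\mathscr R_m^2\}}$ equal $1$ for large $m$, giving $s_{i,1}\approx(\lambda_i^{(m)})^{2s}\mathscr R_m^{-4s}$ and $t_{i,1}\approx(\lambda_i^{(m)})^{4s}\mathscr R_m^{-4s}$. Thus $W_j^{(m)}=o(1)W_i^{(m)}$ amounts to $(\lambda_j^{(m)})^{2s}\big[(\lambda_i^{(m)})^{2s}\tau(z_j^{(m)})^{4s}\big]^{-1}\to0$ uniformly on $B(0,\widetilde A)$, while
\[
\frac{s_{j,1}}{s_{i,1}}\lesssim\Big(\frac{\lambda_j^{(m)}}{\lambda_i^{(m)}\tau(z_j^{(m)})}\Big)^{2s},\qquad
\frac{s_{j,2}}{s_{i,1}}\lesssim\Big(\frac{\lambda_j^{(m)}}{\lambda_i^{(m)}}\Big)^{2s}\frac{\mathscr R_m^{2s}}{\tau(z_j^{(m)})^{3s}},
\]
with the two estimates for $t_{j,1}/t_{i,1}$ and $t_{j,2}/t_{i,1}$ obtained by changing the exponents $(2s,3s)$ to $(4s,5s)$.

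Next I would split according to why $j\notin\mathscr H(i)$ (we may assume $j\ne i$): either (a) $j<i$, so $\lambda_j^{(m)}\le\lambda_i^{(m)}$, or (b) $j>i$ and $|\xi_{ij}^{(m)}|\to\infty$. In case (a), after passing to a subsequence $\lambda_j^{(m)}/\lambda_i^{(m)}\to c\in[0,1]$: if $c=0$ every quotient is $O((\lambda_j^{(m)}/\lambda_i^{(m)})^{2s})=o(1)$; if $c>0$, then $\sqrt{\lambda_i^{(m)}/\lambda_j^{(m)}}$ and $\sqrt{\lambda_j^{(m)}/\lambda_i^{(m)}}$ are bounded, so $\mathscr R_{ij}^{(m)}\to\infty$ forces $\sqrt{\lambda_i^{(m)}\lambda_j^{(m)}}\,|\xi_i^{(m)}-\xi_j^{(m)}|\to\infty$, hence $\lambda_j^{(m)}|\xi_i^{(m)}-\xi_j^{(m)}|\to\infty$, and since on $B(0,\widetilde A)$ we have $|x-\xi_j^{(m)}|\ge|\xi_i^{(m)}-\xi_j^{(m)}|-\widetilde A/\lambda_i^{(m)}$ with $\lambda_j^{(m)}\widetilde A/\lambda_i^{(m)}\le\widetilde A$, it follows that $\tau(z_j^{(m)})\ge|z_j^{(m)}|\to\infty$ uniformly. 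In case (b), the same lower bound for $|x-\xi_j^{(m)}|$ combined with $\lambda_i^{(m)}|\xi_i^{(m)}-\xi_j^{(m)}|=|\xi_{ij}^{(m)}|\to\infty$ yields $\tau(z_j^{(m)})\gtrsim(\lambda_j^{(m)}/\lambda_i^{(m)})|\xi_{ij}^{(m)}|$, hence $\lambda_j^{(m)}/(\lambda_i^{(m)}\tau(z_j^{(m)}))\lesssim|\xi_{ij}^{(m)}|^{-1}\to0$. In every case this forces $\lambda_j^{(m)}/(\lambda_i^{(m)}\tau(z_j^{(m)}))\to0$; moreover on the support of $s_{j,2}$ (resp.\ $t_{j,2}$) one has $|z_j^{(m)}|>\mathscr R_m^2$, i.e.\ $\tau(z_j^{(m)})\gtrsim\mathscr R_m^2$, so the extra factor $\mathscr R_m^{2s}$ is dominated by one of the surplus powers of $\tau(z_j^{(m)})$ in the denominator, reducing $s_{j,2}/s_{i,1}$ (resp.\ $t_{j,2}/t_{i,1}$) to the same form as $s_{j,1}/s_{i,1}$ (resp.\ $t_{j,1}/t_{i,1}$). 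Substituting then gives that all four quotients, and $W_j^{(m)}/W_i^{(m)}$, are $o(1)$ uniformly on $B(0,\widetilde A)$.

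The main obstacle I anticipate is the borderline sub-case of (a) in which $\lambda_j^{(m)}/\lambda_i^{(m)}$ does not vanish: there the decay of the quotients must be extracted purely from the interaction condition through the separation of the centers, and one has to verify that this separation rate is strong enough to beat the powers of $\mathscr R_m$ entering $s_{j,2}$ and $t_{j,2}$. Once this bookkeeping is in place, the remaining computations are direct substitutions insensitive to the value of $s$, exactly as in \cite[Lemma 4.7]{DSW21}.
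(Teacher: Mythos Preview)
Your proposal is correct and follows essentially the same approach as the paper, which simply invokes \cite[Lemma~4.7]{DSW21}; you have in effect written out that argument with the exponents adjusted from $s=1$ to general $s$. The case split $(a)$/$(b)$ and the reduction of all four weight quotients to the single quantity $\lambda_j^{(m)}/(\lambda_i^{(m)}\tau(z_j^{(m)}))$ via the support constraint $\tau(z_j^{(m)})\gtrsim\mathscr R_m^2$ is exactly the mechanism used there, and your treatment of the borderline sub-case of $(a)$ (comparable heights, diverging centers) is already complete---the interaction hypothesis forces $\lambda_j^{(m)}|\xi_i^{(m)}-\xi_j^{(m)}|\to\infty$, which suffices.
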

	
	As a consequence of Lemma \ref{cll-11-2}, we can directly deduce that
	\begin{lem}\label{cll-11-3}
		Assume that $n=6s$, $\mu=4s$ and $j\notin\mathscr{H}(i)$. Then we have that
		\begin{equation}\label{zzz-2}
			\sum\limits_{j=1}^{\kappa}W_j^{(m)}=\sum\limits_{j\in\mathscr{H}(i)}W_j^{(m)}+(1+o(1))W_i^{(m)},
		\end{equation}
		\begin{equation}\label{617}
			S(x)
			=\sum\limits_{j\in\mathscr{H}(i)}(s_{j,1}(x)+s_{j,2}(x))+(1+o(1))s_{i,1}(x),
		\end{equation}
		\begin{equation}\label{618}
			T(x)
			=\sum\limits_{j\in\mathscr{H}(i)}(t_{j,1}(x)+t_{j,2}(x))+(1+o(1))t_{i,1}(x),
		\end{equation}
		uniformly for $x\in B(0,\widetilde{A})$.
	\end{lem}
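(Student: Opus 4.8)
The plan is to obtain Lemma \ref{cll-11-3} as a direct bookkeeping consequence of Lemma \ref{cll-11-2}, using only that the index set $I=\{1,\dots,\kappa\}$ is finite. First I would fix $i\in I$ and split $I$ into three disjoint parts: the singleton $\{i\}$, the descendant set $\mathscr{H}(i)=\{j\in I:\ i\prec j\}$, and the remainder $I':=I\setminus(\{i\}\cup\mathscr{H}(i))$. Every index $j\in I'$ satisfies $j\notin\mathscr{H}(i)$, so Lemma \ref{cll-11-2} applies to each such $j$: uniformly for $x\in B(0,\widetilde{A})$ one has $W_j^{(m)}=o(1)W_i^{(m)}$, $s_{j,1}+s_{j,2}=o(1)\,s_{i,1}$ and $t_{j,1}+t_{j,2}=o(1)\,t_{i,1}$, with an $o(1)$ that is uniform in $x\in B(0,\widetilde{A})$.

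Next I would record the elementary fact that on $B(0,\widetilde{A})$ the rescaled variable $z_i^{(m)}=\lambda_i^{(m)}(x-\xi_i^{(m)})$ remains bounded while $\mathscr{R}_m\to\infty$ as $m\to\infty$; hence for $m$ large the cut-off $\chi_{\{|z_i|>\mathscr{R}_m^2\}}$ vanishes there, so that
\[
s_{i,2}(x,\mathscr{R}_m)=t_{i,2}(x,\mathscr{R}_m)=0\qquad\text{on }B(0,\widetilde{A})
\]
for all $m$ sufficiently large. In particular the contribution of bubble $i$ itself to $S$ and to $T$ on this region is precisely $s_{i,1}$ and $t_{i,1}$, respectively.

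Then I would simply sum the finitely many estimates. Since $|I'|\le\kappa-1<\infty$, adding the bounds from Lemma \ref{cll-11-2} over $j\in I'$ yields $\sum_{j\in I'}W_j^{(m)}=o(1)W_i^{(m)}$, $\sum_{j\in I'}(s_{j,1}+s_{j,2})=o(1)\,s_{i,1}$ and $\sum_{j\in I'}(t_{j,1}+t_{j,2})=o(1)\,t_{i,1}$, all uniformly on $B(0,\widetilde{A})$. Combining with the vanishing of the tail weights of bubble $i$, one gets
\[
\sum_{j=1}^{\kappa}W_j^{(m)}=\sum_{j\in\mathscr{H}(i)}W_j^{(m)}+W_i^{(m)}+\sum_{j\in I'}W_j^{(m)}=\sum_{j\in\mathscr{H}(i)}W_j^{(m)}+(1+o(1))W_i^{(m)},
\]
which is \eqref{zzz-2}; writing $S(x)=\sum_{j\in\mathscr{H}(i)}(s_{j,1}+s_{j,2})+(s_{i,1}+s_{i,2})+\sum_{j\in I'}(s_{j,1}+s_{j,2})$ and using $s_{i,2}=0$ on $B(0,\widetilde{A})$ gives \eqref{617}, and the identical computation with $t$ in place of $s$ gives \eqref{618}.

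I do not expect any genuine obstacle, since all the analytic content is already packaged in Lemma \ref{cll-11-2} and the proof is a finite summation. The two points deserving a line of care are: (i) that the tail pieces $s_{i,2},t_{i,2}$ attached to bubble $i$ itself — which are \emph{not} covered by Lemma \ref{cll-11-2} — really drop out on $B(0,\widetilde{A})$, which is exactly the boundedness of $|z_i^{(m)}|$ there against $\mathscr{R}_m\to\infty$; and (ii) that the $o(1)$ in Lemma \ref{cll-11-2} is uniform in $x$, so that the uniformity asserted in \eqref{zzz-2}--\eqref{618} survives the sum over the finitely many indices in $I'$.
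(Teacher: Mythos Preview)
Your proposal is correct and matches the paper's approach: the paper states Lemma \ref{cll-11-3} ``as a consequence of Lemma \ref{cll-11-2}'' without further detail, and your argument supplies exactly the finite-summation bookkeeping that makes this deduction precise. The one point you add beyond what the paper writes explicitly---that $s_{i,2}=t_{i,2}=0$ on $B(0,\widetilde{A})$ because $|z_i^{(m)}|$ stays bounded there while $\mathscr{R}_m\to\infty$---is the correct way to handle the $i$-th bubble's tail terms.
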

	
	\begin{step}\label{step5.2}
		When $\left\{x_m\right\}\subset\mathcal{C}_{core,M,2}$, we have that	
		\begin{equation*}
			|\phi_{m}|(x)=o(1)S(x)\hspace{4mm}\text{as}\hspace{2mm}m\rightarrow+\infty
		\end{equation*}
	\end{step}
	To prove estimate \eqref{n-m-2} by contradiction, up to a subsequence, we choose $\varepsilon_1>0$ and $\{x_m\}\subset\mathcal{C}_{core,M,2}$ such that
	\begin{equation*}
		|\phi_{m}|(x_m)>\varepsilon_1S(x_m),
	\end{equation*}
	At this stage, it remains to prove the following results.
	\begin{Prop}\label{diyigemingti}
		Assume that $n=6s$ and $\mu=4s$. In each compact subset $\mathcal K_L$, we have the following estimate, uniformly for $z\in \mathcal{Y}_L$
		\begin{equation}\label{KM-0}
			\bar{\sigma}_m(z)\rightarrow W[0,1](z)
			\hspace{2mm}\mbox{as}\hspace{2mm} m\rightarrow\infty.
		\end{equation}
		Moreover we have that
		\begin{equation}\label{km-1}
			\vert\bar{\phi}_m\vert(z)\lesssim\sum_{j\in\mathscr{H}(i_0) }\big(\frac{M}{|z-\xi_{i_0j}^{(\infty)}|}\big)^{3s}+M^{2s},\hspace{2mm}\mbox{for} \hspace{2mm}z\in \mathcal{Y}_L,
		\end{equation}	
	\end{Prop}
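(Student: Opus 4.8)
The plan is to establish the two assertions \eqref{KM-0} and \eqref{km-1} separately: the first is a soft convergence statement about the rescaled bubble configuration $\bar\sigma_m$, and the second is a quantitative a priori bound on $\bar\phi_m$ that feeds the compactness argument used in the remaining steps of the proof of Lemma \ref{estimate2}.

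For \eqref{KM-0}, I would start from the explicit decomposition $\bar\sigma_m(z)=W[0,1](z)+\sum_{j\in I\setminus\{i_0\}}W[\xi_{i_0j}^{(m)},\lambda_j^{(m)}/\lambda_{i_0}^{(m)}](z)$ and show that every summand with $j\neq i_0$ tends to zero uniformly on the compact set $\mathcal{Y}_L$. If $j\in\mathscr{H}(i_0)$, then by \eqref{TTT} and the definition of $\mathscr{H}(i_0)$ the recentred points converge, $\xi_{i_0j}^{(m)}\to\xi_{i_0j}^{(\infty)}\in\mathbb{R}^n$, while $\mathscr{R}_{i_0j}^{(m)}=\sqrt{\lambda_j^{(m)}/\lambda_{i_0}^{(m)}}\to\infty$, so the relative scale $\Lambda_m:=\lambda_j^{(m)}/\lambda_{i_0}^{(m)}\to\infty$; since on $\mathcal{Y}_L$ one has $|z-\xi_{i_0j}^{(m)}|\ge 1/(2L)$ for all large $m$ by \eqref{nongfushanquan}, the corresponding bubble is bounded by $\alpha_{n,\mu,s}(2L)^{n-2s}\Lambda_m^{-(n-2s)/2}\to 0$. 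If $j\notin\mathscr{H}(i_0)$, Lemma \ref{cll-11-2} applied with $i=i_0$ and $\widetilde{A}=L$ gives, after the scale-invariant rescaling, $W[\xi_{i_0j}^{(m)},\lambda_j^{(m)}/\lambda_{i_0}^{(m)}](z)=o(1)\,W[0,1](z)$ uniformly for $|z|\le L$, and $W[0,1]$ is bounded on $\mathcal{Y}_L$. Summing the finitely many contributions yields \eqref{KM-0}.

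For \eqref{km-1}, I would combine the normalization $\|\phi_m\|_\ast=1$, which gives $|\bar\phi_m|(z)\le S(x)/S(x_m)$ with $x=z/\lambda_{i_0}^{(m)}+\xi_{i_0j}^{(m)}$, with the identification of the rescaled weight on $\mathcal{Y}_L$. By Lemma \ref{cll-11-3} one has $S(x)=\sum_{j\in\mathscr{H}(i_0)}(s_{j,1}+s_{j,2})(x)+(1+o(1))s_{i_0,1}(x)$ on bounded sets, and since $x_m$ lies in the core $\Omega_{i_0}^{(m)}$ one has $S(x_m)\gtrsim s_{i_0,1}(x_m)\gtrsim \lambda_{i_0}^{2s}\mathscr{R}_m^{-4s}M^{-2s}$; plugging in the definitions of $s_{i_0,1}$ and $s_{j,2}$, using $|z_j|\approx(\lambda_j/\lambda_{i_0})|z-\xi_{i_0j}^{(\infty)}|$ and the interaction bound $\mathscr{R}_m\lesssim\mathscr{R}_{i_0j}^{(m)}\approx\sqrt{\lambda_j/\lambda_{i_0}}$ from \eqref{1}–\eqref{6}, one sees that the $i_0$-core term contributes the $M^{2s}$ and each concentrating descendant contributes a $(M/|z-\xi_{i_0j}^{(\infty)}|)^{3s}$. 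The same bound can alternatively be read off from the Green's representation attached to \eqref{fai},
$$\bar\phi_m(z)=C\int_{\mathbb{R}^{6s}}\frac{1}{|z-w|^{4s}}\Big(I_{6s,4s,s}[\bar\sigma_m,\bar\phi_m](w)+\bar g_m(w)\Big)\,dw,$$
where the $\bar g_m$–term is negligible (because $\|g_m\|_{\ast\ast}\to0$, the scalars $c_{a,m}^{i}$ are controlled by Lemma \ref{cll}, and $\int|z-w|^{-4s}T(w)\,dw\lesssim S(z)$ as in \cite{C-K-L-24}), and the main term is estimated region by region on $\mathcal{C}_{ext,M,1}$, $\mathcal{C}_{core,M,2}$, $\mathcal{C}_{neck,M,3}$ using Lemma \ref{p1-00} for the nonlocal factor and Lemma \ref{B3} for the Riesz convolutions.

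The main obstacle is precisely this region-by-region estimation of $\int|z-w|^{-4s}I_{6s,4s,s}[\bar\sigma_m,\bar\phi_m]$ against the weight $S$ in the presence of several distinct concentration scales: one must invoke the sharp interaction estimates \eqref{1}–\eqref{6} and the convolution bounds of Lemma \ref{B3} carefully so as not to lose a logarithm or a power of $M$, and one must keep track of how the exterior contribution controlled in Step \ref{step5.1} interacts with the core and neck contributions. Once \eqref{KM-0} and \eqref{km-1} are in hand, standard elliptic estimates applied to \eqref{fai} yield local $C^{\alpha}$ bounds for $\bar\phi_m$, so that along a subsequence $\bar\phi_m\to\bar\phi$ locally; passing to the limit in \eqref{fai} and using \eqref{KM-0} shows that $\bar\phi$ solves the linearized equation at $W[0,1]$ together with the orthogonality conditions, hence $\bar\phi\equiv 0$ by the nondegeneracy Theorem \ref{prondgr}, which is the key input for reaching the contradiction with $\|\phi_m\|_\ast=1$ in the remaining steps.
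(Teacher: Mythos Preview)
Your proposal is essentially correct and matches the paper's approach. For \eqref{KM-0} the paper does exactly what you outline: split the indices $j\neq i_0$ into $j\in\mathscr{H}(i_0)$ (where $\lambda_j^{(m)}/\lambda_{i_0}^{(m)}\to\infty$ and $|z-\xi_{i_0j}^{(m)}|$ is bounded below on $\mathcal{Y}_L$) and $j\notin\mathscr{H}(i_0)$ (handled by Lemma \ref{cll-11-2}). For \eqref{km-1} the paper uses precisely your first approach: from $\|\phi_m\|_\ast=1$ write $|\bar\phi_m|(z)\le S(x)/S(x_m)$, apply the decomposition \eqref{617}, then bound the resulting ratio term by term via the elementary inequality $(\sum a_i)/(\sum b_i)\le\sum a_i/b_i$, computing $s_{i_0,1}(x)/s_{i_0,1}(x_m)\lesssim 1+M^{2s}$ and, for $j\in\mathscr{H}(i_0)$, $s_{j,2}(x)/s_{j,2}(x_m)\lesssim (M/|z-\xi_{i_0j}^{(\infty)}|)^{3s}$ (using that on $\mathcal{Y}_L$ one has $|z_j|\gg\mathscr{R}_m^2$, so only $s_{j,2}$ is active).

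One remark: the Green's representation alternative you sketch for \eqref{km-1} is not needed and the paper does not use it here; the direct weight comparison is the entire argument, and the region-by-region integral estimates you flag as the ``main obstacle'' belong to Step \ref{step5.1} rather than to this proposition. Your final paragraph about passing to the limit in \eqref{fai} via elliptic regularity and invoking nondegeneracy is the content of the subsequent Proposition \ref{converges-4}, not of the present statement, but it correctly anticipates how \eqref{KM-0}--\eqref{km-1} are used.
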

	\begin{proof}
		Thanks to Lemma \ref{cll-11-3}, to conclude the proof of \eqref{KM-0}, it suffices to check that
		$$\sum_{j\in I\setminus\{i_0\}}W[\xi_{i_0j}^{(m)},\lambda_{j}^{(m)}/\lambda_{i_0}^{(m)}](z)=o(1)W[0,1](z),\hspace{2mm}\mbox{for}\hspace{2mm}z\in \mathcal{Y}_{L},\hspace{2mm}j\in\mathscr{H}(i_0).$$
		In the case $j\in\mathscr{H}(i_0)$, one must have $\lambda_{j}^{(m)}/\lambda_{i_0}^{(m)}\rightarrow\infty$ as $m\rightarrow\infty$.
		Then together with $z\in \mathcal{Y}_L$, we get that
		$$W[\xi_{i_0j}^{(m)},\lambda_{j}^{(m)}/\lambda_{i_0}^{(m)}]W^{-1}[0,1]\leq(\lambda_{j}^{(m)}/\lambda_{i_0}^{(m)})^{-2s}(2L)^{8s}\rightarrow0\hspace{6mm}\mbox{as}\hspace{2mm}m\rightarrow\infty,$$
		as desired.
		
		Using the elementary inequality
		\begin{equation*}
			(\sum_{i=1}^{\kappa}b_i)^{-1}\sum_{i=1}^{\kappa}a_i\leq\max\Big\{\frac{a_i}{b_i},\cdots,\frac{a_\kappa}{b_\kappa}\Big\}\leq\sum_{i=1}^{\kappa}b_i^{-1}a_i
			\hspace{2mm}\mbox{for all}\hspace{2mm}a_1,\cdots,a_\kappa>0,\hspace{2mm} b_1,\cdots,b_\kappa>0,
		\end{equation*}
		it follows from \eqref{617}-\eqref{618} and the definition of weight functions that
		\begin{equation*}
			\begin{split}
				&\frac{|\phi_{m}(x)|}{S(x_m)}\leq\frac{\sum_{j\in\mathscr{H}(i_0)}^{\kappa}(s_{j,1}(x)+s_{j,2}(x))+s_{i_0,1}(x)}{\sum_{j\in\mathscr{H}(i_0)}^{\kappa}(s_{j,1}(x_{m})+s_{j,2}(x_{m}))+s_{i_0,1}(x_{m})}
				\\&\lesssim\frac{s_{i_0,1}(x)}{s_{i_0,1}(x_{m})}+\sum_{j\in\mathscr{H}(i_0)}^{\kappa}\bigg[\frac{s_{j,1}(x)}{s_{j,1}(x_{m})}_{\big\{\big|\lambda_{j}^{(m)}/\lambda_{i_0}^{(m)}(z-\xi_{i_0j}^{(\infty)})\big|\leq\mathscr{R}_m^2\big\}}
				+{\frac{s_{j,2}(x)}{s_{j,2}(x_{m})}}_{\big\{\big|\lambda_{j}^{(m)}/\lambda_{i_0}^{(m)}(z-\xi_{i_0j}^{(\infty)})\big|>\mathscr{R}_m^2\big\}}\bigg],
			\end{split}
		\end{equation*}
		\begin{equation*}
			\begin{split}
				&\frac{|g_m(x)|}{(\lambda_{i_0}^{(m)})^{2s}S_1(x_m)}\lesssim\frac{(\lambda_{i_0}^{(m)})^{-2s}\sum_{j\in\mathscr{H}(i_0)}^{\kappa}(t_{j,1}(x)+t_{j,2}(x))+t_{i_0,1}(x)
				}{\sum_{j\in\mathscr{H}(i_0)}^{\kappa}(s_{j,1}(x_{m})+s_{j,2}(x_{m}))+s_{i_0,1}(x_{m})}
				\lesssim\frac{(\lambda_{i_0}^{(m)})^{-2s}t_{i_0,1}(x)}{s_{i_0,1}(x_{m})}\\&+\sum_{j\in\mathscr{H}(i_0)}^{\kappa}\bigg[\frac{(\lambda_{i_0}^{(m)})^{-2s}t_{j,1}(x)}{s_{j,1}(x_{m})}_{\big\{\big|\lambda_{j}^{(m)}/\lambda_{i_0}^{(m)}(z-\xi_{i_0j}^{(\infty)})\big|\leq\mathscr{R}_m^2\big\}}
				+\frac{(\lambda_{i_0}^{(m)})^{-2s}t_{j,2}(x)}{s_{j,2}(x_{m})}_{\big\{\big|\lambda_{j}^{(m)}/\lambda_{i_0}^{(m)}(z-\xi_{i_0j}^{(\infty)})\big|>\mathscr{R}_m^2\big\}}\bigg].     \end{split}
		\end{equation*}
		Note that, there exists a large number $m_L\in\mathbb{N}$ such that
		\begin{equation*}
			z\in \mathcal{Y}_L,~j\in\mathscr{H}(i_0),~m\geq m_L \Rightarrow
			\big|\lambda_{j}^{(m)}/\lambda_{i_0}^{(m)}(z-\xi_{i_0j}^{(\infty)})\big|\gg\mathscr{R}_m^2.
		\end{equation*}
		Therefore, we obtain
		\begin{equation*}
			\frac{s_{i_0,1}(x)}{s_{i_0,1}(x_{m})}= \leq1+M^{2s},\hspace{2mm}\frac{(\lambda_{i_0}^{(m)})^{-2s}v_{i_0,1}(x)}{z_{i_0,1}(x_{m})}\leq1+M^{2s},\hspace{2mm}
		\end{equation*}
		for $z=\lambda_{i_0}(x-\xi_{i_0})$ and $\zeta_m=\lambda_{i_0}(x_m-\xi_{i_0})$.
		In the case $i_0\prec j$, $\lim\limits_{m\rightarrow\infty}\xi_{ij}^{(m)}$ exists, one has $\lambda_{j}^{(m)}/\lambda_{i_0}^{(m)}\rightarrow\infty$ as $m\rightarrow\infty$. It follows from \eqref{AA-00} and $|\zeta_{m}|\leq M$ that
		\begin{equation*}
			\frac{s_{j,2}(x)}{s_{j,2}(x_{m})}= \frac{\tau(\lambda_j(x_m-\xi_j))^{3s}}{\tau(\lambda_j(x-\xi_j))^{3s}}
			\leq \big(\frac{M}{|z-\xi_{i_0j}^{(\infty)}|}\big)^{3s},\hspace{2mm}
			\frac{(\lambda_{i_0}^{(m)})^{-2s}t_{j,2}(x)}{s_{j,2}(x_{m})}=
			\frac{\tau(\lambda_j(x_m-\xi_j))^{3s}}{\tau(\lambda_j(x-\xi_j))^{5s}}
			\leq L^{5s}M^{3s}.
		\end{equation*}
		Combining all these together, the result follows.
	\end{proof}
	Next, we conclude the proof of Step \ref{step5.2} by proving the following useful results.
	\begin{Prop}\label{converges-4}
		Up to a subsequence, we have
		\begin{equation*}
			\bar{\phi}_m(z)\rightarrow0 \hspace{2mm}\mbox{in}\hspace{2mm}C_{loc}^{0}\big(\mathbb{R}^{6s}\setminus\{\xi_{i_0j}^{(\infty)}:j\in\mathscr{H}(i_0)\}\big)\hspace{6mm}\mbox{as}\hspace{2mm} m\rightarrow\infty.
		\end{equation*}
	\end{Prop}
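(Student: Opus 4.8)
The plan is to run a blow-up and compactness argument on the rescaled profiles $\bar\phi_m$ of \eqref{fai} and then invoke the non-degeneracy Theorem \ref{prondgr}. First I would observe that the pointwise bound \eqref{km-1} of Proposition \ref{diyigemingti}, which holds on every $\mathcal{Y}_L$, shows that $\{\bar\phi_m\}$ is uniformly bounded on each compact subset of $\mathbb{R}^{6s}\setminus\{\xi_{i_0j}^{(\infty)}:j\in\mathscr{H}(i_0)\}$. On such a compact set $\bar\sigma_m\to W[0,1]$ in $C^0$ by \eqref{KM-0}, so the coefficients of the linear operator $I_{6s,4s,s}[\bar\sigma_m,\cdot]$ converge, while the right-hand side $\bar g_m$ converges to $0$ uniformly on compacta: its $g_m$-part is controlled by $\|g_m\|_{\ast\ast}\to0$ through the weight ratios estimated in the proof of Proposition \ref{diyigemingti}, and its projection part is controlled by $|c_{a,m}^i|\lesssim\mathscr{Q}\|g_m\|_{\ast\ast}+\mathscr{Q}^2|\log\mathscr{Q}|\|\phi_m\|_{\ast}\to0$ from Lemma \ref{cll} together with $\|\phi_m\|_{\ast}=1$ and $\mathscr{Q}\to0$. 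Applying interior regularity theory for $(-\Delta)^s$ to \eqref{fai} then yields uniform $C^{\alpha}_{\mathrm{loc}}$ bounds for $\bar\phi_m$ away from the punctures, so by Arzel\`a--Ascoli and a diagonal extraction we may assume, along a subsequence, that $\bar\phi_m\to\bar\phi$ in $C^0_{\mathrm{loc}}\big(\mathbb{R}^{6s}\setminus\{\xi_{i_0j}^{(\infty)}\}\big)$, and passing to the limit in \eqref{fai} shows that $\bar\phi$ solves
\begin{equation*}
(-\Delta)^s\bar\phi=2\big(|x|^{-4s}\ast(W[0,1]\bar\phi)\big)W[0,1]+\big(|x|^{-4s}\ast W[0,1]^2\big)\bar\phi
\end{equation*}
in the punctured domain.

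Next I would show that each puncture is removable. By \eqref{km-1} the limit obeys $|\bar\phi(z)|\lesssim|z-\xi_{i_0j}^{(\infty)}|^{-3s}$ near $\xi_{i_0j}^{(\infty)}$, while the defining property $\|\phi_m\|_{\ast}=1$ (that is $|\phi_m|\le S$) produces, after rescaling, a uniform decay bound at infinity for the $\bar\phi_m$ coming from the decay of the weights $s_{i,k}$, so the limit $\bar\phi$ decays at infinity. Since $3s<4s=n-2s$, the growth of $\bar\phi$ at each puncture is strictly slower than that of the fundamental solution of $(-\Delta)^s$ on $\mathbb{R}^{6s}$, and the right-hand side of the equation for $\bar\phi$ is bounded near the puncture because it involves only the fixed smooth profile $W[0,1]$; hence a removable-singularity argument (as in \cite{DSW21}) shows that $\bar\phi$ extends to a distributional solution on all of $\mathbb{R}^{6s}$, and a standard bootstrap together with the decay at infinity places $\bar\phi\in\dot H^s(\mathbb{R}^{6s})$. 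Thus $\bar\phi$ is a genuine $\dot H^s$-solution of the linearized equation at $W[0,1]$; since $n=6s>2s$ and $\mu=4s$ satisfies $0<\mu\le4s$, Theorem \ref{prondgr} applies and gives $\bar\phi=\sum_{a=1}^{n+1}b_a\mathcal{Z}^a$ for constants $b_a$.

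Finally I would pass to the limit in the orthogonality relations $\int_{\mathbb{R}^{6s}}I_{6s,4s,s}[W,\mathcal{Z}^a]\bar\phi_m=0$, $a=1,\dots,n+1$, from \eqref{fai}. Because $I_{6s,4s,s}[W,\mathcal{Z}^a]$ is smooth, bounded near the punctures, and integrable on $\mathbb{R}^{6s}$ with fast decay at infinity, while $\bar\phi_m$ carries the uniform (in $m$), locally integrable majorant supplied by \eqref{km-1} together with the $S$-bound, the contributions from small balls about the punctures and from $\{|z|>L\}$ are uniformly small, and $\bar\phi_m\to\bar\phi$ uniformly on the remaining compact region; hence $\int_{\mathbb{R}^{6s}}I_{6s,4s,s}[W,\mathcal{Z}^a]\bar\phi=0$ for every $a$. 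Since the $\mathcal{Z}^a$ are eigenfunctions of \eqref{Pwhlep} (Lemma \ref{propep}), the Gram matrix $\big(\int_{\mathbb{R}^{6s}}I_{6s,4s,s}[W,\mathcal{Z}^a]\mathcal{Z}^b\big)_{a,b}$ is positive definite (this is exactly the coercivity already used via Lemma \ref{armidale} and the positivity of the Riesz kernel), so all $b_a=0$, i.e. $\bar\phi\equiv0$, which is the assertion. \emph{The main obstacle} is the removable-singularity step: one must check carefully that the $|z-\xi_{i_0j}^{(\infty)}|^{-3s}$ bound coming from \eqref{km-1} genuinely lies below the fundamental-solution threshold $|z|^{-(n-2s)}$ and that the concentrating bubbles $W_j^{(m)}$ — which are responsible for the blow-up of $\bar\sigma_m$ precisely at the punctures — contribute no concentrated mass to the limiting equation; this is where the specific choice of the weights $s_{i,k}$, $t_{i,k}$ and of the parameters $M$, $\varepsilon_0$, $L$ becomes essential.
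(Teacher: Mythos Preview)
Your overall strategy matches the paper's: extract a $C^0_{\mathrm{loc}}$ limit $\bar\phi_\infty$ on the punctured space, show it solves the linearized equation at $W[0,1]$, remove the singularities, and kill the limit via nondegeneracy plus the orthogonality relations. The place where your argument breaks down is the removable-singularity step. Your assertion that ``the right-hand side of the equation for $\bar\phi$ is bounded near the puncture because it involves only the fixed smooth profile $W[0,1]$'' is not correct: writing the equation as $(-\Delta)^s\bar\phi=I_{6s,4s,s}[W[0,1],\bar\phi]$, the term $(|x|^{-4s}\ast W[0,1]^2)\bar\phi$ carries the same $|z-\xi_{i_0j}^{(\infty)}|^{-3s}$ singularity as $\bar\phi$ itself, and the convolution term $(|x|^{-4s}\ast W[0,1]\bar\phi)W[0,1]$ is not obviously bounded either. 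So a one-shot ``sub-fundamental growth $\Rightarrow$ removable'' argument does not apply directly.

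What the paper does instead is a genuine bootstrap via the Green representation: starting from $|h(z)|\lesssim\sum_{j}|z-\xi_{i_0j}^{(\infty)}|^{-3s}+1$, one writes $h$ as the Riesz potential of $I_{6s,4s,s}[W[0,1],h]$ and estimates each piece explicitly (splitting the integrals near and away from the punctures), obtaining $|h(z)|\lesssim 1+\sum_j|z-\xi_{i_0j}^{(\infty)}|^{-\alpha}$ with a strictly smaller $\alpha$; iterating finitely many times yields $h\in L^\infty(\mathbb{R}^{6s})$. This is the missing ingredient in your sketch, and it is exactly the point you flag as ``the main obstacle''. Note also that the paper concludes directly from $L^\infty$ plus orthogonality (the proof of Theorem~\ref{prondgr} shows that the spherical-harmonic classification only needs $v\in L^\infty$), rather than first placing $\bar\phi$ in $\dot H^s$; your route through $\dot H^s$ would require an additional decay argument at infinity that the bound \eqref{km-1} alone does not supply.
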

	\begin{proof}
		Note that, combining the standard elliptic regularity and diagonal argument, up to subsequence, we have
		\begin{equation*}	\bar{\phi}_m\rightarrow\bar{\phi}_{\infty}\hspace{2mm}\mbox{in}\hspace{2mm} C_{loc}^{0}\big(\mathbb{R}^{6s}\setminus\{\xi_{i_0j}^{(\infty)}:j\in\mathscr{H}(i_0)\}\big)
			\hspace{6mm}\mbox{as}\hspace{2mm} m\rightarrow\infty
		\end{equation*}
		for some function $\bar{\phi}_{\infty}$. Furthermore, by \eqref{km-1} we have
		$$\vert\bar{\phi}_{\infty}(z)\vert
		\lesssim\sum_{j\in\mathscr{H}(i_0)}^{\kappa}\big(\frac{M}{|z-\xi_{i_0j}^{(\infty)}|}\big)^{3s}+M^{2s},\hspace{4mm}         \mbox{in}\hspace{2mm}\mathbb{R}^{6s}\setminus\{\xi_{i_0j}^{(\infty)}:j\in\mathscr{H}(i_0)\},
		$$
		Then by the same argument as in \cite{YZ25}, the following result holds
		\begin{equation*}			    (-\Delta)^s\bar{\phi}_{\infty}-I_{6s,4s,s}[W[0,1],\bar{\phi}_{\infty}]
			=0,   \hspace{4mm} \text{in}\hspace{2mm}\mathbb{R}^{6s}\setminus\{\xi_{i_0j}^{(\infty)}:j\in\mathscr{H}(i_0)\}
		\end{equation*}
		and
		\begin{equation*}
			\displaystyle \int_{\mathbb{R}^{6s}} I_{6s,4s,s}[W[0,1],\mathcal{Z}^{a}[0,1]]\bar{\phi}_{\infty}=0,\hspace{4mm}\mbox{for all}\hspace{2mm}i=1,\cdots,\kappa\hspace{2mm}\mbox{and}\hspace{2mm}a=1,\cdots,n+1.
		\end{equation*}
		Furthermore, we can claim that there exists a function $h(z)$ satisfying
		\begin{equation*}
			\left\{
			\begin{array}{ll}
				(-\Delta)^s h-I_{6s,4s,s}[W[0,1],h]
				=0,        &\text{in}~\mathbb{R}^{6s}\setminus\{\xi_{i_0j}^{(\infty)}:j\in\mathscr{H}(i_0)\},            \\
				\vert h(z)\vert			\lesssim\sum_{j\in\mathscr{H}(i_0)}\big(\frac{M}{|z-\xi_{i_0j}^{(\infty)}|}\big)^{3s}+M^{2s},                  &\text{in}~\mathbb{R}^{6s}\setminus\{\xi_{i_0j}^{(\infty)}:j\in\mathscr{H}(i_0)\}.
			\end{array}
			\right.
		\end{equation*}
		Then $h\in L^{\infty}(\mathbb{R}^n).$
		Indeed, in order to conclude the proof of Proposition \ref{converges-4}, it suffices to obtain the boundness in the set $B(0,4\widetilde{C})\setminus\{\xi_{i_0j}^{(\infty)}:j\in\mathscr{H}(i_0)\}$, where $4\widetilde{C}:=1+\max_{j\in\mathscr{H}(i_0)}|\xi_{i_0j}^{(\infty)}|$
		For any $z\in B(0,4\widetilde{C})\setminus\{\xi_{i_0j}^{(\infty)}:j\in\mathscr{H}(i_0)\}$, we have
		\begin{equation*}
			\begin{split}			|h(z)|&\lesssim\int_{\mathbb{R}^{6s}}\frac{1}{|z-\omega|^{4s}}\Big[\Big(\int_{\mathbb{R}^{6s}}\frac{W[0,1]}{|\omega-y|^{4s}}dy\Big)
				W[0,1]+\Big(\int_{\mathbb{R}^{6s}}\frac{W[0,1]^2}{|\omega-y|^{4s}}dy\Big)
				\Big]d\omega\\&				+\sum_{j\in\mathscr{H}(i_0)}\int_{\mathbb{R}^{6s}}\frac{1}{|z-\omega|^{4s}}\Big(\int_{\mathbb{R}^{6s}}\frac{W[0,1]}{|\omega-y|^{4s}}\big(\frac{1}{|y-\xi_{i_0j}^{(\infty)}|}\big)^{3s}dy\Big)			W[0,1]d\omega\\&+\sum_{j\in\mathscr{H}(i_0)}\int_{\mathbb{R}^{6s}}\frac{1}{|z-\omega|^{4s}}\Big(\int_{\mathbb{R}^{6s}}\frac{W[0,1]^2}{|\omega-y|^{4s}}dy\Big)				\big(\frac{1}{|\omega-\xi_{i_0j}^{(\infty)}|}\big)^{3s}\Big]d\omega.
			\end{split}
		\end{equation*}
		Now, following the similar process to \cite{YZ25} with minor modifications, we eventually arrive at
		$$|g(z)|\leq C\Big(1+\sum_{j\in\mathscr{H}(i_0)}\frac{1}{|z-\xi_{i_0j}^{(\infty)}|^{\alpha_1}}+\sum_{j\in\mathscr{H}(i_0)}\frac{1}{|z-\xi_{i_0j}^{(\infty)}|^{\alpha_2}}\Big) \hspace{6mm}\text{in}~\mathbb{R}^{6s}\setminus\{\xi_{i_0j}^{(\infty)}:j\in\mathscr{H}(i_0)\},
		$$
		for some positive constants $\alpha_1$ and $\alpha_2$. Substituting this bound into the previous estimate of $|h(z)|$ and the claim then follows by iterating the above argument.
		Thus the conclusion follows from the orthogonal condition and non-degeneracy of $W_i$.
	\end{proof}

	\begin{proof}[Proof of Step \ref{step5.2}]
		Since $\vert \zeta_m\vert:=\vert\lambda_{i_0}^{(m)}(x_m-\xi_{i_0j}^{(m)})\vert\leq M$ and $\vert \zeta_m-\xi_{i_0j}^{\left(m\right)}\vert\geq\varepsilon_0$, up to a subsequence, we have $\lim_{m\rightarrow\infty}\zeta_m=\zeta_\infty\notin\{\xi_{i_0j}^{(\infty)}:j\in\mathscr{H}(i_0)\}$. However, this leads to a contradiction with $\bar{\phi}_{\infty}\equiv0$ by Proposition \ref{converges-4}.
	\end{proof}
	
	\begin{step}\label{step5.3}
		When $\left\{x_m\right\}\subset\mathcal{C}_{neck,M,3}:=\bigcup\limits_{i\in I}D_i^{(m)},$
		we have that
		\begin{equation*}
			|\phi_{m}|(x)<\frac{2}{3}S(x)\hspace{4mm}\text{as}\hspace{2mm}m\rightarrow+\infty.
		\end{equation*}
	\end{step}
	\begin{proof}[Proof of Step \ref{step5.3}]
		By \eqref{gelinhanshu} and \eqref{diyige}, we know
		\begin{equation*}
			|\phi_m|(\tilde{x})
			\lesssim\Big|\int_{\mathbb{R}^{6s}}\frac{1}{|\tilde{x}-x|^{4s}}I_{6s,4s,s}[\sigma_m,\phi_m]dx\Big|
			+o(1)S(\tilde{x}).
		\end{equation*}
		It follows from Step \ref{step5.1} and Step \ref{step5.2} that, provided $M$ is large enough,
		\begin{equation*}
			\begin{split}
				&\bigg|\int_{\mathbb{R}^{6s}}\frac{1}{|\tilde{x}-x|^{4s}}\bigg(\int_{\mathbb{R}^{6s}}\frac{\sigma_m^2(y)
				}{|x-y|^{4s}}dy\phi_m(x)+\int_{\mathbb{R}^{6s}}\frac{2\sigma_m(y)\phi_m(y)}{|x-y|^{4s}}dy\sigma_m(x)\bigg)dx\bigg|       \\
				\leq&
				\bigg|\int_{\bigcup\limits_{i\in I}D_i^{(m)}}\frac{1}{|\tilde{x}-x|^{4s}}\bigg(\int_{\mathbb{R}^{6s}}\frac{\sigma_m^2(y)
				}{|x-y|^{4s}}dy\phi_m(x)+\int_{\mathbb{R}^{6s}}\frac{2\sigma_m(y)\phi_m(y)}{|x-y|^{4s}}dy\sigma_m(x)\bigg)dx\bigg|
				+\bigg(\frac{1}{6}+o(1)\bigg)S(\tilde{x}).
			\end{split}
		\end{equation*}
		Moreover, from Step \ref{step5.1} we have
		\begin{equation*}
			\begin{split}
				&\bigg|\int_{D_i^{(m)}}\frac{1}{|\tilde{x}-x|^{4s}}\bigg(\int_{\mathbb{R}^{6s}}\frac{\sigma_m^2(y)
				}{|x-y|^{4s}}dy\phi_m(x)+\int_{\mathbb{R}^{6s}}\frac{2\sigma_m(y)\phi_m(y)}{|x-y|^{4s}}dy\sigma_m(x)\bigg)dx\bigg|     \\
				\leq&K^{8s}\sum_{j=1}^{\kappa}\sum_{l>j}\int_{D_i^{(m)}\cap\big\{\frac{\lambda_{l}}{K\lambda_{j}}\leq|z_l|\leq\frac{K\lambda_{l}}{\lambda_{j}},|z_j|\leq2K\big\}}\frac{1}{|\tilde{z_l}-z_l|^{4s}}\frac{\lambda_{l}^{2s}}{\tau(z_j)^{12s}\mathscr{R}_m^{4s}}dz_l    \\
				&+\mathscr{R}_m^{-4s}\sum_{j=1}^\kappa\int_{D_i^{(m)}}\frac{1}{|\tilde{z_j}-z_j|^{4s}}\frac{\lambda_{j}^{4s}\log\sqrt{1+|z_j|^2}}{\tau(z_j)^{8s}}dz_j.
			\end{split}
		\end{equation*}
		Let $M$ large enough, if $j>i$, then we have $D_i^{(m)}\cap\big\{\frac{\lambda_{l}}{K\lambda_{j}}\leq|z_l|\leq\frac{K\lambda_{l}}{\lambda_{j}},|z_j|\leq2K\big\}=\emptyset$. Consequently,
		\begin{equation*}
			\begin{split}
				&\bigg|\int_{D_i^{(m)}}\frac{1}{|\tilde{x}-x|^{4s}}\bigg(\int_{\mathbb{R}^{6s}}\frac{\sigma_m^2(y)
				}{|x-y|^{4s}}dy\phi_m(x)+\int_{\mathbb{R}^{6s}}\frac{2\sigma_m(y)\phi_m(y)}{|x-y|^{4s}}dy\sigma_m(x)\bigg)dx\bigg|       \\
				\leq&\sum_{j>i}\int_{D_i^{(m)}\cap\big\{\frac{\lambda_{l}}{K\lambda_{j}}\leq|z_l|\leq\frac{K\lambda_{l}}{\lambda_{j}},|z_j|\leq2K\big\}}\frac{1+o(1)}{|\tilde{z_j}-z_j|^{4s}}\frac{M^{8s}\lambda_{j}^{2s}}{\tau(z_i)^{12s}}dz_j
				+(\varepsilon^{2s}+o(1))s_{i,1}(\tilde{x})
				+M^{-2s}\sum_{j>i}\big(s_{j,1}(\tilde{x})+s_{j,2}(\tilde{x})\big)   \\
				\leq&\big(\varepsilon^{2s}+\varepsilon^{2s}M^{8s}+o(1)\big)s_{i,1}(\tilde{x})
				+M^{-2s}\sum_{j>i}\big(s_{j,1}(\tilde{x})+s_{j,2}(\tilde{x})\big).
			\end{split}
		\end{equation*}
		Thus for $x\in D_{i_0}^{(m)}$, let $\varepsilon$ small enough we can deduce
		\begin{equation*}
			\begin{split}
				&\bigg|\int_{\mathbb{R}^{6s}}\frac{1}{|\tilde{x}-x|^{4s}}\bigg(\int_{\mathbb{R}^{6s}}\frac{\sigma_m^2(y)
				}{|x-y|^{4s}}dy\phi_m(x)+\int_{\mathbb{R}^{6s}}\frac{2\sigma_m(y)\phi_m(y)}{|x-y|^{4s}}dy\sigma_m(x)\bigg)dx\bigg|       \\
				\leq&\frac{1}{6}S(\tilde{x})
				+\sum_{i=1}^\kappa\bigg(\big(\varepsilon^{2s}+\varepsilon^{2s}K^{8s}+o(1)\big)s_{i,1}(\tilde{x},\mathscr{R}_m)
				+M^{-2s}\sum_{j>i}\big(s_{j,1}(\tilde{x},\mathscr{R}_m)+s_{j,2}(\tilde{x},\mathscr{R}_m)\big)\bigg)
				\leq\frac{1}{3}S(\tilde{x}).
			\end{split}
		\end{equation*}
		In conclusion, we have that
		\begin{equation*}
			|\phi_m|(x)
			\leq\frac{2}{3}S(x)\hspace{6mm}for\hspace{2mm}x\in\cup_{i\in I}D_i^{(m)}.
		\end{equation*}
	\end{proof}
	
	\begin{proof}[Proof of Lemma \ref{estimate2}]
		It follows from Step \ref{step5.1}, Step \ref{step5.2} and Step \ref{step5.3} that $\|\phi_m\|_\ast<1$, which contradicts the assumption $\|\phi_m\|_\ast=1$. As a result, Lemma \ref{estimate2} is proven.
	\end{proof}
	
	\section{Proofs of Theorem \ref{Figalli} and Corollary \ref{Figalli2}}\label{section7}
	In this section, we will prove our main Theorem \ref{Figalli} and Corollary \ref{Figalli2} based on some crucial energy estimates.
	\subsection{Energy estimate of the first approximation}
	\begin{lem}\label{p00}
		Assume that $s\in(0,\frac{n}{2})$, $n>2s$, $0<\mu<n$ with $0<\mu\leq4s$. For $\delta$ is small enough, we have the following estimate:
		\begin{equation*}
			\big\|\varrho_0\big\|_{\dot{H}^s(\mathbb{R}^n)}\lesssim\mathcal{K}_{n,\mu,s}(\mathscr{Q}^{\min\{\frac{\mu}{n-2s},1\}}),
		\end{equation*}
		where  $\mathcal{K}_{n,\mu,s}$ is the piece-wise function defined in \eqref{hanshu}.
	\end{lem}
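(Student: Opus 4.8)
The plan is to separate the argument according to the two branches defining $\mathcal{K}_{n,\mu,s}$. In the range $n=6s$ with $\mu\in(0,4s)$, or $n\neq6s$ with $\mu\in(0,4s]$, as well as in the one-bubble situation $\kappa=1$ (where $g\equiv0$, so that the fixed-point problem \eqref{coefficients11} is solved by $\varrho_0\equiv0$), the function $\mathcal{K}_{n,\mu,s}$ is the identity map; consequently the asserted bound is just $\|\varrho_0\|_{\dot H^s(\mathbb{R}^n)}\lesssim\mathscr{Q}^{\min\{\frac{\mu}{n-2s},1\}}$, which is exactly the conclusion of Lemma \ref{zhaji}. Thus the whole content of the statement is concentrated in the borderline case $n=6s$, $\mu=4s$, $\kappa\geq2$, where $p_s=2$, $\min\{\frac{\mu}{n-2s},1\}=1$, and one must upgrade the weighted $C^0$-estimate $\|\varrho_0\|_{\ast}\lesssim1$ from Lemma \ref{xianyuxian} to $\|\varrho_0\|_{\dot H^s(\mathbb{R}^{6s})}\lesssim\mathscr{Q}|\log\mathscr{Q}|^{\frac12}$.

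In that case I would test the equation \eqref{suannai}, rewritten in the form \eqref{AA}, against $\varrho_0$ itself. Since $\varrho_0$ satisfies the orthogonality conditions $\int I_{6s,4s,s}[W_i,\mathcal{Z}_i^a]\varrho_0=0$, the Lagrange-multiplier term drops out and one is left with
\begin{equation*}
\|\varrho_0\|_{\dot H^s(\mathbb{R}^{6s})}^2-\int I_{6s,4s,s}[\sigma,\varrho_0]\varrho_0=\int g\varrho_0+\int N(\varrho_0)\varrho_0.
\end{equation*}
The spectral inequality of Lemma \ref{estim} (applicable since $\sigma=\sum_i W_i$ is a $\delta$-interacting family) gives $\int I_{6s,4s,s}[\sigma,\varrho_0]\varrho_0\leq\tau_0\|\varrho_0\|_{\dot H^s}^2$ for some $\tau_0<1$ depending only on $s,\kappa$, so that $(1-\tau_0)\|\varrho_0\|_{\dot H^s}^2\leq|\int g\varrho_0|+|\int N(\varrho_0)\varrho_0|$, and it only remains to bound the two source integrals.

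For this I would use the weight functions: by definition of the $\ast$- and $\ast\ast$-norms, $|g(x)|\leq\|g\|_{\ast\ast}T(x)$ and $|\varrho_0(x)|\leq\|\varrho_0\|_{\ast}S(x)$, so H\"older's inequality with exponents $\tfrac32$ and $3$ yields
\begin{equation*}
\Big|\int g\varrho_0\Big|\leq\|g\|_{\ast\ast}\|\varrho_0\|_{\ast}\,\|T\|_{L^{3/2}(\mathbb{R}^{6s})}\|S\|_{L^3(\mathbb{R}^{6s})}.
\end{equation*}
By Lemma \ref{estimate1} one has $\|g\|_{\ast\ast}\lesssim1$, by Lemma \ref{xianyuxian} $\|\varrho_0\|_{\ast}\lesssim1$, and by Lemma \ref{cll-0-0} $\|S\|_{L^3}\|T\|_{L^{3/2}}\lesssim\mathscr{R}^{-8s}\log\mathscr{R}$; since $\mathscr{Q}\approx\mathscr{R}^{-(n-2s)}=\mathscr{R}^{-4s}$ this gives $|\int g\varrho_0|\lesssim\mathscr{Q}^2|\log\mathscr{Q}|$. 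For the nonlinear term, the bound $\|N(\varrho_0)\|_{\ast\ast}\lesssim\mathscr{Q}\|\varrho_0\|_{\ast}^2\lesssim\mathscr{Q}$ established in the proof of Lemma \ref{xianyuxian} gives, in the same way, $|\int N(\varrho_0)\varrho_0|\lesssim\mathscr{Q}\cdot\mathscr{Q}^2|\log\mathscr{Q}|$, a higher-order term. Combining, $(1-\tau_0)\|\varrho_0\|_{\dot H^s}^2\lesssim\mathscr{Q}^2|\log\mathscr{Q}|$, i.e. $\|\varrho_0\|_{\dot H^s}\lesssim\mathscr{Q}|\log\mathscr{Q}|^{1/2}=\mathcal{K}_{6s,4s,s}(\mathscr{Q}^{\min\{\mu/(n-2s),1\}})$, which finishes the proof.

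The delicate point — and the source of the logarithmic correction — is precisely the factor $\log\mathscr{R}$ in the weighted $L^3$/$L^{3/2}$ estimates for $S$ and $T$ in Lemma \ref{cll-0-0}: combined with $\mathscr{R}^{-8s}\approx\mathscr{Q}^2$ it turns the naive quadratic bound $\mathscr{Q}$ into $\mathscr{Q}|\log\mathscr{Q}|^{1/2}$, and it is this loss that later forces the $|\log\Gamma(u)|^{1/2}$ in Theorem \ref{Figalli} when $n=6s$, $\mu=4s$. One should also note that the whole chain of inequalities in this case is legitimate only because the weighted bound $\|\varrho_0\|_{\ast}\lesssim1$ is already in hand from the contraction argument of Lemma \ref{xianyuxian}, so one may pass freely between the $\ast$-norm and integrals tested against $S$; the spectral inequality then closes the energy estimate without circularity.
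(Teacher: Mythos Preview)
Your argument in the non-critical regime is fine (and in fact the paper's proof there is equally short, simply citing Lemma~\ref{zhaji}). The gap is in the borderline case $n=6s$, $\mu=4s$: you invoke the spectral inequality of Lemma~\ref{estim} to absorb $\int I_{6s,4s,s}[\sigma,\varrho_0]\varrho_0$ into the left-hand side, but Lemma~\ref{estim} requires the full set of orthogonality conditions \eqref{EP1}, which includes $\langle\varrho_0,W_i\rangle_{\dot H^s}=0$ (equivalently $\int(|x|^{-\mu}\ast W_i^{p_s-1}\varrho_0)W_i^{p_s}=0$) for every $i$. The fixed-point problem \eqref{suannai} only imposes orthogonality to the kernel directions $\mathcal Z_i^a$, $a=1,\dots,n+1$, and \emph{not} to the bubbles $W_i$ themselves, so Lemma~\ref{estim} does not apply to $\varrho_0$. (This is exactly why, later on, the paper decomposes $\varrho_1=\sum_i\gamma^iW_i+\varrho_2$ before using the spectral inequality on $\varrho_2$.)

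The paper circumvents this issue altogether: instead of moving the linear term to the left, it bounds it directly on the right. Using $|\varrho_0|\lesssim S$ from Lemma~\ref{xianyuxian}, one has $\|\varrho_0\|_{L^{3}}\lesssim\|S\|_{L^{3}}$, and then HLS plus H\"older give
\[
\Big|\int I_{6s,4s,s}[\sigma,\varrho_0]\varrho_0\Big|\lesssim\|\sigma\|_{L^3}^{2}\|\varrho_0\|_{L^3}^{2}\lesssim\|S\|_{L^3}^{2}\lesssim\mathscr{Q}^{2}|\log\mathscr{Q}|,
\]
via Lemma~\ref{cll-0-0}. With the identity $\|\varrho_0\|_{\dot H^s}^{2}=\int I[\sigma,\varrho_0]\varrho_0+\int N(\varrho_0)\varrho_0+\int g\varrho_0$ (your equation, without any absorption), all three right-hand terms are $\lesssim\mathscr{Q}^{2}|\log\mathscr{Q}|$ and the conclusion follows. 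Your estimates for $\int g\varrho_0$ and $\int N(\varrho_0)\varrho_0$ are correct; it is only the treatment of the linear term that needs to be replaced.
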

	\begin{proof}
		We begin from the equation \eqref{coefficients11}. Multiplying this equation by $\varrho_0$ and integrating by parts, we obtain
		\begin{equation}\label{0AA}	
			\|\varrho_0\|_{\dot{H}^s(\mathbb{R}^n)}^2
			=\int I_{n,\mu,s}[\sigma,\varrho_0]\varrho_0
			+\int N(\varrho_0)\varrho_0
			+\int g\varrho_0,
		\end{equation}
		where we use the orthogonal condition. When $n=6s$, $0<\mu<4s$ or $n\neq6s$, $0<\mu\leq4s$, it follows from Lemma \ref{qiegao} and Lemma \ref{zhaji} that $\|g\|_{L^{(2_s^\ast)^\prime}}\lesssim\mathscr{Q}^{\min\{\frac{\mu}{n-2s},1\}}$ and $\|\varrho_{0}\|_{\dot{H}^s(\mathbb{R}^n)}\lesssim\mathscr{Q}^{\min\{\frac{\mu}{n-2s},1\}}$. Therefore,
		\begin{equation*}
			\begin{split}
				\Big|\int I_{n,\mu,s}[\sigma,\varrho_0]\varrho_0\Big|
				&=\Big|p_s\int\big(|x|^{-\mu}\ast \sigma^{p_s-1}\varrho_0\big)
				\sigma^{p_s-1}\varrho_0+(p_s-1)\int\big(|x|^{-\mu}\ast\sigma^{p_s}\big)
				\sigma^{p_s-2}\varrho_0^2\Big|      \\
				&\lesssim\|\sigma\|_{L^{2_s^\ast}}^{2(p_s-1)}\|\varrho_0\|_{L^{2_s^\ast}}^2
				\lesssim\mathscr{Q}^{2\min\{\frac{\mu}{n-2s},1\}},
			\end{split}
		\end{equation*}
		\begin{equation*}
			\Big|\int N(\varrho_0)\varrho_0\Big|
			\lesssim o(1)\|\varrho_0\|_{L^{2_s^\ast}}^2
			\lesssim o(1)\mathscr{Q}^{2\min\{\frac{\mu}{n-2s},1\}},
		\end{equation*}
		\begin{equation*}
			\Big|\int g\varrho_0\Big|
			\lesssim\|g\|_{L^{(2_s^\ast)^\prime}}\|\varrho_{0}\|_{\dot{H}^s(\mathbb{R}^n)}
			\lesssim\mathscr{Q}^{2\min\{\frac{\mu}{n-2s},1\}}.
		\end{equation*}
		Hence, the proof is complete by putting the above estimates together.
		
		When $n=6s$ and $\mu=4s$, it follows from Lemma \ref{estimate1} and Lemma \ref{xianyuxian} that $|g(x)|\lesssim T(x)$ and $|\varrho_0(x)|\lesssim S(x)$. By Lemma \ref{cll-0-0}, we have $\|S\|_{L^{2_s^\ast}}\lesssim\mathscr{Q}|\log\mathscr{Q}|^\frac{1}{2}$ and $\|S\|_{L^{2_s^{\ast}}}\|T\|_{L^{(2_s^\ast)^\prime}}\lesssim\mathscr{Q}^2|\log\mathscr{Q}|$. Therefore,
		\begin{equation*}
			\Big|\int_{\mathbb{R}^{6s}}I_{6s,4s,s}[\sigma,\varrho_0]\varrho_0\Big|
			\lesssim\|\sigma\|_{L^{2_s^\ast}}^2\|\varrho_0\|_{L^{2_s^\ast}}^2
			\lesssim\mathscr{Q}^2|\log\mathscr{Q}|,
		\end{equation*}
		\begin{equation*}
			\Big|\int_{\mathbb{R}^{6s}} N(\varrho_0)\varrho_0\Big|
			\lesssim o(1)\|\varrho_0\|_{L^{2_s^\ast}}^2
			\lesssim o(1)\mathscr{Q}^2|\log\mathscr{Q}|,
		\end{equation*}
		\begin{equation*}
			\Big|\int_{\mathbb{R}^{6s}} g\varrho_0\Big|\lesssim\int ST\leq\big\|S\big\|_{L^{2_s^{\ast}}}\big\|T\big\|_{L^{(2_s^\ast)^\prime}}\lesssim\mathscr{Q}^2|\log\mathscr{Q}|.
		\end{equation*}
		Plugging in the above inequalities to \eqref{0AA}, the proof is completed.

	\end{proof}

	\subsection{Energy estimate of the second approximation}
	First we need to decompose the error function $\varrho$. Recall that $\varrho$ and $\varrho_{0}$ satisfy \eqref{u-0} and \eqref{coefficients11}, respectively.
	Thus, $\varrho_1=\varrho-\varrho_0$ solves
	\begin{equation}\label{c1-000}
		\left\{\begin{array}{l}
			\displaystyle (-\Delta)^s \varrho_1-\big(|x|^{-\mu}\ast(\sigma+\varrho_0+\varrho_1)^{p_s}\big)(\sigma+\varrho_0+\varrho_1)^{p_s-1}
			+\Big(|x^{-\mu}\ast(\sigma+\varrho_0)^{p_s}\Big)(\sigma+\varrho_0)^{p_s-1}
			\\
			\displaystyle+\sum_{i=1}^{\kappa}\sum_{a=1}^{n+1}c_{a}^{i}I_{n,\mu,s}[W_{i},\mathcal{Z}^{a}_i]-\hat{f}=0,\\
			\displaystyle \int I_{n,\mu,s}[W_{i},\mathcal{Z}^{a}_i]\varrho_1=0,\hspace{2mm}i=1,\cdots, \kappa; ~a=1,\cdots,n+1,
		\end{array}
		\right.
	\end{equation}
	where $\hat{f}:=(-\Delta)^su-\big(|x|^{-\mu}\ast |u|^{p_s}\big)|u|^{p_s-2}u$. Moreover, there exist appropriate constants $\gamma^i$ such that the following decomposition hold:
	\begin{equation}\label{c1-001}
		\varrho_1=\sum_{i=1}^{\kappa}\gamma^iW_i+\varrho_2\hspace{4mm}\text{with}\hspace{4mm}\gamma^i=\int(-\Delta)^\frac{s}{2}\varrho_1(-\Delta)^\frac{s}{2}W_i.
	\end{equation}
	Then for all $i=1,\cdots, \kappa; ~a=1,\cdots,n+1$, we achieve
	$$\int(-\Delta)^\frac{s}{2}\varrho_2\cdot(-\Delta)^\frac{s}{2}W_i=\int(-\Delta)^\frac{s}{2}\varrho_2\cdot(-\Delta)^\frac{s}{2}\mathcal{Z}_{i}^{a}=0.$$
	\begin{lem}\label{Ni-1-3}
		Assume that $s\in(0,\frac{n}{2})$, $n>2s$, $0<\mu<n$ with $0<\mu\leq4s$. We have that
		$$\big\|\varrho_2\big\|_{\dot{H}^s(\mathbb{R}^n)}\lesssim\sum_{i=1}^{\kappa}|\gamma^i|+\big\|\hat{f}\big\|_{\dot{H}^{-s}(\mathbb{R}^n)}.$$
	\end{lem}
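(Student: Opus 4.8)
The plan is to pair equation \eqref{c1-000} with $\varrho_2$ and extract coercivity from the spectral gap in Lemma \ref{estim}. First I would record that the decomposition \eqref{c1-001} makes $\varrho_2$ orthogonal in $\dot{H}^s(\mathbb{R}^n)$ to every $W_i$ and to every $\mathcal{Z}_i^a$; since $W_i$ solves \eqref{ele-1.1} and $\mathcal{Z}_i^a$ solves the associated linearized equation, one has $(-\Delta)^sW_i=(|x|^{-\mu}\ast W_i^{p_s})W_i^{p_s-1}$ and $(-\Delta)^s\mathcal{Z}_i^a=I_{n,\mu,s}[W_i,\mathcal{Z}_i^a]$, so these orthogonality relations become $\int(|x|^{-\mu}\ast W_i^{p_s})W_i^{p_s-1}\varrho_2=0$ and $\int I_{n,\mu,s}[W_i,\mathcal{Z}_i^a]\varrho_2=0$. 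By the symmetry of the Riesz potential and the definition of $\mathcal{Z}_i^a$ these are precisely the orthogonality conditions \eqref{EP1} with $\varrho=\varrho_2$, so Lemma \ref{estim} applies to $\varrho_2$. Subtracting $\sum_i\gamma^i(-\Delta)^sW_i=\sum_i\gamma^i(|x|^{-\mu}\ast W_i^{p_s})W_i^{p_s-1}$ from \eqref{c1-000}, testing the equation for $\varrho_2$ against $\varrho_2$, and using the two identities above to annihilate exactly both the $c_a^i\,I_{n,\mu,s}[W_i,\mathcal{Z}_i^a]$ terms and the $\gamma^i(|x|^{-\mu}\ast W_i^{p_s})W_i^{p_s-1}$ terms, I obtain
\begin{equation*}
\|\varrho_2\|_{\dot{H}^s(\mathbb{R}^n)}^2=\int\Big[\big(|x|^{-\mu}\ast(\sigma+\varrho_0+\varrho_1)^{p_s}\big)(\sigma+\varrho_0+\varrho_1)^{p_s-1}-\big(|x|^{-\mu}\ast(\sigma+\varrho_0)^{p_s}\big)(\sigma+\varrho_0)^{p_s-1}\Big]\varrho_2+\int\hat f\,\varrho_2 .
\end{equation*}

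Next I would expand the nonlinear difference by Taylor's formula around $\sigma+\varrho_0$ in the direction $\varrho_1=\sum_i\gamma^iW_i+\varrho_2$, writing it as $I_{n,\mu,s}[\sigma+\varrho_0,\varrho_1]$ plus a remainder that is at least quadratic in $\varrho_1$. Splitting $I_{n,\mu,s}[\sigma+\varrho_0,\varrho_1]=I_{n,\mu,s}[\sigma+\varrho_0,\varrho_2]+I_{n,\mu,s}\big[\sigma+\varrho_0,\sum_i\gamma^iW_i\big]$, I replace $\sigma+\varrho_0$ by $\sigma$ in the first summand (the error being $O(\|\varrho_0\|_{\dot{H}^s}\|\varrho_2\|_{\dot{H}^s}^2)$ by H\"{o}lder, Sobolev and Hardy--Littlewood--Sobolev, all exponents being admissible since $p_s\ge2$), so that this term equals $\int I_{n,\mu,s}[\sigma,\varrho_2]\varrho_2$ up to $o(\|\varrho_2\|_{\dot{H}^s}^2)$ because $\|\varrho_0\|_{\dot{H}^s}\lesssim\mathcal{K}_{n,\mu,s}(\mathscr{Q}^{\min\{\frac{\mu}{n-2s},1\}})\to0$ by Lemma \ref{p00}. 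Pairing the second summand with $\varrho_2$ gives a contribution bounded by $\big(\sum_i|\gamma^i|\big)\|\varrho_2\|_{\dot{H}^s}$, using $|\mathcal{Z}_i^a|\lesssim W_i$, the convolution bound of Lemma \ref{B4-1} and Hardy--Littlewood--Sobolev; and the quadratic remainder paired with $\varrho_2$ is controlled by $\|\varrho_1\|_{\dot{H}^s}^{\min\{p_s-1,2\}}\|\varrho_2\|_{\dot{H}^s}$, which is $o(\|\varrho_2\|_{\dot{H}^s}^2)+C\big(\sum_i|\gamma^i|\big)\|\varrho_2\|_{\dot{H}^s}$ since $|\gamma^i|\le\|W_i\|_{\dot{H}^s}\|\varrho_1\|_{\dot{H}^s}\lesssim\delta$ and $\|\varrho_2\|_{\dot{H}^s}\le\|\varrho_1\|_{\dot{H}^s}\lesssim\delta$ are both small.

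Collecting everything and using $\int\hat f\,\varrho_2\le\|\hat f\|_{\dot{H}^{-s}(\mathbb{R}^n)}\|\varrho_2\|_{\dot{H}^s}$, I reach
\begin{equation*}
\|\varrho_2\|_{\dot{H}^s(\mathbb{R}^n)}^2=\int I_{n,\mu,s}[\sigma,\varrho_2]\varrho_2+o\big(\|\varrho_2\|_{\dot{H}^s}^2\big)+O\Big(\Big(\sum_{i=1}^{\kappa}|\gamma^i|+\big\|\hat f\big\|_{\dot{H}^{-s}(\mathbb{R}^n)}\Big)\|\varrho_2\|_{\dot{H}^s}\Big),
\end{equation*}
and Lemma \ref{estim} gives $\int I_{n,\mu,s}[\sigma,\varrho_2]\varrho_2\le\tau_0\|\varrho_2\|_{\dot{H}^s}^2$ with $\tau_0<1$. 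Therefore $\big(1-\tau_0-o(1)\big)\|\varrho_2\|_{\dot{H}^s}^2\lesssim\big(\sum_i|\gamma^i|+\|\hat f\|_{\dot{H}^{-s}}\big)\|\varrho_2\|_{\dot{H}^s}$, and dividing by $\|\varrho_2\|_{\dot{H}^s}$ yields the assertion once $\delta$ is small. The main obstacle is the careful bookkeeping of the nonlinear difference: one must check that, after expanding the doubly perturbed Hartree nonlinearity around $\sigma$, each resulting term either reproduces the quadratic form estimated in Lemma \ref{estim} or is absorbed with a small constant into $o(\|\varrho_2\|_{\dot{H}^s}^2)$ or into $(\sum_i|\gamma^i|+\|\hat f\|_{\dot{H}^{-s}})\|\varrho_2\|_{\dot{H}^s}$; this relies only on the smallness of $\|\varrho_0\|_{\dot{H}^s}$ (Lemma \ref{p00}) and of $\sum_i|\gamma^i|$, on Lemma \ref{B4-1}, and on standard Sobolev and Hardy--Littlewood--Sobolev inequalities, following the pattern of the analogous arguments in \cite{DSW21,C-K-L-24,DHP}.
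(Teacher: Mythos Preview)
Your proposal is correct and follows essentially the same route as the paper: test \eqref{c1-000} against $\varrho_2$, use the $\dot H^s$-orthogonality of $\varrho_2$ to each $W_i$ and $\mathcal Z_i^a$ to eliminate the Lagrange-multiplier and $\gamma^i$ terms, Taylor-expand the Hartree nonlinearity around $\sigma+\varrho_0$, and apply the spectral gap (Lemma~\ref{estim}) to the resulting quadratic form $\int I_{n,\mu,s}[\sigma,\varrho_2]\varrho_2$, absorbing the leftover pieces via Lemma~\ref{p00} and H\"older/HLS. The paper carries this out by naming the individual pieces $J_1,\dots,J_9$, but the logic is identical. One minor imprecision: when you replace $\sigma+\varrho_0$ by $\sigma$ in $I_{n,\mu,s}[\sigma+\varrho_0,\varrho_2]$, the error paired with $\varrho_2$ is $O\big(\|\varrho_0\|_{\dot H^s}^{\min\{p_s-2,1\}}\|\varrho_2\|_{\dot H^s}^2\big)$ rather than $O(\|\varrho_0\|_{\dot H^s}\|\varrho_2\|_{\dot H^s}^2)$, because for $2<p_s<3$ the bound $|(\sigma+\varrho_0)^{p_s-2}-\sigma^{p_s-2}|\lesssim|\varrho_0|^{p_s-2}$ is what H\"older yields; since $p_s-2>0$ this is still $o(\|\varrho_2\|_{\dot H^s}^2)$, so your conclusion is unaffected.
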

	\begin{proof}
		Multiplying \eqref{c1-000} by $\varrho_2$ and integrating by parts, we have
		\begin{equation*}
			\begin{split}
				\|\varrho_2\|_{\dot{H}^s(\mathbb{R}^n)}^2
				=&\int\big(|x|^{-\mu}\ast(\sigma+\varrho_0+\varrho_1)^{p_s}\big)(\sigma+\varrho_0+\varrho_1)^{p_s-1}\varrho_2
				-\int\big(|x|^{-\mu}\ast(\sigma+\varrho_0)^{p_s}\big)(\sigma+\varrho_0)^{p_s-1}\varrho_2
				+\int\hat{f}\varrho_2         \\
				=&\int\big(|x|^{-\mu}\ast\big[(\sigma+\varrho_0+\varrho_1)^{p_s}-(\sigma+\varrho_0)^{p_s}-p_s(\sigma+\varrho_0)^{p_s-1}\varrho_1\big]\big)\big(\sigma+\varrho_0+\varrho_1\big)^{p_s-1}\varrho_2     \\
				&+\int\big(|x|^{-\mu}\ast(\sigma+\varrho_0)^{p_s}\big)\big[(\sigma+\varrho_0+\varrho_1)^{p_s-1}-(\sigma+\varrho_0)^{p_s-1}-(p_s-1)(\sigma+\varrho_0)^{p_s-2}\varrho_1\big]\varrho_2    \\
				&+p_s\int\big(|x|^{-\mu}\ast(\sigma+\varrho_0)^{p_s-1}\varrho_1\big)\big[(\sigma+\varrho_0+\varrho_1)^{p_s-1}-(\sigma+\varrho_0)^{p_s-1}\big]\varrho_2    \\
				&+\int I_{n,\mu,s}[\sigma+\varrho_0,\varrho_1]\varrho_2+\int\hat{f}\varrho_2     \\
				=&:J_1+J_2+J_3+\int I_{n,\mu,s}[\sigma+\varrho_0,\varrho_1]\varrho_2+\int\hat{f}\varrho_2.
			\end{split}
		\end{equation*}
		Since the elementary inequalities
		\begin{equation*}
			(\sigma+\varrho_0+\varrho_1)^{p_s}-(\sigma+\varrho_0)^{p_s}-p_s(\sigma+\varrho_0)^{p_s-1}\varrho_1
			\lesssim(\sigma+\varrho_0)^{p_s-2}\varrho_1^2
			+|\varrho_1|^{p_s},
		\end{equation*}
		\begin{equation*}
			(\sigma+\varrho_0+\varrho_1)^{p_s-1}-(\sigma+\varrho_0)^{p_s-1}-(p_s-1)(\sigma+\varrho_0)^{p_s-2}\varrho_1
			\lesssim\varrho_1^{p_s-1},
		\end{equation*}
		together with the H\"older inequality, Hardy-Littlewood-Sobolev inequality and Sobolev inequality, we get
		\begin{equation*}
			\Big|\int\hat{f}\varrho_2\Big|
			\leq\|\varrho_2\|_{\dot{H}^s(\mathbb{R}^n)}\|\hat{f}\|_{\dot{H}^{-s}(\mathbb{R}^n)},
		\end{equation*}
		\begin{equation*}
			|J_1+J_2+J_3|
			\lesssim
			\left\lbrace
			\begin{aligned}
				&\bigg(\mathcal{G}+\|\varrho_2\|_{\dot{H}^s(\mathbb{R}^n)}\bigg)^2\|\varrho_2\|_{\dot{H}^s(\mathbb{R}^n)}\hspace{12mm}if\hspace{2mm}\mu=4s,    \\
				&\bigg(\mathcal{G}+\|\varrho_2\|_{\dot{H}^s(\mathbb{R}^n)}\bigg)^{p_s-1}\|\varrho_2\|_{\dot{H}^s(\mathbb{R}^n)}\hspace{7mm}if\hspace{2mm}0<\mu<4s,
			\end{aligned}
			\right.
		\end{equation*}
		where $\mathcal{G}=\sum_{i=1}^\kappa|\gamma^i|$. Next we consider $\int I_{n,\mu,s}[\sigma+\varrho_0,\varrho_1]\varrho_2$. Noticing the decomposition of $\varrho_1$ in \eqref{c1-001}, we have
		$|\varrho_1|\leq C\mathcal{G}\sigma+|\varrho_2|$ and
		\begin{equation*}
			\begin{split}
				&\int I_{n,\mu,s}[\sigma+\varrho_0,\varrho_1]\varrho_2    \\
				\leq& C\mathcal{G}\int\big(|x|^{-\mu}\ast(\sigma+\varrho_0)^{p_s-1}\sigma\big)(\sigma+\varrho_0)^{p_s-1}|\varrho_2|+\big(|x|^{-\mu}\ast(\sigma+\varrho_0)^{p_s}\big)(\sigma+\varrho_0)^{p_s-2}\sigma|\varrho_2|     \\
				&+\int p_s\big(|x|^{-\mu}\ast(\sigma+\varrho_0)^{p_s-1}|\varrho_2|\big)(\sigma+\varrho_0)^{p_s-1}|\varrho_2|
				+(p_s-1)\big(|x|^{-\mu}\ast(\sigma+\varrho_0)^{p_s}\big)(\sigma+\varrho_0)^{p_s-2}\varrho_2^2   \\
				=:&~ C\mathcal{G}J_4+J_5
			\end{split}
		\end{equation*}
		For $J_4$, we apply the H\"older inequality, Hardy-Littlewood-Sobolev inequality and Sobolev inequality to get
		\begin{equation}\label{lajidai}
			|J_4|
			\lesssim\|\sigma+\varrho_0\|_{L^{2_s^\ast}}^{2p_s-2}\|\sigma\|_{L^{2_s^\ast}}\|\varrho_2\|_{L^{2_s^\ast}}
			\lesssim\|\varrho_2\|_{\dot{H}^s},
		\end{equation}
		here in the last inequality we have used Lemma \ref{p00} and the fact $\|W_i\|_{L^{2_s^\ast}}\lesssim1$ for all $1\leq i\leq\kappa$. For $|J_5|$, it follows from Lemma \ref{estim} that there exists a positive constant $\tau_0<1$ such that
		\begin{equation*}
			p_{s}\int\big(|x|^{-\mu}\ast\sigma^{p_{s}-1}\varrho_2\big)\sigma^{p_s-1}\varrho_2
			+(p_{s}-1)\int\big(|x|^{-\mu}\ast \sigma^{p_{s}}\big)\sigma^{p_{s}-2}\varrho_2^2
			\leq\tau_0\big\|\varrho_2\big\|_{\dot{H}^s}^2.
		\end{equation*}
		Therefore, we have
		\begin{equation}\label{lajifang}
			|J_5|
			\leq\tau_0\big\|\varrho_2\big\|_{\dot{H}^s}^2
			+|J_6|+|J_7|+|J_8|+|J_9|,
		\end{equation}
		where
		\begin{equation*}
			J_6:=p_s\int\big(|x|^{-\mu}\ast\big[(\sigma+\varrho_0)^{p_s-1}-\sigma^{p_s-1}\big]\varrho_2\big)\big(\sigma+\varrho_0\big)^{p_s-1}\varrho_2,
		\end{equation*}
		\begin{equation*}
			J_7:=p_s\int\big(|x|^{-\mu}\ast\sigma^{p_s-1}\varrho_2\big)\big[(\sigma+\varrho_0)^{p_s-1}-\sigma^{p_s-1}\big]\varrho_2,
		\end{equation*}
		\begin{equation*}
			J_8:=(p_s-1)\int\big(|x|^{-\mu}\ast\big[(\sigma+\varrho_0)^{p_s}-\sigma^{p_s}\big]\big)\big(\sigma+\varrho_0\big)^{p_s-2}\varrho_2^2,
		\end{equation*}
		\begin{equation*}
			J_9:=(p_s-1)\int\big(|x|^{-\mu}\ast\sigma^{p_s}\big)\big[(\sigma+\varrho_0)^{p_s-2}-\sigma^{p_s-2}\big]\varrho_2^2.
		\end{equation*}
		Then by the H\"older inequality, Hardy-Littlewood-Sobolev inequality and Sobolev inequality, we obtain
		\begin{equation}\label{lajiche}
			|J_6|+|J_7|+|J_8|+|J_9|
			\leq
			\left\lbrace
			\begin{aligned}
				&C\|\varrho_0\|_{\dot{H}^s(\mathbb{R}^n)}^2\|\varrho_2\|_{\dot{H}^s(\mathbb{R}^n)}^2\hspace{6mm}if\hspace{2mm}\mu=4s,\\
				&C\|\varrho_0\|_{\dot{H}^s(\mathbb{R}^n)}^{p_s-2}\|\varrho_2\|_{\dot{H}^s(\mathbb{R}^n)}^2\hspace{6mm}if\hspace{2mm}0<\mu<4s.
			\end{aligned}
			\right.
		\end{equation}
		Combining \eqref{lajidai}-\eqref{lajiche}, we can deduce that
		\begin{equation*}
			\Big|\int I_{n,\mu,s}[\sigma+\varrho_0,\varrho_1]\varrho_2\Big|
			\leq
			\left\lbrace
			\begin{aligned}
				&\bigg(\tau_0+C\|\varrho_0\|_{\dot{H}^s(\mathbb{R}^n)}^2\bigg)\|\varrho_2\|_{\dot{H}^s(\mathbb{R}^n)}^2+C\mathcal{G}\|\varrho_2\|_{\dot{H}^s(\mathbb{R}^n)}\hspace{6mm}if\hspace{2mm}\mu=4s,       \\
				&\bigg(\tau_0+C\|\varrho_0\|_{\dot{H}^s(\mathbb{R}^n)}^{p_s-2}\bigg)\|\varrho_2\|_{\dot{H}^s(\mathbb{R}^n)}^2+C\mathcal{G}\|\varrho_2\|_{\dot{H}^s(\mathbb{R}^n)}\hspace{6mm}if\hspace{2mm}0<\mu<4s.
			\end{aligned}
			\right.
		\end{equation*}
		By Lemma \ref{p00}, we can assume $\|\varrho_0\|_{\dot{H}^s(\mathbb{R}^n)}\ll1$. Thus, we have
		\begin{equation*}
			\|\varrho_2\|_{\dot{H}^s(\mathbb{R}^n)}^2
			\lesssim
			\left\lbrace
			\begin{aligned}
				&\bigg(\mathcal{G}+\|\varrho_2\|_{\dot{H}^s(\mathbb{R}^n)}\bigg)^2\|\varrho_2\|_{\dot{H}^s(\mathbb{R}^n)}+\|\varrho_2\|_{\dot{H}^s(\mathbb{R}^n)}\|\hat{f}\|_{\dot{H}^{-s}(\mathbb{R}^n)}+\mathcal{G}\|\varrho_2\|_{\dot{H}^s(\mathbb{R}^n)}\hspace{11mm}if\hspace{2mm}\mu=4s,    \\
				&\bigg(\mathcal{G}+\|\varrho_2\|_{\dot{H}^s(\mathbb{R}^n)}\bigg)^{p_s-1}\|\varrho_2\|_{\dot{H}^s(\mathbb{R}^n)}+\|\varrho_2\|_{\dot{H}^s(\mathbb{R}^n)}\|\hat{f}\|_{\dot{H}^{-s}(\mathbb{R}^n)}+\mathcal{G}\|\varrho_2\|_{\dot{H}^s(\mathbb{R}^n)}\hspace{6mm}if\hspace{2mm}0<\mu<4s.
			\end{aligned}
			\right.
		\end{equation*}
		Dividing $\|\varrho_2\|_{\dot{H}^s(\mathbb{R}^n)}$ on both sides (unless $\varrho_2\equiv0$, when there is nothing to pvove), we have
		\begin{equation*}
			\|\varrho_2\|_{\dot{H}^s(\mathbb{R}^n)}
			\lesssim\left\lbrace
			\begin{aligned}
				&\bigg(\mathcal{G}+\|\varrho_2\|_{\dot{H}^s(\mathbb{R}^n)}\bigg)^2+\|\hat{f}\|_{\dot{H}^{-s}(\mathbb{R}^n)}+\mathcal{G}\hspace{11mm}if\hspace{2mm}\mu=4s,    \\
				&\bigg(\mathcal{G}+\|\varrho_2\|_{\dot{H}^s(\mathbb{R}^n)}\bigg)^{p_s-1}+\|\hat{f}\|_{\dot{H}^{-s}(\mathbb{R}^n)}+\mathcal{G}\hspace{6mm}if\hspace{2mm}0<\mu<4s.
			\end{aligned}
			\right.
		\end{equation*}
		Then by choosing $\delta$ small enough, together with Lemma \ref{p00} we can obtain $\mathcal{G}\ll1$ and $\|\varrho_2\|_{\dot{H}^s(\mathbb{R}^n)}\ll1$. Thus we are able to conclude the desired estimate
		\begin{equation*}
			\|\varrho_2\|_{\dot{H}^s(\mathbb{R}^n)}
			\lesssim\mathcal{G}+\|\hat{f}\|_{\dot{H}^{-s}(\mathbb{R}^n)}.
		\end{equation*}
	\end{proof}
	
	\begin{lem}\label{rr-1}
		Given $s\in(0,\frac{n}{2})$, $n>2s$, $0<\mu<n$ with $0<\mu\leq4s$. If $\delta$ is small enough, then
		\begin{equation*}	
			|\gamma^i|
			\lesssim\big\|\hat{f}\big\|_{\dot{H}^{-s}(\mathbb{R}^n)}
			+\mathscr{Q}^{1+\min\{\frac{\mu}{n-2s},1\}},\hspace{2mm}i=1,\cdots, \kappa.
		\end{equation*}
	\end{lem}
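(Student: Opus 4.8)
The plan is to obtain, for each fixed $j\in\{1,\dots,\kappa\}$, a scalar identity for $\gamma^j$ by testing the equation \eqref{c1-000} against the bubble $W_j$, to isolate a leading coefficient $2p_s-2>0$ in front of $\gamma^j$, and to bound every remaining contribution by $\|\hat f\|_{\dot H^{-s}(\mathbb R^n)}$, by $\mathscr Q^{1+\min\{\frac{\mu}{n-2s},1\}}$, or by quantities small enough to be absorbed after a summation over $j$. Concretely: multiplying \eqref{c1-000} by $W_j$, integrating by parts, and using $\int(-\Delta)^{\frac s2}\varrho_1(-\Delta)^{\frac s2}W_j=\gamma^j$ yields
\[
\gamma^j=\int\big[(|x|^{-\mu}\ast\Psi^{p_s})\Psi^{p_s-1}-(|x|^{-\mu}\ast\Phi^{p_s})\Phi^{p_s-1}\big]W_j-\sum_{i,a}c_a^i\!\int I_{n,\mu,s}[W_i,\mathcal Z_i^a]W_j+\int\hat f W_j,
\]
where $\Psi:=\sigma+\varrho_0+\varrho_1$ and $\Phi:=\sigma+\varrho_0$.

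For the last two terms: by duality $|\int\hat f W_j|\le\|\hat f\|_{\dot H^{-s}(\mathbb R^n)}\|W_j\|_{\dot H^s(\mathbb R^n)}\lesssim\|\hat f\|_{\dot H^{-s}(\mathbb R^n)}$. Since $\mathcal Z_j^a$ solves the linearized equation at $W_j$, $\int I_{n,\mu,s}[W_j,\mathcal Z_j^a]W_j=\int(-\Delta)^s\mathcal Z_j^a\,W_j=\langle\mathcal Z_j^a,W_j\rangle_{\dot H^s(\mathbb R^n)}=0$ by the orthogonality of $W$ and its rescaled derivatives (Lemma \ref{propep}); and for $i\neq j$, $|\int I_{n,\mu,s}[W_i,\mathcal Z_i^a]W_j|\lesssim Q_{ij}$ by the interaction estimates (cf. Lemma \ref{armidale}). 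Combined with $|c_a^i|\lesssim\mathscr Q^{\min\{\frac{\mu}{n-2s},1\}}$ — which follows from Lemma \ref{wanfan} (resp. Lemma \ref{cll} when $n=6s$, $\mu=4s$) together with Lemma \ref{qiegao} (resp. Lemma \ref{estimate1}) and the bounds $\|\varrho_0\|_{\dot H^s(\mathbb R^n)}\lesssim\mathscr Q^{\min\{\frac{\mu}{n-2s},1\}}$ (resp. $\|\varrho_0\|_\ast\lesssim1$) from Lemma \ref{zhaji} (resp. Lemma \ref{xianyuxian}) — the Lagrange-multiplier term is $O(\mathscr Q^{1+\min\{\frac{\mu}{n-2s},1\}})$.

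For the first term, expand $(|x|^{-\mu}\ast\Psi^{p_s})\Psi^{p_s-1}-(|x|^{-\mu}\ast\Phi^{p_s})\Phi^{p_s-1}=I_{n,\mu,s}[\Phi,\varrho_1]+\widetilde N_\Phi(\varrho_1)$, where $\widetilde N_\Phi(\varrho_1)$ gathers the terms of order $\geq2$ in $\varrho_1$; by Hardy–Littlewood–Sobolev, Hölder and Sobolev together with Lemma \ref{Ni-1-3}, $\bigl|\int\widetilde N_\Phi(\varrho_1)W_j\bigr|\lesssim\|\varrho_1\|_{\dot H^s(\mathbb R^n)}^2\lesssim(\mathcal G+\|\hat f\|_{\dot H^{-s}(\mathbb R^n)})^2$, with $\mathcal G:=\sum_{i}|\gamma^i|$. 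In the linear part, replacing $\Phi$ by $\sigma$ costs $O(\|\varrho_0\|_{\dot H^s(\mathbb R^n)}(\mathcal G+\|\hat f\|_{\dot H^{-s}(\mathbb R^n)}))=O(\mathscr Q^{\min\{\frac{\mu}{n-2s},1\}}(\mathcal G+\|\hat f\|_{\dot H^{-s}(\mathbb R^n)}))$ (using $p_s-1\geq1$ and Lemma \ref{p00}); replacing $\sigma$ by $W_j$ inside $I_{n,\mu,s}[\sigma,\cdot]$, via the pointwise monotonicity $(\sigma^{p_s-1}-W_j^{p_s-1})W_j\le\sigma^{p_s}-\sum_iW_i^{p_s}$ and $(\sigma^{p_s-2}-W_j^{p_s-2})W_j\le\sigma^{p_s-1}-\sum_iW_i^{p_s-1}$ (as in the proof of Lemma \ref{wanfan}) together with Lemma \ref{qiegao} (resp. Lemma \ref{estimate1} and Lemma \ref{cll-0-0}), costs $O(\mathscr Q^{\min\{\frac{\mu}{n-2s},1\}}(\mathcal G+\|\hat f\|_{\dot H^{-s}(\mathbb R^n)}))$; and the diagonal term is, by the bilinear symmetry of $I_{n,\mu,s}[W_j,\cdot]$ and $I_{n,\mu,s}[W_j,W_j]=(2p_s-1)(-\Delta)^sW_j$, equal to $\int I_{n,\mu,s}[W_j,W_j]\varrho_1=(2p_s-1)\langle W_j,\varrho_1\rangle_{\dot H^s(\mathbb R^n)}=(2p_s-1)\gamma^j$. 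Collecting everything, $(2p_s-2)\gamma^j=O\big(\mathscr Q^{\min\{\frac{\mu}{n-2s},1\}}\mathcal G+\mathcal G^2+\mathscr Q^{1+\min\{\frac{\mu}{n-2s},1\}}+\|\hat f\|_{\dot H^{-s}(\mathbb R^n)}\big)$; since $2p_s-2\geq2>0$, dividing, summing over $j=1,\dots,\kappa$, and absorbing $\mathscr Q^{\min\{\frac{\mu}{n-2s},1\}}\mathcal G$ and $\mathcal G^2$ into the left-hand side — legitimate after shrinking $\delta$, because $\mathcal G\lesssim\|\varrho_1\|_{\dot H^s(\mathbb R^n)}\le\delta$ and $\mathscr Q\le\delta$ — gives $\mathcal G\lesssim\|\hat f\|_{\dot H^{-s}(\mathbb R^n)}+\mathscr Q^{1+\min\{\frac{\mu}{n-2s},1\}}$, hence the asserted bound on each $|\gamma^i|$.

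The main obstacle is the bookkeeping of the powers of $\mathscr Q$ in the error terms of the third step in the borderline regime $n=6s$, $\mu=4s$: there $\|\varrho_0\|_{\dot H^s(\mathbb R^n)}$ and the interaction quantities carry a logarithmic factor, so one must run the above estimates in the weighted norms $\|\cdot\|_\ast$, $\|\cdot\|_{\ast\ast}$ of Section \ref{stabilitysection6}, using the sharp bounds of Lemma \ref{estimate1} and Lemma \ref{cll-0-0} and the coefficient estimate of Lemma \ref{cll}, to make sure that after the absorption step the surviving contribution is genuinely of order $\mathscr Q^2=\mathscr Q^{1+\min\{\frac{\mu}{n-2s},1\}}$, with no residual logarithm. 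The corresponding computation has been carried out, for $s$ replaced by $1$, in \cite{DHP} and \cite{C-K-L-24}, and only the exponents are modified by the parameter $s$ here.
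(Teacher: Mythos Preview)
Your proposal is correct and follows essentially the same route as the paper: test \eqref{c1-000} against $W_j$, extract the diagonal coefficient $(2p_s-2)\gamma^j$ from the linearized piece, bound the Lagrange-multiplier contribution by $|c_a^i|\cdot Q_{ij}\lesssim\mathscr Q^{1+\min\{\frac{\mu}{n-2s},1\}}$, control the remaining error terms by $o(1)(\mathcal G+\|\hat f\|_{\dot H^{-s}})$ plus higher powers, and absorb. Your caution about the borderline case $n=6s$, $\mu=4s$ is slightly excessive: the paper treats all cases uniformly in this lemma, since the weighted-norm machinery has already been discharged in establishing $|c_a^i|\lesssim\mathscr Q$ (Lemma~\ref{cll}) and the error terms here only need the cruder $o(1)$ coming from $\|\varrho_0\|_{\dot H^s}\to0$ and $\big\|\sum_{i\neq k}W_i^{(p_s-1)/p_s}W_k^{1/p_s}\big\|_{L^{2_s^\ast}}\to0$, with no residual logarithm to track.
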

	\begin{proof}
		Multiplying \eqref{c1-000} by $W_k$ and integrating, we have
		\begin{equation}\label{rr-2}
			\int(-\Delta)^\frac{s}{2}\varrho_1(-\Delta)^\frac{s}{2} W_k
			=\mathcal{F}_{n,\mu,s}(x)
			+\int\sum_{i=1}^{\kappa}\sum_{a=1}^{n+1}c_{a}^{i}I_{n,\mu,s}[W_{i},\mathcal{Z}^{a}_i]W_k+\int \hat{f}W_k,
		\end{equation}
		where we denote in what follows
		\begin{equation*}
			\begin{split}
				\mathcal{F}_{n,\mu,s}(x)
				&:=\int\big[\big(|x|^{-\mu}\ast(\sigma+\varrho_0+\varrho_1)^{p_s}\big)(\sigma+\varrho_0+\varrho_1)^{p_s-1}-\big(|x|^{-\mu}\ast(\sigma+\varrho_0)^{p_s}\big)(\sigma+\varrho_0)^{p_s-1}\big]W_k     \\
				&=p_s\int\big(|x|^{-\mu}\ast W_k^{p_s-1}\varrho_1\big)W_k^{p_s}
				+(p_s-1)\int\big(|x|^{-\mu}\ast W_k^{p_s}\big)W_k^{p_s-1}\varrho_1+J_1+J_2+J_3+J_4+J_5+J_6,
			\end{split}
		\end{equation*}
		where
		\begin{equation*}
			J_1:=\int\big(|x|^{-\mu}\ast\big[(\sigma+\varrho_0+\varrho_1)^{p_s}-(\sigma+\varrho_0)^{p_s}-p_s(\sigma+\varrho_0)^{p_s-1}\varrho_1\big]\big)\big(\sigma+\varrho_0+\varrho_1\big)^{p_s-1}W_k,
		\end{equation*}
		\begin{equation*}
			J_2:=\int\big(|x|^{-\mu}\ast(\sigma+\varrho_0)^{p_s}\big)\big[(\sigma+\varrho_0+\varrho_1)^{p_s-1}-(\sigma+\varrho_0)^{p_s-1}-(p_s-1)(\sigma+\varrho_0)^{p_s-2}\varrho_1\big]W_k,
		\end{equation*}
		\begin{equation*}
			J_3:=p_s\int\big(|x|^{-\mu}\ast\big[(\sigma+\varrho_0)^{p_s-1}-W_k^{p_s-1}\big]\varrho_1\big)\big(\sigma+\varrho_0+\varrho_1\big)^{p_s-1}W_k,
		\end{equation*}
		\begin{equation*}
			J_4:=p_s\int\big(|x|^{-\mu}\ast W_k^{p_s-1}\varrho_1\big)\big[(\sigma+\varrho_0+\varrho_1)^{p_s-1}-W_k^{p_s-1}\big]W_k,
		\end{equation*}
		\begin{equation*}
			J_5:=(p_s-1)\int\big(|x|^{-\mu}\ast(\sigma+\varrho_0)^{p_s}\big)\big[(\sigma+\varrho_0)^{p_s-2}-W_k^{p_s-2}\big]\varrho_1W_k,
		\end{equation*}
		\begin{equation*}
			J_6:=(p_s-1)\int\big(|x|^{-\mu}\ast\big[(\sigma+\varrho_0)^{p_s}-W_k^{p_s}\big]\big)W_k^{p_s-1}\varrho_1.
		\end{equation*}
		By the H\"older inequality, Hardy-Littlewood-Sobolev inequality and Sobolev inequality, together with Lemma \ref{Ni-1-3} and Lemma \ref{FPU1}, we deduce that
		\begin{equation*}
			\|\sigma+\varrho_0\|_{L^{2_s^\ast}(\mathbb{R}^n)}\lesssim1,\hspace{6mm}\|\varrho_0\|_{L^{2_s^\ast}(\mathbb{R}^n)}=o(1),\hspace{6mm}\|W_k\|_{L^{2_s^\ast}(\mathbb{R}^n)}\lesssim1,
		\end{equation*}
		\begin{equation*}
			\bigg\|\sum_{i\neq k}W_i^\frac{p_s-1}{p_s}W_k^\frac{1}{p_s}\bigg\|_{L^{2_s^\ast}(\mathbb{R}^n)}=o(1),\hspace{6mm}\bigg\|\sum_{i\neq k}W_i^\frac{p_s-2}{p_s}W_k^\frac{2}{p_s}\bigg\|_{L^{2_s^\ast}(\mathbb{R}^n)}=o(1),
		\end{equation*}
		\begin{equation*}
			\Big|(\sigma+\varrho_0+\varrho_1)^{p_s}-(\sigma+\varrho_0)^{p_s}-p_s(\sigma+\varrho_0)^{p_s-1}\varrho_1\Big|
			\lesssim(\sigma+\varrho_0)^{p_s-2}\varrho_1^2,
		\end{equation*}
		\begin{equation*}
			\Big|(\sigma+\varrho_0+\varrho_1)^{p_s-1}-(\sigma+\varrho_0)^{p_s-1}-(p_s-1)(\sigma+\varrho_0)^{p_s-2}\varrho_1\Big|
			\lesssim|\varrho_1|^{p_s-1},
		\end{equation*}
		\begin{equation*}
			\Big|(\sigma+\varrho_0)^{p_s-1}-W_k^{p_s-1}\Big|
			\lesssim\Big(\sum_{i\neq k}W_i^{p_s-2}+|\varrho_0|^{p_s-2}\Big)W_k,
		\end{equation*}
		\begin{equation*}
			\Big|(\sigma+\varrho_0+\varrho_1)^{p_s-1}-W_k^{p_s-1}\Big|
			\lesssim\Big(\sum_{i\neq k}W_i^{p_s-2}+|\varrho_0|^{p_s-2}+|\varrho_1|^{p_s-2}\Big)W_k,
		\end{equation*}
		\begin{equation*}
			\Big|(\sigma+\varrho_0)^{p_s-2}-W_k^{p_s-2}\Big|
			\lesssim\sum_{i\neq k}W_i^{p_s-2}+|\varrho_0|^{p_s-2},
		\end{equation*}
		\begin{equation*}
			\Big|(\sigma+\varrho_0)^{p_s}-W_k^{p_s}\Big|
			\lesssim\Big(\sum_{i\neq k}W_i^{p_s-1}+|\varrho_0|^{p_s-1}\Big)W_k.
		\end{equation*}
		Here $o(1)$ denotes a quantity that goes to zero as $\delta$ tends to zero. Then a direct computation yields
		\begin{equation*}
			|J_1|+|J_2|+|J_3|+|J_4|+|J_5|+|J_6|
			\lesssim
			\left\lbrace
			\begin{aligned}
				&\mathcal{G}^2+\|\hat{f}\|_{\dot{H}^{-s}(\mathbb{R}^n)}^2+o(1)\Big(\mathcal{G}+\|\hat{f}\|_{\dot{H}^{-s}(\mathbb{R}^n)}\Big)\hspace{11mm}if\hspace{2mm}\mu=4s,    \\
				&\mathcal{G}^{p_s-1}+\|\hat{f}\|_{\dot{H}^{-s}(\mathbb{R}^n)}^{p_s-1}+o(1)\Big(\mathcal{G}+\|\hat{f}\|_{\dot{H}^{-s}(\mathbb{R}^n)}\Big)\hspace{6mm}if\hspace{2mm}0<\mu<4s.
			\end{aligned}
			\right.
		\end{equation*}
		Now returning to \eqref{rr-2}, we proceed to derive
		\begin{equation}\label{luying}
			\begin{split}
				(2-2p_s)\int(-\Delta)^\frac{s}{2}\varrho_1(-\Delta)^\frac{s}{2} W_k
				\lesssim&\int\sum_{i=1}^{\kappa}\sum_{a=1}^{n+1}c_{a}^{i}I_{n,\mu,s}[W_{i},\mathcal{Z}^{a}_i]W_k
				+\|\hat{f}\|_{\dot{H}^{-s}(\mathbb{R}^n)}     \\
				&+\left\lbrace
				\begin{aligned}
					&\mathcal{G}^2+\|\hat{f}\|_{\dot{H}^{-s}(\mathbb{R}^n)}^2+o(1)\Big(\mathcal{G}+\|\hat{f}\|_{\dot{H}^{-s}(\mathbb{R}^n)}\Big)\hspace{11mm}if\hspace{2mm}\mu=4s,    \\
					&\mathcal{G}^{p_s-1}+\|\hat{f}\|_{\dot{H}^{-s}(\mathbb{R}^n)}^{p_s-1}+o(1)\Big(\mathcal{G}+\|\hat{f}\|_{\dot{H}^{-s}(\mathbb{R}^n)}\Big)\hspace{6mm}if\hspace{2mm}0<\mu<4s.
				\end{aligned}
				\right.
			\end{split}
		\end{equation}
		For the left-hand side of \eqref{luying}, we have
		\begin{equation*}
			(2-2p_s)\int(-\Delta)^\frac{s}{2}\varrho_1\cdot(-\Delta)^\frac{s}{2} W_k
			=(2-2p_s)\sum_{i=1}^\kappa\gamma^i\int(-\Delta)^\frac{s}{2}W_i\cdot(-\Delta)^\frac{s}{2}W_k
			=(2-2p_s)\gamma^k.
		\end{equation*}
		For the right-hand side of \eqref{luying}, observing that for $i=k$, it holds $\int I_{n,\mu,s}[W_{k},\mathcal{Z}^{a}_k]W_k=0$.
		By Lemma \ref{FPU1}, for $i\neq k$, it holds
		\begin{equation*}
			\int I_{n,\mu,s}[W_{i},\mathcal{Z}^{a}_i]W_k
			\lesssim\int W_i^{2_s^\ast-1}W_k
			\approx\mathscr{Q}.
		\end{equation*}
		Furthermore, the coefficients $c_{a}^i$ are controlled by Lemma \ref{wanfan} and Lemma \ref{cll} as follows:
		$$|c_a^i|\lesssim\mathscr{Q}^{\min\{\frac{\mu}{n-2s},1\}}.
		$$
		Hence, putting these estimates together we obtain the desired thesis as $\delta$ is small, which concludes the proof of this lemma.
	\end{proof}
	
	As a byproduct of Lemma \ref{Ni-1-3} and Lemma \ref{rr-1}, we easily get the following estimate holds.
	\begin{lem}\label{rr-1-00}
		Given $s\in(0,\frac{n}{2})$, $n>2s$, $0<\mu<n$ with $0<\mu\leq4s$, then as $\delta$ is small we have:
		\begin{equation}
			\|\varrho_1\|_{\dot{H}^s(\mathbb{R}^n)}
			\lesssim\|\hat{f}\|_{\dot{H}^{-s}(\mathbb{R}^n)}+\mathscr{Q}^{1+\min\{\frac{\mu}{n-2s},1\}}.
		\end{equation}
	\end{lem}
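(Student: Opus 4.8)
The plan is to obtain Lemma \ref{rr-1-00} as a purely formal consequence of the decomposition \eqref{c1-001} together with the two estimates already in hand, namely Lemma \ref{Ni-1-3} and Lemma \ref{rr-1}. Recall that $\varrho_1=\sum_{i=1}^{\kappa}\gamma^iW_i+\varrho_2$ with $\gamma^i=\int(-\Delta)^{\frac{s}{2}}\varrho_1\cdot(-\Delta)^{\frac{s}{2}}W_i$ and that $\varrho_2$ is orthogonal (in the $\dot H^s$ inner product) to each $W_i$ and each $\mathcal{Z}_i^a$. Set $\mathcal{G}:=\sum_{i=1}^{\kappa}|\gamma^i|$. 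The first step is to note that, since $\|W_i\|_{\dot{H}^s(\mathbb{R}^n)}$ is a fixed dimensional constant for every $i$, the triangle inequality gives
\begin{equation*}
\|\varrho_1\|_{\dot{H}^s(\mathbb{R}^n)}\leq\sum_{i=1}^{\kappa}|\gamma^i|\,\|W_i\|_{\dot{H}^s(\mathbb{R}^n)}+\|\varrho_2\|_{\dot{H}^s(\mathbb{R}^n)}\lesssim\mathcal{G}+\|\varrho_2\|_{\dot{H}^s(\mathbb{R}^n)},
\end{equation*}
where the implicit constant depends only on $n,\mu,s$ (the near-orthogonality of the decomposition is what makes this estimate sharp, but for the upper bound here the crude triangle inequality already suffices).

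Next I would invoke Lemma \ref{Ni-1-3}, which yields $\|\varrho_2\|_{\dot{H}^s(\mathbb{R}^n)}\lesssim\mathcal{G}+\|\hat f\|_{\dot{H}^{-s}(\mathbb{R}^n)}$; substituting this into the previous display gives $\|\varrho_1\|_{\dot{H}^s(\mathbb{R}^n)}\lesssim\mathcal{G}+\|\hat f\|_{\dot{H}^{-s}(\mathbb{R}^n)}$. Then I would control $\mathcal{G}$ via Lemma \ref{rr-1}: since there are only $\kappa$ indices and $|\gamma^i|\lesssim\|\hat f\|_{\dot{H}^{-s}(\mathbb{R}^n)}+\mathscr{Q}^{1+\min\{\frac{\mu}{n-2s},1\}}$ for each $i=1,\dots,\kappa$, summing over $i$ produces $\mathcal{G}\lesssim\|\hat f\|_{\dot{H}^{-s}(\mathbb{R}^n)}+\mathscr{Q}^{1+\min\{\frac{\mu}{n-2s},1\}}$ with a constant depending additionally on $\kappa$. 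Chaining these two inequalities gives
\begin{equation*}
\|\varrho_1\|_{\dot{H}^s(\mathbb{R}^n)}\lesssim\|\hat f\|_{\dot{H}^{-s}(\mathbb{R}^n)}+\mathscr{Q}^{1+\min\{\frac{\mu}{n-2s},1\}},
\end{equation*}
which is exactly the assertion.

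There is essentially no genuine obstacle: all the work has already been done in Lemmas \ref{Ni-1-3} and \ref{rr-1}, and the only point to keep track of is that $\kappa$ is a fixed finite integer, so that replacing $\max_i|\gamma^i|$ by $\mathcal{G}=\sum_i|\gamma^i|$ costs only a $\kappa$-dependent factor, harmless because all implicit constants are allowed to depend on $n,\mu,s,\kappa$. The hypothesis that $\delta$ is small is used exactly to guarantee that Lemma \ref{Ni-1-3} and Lemma \ref{rr-1} (and the energy bound of Lemma \ref{p00} that feeds into them) are applicable; no further smallness is required. This completes the proposed proof.
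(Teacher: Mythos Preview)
Your proposal is correct and matches the paper's approach exactly: the paper simply states that Lemma \ref{rr-1-00} is ``a byproduct of Lemma \ref{Ni-1-3} and Lemma \ref{rr-1}'' without supplying any further details, and your argument via the triangle inequality on the decomposition \eqref{c1-001} is precisely the intended chain of implications.
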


	\subsection{Conclusion}
	We are now in position to conclude the proof of Theorem \ref{Figalli}. Coming back to \eqref{u-0}, we note that
	\begin{equation}\label{dagonggaocheng}
		(-\Delta)^s \varrho-I_{n,\mu,s}[\sigma,\varrho]-g-N(\varrho)
		-\big[(-\Delta)^s u-\big(|x|^{-\mu}\ast u^{p_s}\big)u^{p_s-1}\big]
		=0.
	\end{equation}
	Multiplying $\mathcal{Z}_k^{n+1}$ to \eqref{dagonggaocheng} and integrating over $\mathbb{R}^n$, we get
	\begin{equation}\label{dache}
		\Big|\int g\mathcal{Z}_k^{n+1}\Big|
		\leq\Big|\int I_{n,\mu,s}[\sigma,\varrho]\mathcal{Z}_k^{n+1}\Big|
		+\Big|\int N(\varrho)\mathcal{Z}_k^{n+1}\Big|
		+\Big|\int\hat{f}\mathcal{Z}_k^{n+1}\Big|.
	\end{equation}
	It is easy to see that
	\begin{equation}\label{hangzhou}
		\Big|\int\hat{f}\mathcal{Z}_k^{n+1}\Big|
		\lesssim\|\hat{f}\|_{\dot{H}^{-s}(\mathbb{R}^n)}.
	\end{equation}
	Furthermore, we proceed to estimate each term appearing in \eqref{dache}.
	\begin{lem}\label{Ni-1}
		Assume that $s\in(0,\frac{n}{2})$, $n>2s$, $0<\mu<n$ with $0<\mu\leq4s$. If $\delta$ is small, we have
		\begin{equation*}
			\Big|\int I_{n,\mu\,s}[\sigma,\varrho]\mathcal{Z}_k^{n+1}\Big|
			=\|\hat{f}\|_{\dot{H}^{-s}(\mathbb{R}^n)}
			+o(\mathscr{Q}^{\min\{\frac{\mu}{n-2s},1\}}).
		\end{equation*}
	\end{lem}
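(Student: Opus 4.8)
The plan is to reduce the pairing $\int I_{n,\mu,s}[\sigma,\varrho]\mathcal{Z}_k^{n+1}$ to a sum of genuinely interacting ``cross'' terms by exploiting three structural facts: (i) the operator $\phi\mapsto I_{n,\mu,s}[\sigma,\phi]$ is self-adjoint for the $L^2$ pairing, since both summands in \eqref{I-FAI-1} become symmetric after moving the Riesz potential by Fubini; (ii) $\mathcal{Z}_k^{n+1}$ is, together with the $W_i$ and $\mathcal{Z}_i^a$, an eigenfunction of the linearized operator, so that $(-\Delta)^s\mathcal{Z}_k^{n+1}=I_{n,\mu,s}[W_k,\mathcal{Z}_k^{n+1}]$ (this is the linearized equation of \eqref{ele-1.1}, cf. Lemma \ref{propep}); and (iii) $\langle W_k,\mathcal{Z}_k^{n+1}\rangle_{\dot H^s(\mathbb{R}^n)}=0$, which follows by differentiating in $\lambda$ the scale-invariant identity that $\|W[\xi_k,\lambda]\|_{\dot H^s(\mathbb{R}^n)}$ is independent of $\lambda$. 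First I would insert the decomposition $\varrho=\varrho_0+\varrho_1=\varrho_0+\sum_{i=1}^{\kappa}\gamma^iW_i+\varrho_2$ from \eqref{c1-001} and treat the three pieces separately.

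For the pieces $\varrho_0$ and $\varrho_2$: using self-adjointness, $\int I_{n,\mu,s}[\sigma,\varrho_j]\mathcal{Z}_k^{n+1}=\int I_{n,\mu,s}[\sigma,\mathcal{Z}_k^{n+1}]\varrho_j$ for $j=0,2$. Since $\varrho_0$ satisfies the orthogonality conditions imposed in its defining system and $\varrho_2$ is $\dot H^s$-orthogonal to $\mathcal{Z}_k^{n+1}$, hence via (ii) satisfies $\int I_{n,\mu,s}[W_k,\mathcal{Z}_k^{n+1}]\varrho_j=0$, I would subtract $I_{n,\mu,s}[W_k,\mathcal{Z}_k^{n+1}]$ and be left with $\int\big(I_{n,\mu,s}[\sigma,\mathcal{Z}_k^{n+1}]-I_{n,\mu,s}[W_k,\mathcal{Z}_k^{n+1}]\big)\varrho_j$. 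Expanding the difference produces the familiar error quantities $\sigma^{p_s-1}-W_k^{p_s-1}$, $\sigma^{p_s}-W_k^{p_s}$, $\sigma^{p_s-2}-W_k^{p_s-2}$; using $(\sigma^{p_s-1}-W_k^{p_s-1})W_k\le\sigma^{p_s}-\sum_iW_i^{p_s}$ (and its analogues for the other exponents) together with $|\mathcal{Z}_k^{n+1}|\lesssim W_k$, the same Hardy--Littlewood--Sobolev/Hölder/Sobolev bookkeeping as in \eqref{qk1}--\eqref{qk2} (resp. \eqref{bz1}--\eqref{bz2} in the weighted setting when $n=6s$, $\mu=4s$) gives, via Lemma \ref{qiegao} (resp. Lemma \ref{estimate1} and Lemma \ref{cll-0-0}), a bound $\lesssim\mathscr{Q}^{\min\{\frac{\mu}{n-2s},1\}}\|\varrho_j\|_{\dot H^s(\mathbb{R}^n)}$. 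Plugging in $\|\varrho_0\|_{\dot H^s(\mathbb{R}^n)}\lesssim\mathcal{K}_{n,\mu,s}(\mathscr{Q}^{\min\{\frac{\mu}{n-2s},1\}})$ from Lemma \ref{p00} and $\|\varrho_2\|_{\dot H^s(\mathbb{R}^n)}\lesssim\sum_i|\gamma^i|+\|\hat f\|_{\dot H^{-s}(\mathbb{R}^n)}\lesssim\|\hat f\|_{\dot H^{-s}(\mathbb{R}^n)}+\mathscr{Q}^{1+\min\{\frac{\mu}{n-2s},1\}}$ from Lemma \ref{Ni-1-3} and Lemma \ref{rr-1}, the $\varrho_0$-contribution is $o(\mathscr{Q}^{\min\{\frac{\mu}{n-2s},1\}})$ and the $\varrho_2$-contribution is $\lesssim\|\hat f\|_{\dot H^{-s}(\mathbb{R}^n)}+o(\mathscr{Q}^{\min\{\frac{\mu}{n-2s},1\}})$.

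For the piece $\sum_i\gamma^iW_i$, I would split off the diagonal term $i=k$. Here $I_{n,\mu,s}[W_k,W_k]=(2p_s-1)(|x|^{-\mu}\ast W_k^{p_s})W_k^{p_s-1}=(2p_s-1)(-\Delta)^sW_k$ since $W_k$ solves \eqref{ele-1.1}, so by fact (iii), $\int I_{n,\mu,s}[W_k,W_k]\mathcal{Z}_k^{n+1}=(2p_s-1)\langle W_k,\mathcal{Z}_k^{n+1}\rangle_{\dot H^s(\mathbb{R}^n)}=0$; hence $\gamma^k\int I_{n,\mu,s}[\sigma,W_k]\mathcal{Z}_k^{n+1}=\gamma^k\int\big(I_{n,\mu,s}[\sigma,W_k]-I_{n,\mu,s}[W_k,W_k]\big)\mathcal{Z}_k^{n+1}$, and the integral is again a cross term bounded by $\mathscr{Q}^{\min\{\frac{\mu}{n-2s},1\}}$ as above. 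For $i\ne k$, $\int I_{n,\mu,s}[\sigma,W_i]\mathcal{Z}_k^{n+1}\lesssim\int(|x|^{-\mu}\ast W_i^{p_s})W_i^{p_s-1}W_k\approx\mathscr{Q}$ by Lemma \ref{FPU1}. Using $|\gamma^i|\lesssim\|\hat f\|_{\dot H^{-s}(\mathbb{R}^n)}+\mathscr{Q}^{1+\min\{\frac{\mu}{n-2s},1\}}$ from Lemma \ref{rr-1}, this whole piece is again $\lesssim\|\hat f\|_{\dot H^{-s}(\mathbb{R}^n)}+o(\mathscr{Q}^{\min\{\frac{\mu}{n-2s},1\}})$. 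Summing the three contributions yields the claim.

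The routine part is the Riesz-potential bookkeeping, which is identical in spirit to the estimates already performed in Lemmas \ref{wanfan}, \ref{cll} and \ref{rr-1}; the point requiring genuine care is tracking the powers of $\mathscr{Q}$ so that every term that must be absorbed into $o(\mathscr{Q}^{\min\{\frac{\mu}{n-2s},1\}})$ really is — in particular in the endpoint case $n=6s$, $\mu=4s$, where $\min\{\frac{\mu}{n-2s},1\}=1$ and the logarithmic factors coming from $\mathcal{K}_{n,\mu,s}$ and from $\|S\|_{L^3(\mathbb{R}^{6s})}$, $\|T\|_{L^{3/2}(\mathbb{R}^{6s})}$ (Lemma \ref{cll-0-0}) must be carried along, and the pairing with $\varrho_0$ must be estimated through the weighted bound $|\varrho_0|\lesssim S$ (rather than through $\|\varrho_0\|_{\dot H^s(\mathbb{R}^n)}$) to see that $\mathscr{Q}\,\mathcal{K}_{6s,4s,s}(\mathscr{Q})=\mathscr{Q}^2|\log\mathscr{Q}|^{1/2}=o(\mathscr{Q})$. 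The conceptual heart, however, is the triple cancellation (self-adjointness, the linearized-equation identity for $\mathcal{Z}_k^{n+1}$, and $\langle W_k,\mathcal{Z}_k^{n+1}\rangle_{\dot H^s(\mathbb{R}^n)}=0$), after which no leading-order term survives and only products of small quantities with $\|\hat f\|_{\dot H^{-s}(\mathbb{R}^n)}$ or $\mathscr{Q}^{\min\{\frac{\mu}{n-2s},1\}}$ remain.
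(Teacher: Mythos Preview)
Your argument is correct, but it does more work than necessary compared to the paper's proof. Both proofs split $\varrho=\varrho_0+\varrho_1$ and treat $\tilde h_1:=\int I_{n,\mu,s}[\sigma,\varrho_0]\mathcal{Z}_k^{n+1}$ exactly as you do: subtract $I_{n,\mu,s}[W_k,\mathcal{Z}_k^{n+1}]$ using the orthogonality of $\varrho_0$, then estimate the cross terms via \eqref{qk1}--\eqref{qk2} (or \eqref{bz1}--\eqref{bz2} in the endpoint case) to obtain $o(\mathscr{Q}^{\min\{\frac{\mu}{n-2s},1\}})$. The difference is in the handling of $\tilde h_2:=\int I_{n,\mu,s}[\sigma,\varrho_1]\mathcal{Z}_k^{n+1}$. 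The paper simply observes that, by self-adjointness and Hardy--Littlewood--Sobolev/H\"older, $|\tilde h_2|\lesssim\|I_{n,\mu,s}[\sigma,\mathcal{Z}_k^{n+1}]\|_{L^{(2_s^*)'}}\|\varrho_1\|_{L^{2_s^*}}\lesssim\|\varrho_1\|_{\dot H^s}$, and then invokes Lemma~\ref{rr-1-00} directly: $\|\varrho_1\|_{\dot H^s}\lesssim\|\hat f\|_{\dot H^{-s}}+\mathscr{Q}^{1+\min\{\frac{\mu}{n-2s},1\}}$. No further decomposition into $\sum_i\gamma^iW_i+\varrho_2$, no diagonal cancellation via $\langle W_k,\mathcal{Z}_k^{n+1}\rangle_{\dot H^s}=0$, is needed. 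Your route through $\gamma^iW_i$ and $\varrho_2$ separately, using Lemmas~\ref{Ni-1-3} and~\ref{rr-1}, recovers the same bound but via a longer path; the cancellation identities you highlight are true and well-observed, yet redundant here because $\varrho_1$ is already small enough that the crude estimate suffices. One small imprecision: in the endpoint case $n=6s$, $\mu=4s$, your computation ``$\mathscr{Q}\cdot\mathcal{K}_{6s,4s,s}(\mathscr{Q})$'' mixes the two routes; the weighted-norm estimate \eqref{bz1}--\eqref{bz2} actually gives $\mathscr{Q}^2|\log\mathscr{Q}|\cdot\|\varrho_0\|_*\lesssim\mathscr{Q}^2|\log\mathscr{Q}|$, which is still $o(\mathscr{Q})$, so the conclusion stands.
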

	\begin{proof}
		An easy computation shows that
		\begin{equation}\label{pp-1}
			\begin{split}
				\int I_{n,\mu,s}[\sigma,\varrho]\mathcal{Z}_{k}^{n+1}
				=\int I_{n,\mu,s}[\sigma,\varrho_{0}]\mathcal{Z}_{k}^{n+1}+\int I_{n,\mu,s}[\sigma,\varrho_{1}]\mathcal{Z}_{k}^{n+1}
				=:\tilde{h}_1+\tilde{h}_2.
			\end{split}
		\end{equation}
		Let us estimate each integral in \eqref{pp-1}. Similar to the argument of \eqref{qk1}-\eqref{qk2}, we get that
		\begin{equation*}
			|\tilde{h}_1|=o(\mathscr{Q}^{\min\{\frac{\mu}{n-2s},1\}}),
		\end{equation*}
		due to the orthogonality condition. Moreover, Lemma \ref{rr-1-00} implies that
		\begin{equation*}
			\hspace{4mm}
			|\tilde{h}_2|\lesssim\|\hat{f}\|_{\dot{H}^{-s}(\mathbb{R}^n)}+\mathscr{Q}^{1+\min\{\frac{\mu}{n-2s},1\}}.
		\end{equation*}
		Hence the result easily follows.
	\end{proof}
	
	\begin{lem}\label{Ni-2}
		Assume that $s\in(0,\frac{n}{2})$, $n>2s$, $0<\mu<n$ with $0<\mu\leq4s$. If $\delta$ is small we have
		\begin{equation*}
			\Big|\int N(\phi)\mathcal{Z}_{k}^{n+1}\Big|
			=\|\hat{f}\|_{\dot{H}^{-s}(\mathbb{R}^n)}
			+o(\mathscr{Q}^{\min\{\frac{\mu}{n-2s},1\}}).
		\end{equation*}
	\end{lem}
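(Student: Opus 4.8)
The plan is to follow the scheme of the proof of Lemma \ref{Ni-1}, exploiting that $N(\cdot)$ — defined in \eqref{u-2} — is superlinear in its argument, so that after the splitting $\varrho=\varrho_0+\varrho_1$ the quantity $\int N(\varrho)\mathcal{Z}_k^{n+1}$ separates into one part absorbed by $\|\hat{f}\|_{\dot{H}^{-s}(\mathbb{R}^n)}$ and one part that is $o(\mathscr{Q}^{\min\{\frac{\mu}{n-2s},1\}})$ as $\delta\to 0$.

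First I would use $|\mathcal{Z}_k^{n+1}|\lesssim W_k$ together with H\"older's inequality and the Sobolev embedding to reduce to an $L^{(2_s^{\ast})^{\prime}}$-bound on $N(\varrho)$:
\begin{equation*}
\Big|\int N(\varrho)\mathcal{Z}_k^{n+1}\Big|\le\int|N(\varrho)|W_k\le\big\|N(\varrho)\big\|_{L^{(2_s^{\ast})^{\prime}}(\mathbb{R}^n)}\big\|W_k\big\|_{L^{2_s^{\ast}}(\mathbb{R}^n)}\lesssim\big\|N(\varrho)\big\|_{L^{(2_s^{\ast})^{\prime}}(\mathbb{R}^n)}.
\end{equation*}
Then, exactly as in \eqref{kele1}--\eqref{kele2} in the proof of Lemma \ref{zhaji} (Hardy--Littlewood--Sobolev, H\"older and Sobolev inequalities, valid since $\|\varrho\|_{\dot{H}^s(\mathbb{R}^n)}\le\delta$), one has $\|N(\varrho)\|_{L^{(2_s^{\ast})^{\prime}}(\mathbb{R}^n)}\lesssim\|\varrho\|_{\dot{H}^s(\mathbb{R}^n)}^{q}$ with $q=2$ when $\mu=4s$ and $q=\min\{p_s-1,2\}\in(1,2]$ when $0<\mu<4s$; in both cases $q>1$. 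Using $(a+b)^q\lesssim a^q+b^q$ for $q\in[1,2]$, it then suffices to estimate $\|\varrho_0\|_{\dot{H}^s(\mathbb{R}^n)}^q$ and $\|\varrho_1\|_{\dot{H}^s(\mathbb{R}^n)}^q$ individually.

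For the first term, Lemma \ref{p00} gives $\|\varrho_0\|_{\dot{H}^s(\mathbb{R}^n)}\lesssim\mathcal{K}_{n,\mu,s}(\mathscr{Q}^{\min\{\frac{\mu}{n-2s},1\}})$, and reading off the definition \eqref{hanshu} one checks that, because $q>1$, $\mathcal{K}_{n,\mu,s}(\mathscr{Q}^{\min\{\frac{\mu}{n-2s},1\}})^q=o(\mathscr{Q}^{\min\{\frac{\mu}{n-2s},1\}})$ as $\delta\to 0$ — the one case to be careful with is $n=6s$, $\mu=4s$, $\kappa\ge 2$, where $\min\{\frac{\mu}{n-2s},1\}=1$, $q=2$, and the estimate reads $\mathscr{Q}^2|\log\mathscr{Q}|=o(\mathscr{Q})$. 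For the second term, $\|\varrho_1\|_{\dot{H}^s(\mathbb{R}^n)}\le\delta\ll 1$ and $q\ge 1$ give $\|\varrho_1\|_{\dot{H}^s(\mathbb{R}^n)}^q\le\|\varrho_1\|_{\dot{H}^s(\mathbb{R}^n)}$, and Lemma \ref{rr-1-00} yields $\|\varrho_1\|_{\dot{H}^s(\mathbb{R}^n)}\lesssim\|\hat{f}\|_{\dot{H}^{-s}(\mathbb{R}^n)}+\mathscr{Q}^{1+\min\{\frac{\mu}{n-2s},1\}}=\|\hat{f}\|_{\dot{H}^{-s}(\mathbb{R}^n)}+o(\mathscr{Q}^{\min\{\frac{\mu}{n-2s},1\}})$. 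Adding the two contributions gives the assertion.

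The hard part is only the bookkeeping in the last step: one must verify regime by regime — $\mu=4s$ versus $0<\mu<4s$, and within the former the extra logarithmic weight carried by $\mathcal{K}_{n,\mu,s}$ when $n=6s$ and $\kappa\ge 2$ — that each of $\mathcal{K}_{n,\mu,s}(\mathscr{Q}^{\min\{\frac{\mu}{n-2s},1\}})^q$ and $\mathscr{Q}^{1+\min\{\frac{\mu}{n-2s},1\}}$ is genuinely $o(\mathscr{Q}^{\min\{\frac{\mu}{n-2s},1\}})$ uniformly for small $\delta$, while the term coming from $\varrho_1$ is $\lesssim\|\hat{f}\|_{\dot{H}^{-s}(\mathbb{R}^n)}$. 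No new ingredient beyond Lemma \ref{p00}, Lemma \ref{rr-1-00} and the HLS/H\"older/Sobolev bounds on $N$ already used in Lemmas \ref{Ni-1-3}, \ref{rr-1} and \ref{Ni-1} is required.
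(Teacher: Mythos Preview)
Your proposal is correct and follows essentially the same approach as the paper, which simply states that the conclusion follows from H\"older's and Sobolev's inequalities together with Lemma~\ref{p00} and Lemma~\ref{rr-1-00}. Your write-up supplies the details the paper leaves implicit, with the same ingredients and no substantive difference in strategy.
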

	\begin{proof}
		The conclusion follows by using the H\"{o}lder's inequality and Sobolev's inequality, Lemma \ref{p00} and  Lemma \ref{rr-1-00}.
	\end{proof}

	\begin{lem}\label{Ni-3}
		Assume that $s\in(0,\frac{n}{2})$, $n>2s$, $0<\mu<n$ with $0<\mu\leq4s$. If $\delta$ is small we have
		\begin{equation}\label{xiache}
			\int g\mathcal{Z}_k^{n+1}
			=\widetilde{\alpha}_{n,\mu,s}\sum_{i\neq k}\int W_i^{2_s^\ast-p_s}W_k^{p_s-1}\mathcal{Z}_k^{n+1}
			+\widetilde{\alpha}_{n,\mu,s}(2_s^\ast-1)\int W_k^{2_s^\ast-2}\sum_{i\neq k}^{\kappa}W_i\mathcal{Z}_k^{n+1}
			+o(\mathscr{Q}^{\min\{\frac{\mu}{n-2s},1\}}),
		\end{equation}
		where $\widetilde{\alpha}_{n,\mu,s}$ is defined in Lemma \ref{p1-00}.
	\end{lem}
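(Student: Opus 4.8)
The plan is to carry out the linearization of $g$ around the $k$-th bubble $W_k$ and to reduce $\int g\,\mathcal{Z}_k^{n+1}$ to the standard interaction integrals, exactly in the spirit of \cite{DSW21,DHP,YZ25} but with all exponents adjusted by the parameter $s$. First I would record, from Lemma \ref{p1-00}, the two algebraic identities that drive the computation: $|x|^{-\mu}\ast W_i^{p_s}=\widetilde{\alpha}_{n,\mu,s}W_i^{2_s^\ast-p_s}$, hence $(|x|^{-\mu}\ast W_i^{p_s})W_i^{p_s-1}=\widetilde{\alpha}_{n,\mu,s}W_i^{2_s^\ast-1}=(-\Delta)^sW_i$; and, applying $\lambda_k\partial_{\lambda_k}$ to the first identity together with $\mathcal{Z}_k^{n+1}=\lambda_k\partial_{\lambda_k}W_k$,
\begin{equation*}
|x|^{-\mu}\ast\big(W_k^{p_s-1}\mathcal{Z}_k^{n+1}\big)=\frac{\widetilde{\alpha}_{n,\mu,s}(2_s^\ast-p_s)}{p_s}\,W_k^{2_s^\ast-p_s-1}\mathcal{Z}_k^{n+1},
\end{equation*}
noting that $2_s^\ast-p_s=\mu/(n-2s)>0$.

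Next I would split
\begin{equation*}
g=\big(|x|^{-\mu}\ast\sigma^{p_s}\big)\Big(\sigma^{p_s-1}-\sum_{i=1}^{\kappa}W_i^{p_s-1}\Big)+\sum_{i=1}^{\kappa}\big(|x|^{-\mu}\ast(\sigma^{p_s}-W_i^{p_s})\big)W_i^{p_s-1},
\end{equation*}
test against $\mathcal{Z}_k^{n+1}$, and in every resulting integral move the Riesz potential onto the factor carrying $\mathcal{Z}_k^{n+1}$ via $\int(|x|^{-\mu}\ast f)h=\int f\,(|x|^{-\mu}\ast h)$. Using the pointwise estimates already exploited in the proof of Lemma \ref{wanfan}, namely $(\sigma^{p_s-1}-W_k^{p_s-1})W_k\le\sigma^{p_s}-\sum_iW_i^{p_s}$ and $(\sigma^{p_s-2}-W_k^{p_s-2})W_k\le\sigma^{p_s-1}-\sum_iW_i^{p_s-1}$, together with the Hardy--Littlewood--Sobolev and H\"older inequalities, one isolates the $i=k$ summand of the second sum and the whole first sum from the rest. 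In this part the convolution cross terms $p_s\big(|x|^{-\mu}\ast(W_k^{p_s-1}\sum_{i\ne k}W_i)\big)W_k^{p_s-1}$ and the local term $(p_s-1)\widetilde{\alpha}_{n,\mu,s}W_k^{2_s^\ast-2}\sum_{i\ne k}W_i$ collapse, after inserting the $\lambda$-derivative identity above, into $\widetilde{\alpha}_{n,\mu,s}\big[(2_s^\ast-p_s)+(p_s-1)\big]\int W_k^{2_s^\ast-2}\sum_{i\ne k}W_i\,\mathcal{Z}_k^{n+1}$; while the non-local tail of $|x|^{-\mu}\ast(\sigma^{p_s}-W_k^{p_s})$ produced by the distant bubbles gives, by Lemma \ref{p1-00} once more, $\widetilde{\alpha}_{n,\mu,s}\sum_{i\ne k}\int W_i^{2_s^\ast-p_s}W_k^{p_s-1}\mathcal{Z}_k^{n+1}$. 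Since $(2_s^\ast-p_s)+(p_s-1)=2_s^\ast-1$, these two contributions are precisely the right-hand side of \eqref{xiache}.

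It then remains to absorb everything else into $o(\mathscr{Q}^{\min\{\mu/(n-2s),1\}})$: the $i\ne k$ summands of the second sum (whose kernel ``sees'' two distinct centres), the genuinely quadratic remainders in the expansions of $\sigma^{p_s}$ and $\sigma^{p_s-1}$, and the far-field portions of the already subtracted $\sum_iW_i^{2_s^\ast-1}$. Each of these carries an extra factor $\mathscr{Q}$, or a strictly positive power of $\mathscr{Q}$, relative to the leading scale; this is obtained by combining H\"older/HLS with the interaction estimates of Lemma \ref{FPU1} (where $\int W_i^{2_s^\ast-1}W_k\approx\mathscr{Q}$, up to logarithms when $n=6s$, $\mu=4s$), after observing that the leading interaction $\int W_i^{2_s^\ast-p_s}W_k^{p_s-1}\mathcal{Z}_k^{n+1}$ scales like $\mathscr{Q}^{\mu/(n-2s)}$ when $\mu<n-2s$ and like $\mathscr{Q}$ otherwise — which is exactly what fixes the exponent $\min\{\mu/(n-2s),1\}$.

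The hard part is this last step. When $\mu\ge n-2s$ the error threshold is $\mathscr{Q}$ itself, so one cannot afford to lose a full power of $\mathscr{Q}$ and must instead extract the extra smallness from the ``double-interaction'' structure of the leftover terms, or from the sign cancellation in $\mathcal{Z}_k^{n+1}$ recorded by the orthogonality $\langle W_k,\mathcal{Z}_k^{n+1}\rangle_{\dot{H}^s(\mathbb{R}^n)}=0$; in the critical case $n=6s$, $\mu=4s$ the logarithmic corrections in the interaction integrals have to be tracked carefully. These bookkeeping estimates are the fractional counterparts of the kernel computations collected in the appendix and in \cite{DSW21,DHP,YZ25}, and they carry the bulk of the work; the algebraic identities of the first two steps are routine.
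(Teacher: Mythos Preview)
Your approach is correct and runs parallel to the paper's. The paper decomposes $\int g\,\mathcal{Z}_k^{n+1}$ into three pieces $J_1+J_2+J_3$ by writing $(|x|^{-\mu}\ast\sigma^{p_s})\sigma^{p_s-1}-\sum_i\widetilde{\alpha}_{n,\mu,s}W_i^{2_s^\ast-1}$ and then inserting $\sum_iW_i^{p_s}$ and $\sum_jW_j^{p_s-1}$ successively; your two-piece split is equivalent after regrouping. Your differentiation identity for $|x|^{-\mu}\ast(W_k^{p_s-1}\mathcal{Z}_k^{n+1})$ is a clean way to read off the coefficient $2_s^\ast-p_s$ from the convolution side, and indeed the paper's unproved claim ``Similar to $J_2$, $J_1=\widetilde{\alpha}_{n,\mu,s}(2_s^\ast-p_s)\int W_k^{2_s^\ast-2}\sum_{j\ne k}W_j\,\mathcal{Z}_k^{n+1}+o(\cdot)$'' is most naturally justified exactly this way.

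Where you defer and the paper does the work is in the error terms. The paper controls them by (i) splitting into the regions $\{W_l=\max_iW_i\}$ and Taylor-expanding $\sigma^{p_s-1}$ and $\sigma^{p_s}$ around $W_l$ there, with a genuine case distinction $n\ge 6s$ versus $2s<n<6s$ (in the former range the second-order remainder already suffices, in the latter one must keep the first-order term and push the error one step further); (ii) invoking the three-bubble estimate Lemma~\ref{FPU3} for integrals carrying three distinct indices; and (iii) using the restricted-domain bound Lemma~\ref{chufa} for two-bubble integrals such as $\int_{\{W_l=\max\}}W_l^{2_s^\ast-p_s+1}W_j^{p_s-1}$. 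Your ``double-interaction'' idea is exactly what (ii) captures. Your alternative suggestion of extracting extra smallness from the sign change of $\mathcal{Z}_k^{n+1}$ is not how the remainders are handled --- they are pure size bounds via (ii) and (iii) --- though that sign is of course what makes the \emph{retained} leading integrals have the orders recorded in Lemma~\ref{guji}.
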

	\begin{proof}
		We exploit the following decomposition
		\begin{equation}\label{zong}
			\begin{split}
				\int g\mathcal{Z}_k^{n+1}
				=&\int\big(|x|^{-\mu}\ast\big[\sigma^{p_s}-\sum_{i=1}^\kappa W_i^{p_s}\big]\big)\sigma^{p_s-1}\mathcal{Z}_k^{n+1}
				+\widetilde{\alpha}_{n,\mu,s}\sum_{i=1}^\kappa\int W_i^{2_s^\ast-p_s}\big(\sigma^{p_s-1}-\sum_{j=1}^\kappa W_j^{p_s-1}\big)\mathcal{Z}_k^{n+1}      \\
				&+\widetilde{\alpha}_{n,\mu,s}\sum_{i,j=1,i\neq j}^\kappa\int W_i^{2_s^\ast-p_s}W_j^{p_s-1}\mathcal{Z}_k^{n+1}
				=:J_1+J_2+J_3.
			\end{split}
		\end{equation}
		Using Lemma \ref{FPU3}, we verify that
		\begin{equation}\label{J_3}
			\begin{split}
				J_3
				&=\widetilde{\alpha}_{n,\mu,s}\sum_{i\neq k}\int W_i^{2_s^\ast-p_s}W_k^{p_s-1}\mathcal{Z}_k^{n+1}
				+\widetilde{\alpha}_{n,\mu,s}\sum_{i,j=1,i\neq j\neq k}^\kappa\int W_i^{2_s^\ast-p_s}W_j^{p_s-1}\mathcal{Z}_k^{n+1}      \\
				&=\widetilde{\alpha}_{n,\mu,s}\sum_{i\neq k}\int W_i^{2_s^\ast-p_s}W_k^{p_s-1}\mathcal{Z}_k^{n+1}
				+o(\mathscr{Q}^{\min\{\frac{\mu}{n-2s},1\}}).
			\end{split}
		\end{equation}
		Next we estimate $J_2$. If $\mu=4s$, then $J_2\equiv0$. Thus, in the following we consider two cases.
		\par \textbf{Case 1:} $n\geq6s$ and $0<\mu<4s$.
		\par If $W_l=\max W_i$ then we have $\Big|\sigma^{p_s-1}-\sum_{j=1}^\kappa W_j^{p_s-1}\Big|
		\lesssim W_l^{p_s-2}\sum_{j\neq l}^\kappa W_j$. Thus,
		\begin{equation*}
			|J_2|
			=\Big|\widetilde{\alpha}_{n,\mu,s}\sum_{l=1}^{\kappa}\int_{\{W_l=\max W_i\}}\sum_{i=1}^\kappa W_i^{2_s^\ast-p_s}W_l^{p_s-2}\sum_{j\neq l}^\kappa W_j\mathcal{Z}_k^{n+1}\Big|
			\lesssim\sum_{j\neq l}\int W_l^{2_s^\ast-1}W_j
			\lesssim\mathscr{Q}
			=o(\mathscr{Q}^{\min\{\frac{\mu}{n-2s},1\}}).
		\end{equation*}
		\par \textbf{Case 2:} $2s<n<6s$ and $0<\mu<4s$.
		\par We decompose $J_2$ as follows:
		\begin{equation*}
			\begin{split}
				J_2=&\widetilde{\alpha}_{n,\mu,s}\sum_{l=1}^{\kappa}\int_{\{W_l=\max W_i\}}\sum_{i=1}^\kappa W_i^{2_s^\ast-p_s}\big[\sigma^{p_s-1}-W_l^{p_s-1}-(p_s-1)W_l^{p_s-2}\sum_{j\neq l}W_j\big]\mathcal{Z}_k^{n+1}    \\
				&+\widetilde{\alpha}_{n,\mu,s}(p_s-1)\sum_{l=1}^{\kappa}\int_{\{W_l=\max W_i\}}\sum_{i\neq l}^\kappa W_i^{2_s^\ast-p_s}W_l^{p_s-2}\sum_{j\neq l}^{\kappa}W_j\mathcal{Z}_k^{n+1}    \\
				&+\widetilde{\alpha}_{n,\mu,s}(p_s-1)\sum_{l=1}^{\kappa}\int_{\{W_l=\max W_i\}}W_l^{2_s^\ast-2}\sum_{j\neq l}^{\kappa}W_j\mathcal{Z}_k^{n+1}        \\
				&-\widetilde{\alpha}_{n,\mu,s}(p_s-2)\sum_{l=1}^{\kappa}\int_{\{W_l=\max W_i\}}\sum_{i=1}^\kappa W_i^{2_s^\ast-p_s}\sum_{j\neq l}^{\kappa}W_j^{p_s-1}\mathcal{Z}_k^{n+1}
				=:J_{2,1}+J_{2,2}+J_{2,3}+J_{2,4}.
			\end{split}
		\end{equation*}
		Since $2s<n<6s$, we have $\Big|\sigma^{p_s-1}-W_l^{p_s-1}-(p_s-1)W_l^{p_s-2}\sum_{j\neq l}W_j\Big|\lesssim W_l^{p_s-3}\sum_{j\neq l}^{\kappa}W_j^2$. Thus,
		\begin{equation}\label{J_21}
			|J_{2,1}|
			\lesssim\sum_{j\neq l}\int W_l^{2_s^\ast-2}W_j^2
			=o(\mathscr{Q}^{\min\{\frac{\mu}{n-2s},1\}}).
		\end{equation}
		Furthermore, applying Lemma \ref{chufa} we get
		\begin{equation*}
			J_{2,2}
			=\widetilde{\alpha}_{n,\mu,s}(p_s-1)\sum_{l=1}^{\kappa}\int_{\{W_l=\max W_i\}}\sum_{i\neq l}^{\kappa}W_i^{2_s^\ast-p_s+1}W_l^{p_s-2}\mathcal{Z}_k^{n+1}
			+o(\mathscr{Q}^{\min\{\frac{\mu}{n-2s},1\}}),
		\end{equation*}
		which implies that
		\begin{equation}\label{J_22}
			|J_{2,2}|
			\lesssim\sum_{i\neq l}\int W_i^{2_s^\ast-p_s+1}W_l^{p_s-1}
			+o(\mathscr{Q}^{\min\{\frac{\mu}{n-2s},1\}})
			=o(\mathscr{Q}^{\min\{\frac{\mu}{n-2s},1\}}).
		\end{equation}
		For $J_{2,3}$, we have
		\begin{equation}\label{huanghe1}
			\begin{split}
				J_{2,3}
				=&\widetilde{\alpha}_{n,\mu,s}(p_s-1)\int_{\{W_k=\max W_i\}}W_k^{2_s^\ast-2}\sum_{j\neq k}^{\kappa}W_j\mathcal{Z}_k^{n+1}
				+\widetilde{\alpha}_{n,\mu,s}(p_s-1)\sum_{l\neq k}^{\kappa}\int_{\{W_l=\max W_i\}}W_k^{2_s^\ast-2}\sum_{j\neq l}^{\kappa}W_j\mathcal{Z}_k^{n+1}        \\
				&+\widetilde{\alpha}_{n,\mu,s}(p_s-1)\sum_{l\neq k}^{\kappa}\int_{\{W_l=\max W_i\}}W_l^{2_s^\ast-2}\sum_{j\neq l\neq k}^{\kappa}W_j\mathcal{Z}_k^{n+1}.
			\end{split}
		\end{equation}
		It follows from Lemma \ref{FPU1} that
		\begin{equation}\label{huanghe2}
			\Big|\widetilde{\alpha}_{n,\mu,s}(p_s-1)\sum_{l\neq k}^{\kappa}\int_{\{W_l=\max W_i\}}W_k^{2_s^\ast-2}\sum_{j\neq l}^{\kappa}W_j\mathcal{Z}_k^{n+1}\Big|
			\lesssim\sum_{l\neq k}\int_{\{W_l=\max W_i\}}W_k^{2_s^\ast-2}W_l^2
			=o(\mathscr{Q}^{\min\{\frac{\mu}{n-2s},1\}}),
		\end{equation}
		Moreover, thanks to Lemma \ref{FPU3}, we obtain that
		\begin{equation}\label{huanghe3}
			\Big|\widetilde{\alpha}_{n,\mu,s}(p_s-1)\sum_{l\neq k}^{\kappa}\int_{\{W_l=\max W_i\}}W_l^{2_s^\ast-2}\sum_{j\neq l\neq k}^{\kappa}W_j\mathcal{Z}_k^{n+1}\Big|
			\lesssim\sum_{j\neq l\neq k}\int_{\{W_l=\max W_i\}}W_l^{2_s^\ast-2}W_jW_k
			=o(\mathscr{Q}^{\min\{\frac{\mu}{n-2s},1\}}).
		\end{equation}
		Thus, combining \eqref{huanghe1}-\eqref{huanghe3}, we arrive at
		\begin{equation}\label{J_23}
			J_{2,3}
			=\widetilde{\alpha}_{n,\mu,s}(p_s-1)\int W_k^{2_s^\ast-2}\sum_{j\neq k}^{\kappa}W_j\mathcal{Z}_k^{n+1}
			+o(\mathscr{Q}^{\min\{\frac{\mu}{n-2s},1\}}).
		\end{equation}
		For $J_{2,4}$, using Lemma \ref{FPU1} and Lemma \ref{chufa} we have
		\begin{equation}\label{J_24}
			|J_{2,4}|
			\lesssim\sum_{j\neq l}\int_{\{W_l=\max W_i\}}W_i^{2_s^\ast-p_s+1}W_j^{p_s-1}
			=o(\mathscr{Q}^{\min\{\frac{\mu}{n-2s},1\}}).
		\end{equation}
		Taking \eqref{J_21}, \eqref{J_22}, \eqref{J_23} and \eqref{J_24} into account, if $2s<n<6s$ and $0<\mu<4s$ we can derive
		\begin{equation*}
			J_2
			=\widetilde{\alpha}_{n,\mu,s}(p_s-1)\int W_k^{2_s^\ast-2}\sum_{j\neq k}^{\kappa}W_j\mathcal{Z}_k^{n+1}
			+o(\mathscr{Q}^{\min\{\frac{\mu}{n-2s},1\}}),
		\end{equation*}
		which together with Case 1, we can derive that
		\begin{equation}\label{J_2}
			J_2
			=\widetilde{\alpha}_{n,\mu,s}(p_s-1)\int W_k^{2_s^\ast-2}\sum_{j\neq k}^{\kappa}W_j\mathcal{Z}_k^{n+1}
			+o(\mathscr{Q}^{\min\{\frac{\mu}{n-2s},1\}}).
		\end{equation}
		
		Similar to $J_2$, we compute $J_1$ as follows
		\begin{equation}\label{J_1}
			J_1
			=\widetilde{\alpha}_{n,\mu,s}(2_s^\ast-p_s)\int W_k^{2_s^\ast-2}\sum_{j\neq k}^{\kappa}W_j\mathcal{Z}_k^{n+1}.
		\end{equation}
		Therefore, the conclusion follows by putting \eqref{zong}, \eqref{J_3}, \eqref{J_2} and \eqref{J_1} together.
	\end{proof}
	
	As a consequence of Lemma \ref{Ni-3}, we deduce the following important estimate.
	\begin{lem}\label{QQ-1}
		Assume that $s\in(0,\frac{n}{2})$, $n>2s$, $0<\mu<n$ with $0<\mu\leq4s$. If $\delta$ is small we have
		\begin{equation}\label{Qf}
			\mathscr{Q}^{\min\{\frac{\mu}{n-2s},1\}}
			\lesssim\|\hat{f}\|_{\dot{H}^{-s}(\mathbb{R}^n)}.
		\end{equation}
	\end{lem}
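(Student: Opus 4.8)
The plan is to test the error equation \eqref{dagonggaocheng} against the dilation generator $\mathcal{Z}_k^{n+1}$ and to show that the resulting identity forces the bubble interaction to be controlled by $\|\hat{f}\|_{\dot{H}^{-s}(\mathbb{R}^n)}$. First I would start from \eqref{dache}, obtained by pairing \eqref{dagonggaocheng} with $\mathcal{Z}_k^{n+1}$ and using the orthogonality $\int I_{n,\mu,s}[W_i,\mathcal{Z}_i^a]\varrho=0$. On its right-hand side, \eqref{hangzhou} gives $|\int \hat{f}\,\mathcal{Z}_k^{n+1}|\lesssim\|\hat{f}\|_{\dot{H}^{-s}(\mathbb{R}^n)}$, while Lemma \ref{Ni-1} and Lemma \ref{Ni-2} give
\[
\Big|\int I_{n,\mu,s}[\sigma,\varrho]\mathcal{Z}_k^{n+1}\Big|+\Big|\int N(\varrho)\mathcal{Z}_k^{n+1}\Big|\lesssim\|\hat{f}\|_{\dot{H}^{-s}(\mathbb{R}^n)}+o\big(\mathscr{Q}^{\min\{\frac{\mu}{n-2s},1\}}\big).
\]
Hence $\big|\int g\,\mathcal{Z}_k^{n+1}\big|\lesssim\|\hat{f}\|_{\dot{H}^{-s}(\mathbb{R}^n)}+o\big(\mathscr{Q}^{\min\{\frac{\mu}{n-2s},1\}}\big)$, uniformly over $k=1,\dots,\kappa$.

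Secondly, I would establish a matching lower bound $\big|\int g\,\mathcal{Z}_k^{n+1}\big|\gtrsim\mathscr{Q}^{\min\{\frac{\mu}{n-2s},1\}}$ for a well-chosen index $k$. By Lemma \ref{Ni-3},
\[
\int g\,\mathcal{Z}_k^{n+1}=\widetilde{\alpha}_{n,\mu,s}\sum_{i\ne k}\int W_i^{2_s^\ast-p_s}W_k^{p_s-1}\mathcal{Z}_k^{n+1}+\widetilde{\alpha}_{n,\mu,s}(2_s^\ast-1)\int W_k^{2_s^\ast-2}\sum_{i\ne k}W_i\,\mathcal{Z}_k^{n+1}+o\big(\mathscr{Q}^{\min\{\frac{\mu}{n-2s},1\}}\big),
\]
so it suffices to see that the two displayed sums do not cancel and are of size $\mathscr{Q}^{\min\{\frac{\mu}{n-2s},1\}}$. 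I would pick the pair $(i_0,k_0)$ with $Q_{i_0k_0}=\mathscr{Q}$; using the tree/partial order of weakly interacting bubbles from Definition \ref{del-1} and the freedom to test with $\mathcal{Z}_{k_0}^{n+1}$ or $\mathcal{Z}_{i_0}^{n+1}$, I may assume $\lambda_{k_0}\ge\lambda_{i_0}$ and, after possibly replacing $\mathcal{Z}_{k_0}^{n+1}$ by $-\mathcal{Z}_{k_0}^{n+1}$, that the $i=i_0$ contributions are non-negative. Since $W_i^{2_s^\ast-p_s}=W_i^{\mu/(n-2s)}\approx\lambda_i^{\mu/2}\tau(z_i)^{-\mu}$, which by Lemma \ref{B4-1} is comparable to $|x|^{-\mu}\ast W_i^{p_s}$, these terms are comparable to $\int(|x|^{-\mu}\ast W_{i_0}^{p_s})W_{i_0}^{p_s-1}W_{k_0}\approx\mathscr{Q}^{\min\{\frac{\mu}{n-2s},1\}}$ (the same interaction asymptotics used in Lemma \ref{FPU1}, Lemma \ref{FPU3} and entering Lemma \ref{qiegao}), while every term involving three mutually distinct bubbles is $o(\mathscr{Q}^{\min\{\frac{\mu}{n-2s},1\}})$ by the $\delta$-interaction hypothesis. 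The delicate point is that $\mathcal{Z}_{k_0}^{n+1}=\frac{n-2s}{2}W_{k_0}\frac{1-\lambda_{k_0}^2|\cdot-\xi_{k_0}|^2}{1+\lambda_{k_0}^2|\cdot-\xi_{k_0}|^2}$ is sign-changing, so one must verify that its pairing with the leading interaction profile is nonzero; this is where the ordering $\lambda_{k_0}\ge\lambda_{i_0}$ and the monotonicity of the interaction in the concentration parameter are used, exactly as in \cite{FG20,DSW21,DHP}, and where the two regimes $\mu\le n-2s$ and $\mu>n-2s$ appear through the decay rate of $|x|^{-\mu}\ast W_{i_0}^{p_s}$.

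Finally, combining the two bounds for the chosen index and taking $\delta$ (hence $\mathscr{Q}$) small enough to absorb the $o(\cdot)$ term into the left-hand side, I obtain
\[
\mathscr{Q}^{\min\{\frac{\mu}{n-2s},1\}}\lesssim\Big|\int g\,\mathcal{Z}_{k_0}^{n+1}\Big|+o\big(\mathscr{Q}^{\min\{\frac{\mu}{n-2s},1\}}\big)\lesssim\|\hat{f}\|_{\dot{H}^{-s}(\mathbb{R}^n)}+o\big(\mathscr{Q}^{\min\{\frac{\mu}{n-2s},1\}}\big),
\]
which is exactly \eqref{Qf}. The main obstacle is the second step: proving the no-cancellation lower bound for $\int g\,\mathcal{Z}_{k_0}^{n+1}$ with the sharp exponent $\min\{\frac{\mu}{n-2s},1\}$, i.e. controlling the sign of the interaction paired against the sign-changing dilation profile and separating the $\mu\lessgtr n-2s$ contributions; once this and the interaction asymptotics are in place, the rest is bookkeeping.
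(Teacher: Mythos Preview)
Your upper-bound step is fine and matches the paper exactly: from \eqref{dache}, \eqref{hangzhou}, Lemma~\ref{Ni-1}, Lemma~\ref{Ni-2} and Lemma~\ref{Ni-3} one gets, for every $k$,
\[
\Big|\widetilde{\alpha}_{n,\mu,s}\sum_{i\ne k}\int W_i^{2_s^\ast-p_s}W_k^{p_s-1}\mathcal{Z}_k^{n+1}
+\widetilde{\alpha}_{n,\mu,s}(2_s^\ast-1)\sum_{i\ne k}\int W_k^{2_s^\ast-2}W_i\,\mathcal{Z}_k^{n+1}\Big|
\lesssim\|\hat f\|_{\dot H^{-s}}+o\big(\mathscr{Q}^{\min\{\frac{\mu}{n-2s},1\}}\big).
\]

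The gap is in your lower-bound step. When you test with $\mathcal{Z}_{k_0}^{n+1}$, the expansion in Lemma~\ref{Ni-3} produces a sum over \emph{all} $i\ne k_0$, not only over $i_0$. These extra terms are two-bubble contributions of size $Q_{ik_0}^{\min\{\frac{\mu}{n-2s},1\}}$, which in general are \emph{not} $o(\mathscr{Q}^{\min\{\frac{\mu}{n-2s},1\}})$; your remark that ``three mutually distinct bubbles'' are lower order does not cover them. Moreover, by Lemma~\ref{guji} the sign of $\int W_k^{2_s^\ast-2}W_i\mathcal{Z}_k^{n+1}$ (and of the companion term) is determined by the ordering of $\lambda_i$ and $\lambda_k$: contributions from indices $i$ with $\lambda_i>\lambda_{k_0}$ enter with the opposite sign to that of the $(i_0,k_0)$ term and may cancel it. Choosing $\lambda_{k_0}\ge\lambda_{i_0}$ does not prevent this, because $k_0$ need not be the bubble of globally largest $\lambda$; and if you instead take $k_0=\kappa$, then all signs agree but you only control $\max_i Q_{i\kappa}$, which need not equal $\mathscr{Q}$.

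The paper resolves exactly this difficulty by an induction on the index $k$ in decreasing order of $\lambda_k$ (with the convention $\lambda_1\le\cdots\le\lambda_\kappa$). For $k=\kappa$ every $i\ne\kappa$ has $\lambda_i\le\lambda_\kappa$, so by Lemma~\ref{guji} all summands have the same (negative) sign and one obtains $\sum_{i<\kappa}Q_{i\kappa}^{\min\{\frac{\mu}{n-2s},1\}}\lesssim\|\hat f\|_{\dot H^{-s}}+o(\mathscr{Q}^{\min\{\frac{\mu}{n-2s},1\}})$. In the inductive step $k=\varsigma$, the ``wrong-sign'' terms with $i>\varsigma$ are bounded in absolute value by $\sum_{i>\varsigma}Q_{i\varsigma}^{\min\{\frac{\mu}{n-2s},1\}}$, which is already controlled by the induction hypothesis $(\Pi_{\varsigma+1})$; the remaining $i<\varsigma$ terms again share a sign. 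Running $\varsigma$ down to $2$ controls all $Q_{ij}$, hence $\mathscr{Q}$. This induction is the missing ingredient in your proposal; without it the single-index argument does not close.
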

	\begin{proof}
		We are going to show that the statement of \eqref{Qf} holds by induction. For each $2\leq\varsigma\leq\kappa$, let us introduce the induction hypothesis
		$$(\Pi_{\varsigma}):\hspace{4mm}\sum_{k=\varsigma}^{\kappa}\sum_{i=1}^{k-1}Q_{ik}^{\min\{\frac{\mu}{n-2s},1\}}\lesssim
		\big\|\hat{f}\big\|_{{\dot{H}^{-s}(\mathbb{R}^n)}}+o(\mathscr{Q}^{\min\{\frac{\mu}{n-2s},1\}}).$$
		By Lemma \ref{guji}, one has
		\begin{equation}\label{guming1}
			\widetilde{\alpha}_{n,\mu,s}\sum_{i\neq k}\int W_i^{2_s^\ast-p_s}W_k^{p_s-1}\mathcal{Z}_k^{n+1}
			\approx -Q_{ik}
		\end{equation}
		and
		\begin{equation}\label{guming2}
			\widetilde{\alpha}_{n,\mu,s}(2_s^\ast-1)\sum_{i\neq k}^{\kappa}\int W_k^{2_s^\ast-2}W_i\mathcal{Z}_k^{n+1}
			\approx -Q_{ik}^\frac{\mu}{n-2s},
		\end{equation}
		when $Q_{ik}\ll1$. It is easy to see that $Q_{ik}+Q_{ik}^\frac{\mu}{n-2s}\approx Q_{ik}^{\min\{\frac{\mu}{n-2s},1\}}$. Therefore, taking $k=\kappa$ in \eqref{dache} and \eqref{xiache} and using \eqref{hangzhou}, \eqref{guming1}, \eqref{guming2}, Lemma \ref{Ni-1} and Lemma \ref{Ni-2} we have
		\begin{equation*}
			\begin{split}
				\sum_{i=1}^{\kappa-1}Q_{i\kappa}^{\min\{\frac{\mu}{n-2s},1\}}
				&\approx-\sum_{i=1}^{\kappa-1}\int W_i^{2_s^\ast-p_s}W_\kappa^{p_s-1}\mathcal{Z}_\kappa^{n+1}
				-\sum_{i=1}^{\kappa-1}\int W_\kappa^{2_s^\ast-2}W_i\mathcal{Z}_\kappa^{n+1}
				+o(\mathscr{Q}^{\min\{\frac{\mu}{n-2s},1\}})    \\
				&\lesssim\|\hat{f}\|_{\dot{H}^{-s}(\mathbb{R}^n)}
				+o(\mathscr{Q}^{\min\{\frac{\mu}{n-2s},1\}}).
			\end{split}
		\end{equation*}
		The above estimate tells us $(\Pi_{\kappa})$ holds true. Now we can assume that the statement of $(\Pi_{\varsigma+1})$ holds, then we claim that $(\Pi_{\varsigma})$ is true. Indeed, choosing $k=\varsigma$ in this case, it follows from \eqref{dache}, \eqref{xiache}, \eqref{hangzhou}, \eqref{guming1}, \eqref{guming2}, Lemma \ref{Ni-1} and Lemma \ref{Ni-2} that
		\begin{equation*}
			\begin{split}
				\sum_{i=1}^{\varsigma-1}Q_{i\varsigma}^{\min\{\frac{\mu}{n-2s},1\}}
				&\lesssim\sum_{i=\varsigma+1}^{\kappa}\int W_i^{2_s^\ast-p_s}W_\varsigma^{p_s-1}\mathcal{Z}_\varsigma^{n+1}
				+\sum_{i=\varsigma+1}^{\kappa}\int W_\varsigma^{2_s^\ast-2}W_i\mathcal{Z}_\varsigma^{n+1}
				+\|\hat{f}\|_{\dot{H}^{-s}(\mathbb{R}^n)}
				+o(\mathscr{Q}^{\min\{\frac{\mu}{n-2s},1\}})     \\
				&\lesssim\sum_{i=\varsigma+1}^{\kappa}Q_{i\varsigma}^{\min\{\frac{\mu}{n-2s},1\}}
				+\|\hat{f}\|_{\dot{H}^{-s}(\mathbb{R}^n)}
				+o(\mathscr{Q}^{\min\{\frac{\mu}{n-2s},1\}})
				\lesssim\|\hat{f}\|_{\dot{H}^{-s}(\mathbb{R}^n)}
				+o(\mathscr{Q}^{\min\{\frac{\mu}{n-2s},1\}}).
			\end{split}
		\end{equation*}
		Then the claim follows by induction. Hence, we have the bound
		$$\mathscr{Q}^{\min\{\frac{\mu}{n-2s},1\}}\lesssim\big\|\hat{f}\big\|_{\dot{H}^{-s}(\mathbb{R}^n)}+o(\mathscr{Q}^{\min\{\frac{\mu}{n-2s},1\}}).$$
		As $\delta$ is small, we have concluded the proof.
	\end{proof}
	
	\begin{proof}[Proof of Theorem \ref{Figalli}]
		We first note that, by Lemma \ref{p00} and Lemma \ref{rr-1-00}, it follows immediately that
		\begin{equation}\label{popo}
			\|\varrho\|_{\dot{H}^s(\mathbb{R}^n)}
			\leq\|\varrho_0\|_{\dot{H}^s(\mathbb{R}^n)}+\|\varrho_1\|_{\dot{H}^s(\mathbb{R}^n)}
			\lesssim\mathcal{K}_{n,\mu,s}(\mathscr{Q}^{\min\{\frac{\mu}{n-2s},1\}})
			+\mathscr{Q}^{1+\min\{\frac{\mu}{n-2s},1\}}
			+\|\hat{f}\|_{\dot{H}^{-s}(\mathbb{R}^n)}.
		\end{equation}
		Combining with \eqref{Qf} and using the fact that $\mathcal{K}_{n,\mu,s}(x)$ is non-decreasing near $0$, we deduce that
		$$\big\|\varrho\big\|_{\dot{H}^s(\mathbb{R}^n)}
		\lesssim\mathcal{K}_{n,\mu,s}\big(\|\hat{f}\big\|_{\dot{H}^{-s}(\mathbb{R}^n)}\big).$$
		which concludes the proof of \eqref{tnu}. Finally, \eqref{moreover} follows by \eqref{Qf} and the following estimate
		\begin{equation*}
			\int_{\mathbb{R}^n}\Big(|x|^{-\mu}\ast W_i^{p_s}\Big)
			W_i^{p_s-1}W_j
			=\widetilde{\alpha}_{n,\mu,s}\int W_i^{2_s^{\ast}-1}W_j\approx Q_{ij}\leq\mathscr{Q}.
		\end{equation*}
	\end{proof}
	
	\begin{proof}[Proof of Corollary \ref{Figalli2}]
		It follows from Theorem \ref{emm} and Theorems \ref{Figalli} that there exists $\varepsilon>0$ such that
		$$\big\|(-\Delta)^s u-\left(|x|^{-\mu}\ast u^{p_s}\right)u^{p_s-1}\big\|_{\dot{H}^{-s}(\mathbb{R}^n)}\leq\varepsilon.$$
		Therefore for any $\delta>0$,
		$$
		\big\|u-\sum_{i=1}^{\kappa}W_i\big\|_{\dot{H}^s(\mathbb{R}^n)}\leq\delta,
		$$
		where $\{W_i\}_{1\leq i\leq\kappa}$ is a $\delta$-interacting family of Talenti bubbles, which concludes the proof.
	\end{proof}

	\section{A sharp example}\label{example}
	In this section, we shall construct an example showing that our quantitative estimate \eqref{tnu} is sharp for $n=6s$ and $\mu=4s$. It is important to note that for $n=6s$, $0<\mu<4s$ or $n\neq6s$, $0<\mu\leq4s$ with $\kappa\geq2$, the estimate is not sharp. In particular, Yang and Zhao in \cite{YZ25} showed that if $n\geq6-\mu$, $\mu\in(0,n)$ with $0<\mu\leq4$ satisfy the following assumption
	\begin{equation*}
		\frac{n^2-6n}{n-4}<\mu<4\hspace{4mm}\text{and}\hspace{4mm}n\geq6-\mu,
	\end{equation*}
	then there holds
	\begin{equation*}
		\Big\|u-\sum_{i=1}^{\kappa}W_i\Big\|_{\dot{H}^1(\mathbb{R}^n)}\lesssim
		\left\lbrace
		\begin{aligned}
			&	\Gamma(u)^\frac{n+2}{2(n-2)},\hspace{8mm}if\hspace{2mm}0<\mu<\frac{n+\mu-2}{2},\\
			&\Gamma(u)^\frac{2n-\mu-4}{n-2},\hspace{8mm}if\hspace{2mm}\frac{n+\mu-2}{2}\leq\mu<4.\\
		\end{aligned}
		\right.
	\end{equation*}
	
	\par WLOG, let us assume that $\kappa=2$. By Lemma \ref{xianyuxian}, choosing $\mathscr{R}$ large enough, we can find a solution $\varrho$ and a family of scalars $\{c_a^i\}$ such that
	\begin{equation}\label{weilan}
		\left\{\begin{array}{l}
			\displaystyle (-\Delta)^s \varrho-\Big(|x|^{-4s}\ast \big(\sigma+\varrho\big)^2\Big)
			\big(\sigma+\varrho\big)+\widetilde{\alpha}_{6s,4s,s}\sum_{i=1}^2W_i^2
			\displaystyle =
			\sum_{i=1}^2\sum_{a=1}^{n+1}c_{a}^{i}I_{6s,4s,s}[W_{i},\mathcal{Z}^{a}_i]\hspace{4.14mm}\mbox{in}\hspace{1.14mm} \mathbb{R}^{6s},\\
			\displaystyle \int_{\mathbb{R}^{6s}} I_{6s,4s,s}[W_{i},\mathcal{Z}^{a}_i]\phi=0,\hspace{4mm}i=1,2; ~a=1,\cdots,n+1.
		\end{array}
		\right.
	\end{equation}
	Here $\sigma=W_1+W_2$ and $\mathcal{Z}^{a}_i$ are the corresponding ones in \eqref{qta} for $W_1$ and $W_2$. It follows from Lemma \ref{cll}, Lemma \ref{xianyuxian} and Lemma \ref{p00} that
	\begin{equation}\label{nanshida}
		\sum_{i=1}^2\sum_{a=1}^{n+1}|c_{a}^{i}|\lesssim\mathscr{Q},\hspace{4mm}\|\varrho\|_\ast\leq c\hspace{4mm}\text{and}\hspace{4mm}\|\varrho\|_{\dot{H}^s(\mathbb{R}^{6s})}\lesssim\mathscr{Q}|\log\mathscr{Q}|^\frac{1}{2}.
	\end{equation}
	Now let $u:=W_1+W_2+\varrho$. Then we have
	\begin{equation*}
		(-\Delta)^su-\big(|x|^{-4s}\ast u^2\big)u
		=\sum_{i=1}^2\sum_{a=1}^{n+1}c_{a}^{i}I_{6s,4s,s}[W_{i},\mathcal{Z}^{a}_i]=:f.
	\end{equation*}
	By the Sobolev embedding, $|\mathcal{Z}_i^a|\lesssim W_i$ and \eqref{nanshida}, it is easy to see that
	\begin{equation*}
		\|f\|_{\dot{H}^{-s}(\mathbb{R}^{6s})}\lesssim\|f\|_{L^\frac{3}{2}(\mathbb{R}^{6s})}\lesssim\sum_{i=1}^2\sum_{a=1}^{n+1}|c_a^i|\|W_i^2\|_{L^\frac{3}{2}(\mathbb{R}^{6s})}\lesssim\mathscr{Q}.
	\end{equation*}
	Then we can follow the argument in \cite[Lemma 7.1]{DSW21} to derive that
	\begin{lem}\label{shubiao}
		For $\mathscr{R}$ large enough, one has
		\begin{equation*}
			\|\varrho\|_{\dot{H}^s(\mathbb{R}^{6s})}\gtrsim\|\hat{f}\|_{\dot{H}^{-s}(\mathbb{R}^{6s})}\Big|\log\|\hat{f}\|_{\dot{H}^{-s}(\mathbb{R}^{6s})}\Big|^\frac{1}{2}.
		\end{equation*}
	\end{lem}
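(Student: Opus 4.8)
The goal of Lemma \ref{shubiao} is to show that the specific configuration $u = W_1+W_2+\varrho$ constructed above saturates the estimate \eqref{tnu} when $n=6s$ and $\mu=4s$; that is, the approximation error $\|\varrho\|_{\dot H^s}$ is genuinely of order $\Gamma(u)|\log\Gamma(u)|^{1/2}$ and cannot be improved. The plan is to argue by contradiction and to exploit the sharp interplay between two quantities: on one hand we already know from \eqref{nanshida} that $\|\varrho\|_{\dot H^s(\mathbb R^{6s})} \lesssim \mathscr Q|\log\mathscr Q|^{1/2}$ and $\|\hat f\|_{\dot H^{-s}(\mathbb R^{6s})} \lesssim \mathscr Q$; on the other hand the lower bound should come from the fact that the best approximation by $W_1+W_2$ already forces $\|\varrho\|_{\dot H^s}\gtrsim \mathscr Q|\log\mathscr Q|^{1/2}$, while $\hat f$ is only of size $\mathscr Q$. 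Combining these two facts gives $\|\varrho\|_{\dot H^s}\gtrsim \|\hat f\|_{\dot H^{-s}}|\log\|\hat f\|_{\dot H^{-s}}|^{1/2}$ after verifying that $\mathscr Q \approx \|\hat f\|_{\dot H^{-s}}$ for this configuration.

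First I would pin down the relation between $\mathscr Q$ and $\|\hat f\|_{\dot H^{-s}}$. Since $u = W_1+W_2+\varrho$ solves $(-\Delta)^s u - (|x|^{-4s}\ast u^2)u = f = \sum_{i,a}c_a^i I_{6s,4s,s}[W_i,\mathcal Z_i^a]$, the bound $\|\hat f\|_{\dot H^{-s}}\lesssim \mathscr Q$ is immediate from $\sum|c_a^i|\lesssim\mathscr Q$ and $|\mathcal Z_i^a|\lesssim W_i$ together with the Sobolev embedding. For the reverse inequality $\mathscr Q\lesssim\|\hat f\|_{\dot H^{-s}}$, one pairs $f$ against suitable test functions: integrating the equation against $\mathcal Z_k^{n+1}$ (as in Lemma \ref{Ni-3} and Lemma \ref{QQ-1}) produces the interaction term $\widetilde\alpha_{6s,4s,s}(2_s^\ast-1)\int W_k^{2_s^\ast-2}\sum_{i\neq k}W_i\mathcal Z_k^{n+1}\approx -Q_{ik}^{\mu/(n-2s)} = -Q_{ik}$ (since $\mu = 4s$, $n-2s = 4s$), modulo lower-order terms controlled by $\|\varrho\|_\ast$ and $\|f\|_{\dot H^{-s}}$. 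Carrying out this computation in the weighted framework of Section \ref{stabilitysection6}, using Lemma \ref{cll}, Lemma \ref{p00} and the estimate $\|\varrho\|_{\dot H^s}\lesssim\mathscr Q|\log\mathscr Q|^{1/2}$ to absorb the error terms, yields $\mathscr Q\lesssim\|\hat f\|_{\dot H^{-s}} + o(\mathscr Q)$, hence $\mathscr Q\approx\|\hat f\|_{\dot H^{-s}}$ for $\mathscr R$ large.

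Next I would establish the lower bound $\|\varrho\|_{\dot H^s(\mathbb R^{6s})}\gtrsim\mathscr Q|\log\mathscr Q|^{1/2}$. This is where the genuine work lies. Testing the equation \eqref{weilan} against $\varrho$ itself and using the spectral gap / coercivity from Lemma \ref{estim} (which gives $(p_s-1)\int(|x|^{-4s}\ast\sigma^2)\varrho^2 + p_s\int(|x|^{-4s}\ast(\sigma\varrho))\sigma\varrho \leq \tau_0\|\varrho\|_{\dot H^s}^2$ with $\tau_0<1$), one obtains a lower bound $\|\varrho\|_{\dot H^s}^2\gtrsim |\langle g,\varrho\rangle|$ up to the coefficient contributions, where $g = (|x|^{-4s}\ast\sigma^2)\sigma - \sum_i(|x|^{-4s}\ast W_i^2)W_i$. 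The point is that for the symmetric tower/cluster configuration $u = W[-\mathscr Re_1,1]+W[\mathscr Re_1,1]+\varrho$, the interaction function $g$ is \emph{not} negligible compared to its projection onto the kernel: an explicit computation (following \cite[Lemma 7.1]{DSW21}, adapted to the fractional exponent) shows that after removing the component of $\varrho$ along the bubbles and their derivatives, the remaining part $\varrho_2$ must still carry the logarithmically-divergent tail of $g$, so that $\int ST \approx \mathscr R^{-8s}\log\mathscr R \approx \mathscr Q^2|\log\mathscr Q|$, which by Lemma \ref{cll-0-0} is exactly the size $\|S\|_{L^3}\|T\|_{L^{3/2}}$. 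One concludes that $\|\varrho\|_{\dot H^s}^2 \gtrsim \mathscr Q^2|\log\mathscr Q|$, i.e. $\|\varrho\|_{\dot H^s}\gtrsim\mathscr Q|\log\mathscr Q|^{1/2}$.

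Combining the two displays with $\mathscr Q\approx\|\hat f\|_{\dot H^{-s}}$ and noting that $x\mapsto x|\log x|^{1/2}$ is increasing for small $x>0$ gives
\[
\|\varrho\|_{\dot H^s(\mathbb R^{6s})}\gtrsim\mathscr Q|\log\mathscr Q|^{1/2}\approx\|\hat f\|_{\dot H^{-s}(\mathbb R^{6s})}\bigl|\log\|\hat f\|_{\dot H^{-s}(\mathbb R^{6s})}\bigr|^{1/2},
\]
which is the claim. The main obstacle I anticipate is the lower bound on $\|\varrho\|_{\dot H^s}$: one has to rule out the possibility that $\varrho$ concentrates its mass in a way that produces cancellation in $\langle g,\varrho\rangle$, i.e. one must verify that the Lyapunov–Schmidt solution $\varrho$ genuinely inherits the logarithmic tail of the interaction rather than a smaller contribution coming only from the non-degenerate directions. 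This requires a careful lower estimate on the weighted integral $\int ST$ over the neck region (where the two bubbles interact), showing it is bounded below — not just above — by $\mathscr Q^2|\log\mathscr Q|$; the matching upper and lower bounds on this neck integral, together with the invertibility of the linearized operator in the $\|\cdot\|_\ast$ norm from Lemma \ref{estimate2}, are what make the logarithmic factor sharp.
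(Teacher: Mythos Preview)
Your overall two–step strategy --- (a) establish $\mathscr Q\approx\|\hat f\|_{\dot H^{-s}}$, (b) prove $\|\varrho\|_{\dot H^s}\gtrsim\mathscr Q|\log\mathscr Q|^{1/2}$, then combine --- is exactly what the paper (via \cite[Lemma~7.1]{DSW21}) does, and your treatment of step~(a) is essentially correct. The gap is in step~(b).

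The way you invoke the spectral gap is backwards. Testing \eqref{weilan} against $\varrho$ and applying Lemma~\ref{estim} (after projecting off the $W_i$ directions, as you note) yields
\[
(1-\tau_0-o(1))\|\varrho\|_{\dot H^s}^2 \;\le\; \|\varrho\|_{\dot H^s}^2-\int I_{6s,4s,s}[\sigma,\varrho]\varrho \;=\;\int g\varrho + \int N(\varrho)\varrho,
\]
so one obtains $\int g\varrho\gtrsim\|\varrho\|_{\dot H^s}^2$, not the reverse. Combined with Cauchy--Schwarz this gives the \emph{upper} bound $\|\varrho\|_{\dot H^s}\lesssim\|g\|_{\dot H^{-s}}$, which is the wrong direction. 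The pointwise controls $|g|\lesssim T$, $|\varrho|\lesssim S$ and the estimate $\int ST\lesssim\mathscr Q^2|\log\mathscr Q|$ likewise only produce upper bounds; you have no mechanism to bound $\int g\varrho$ from below, since no pointwise lower bound on $\varrho$ is available.

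The correct route to the lower bound --- and what \cite{DSW21} actually does --- is via the trivial \emph{boundedness} (not coercivity) of the linearized operator: from $(-\Delta)^s\varrho-I_{6s,4s,s}[\sigma,\varrho]=g+N(\varrho)+\sum c_a^iI[W_i,\mathcal Z_i^a]$ one gets
\[
\|g\|_{\dot H^{-s}}\;\lesssim\;\|\varrho\|_{\dot H^s}+\|N(\varrho)\|_{\dot H^{-s}}+\textstyle\sum|c_a^i|\;\lesssim\;\|\varrho\|_{\dot H^s}+\mathscr Q.
\]
Thus it suffices to show $\|g\|_{\dot H^{-s}}\gtrsim\mathscr Q|\log\mathscr Q|^{1/2}$. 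For the symmetric two-bubble configuration one has $g=2\widetilde\alpha_{6s,4s,s}W_1W_2+2(|x|^{-4s}\ast W_1W_2)\sigma\ge0$, and the lower bound on $\|g\|_{\dot H^{-s}}$ is obtained by an \emph{explicit test function} (equivalently, by estimating the double Riesz integral $\iint g(x)|x-y|^{-4s}g(y)\,dx\,dy$ from below over the neck region $\{|z_i|\sim\mathscr R\}$, where the logarithm appears). This is the step you flag as ``the main obstacle'' but do not carry out; it cannot be replaced by the weighted-norm machinery alone.
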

	\begin{proof}[Proof of Theorem \ref{xiuzhengdai}]
		According to Lemma \ref{shubiao}, it suffices to show that
		\begin{equation*}
			\inf\limits_{\xi_1,\xi_2,\cdots,\xi_\kappa\in\mathbb{R}^{6s},
				\lambda_1,\lambda_2,\cdots,\lambda_{\kappa}\in\mathbb{R}^+}\big\|u-\sum_{i=1}^2 W[\xi_i,\lambda_i]\big\|_{\dot{H}^s(\mathbb{R}^{6s})}
			\gtrsim\|\varrho\|_{\dot{H}^s(\mathbb{R}^{6s})}.
		\end{equation*}
		It is well-known that the minimization problem on the left-hand side can be attained by some
		\begin{equation*}
			\widetilde{W}_1:=W[\xi_1,\lambda_1]\hspace{4mm}\text{and}\hspace{4mm}\widetilde{W}_2:=W[\xi_2,\lambda_2].
		\end{equation*}
		Denote $\widetilde{\sigma}=\widetilde{W}_1+\widetilde{W}_2$ and $\widetilde{\varrho}=u-\widetilde{\sigma}$. We need to show $\|\widetilde{\varrho}\|_{\dot{H}^s(\mathbb{R}^{6s})}\gtrsim\|\varrho\|_{\dot{H}^s(\mathbb{R}^{6s})}$. Since $\widetilde{\sigma}$ is the minimizer, then
		\begin{equation*}
			\|u-\widetilde{\sigma}\|_{\dot{H}^s(\mathbb{R}^{6s})}
			\leq\|u-\sigma\|_{\dot{H}^s(\mathbb{R}^{6s})}
			=\|\varrho\|_{\dot{H}^s(\mathbb{R}^{6s})}
			\lesssim\mathscr{Q}|\log\mathscr{Q}|^\frac{1}{2}.
		\end{equation*}
		Recall that $\langle v,w\rangle_{\dot{H}^s(\mathbb{R}^{6s})}=\int_{\mathbb{R}^{6s}}(-\Delta)^\frac{s}{2}v\cdot(-\Delta)^\frac{s}{2}w$. Hence $\|\sigma-\widetilde{\sigma}\|_{\dot{H}^s(\mathbb{R}^{6s})}\lesssim\mathscr{Q}|\log\mathscr{Q}|^\frac{1}{2}$. This implies that (up to some reordering of $\xi_1$ and $\xi_2$)
		\begin{equation*}
			\lambda_i=1+o(1),\hspace{2mm}\xi_i=(-1)^i(\mathscr{R}+o(1))e_1,\hspace{4mm}i=1,2.
		\end{equation*}
		Here $o(1)$ means a quantity that goes to zero when $\mathscr{R}\rightarrow\infty$. Denote
		\begin{equation*}
			\varepsilon:=\sum_{i}^2|\lambda_i-1|+|\xi_i-(-1)^i\mathscr{R}e_1|.
		\end{equation*}
		It is easy to see that $(\xi,\lambda)\rightarrow W[\xi,\lambda]$ is a smooth map from $\mathbb{R}^{6s}\times(0,\infty)$ to $\dot{H}^{s}(\mathbb{R}^{6s})$. Using the Taylor's expansion, there exist $A_1,A_2\in\dot{H}^s(\mathbb{R}^{6s})$ and $\|A_1\|_{\dot{H}^s(\mathbb{R}^{6s})}=O(\varepsilon^2)$, $\|A_2\|_{\dot{H}^s(\mathbb{R}^{6s})}=O(\varepsilon^2)$ such that
		\begin{equation*}
			\widetilde{W}_1-W_1=\sum_{a=1}^n\mathcal{Z}_1^a(\xi_1+\mathscr{R}e_1)_a+\mathcal{Z}_1^{n+1}(\lambda_1-1)+A_1
		\end{equation*}
		and
		\begin{equation*}
			\widetilde{W}_2-W_2=\sum_{a=1}^n\mathcal{Z}_2^a(\xi_2-\mathscr{R}e_1)_a+\mathcal{Z}_2^{n+1}(\lambda_2-1)+A_2,
		\end{equation*}
		where $\mathcal{Z}_1^a,\mathcal{Z}_2^a$ are defined in \eqref{qta} with respect to $W_1$ and $W_2$. Consequently, $\|W_1-\widetilde{W}_1\|_{\dot{H}^s(\mathbb{R}^{6s})}\approx\varepsilon,~\|W_2-\widetilde{W}_2\|_{\dot{H}^s(\mathbb{R}^{6s})}\approx\varepsilon$ and using Lemma \ref{armidale}, we get
		\begin{equation*}
			\Big|\langle W_1-\widetilde{W}_1,W_2-\widetilde{W}_2\rangle_{\dot{H}^s(\mathbb{R}^{6s})}\Big|\approx o(\varepsilon^2)
			+\varepsilon^2\sum_{a=1}^{n+1}\Big|\langle\mathcal{Z}_1^a,\mathcal{Z}_1^a\rangle_{\dot{H}^s(\mathbb{R}^{6s})}\Big|=o(\varepsilon^2).
		\end{equation*}
		Combining the above estimates, we have $\|\sigma-\widetilde{\sigma}\|_{\dot{H}^s(\mathbb{R}^{6s})}\approx\varepsilon$ because
		\begin{equation*}
			\|\sigma-\widetilde{\sigma}\|_{\dot{H}^s(\mathbb{R}^{6s})}^2
			=\|W_1-\widetilde{W}_1\|_{\dot{H}^s(\mathbb{R}^{6s})}^2
			+\|W_2-\widetilde{W}_2\|_{\dot{H}^s(\mathbb{R}^{6s})}^2
			+2\langle W_1-\widetilde{W}_1,W_2-\widetilde{W}_2\rangle_{\dot{H}^s(\mathbb{R}^{6s})}.
		\end{equation*}
		By the orthogonal conditions in \eqref{weilan}, we have $\varrho$ is orthogonal to $\mathcal{Z}_1^a$ and $\mathcal{Z}_2^a$ in $\dot{H}^s(\mathbb{R}^{6s})$ for $a=1,\cdots n+1$. Thus,
		\begin{equation*}
			\langle\sigma-\widetilde{\sigma},\varrho\rangle_{\dot{H}^s(\mathbb{R}^{6s})}
			=\int_{\mathbb{R}^{6s}}(-\Delta)^\frac{s}{2}(A_1+A_2)\cdot(-\Delta)^\frac{s}{2}\varrho
			\lesssim o(1)\|\sigma-\widetilde{\sigma}\|_{\dot{H}^s(\mathbb{R}^{6s})}\|\varrho\|_{\dot{H}^s(\mathbb{R}^{6s})}.
		\end{equation*}
		Since $\widetilde{\varrho}=\varrho+\sigma-\widetilde{\sigma}$, the above inequality implies that
		\begin{equation*}
			\begin{split}
				\|\widetilde{\varrho}\|_{\dot{H}^s(\mathbb{R}^{6s})}
				&=\|\varrho\|_{\dot{H}^s(\mathbb{R}^{6s})}
				+\|\sigma-\widetilde{\sigma}\|_{\dot{H}^s(\mathbb{R}^{6s})}
				+2\langle\sigma-\widetilde{\sigma},\varrho\rangle_{\dot{H}^s(\mathbb{R}^{6s})}       \\
				&\gtrsim\|\varrho\|_{\dot{H}^s(\mathbb{R}^{6s})}
				+\|\sigma-\widetilde{\sigma}\|_{\dot{H}^s(\mathbb{R}^{6s})}
				\gtrsim\|\varrho\|_{\dot{H}^s(\mathbb{R}^{6s})}.
			\end{split}
		\end{equation*}
		From this, the assertion follows.
	\end{proof}
	
	\appendix
	\section{Technical Lemmata}
	In this appendix, we give some crucial estimates that have been used in the previous sections.
	\begin{lem}\label{FPU2}
		Let $\tilde{s}>\tilde{t}>1$ and $\tilde{s}+\tilde{t}=2_s^{\ast}$. It holds
		$$\int W_i^t\inf(W_i^{\tilde{s}},W_i^{\tilde{t}})=O\big(Q_{ij}^{\frac{n}{n-2s}}|\log{Q_{ij}}|\big).$$
	\end{lem}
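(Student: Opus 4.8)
The plan is to prove this interaction estimate by the usual two-step scheme: a scaling reduction to a model configuration, followed by a decomposition of $\mathbb{R}^n$ into regions adapted to the concentration of each bubble. First I would exploit the invariance of the left-hand integral under the rescaling $x\mapsto\lambda_i^{-1}x+\xi_i$, together with the homogeneity of $Q_{ij}$, to normalize $\lambda_i=1$ and $\xi_i=0$. After relabelling $i$ and $j$ if necessary one may assume $\lambda_i\le\lambda_j$; writing $\Lambda:=\lambda_j/\lambda_i\ge1$ and $a:=\xi_{ij}$, the bubbles become $W_i(x)\approx\tau(x)^{-(n-2s)}$ and $W_j(x)\approx\Lambda^{(n-2s)/2}\bigl(1+\Lambda^2|x-a|^2\bigr)^{-(n-2s)/2}$, and one checks $Q_{ij}\approx\bigl(\Lambda\,\tau(a)^2\bigr)^{-(n-2s)/2}$. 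It is also convenient to record that, since $\tilde s>\tilde t$, one has $\inf(W_j^{\tilde s},W_j^{\tilde t})=W_j^{\tilde t}$ on $\{W_j\ge1\}=\{\,|x-a|\lesssim\Lambda^{-1/2}\,\}$ and $\inf(W_j^{\tilde s},W_j^{\tilde t})=W_j^{\tilde s}$ elsewhere, so that the truncation kills the strong singularity of $W_j$ at its centre.

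Next I would split the integral over the three zones $B(a,\Lambda^{-1/2})$, the transition annulus $\Lambda^{-1/2}\lesssim|x-a|\lesssim1$, and the far zone $|x-a|\gtrsim1$ (the last further subdivided according to whether $x$ lies in the concentration core of $W_i$). In each zone one bounds the relevant power of $W_i$ and the truncated factor $\inf(W_j^{\tilde s},W_j^{\tilde t})$ by their explicit profiles and reduces to integrals of products of powers of $\tau$, which are evaluated through the change of variables $z=\Lambda(x-a)$ together with the elementary bound $\int_{B(0,R)}\tau(z)^{-\beta}\,dz\lesssim\max\{1,R^{\,n-\beta}\}$ for $R\ge1$; here the hypotheses are used exactly in the form $(n-2s)\tilde t<n<(n-2s)\tilde s$ (forced by $\tilde s>\tilde t>1$) and $(n-2s)(\tilde s+\tilde t)=2n$ (the conformal relation $\tilde s+\tilde t=2_s^{\ast}$), the latter guaranteeing decay at infinity. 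The logarithmic factor is produced precisely on the transition annulus, where both weights are comparable to constants and one is integrating essentially $|x-a|^{-n}$ over $\{\Lambda^{-1/2}\lesssim|x-a|\lesssim1\}$, contributing a factor $\log\Lambda\lesssim|\log Q_{ij}|$. Collecting the three contributions and re-expressing them through $Q_{ij}\approx(\Lambda\,\tau(a)^2)^{-(n-2s)/2}$ yields the asserted bound $O\bigl(Q_{ij}^{n/(n-2s)}|\log Q_{ij}|\bigr)$.

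The step I expect to be the main obstacle is the exponent bookkeeping: one must verify that among the several terms generated by the decomposition the dominant one is genuinely of size $Q_{ij}^{n/(n-2s)}$ up to a single power of $|\log Q_{ij}|$, i.e. that no piece carries a strictly smaller negative power of $\tau(a)$ or of $\Lambda$. It is cleanest to treat the two extreme regimes separately — the ``bubble cluster'' regime $\lambda_i\approx\lambda_j$ with $|\xi_i-\xi_j|$ large and the ``bubble tower'' regime $\xi_i\approx\xi_j$ with $\lambda_i/\lambda_j$ small (in the terminology of Definition~\ref{del-1}) — even though both are handled by the same change of variables. All remaining computations are elementary estimates for integrals of powers of $\tau(z)$, entirely parallel to the analogous interaction lemmas in \cite{DSW21,DHP,Aryan}, and I would not reproduce them in detail.
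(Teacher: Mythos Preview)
Your proposal is correct and is precisely the classical argument: the paper does not actually prove this lemma but simply cites Bahri \cite[E4]{Bahri-1989}, whose proof proceeds exactly by the scaling reduction and zone decomposition you describe. Your sketch is faithful to that source (including the identification of the logarithmic factor with the transition annulus where the two bubble profiles are comparable), so there is nothing to add beyond noting that the statement as printed contains typos --- the integrand should read $W_i^{\tilde t}\inf(W_j^{\tilde s},W_j^{\tilde t})$ (or the symmetric variant), as you correctly inferred from the presence of $Q_{ij}$ on the right-hand side.
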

	\begin{proof}
		The proof of can be found in \cite[E4]{Bahri-1989}.
	\end{proof}

	\begin{lem}\label{armidale}
		For the $\mathcal{Z}_i^a$ defined in \eqref{qta}, there exist some constants $\gamma^a=\gamma^a(n)>0$ such that
		\begin{equation*}
			\int W_i^\frac{4s}{n-2s}\mathcal{Z}_i^a\mathcal{Z}_i^b=
			\left\lbrace
			\begin{aligned}
				&0\hspace{7.5mm}if\hspace{2mm}a\neq b,\\&
				\gamma^a\hspace{6mm}if\hspace{2mm}1\leq a=b\leq n+1.
			\end{aligned}
			\right.
		\end{equation*}
		If $i\neq j$ and $1\leq a,b\leq n+1$, then we have
		\begin{equation*}
			\Big|\int W_i^\frac{4s}{n-2s}\mathcal{Z}_i^a\mathcal{Z}_j^b\Big|
			\lesssim Q_{ij}.
		\end{equation*}
	\end{lem}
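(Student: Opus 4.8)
The plan is to handle the diagonal pairings ($i=j$) and the interaction pairings ($i\neq j$) separately, and for the former to reduce to an explicit integral at the standard bubble $W[0,1]$. The key structural observation is that, reading off \eqref{qta}, each $\mathcal{Z}_i^a$ carries exactly the same scaling weight as $W_i$, namely
\[
W_i(x)=\lambda_i^{\frac{n-2s}{2}}W[0,1]\big(\lambda_i(x-\xi_i)\big),\qquad \mathcal{Z}_i^a(x)=\lambda_i^{\frac{n-2s}{2}}\mathcal{Z}^a\big(\lambda_i(x-\xi_i)\big),
\]
where $\mathcal{Z}^a$ denotes the corresponding object for $W:=W[0,1]$. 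Since $\tfrac{n-2s}{2}\big(\tfrac{4s}{n-2s}+2\big)=n$, the change of variables $z=\lambda_i(x-\xi_i)$ gives $\int W_i^{\frac{4s}{n-2s}}\mathcal{Z}_i^a\mathcal{Z}_i^b=\int W^{\frac{4s}{n-2s}}\mathcal{Z}^a\mathcal{Z}^b$, so it suffices to compute the right-hand side. From \eqref{qta}, writing $W=\alpha_{n,\mu,s}\,\tau(x)^{-(n-2s)}$ and recalling $\tfrac{4s}{n-2s}=2_s^\ast-2$ so that $W^{\frac{4s}{n-2s}}=\alpha_{n,\mu,s}^{2_s^\ast-2}\tau(x)^{-4s}$, one has $\mathcal{Z}^a=(2s-n)\alpha_{n,\mu,s}\,x_a\,\tau(x)^{-(n-2s+2)}$ for $1\le a\le n$ and $\mathcal{Z}^{n+1}=\tfrac{n-2s}{2}\alpha_{n,\mu,s}\,(1-|x|^2)\,\tau(x)^{-(n-2s+2)}$.

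For the diagonal case the three families of identities are pure parity statements. If $a\neq b$ with $a,b\in\{1,\dots,n\}$, then $W^{\frac{4s}{n-2s}}\mathcal{Z}^a\mathcal{Z}^b$ is a constant multiple of $x_ax_b\,\tau(x)^{-(2n+4)}$, which is odd under the reflection $x_a\mapsto -x_a$, so the integral vanishes; if $a\in\{1,\dots,n\}$ and $b=n+1$, the integrand is a constant multiple of $x_a(1-|x|^2)\,\tau(x)^{-(2n+4)}$, again odd in $x_a$, hence zero. For $a=b$ the bound $|\mathcal{Z}^a|\le\tfrac{n-2s}{2}W$ — immediate from \eqref{qta} because $\tfrac{|x|}{1+|x|^2}\le\tfrac12$ and $\tfrac{|1-|x|^2|}{1+|x|^2}\le1$ — gives $0\le W^{\frac{4s}{n-2s}}(\mathcal{Z}^a)^2\le\big(\tfrac{n-2s}{2}\big)^2W^{2_s^\ast}\in L^1(\mathbb{R}^n)$, so
\[
\gamma^a:=\int_{\mathbb{R}^n}W^{\frac{4s}{n-2s}}(\mathcal{Z}^a)^2\,dx\in(0,\infty),\qquad a=1,\dots,n+1,
\]
and the $O(n)$-symmetry of $W$ forces $\gamma^1=\cdots=\gamma^n$. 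Undoing the rescaling yields the diagonal assertion for every $i$.

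For the interaction case $i\neq j$ I would discard all cancellation: the same pointwise bound gives $|\mathcal{Z}_i^a|\le\tfrac{n-2s}{2}W_i$ and $|\mathcal{Z}_j^b|\le\tfrac{n-2s}{2}W_j$, whence
\[
\Big|\int_{\mathbb{R}^n}W_i^{\frac{4s}{n-2s}}\mathcal{Z}_i^a\mathcal{Z}_j^b\,dx\Big|\le\Big(\tfrac{n-2s}{2}\Big)^2\int_{\mathbb{R}^n}W_i^{2_s^\ast-1}W_j\,dx,
\]
and one concludes with the standard two-bubble interaction estimate $\int_{\mathbb{R}^n}W_i^{2_s^\ast-1}W_j\,dx\approx Q_{ij}$, which holds for all $n>2s$ with no logarithmic correction because the exponent on $W_j$ equals $1<\tfrac{n}{n-2s}$ (cf. \cite{FG20,Aryan} and the interaction estimates gathered in the appendix). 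This produces the bound $\lesssim Q_{ij}$ and finishes the proof.

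None of this is genuinely delicate; the only input that is not a one-line verification is the \emph{sharp} order $Q_{ij}$ in the last interaction integral, but this is by now a routine computation and is in any case already needed elsewhere in the paper (for instance in the proof of Theorem \ref{Figalli}). The remaining care is purely bookkeeping: checking the parity of the explicit integrands and the $L^1$ domination $W^{\frac{4s}{n-2s}}(\mathcal{Z}^a)^2\le(\tfrac{n-2s}{2})^2W^{2_s^\ast}$, which follows at once from $2_s^\ast-2=\tfrac{4s}{n-2s}$ and $|\mathcal{Z}^a|\lesssim W$.
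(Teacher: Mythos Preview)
Your argument is correct. The paper does not actually prove this lemma: its entire proof is a citation to formulas F1--F6 in Bahri's monograph \cite{Bahri-1989}, together with the remark that $\gamma^1=\cdots=\gamma^n$. Your write-up is therefore strictly more informative than the paper's own treatment: you make explicit the scaling reduction to $W[0,1]$, the parity cancellations for $a\neq b$, the $L^1$ domination giving $0<\gamma^a<\infty$, and the pointwise bound $|\mathcal{Z}_i^a|\le\tfrac{n-2s}{2}W_i$ that reduces the cross term to $\int W_i^{2_s^\ast-1}W_j$, after which Lemma~\ref{FPU1} (with exponents $2_s^\ast-1$ and $1$, whose minimum is $1$) yields the $Q_{ij}$ bound. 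The underlying mathematics is the same as what Bahri does, so there is no genuine methodological divergence---you have simply unpacked the reference.
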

	\begin{proof}
		See the proof in \cite[F1-F6]{Bahri-1989}. Moreover, it is known that $\gamma^1=\cdots=\gamma^n$.
	\end{proof}

	\begin{lem}\label{guji}
		For any two bubbles, there exist two positive constants $c_1$ and $c_2$ such that
		\begin{equation*}
			\int W_i^{2_s^\ast-2}W_j\mathcal{Z}_i^{n+1}
			=-c_1Q_{ij}+o(Q_{ij}),
		\end{equation*}
		\begin{equation*}
			\int W_j^{2_s^\ast-p_s}W_i^{p_s-1}\mathcal{Z}_i^{n+1}
			=-c_2Q_{ij}^\frac{\mu}{n-2s}
			+o(Q_{ij}^\frac{\mu}{n-2s}).
		\end{equation*}
	\end{lem}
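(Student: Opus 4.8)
The statement to prove is Lemma~\ref{guji}, which gives the leading-order asymptotics of the two interaction integrals $\int W_i^{2_s^\ast-2}W_j\mathcal{Z}_i^{n+1}$ and $\int W_j^{2_s^\ast-p_s}W_i^{p_s-1}\mathcal{Z}_i^{n+1}$ in terms of the interaction parameter $Q_{ij}$. Since $\mathcal{Z}_i^{n+1}=\frac{n-2s}{2}W_i\frac{1-\lambda_i^2|\cdot-\xi_i|^2}{1+\lambda_i^2|\cdot-\xi_i|^2}$ by \eqref{qta}, the first integral is essentially $\int W_i^{2_s^\ast-1}\,\omega_i\,W_j$ with the bounded radial factor $\omega_i=\frac{n-2s}{2}\frac{1-|z_i|^2}{1+|z_i|^2}$, and the second is a Hartree-type analogue with the nonlocal weight $W_i^{p_s-1}$ in place of $W_i^{2_s^\ast-1}$.

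The plan is as follows. First I would exploit the scaling and translation invariance of the bubbles to reduce to a normalized configuration: rescale so that $W_i=W[0,1]$ and write $W_j=W[\xi_{ij},\Lambda]$ where, by Definition~\ref{del-1} and the $\delta$-interacting hypothesis, the relevant geometric quantity $Q_{ij}\approx(\Lambda+\Lambda^{-1}+\Lambda|\xi_{ij}|^2)^{-(n-2s)/2}$ is small. Second, for the first integral, I would split $\mathbb{R}^n$ into the region near the concentration point of $W_j$ (where $W_j$ is large but $W_i^{2_s^\ast-1}\omega_i$ is nearly constant) and its complement. In the far region one approximates $W_j$ by its decay tail $W_j\approx\alpha_{n,\mu,s}\Lambda^{-(n-2s)/2}|x-\xi_{ij}/\text{(scaled)}|^{-(n-2s)}$ suitably, giving $\int W_i^{2_s^\ast-1}\omega_i\,W_j \sim c\,Q_{ij}\int W_i^{2_s^\ast-1}\omega_i\,(\text{Green kernel})$, which by the equation $(-\Delta)^s W_i \sim W_i^{2_s^\ast-1}$ (up to the Riesz-potential factor from Lemma~\ref{p1-00}) converges to a fixed constant multiple of $Q_{ij}$; the sign is negative because $\int W_i^{2_s^\ast-1}\omega_i\cdot(\text{positive kernel evaluated at the far point})$ is governed by the weighted integral $\int W^{2_s^\ast-1}\,\frac{1-|z|^2}{1+|z|^2}\,\tau(z)^{-(n-2s)}$, which is strictly negative (the negative tail of $\omega$ dominates against the slowly-decaying kernel). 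The near-region contribution and the error from the tail approximation are $o(Q_{ij})$ by standard bubble-interaction estimates of the type already invoked in Lemmata~\ref{FPU1}, \ref{FPU2}, \ref{chufa}.

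For the second integral $\int W_j^{2_s^\ast-p_s}W_i^{p_s-1}\mathcal{Z}_i^{n+1}$, the extra subtlety is that the exponent on $W_i$ is $p_s-1=\frac{2n-\mu}{n-2s}-1=\frac{n-\mu+2s}{n-2s}$ rather than $2_s^\ast-1=\frac{n+2s}{n-2s}$, and on $W_j$ it is $2_s^\ast-p_s=\frac{\mu}{n-2s}$. Here the dominant region is where $W_j$ is near its peak, and the decay of $W_j^{2_s^\ast-p_s}=W_j^{\mu/(n-2s)}$ away from $\xi_{ij}$ is like $|x-\xi_{ij}|^{-\mu}$ (after rescaling), which is slower. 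Localizing near $\xi_{ij}$, one has $W_i^{p_s-1}\mathcal{Z}_i^{n+1}\approx W_i^{p_s}\omega_i$ evaluated at $\xi_{ij}$, a quantity of size $(\Lambda|\xi_{ij}|^2+1)^{-(n-\mu+2s)/2}$ roughly, and integrating $W_j^{\mu/(n-2s)}$ over a ball gives a factor $\Lambda^{-\mu/2}$; combining these and comparing with the definition of $Q_{ij}$ one checks the total scales exactly as $Q_{ij}^{\mu/(n-2s)}$, with a negative constant coming again from the sign of $\omega_i$ near the peak of $W_j$ (which sits at distance $\gtrsim 1$ from the peak of $W_i$, in the region where $1-|z_i|^2<0$ dominates on average). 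I would carry out this scaling bookkeeping carefully using $\tau(z)=(1+|z|^2)^{1/2}$ and Lemma~\ref{B4-1} to control the Riesz potential of $W_j^{p_s}$.

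The main obstacle I anticipate is pinning down the precise constants $c_1,c_2>0$ and, more importantly, the \emph{sign}: one must show the leading coefficient is genuinely negative and does not vanish. This requires computing (or at least identifying the sign of) explicit one-dimensional radial integrals of the form $\int_0^\infty r^{n-1}(1+r^2)^{-a}\frac{1-r^2}{1+r^2}\,dr$ and its Hartree counterpart, and verifying these are negative for the relevant exponents $a$ determined by $n,s,\mu$ — analogous to the classical computations in \cite{Bahri-1989,DSW21,DHP} but now with the fractional Sobolev exponent and the convolution weight. Given that the paper repeatedly reduces such computations to known references by saying the only change is the modification of exponents by $s$, I expect the honest version of this lemma's proof would likewise follow \cite[Lemma~7.2 or analogous]{DSW21} and \cite[Section~4]{DHP}, carrying the parameter $s$ through, with the sign extracted from the explicit Beta-function identities; I would present the reduction and state the sign computation as the technical core, relegating the elementary integral evaluations to a short computation or citation.
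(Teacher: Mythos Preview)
The paper gives no proof of its own: it simply cites \cite[F16]{Bahri-1989}. Your outline is the standard argument that citation points to --- rescale to normalize $W_i$, replace the other bubble by its far-field tail on the core of $W_i$, and reduce to a fixed radial integral whose sign is read off from Beta-function identities --- and for the first integral your sketch is correct.

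For the second integral you have the dominant region backwards. Since $2_s^\ast-p_s=\mu/(n-2s)$ is the \emph{smaller} of the two exponents (as $\mu<n$), the mechanism behind Lemma~\ref{FPU1} places the mass in the core of $W_i$, not of $W_j$. Concretely, $W_j^{\mu/(n-2s)}$ decays only like $|z|^{-\mu}$ and is not integrable on its own; it is $W_i^{p_s-1}\mathcal{Z}_i^{n+1}\sim W_i^{p_s}$, decaying like $|z_i|^{-(2n-\mu)}$, that supplies the cutoff. Your claim that ``integrating $W_j^{\mu/(n-2s)}$ over a ball gives $\Lambda^{-\mu/2}$'' is wrong: in the tower case with $\lambda_j=\Lambda\gg\lambda_i=1$, the region $\{|z_j|\lesssim1\}$ contributes only $\Lambda^{\mu/2}\cdot\Lambda^{-n}=O(\Lambda^{\mu/2-n})=o(Q_{ij}^{\mu/(n-2s)})$. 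The correct reduction is identical in spirit to the first integral --- approximate $W_j^{\mu/(n-2s)}$ by its far-field value $c\,Q_{ij}^{\mu/(n-2s)}\tau(z_i)^{-\mu}$ on the core of $W_i$, leaving the fixed integral $\int_{\mathbb{R}^n}\tau(z)^{-\mu}W[0,1]^{p_s-1}\mathcal{Z}^{n+1}[0,1]\,dz$ to evaluate.

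One further caution on the sign: writing $\int W_j^{\alpha}W_i^{\beta-1}\mathcal{Z}_i^{n+1}=\tfrac{\lambda_i}{\beta}\,\partial_{\lambda_i}\!\int W_j^{\alpha}W_i^{\beta}$ shows the sign is governed by $\lambda_i\partial_{\lambda_i}Q_{ij}$, which is negative only under the ordering that is implicitly used in Lemma~\ref{QQ-1} (the $\mathcal{Z}$-bubble taken with the largest $\lambda$ first). Keep this ordering in mind when you carry out the explicit computation.
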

	\begin{proof}
		See the proof in \cite[F16]{Bahri-1989}.
	\end{proof}

	\begin{lem}\label{FPU1} Given $s\in(0,\frac{n}{2})$, $n>2s$, $\mu\in(0,n)$ with $0<\mu\leq4s$, let $W_i$ and $W_j$ be two bubbles. Then for any fixed $\varepsilon>0$ and any nonnegative exponents such that $\tilde{s}+\tilde{t}=2_s^\ast$, it holds that
		\begin{equation*}
			\int W_i^{\tilde{s}}W_j^{\tilde{t}}\approx
			\left\lbrace
			\begin{aligned}
				&Q_{ij}^{\min(\tilde{s},\tilde{t})},\quad\hspace{6mm}\hspace{3mm}\quad|\tilde{s}-\tilde{t}|\geq\varepsilon,\\&
				Q_{ij}^{\frac{n}{n-2s}}|\log Q_{ij}|,\quad\quad \tilde{s}=\tilde{t},
			\end{aligned}
			\right.
		\end{equation*}
		where the quantity
		\begin{equation*}	Q_{ij}=\min\Big(\frac{\lambda_i}{\lambda_j}+\frac{\lambda_j}{\lambda_i}+\lambda_i\lambda_j|\xi_i-\xi_j|^2\Big)^{-\frac{n-2s}{2}}.
		\end{equation*}
	\end{lem}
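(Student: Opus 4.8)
The statement is the classical two--bubble interaction integral; only the value of the energy--critical exponent $2^{\ast}_{s}=\tfrac{2n}{n-2s}$ enters, and the standing assumption $0<\mu\le4s$ plays no role here. The plan is as follows. First I would use that, because $\tilde s+\tilde t=2^{\ast}_{s}$, the quantity $\int W_i^{\tilde s}W_j^{\tilde t}$ is invariant under simultaneous translation and dilation of $W_i$ and $W_j$, and so is $Q_{ij}$, while both are symmetric under $(i,\tilde s)\leftrightarrow(j,\tilde t)$. Relabelling so that $\lambda_i\le\lambda_j$ and normalising, this reduces the claim to $W_i=W[0,1]$, $W_j=W[\tilde\xi,\lambda]$ with $\lambda=\lambda_j/\lambda_i\ge1$, in which case $Q_{ij}\approx\bigl(\lambda(1+|\tilde\xi|^2)\bigr)^{-(n-2s)/2}$. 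It is enough to treat the weakly interacting regime $Q_{ij}\to0$: since $\lambda_i/\lambda_j+\lambda_j/\lambda_i\ge2$ one always has $Q_{ij}<1$, and when $Q_{ij}$ is bounded below both sides of the asserted relation are $\approx1$ (upper bound by Hölder, lower bound because the two bubbles then overlap substantially). Using in addition the Kelvin inversion $x\mapsto x/|x|^2$, under which bubbles, integral and $Q_{ij}$ are again invariant, one may further take $\tilde\xi=0$ and work with a bubble tower, but this simplification is optional.

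Next I would decompose $\mathbb{R}^n$ into the core of $W_j$ (the set at distance $\lesssim1/\lambda$ from $\tilde\xi$), the far field $|x|\gtrsim\max(1,|\tilde\xi|)$, and the complementary ``neck'', and on each piece replace $W_i,W_j$ by their size there, using $W[0,1](x)\approx\tau(x)^{-(n-2s)}$ and $W[\tilde\xi,\lambda](x)\approx\lambda^{(n-2s)/2}\bigl(1+\lambda|x-\tilde\xi|\bigr)^{-(n-2s)}$. Exploiting $(\tilde s+\tilde t)\tfrac{n-2s}{2}=n$: the core of $W_j$ contributes $\approx W_i(\tilde\xi)^{\tilde s}\lambda^{-\tilde s(n-2s)/2}\approx Q_{ij}^{\tilde s}$; the far field contributes $\lesssim Q_{ij}^{\min(\tilde s,\tilde t)}$, since there $W_i,W_j$ both decay like $|x|^{-(n-2s)}$ and $2^{\ast}_{s}(n-2s)=2n>n$; and the neck collapses to an elementary radial integral equal to $\approx Q_{ij}^{\tilde t}$, $\approx Q_{ij}^{\tilde s}$, or $\approx Q_{ij}^{n/(n-2s)}\log(1/Q_{ij})$ according as $\tilde t<\tilde s$, $\tilde t>\tilde s$, or $\tilde t=\tilde s$, the logarithm arising from $\int r^{-1}\,dr$ over a radial range of length $\approx\log(1/Q_{ij})$. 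Summing the three pieces yields $\int W_i^{\tilde s}W_j^{\tilde t}\approx Q_{ij}^{\min(\tilde s,\tilde t)}$ when $\tilde s\ne\tilde t$ and $\approx Q_{ij}^{n/(n-2s)}|\log Q_{ij}|$ when $\tilde s=\tilde t=\tfrac n{n-2s}$; the matching upper bounds come from running the same decomposition with $\lesssim$ in place of $\approx$, and the fixed $\varepsilon>0$ in the statement merely ensures uniformity of the constants in the first alternative.

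The main obstacle will be this last step in the bubble--\emph{cluster} configuration $|\tilde\xi|\gg1$, where the ``neck'' must be split once more according to whether a point is comparably close to $0$ or to $\tilde\xi$, and one must check that the part on which $x$ is far from \emph{both} concentration points contributes only $O\bigl(Q_{ij}^{\,2^{\ast}_{s}/2}\bigr)$, so that it never competes with $Q_{ij}^{\min(\tilde s,\tilde t)}$. Everything else is the direct radial bookkeeping outlined above, and this is precisely the fractional analogue of the classical computations in \cite[Appendix~B]{Bahri-1989} (see also \cite{Aryan,DSW21,C-K-L-24}); passing from $s=1$ to $s\in(0,\tfrac n2)$ changes only the numerical exponents, so the lemma may equally well be quoted from those references with the obvious substitutions.
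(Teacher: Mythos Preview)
Your proposal is correct and aligns with the paper's own treatment: the paper simply notes that the proof ``is similar to that of Proposition~B.2 in \cite{FG20}, so is omitted,'' i.e., it defers entirely to the $s=1$ computation with only the exponents changed. You reach the same conclusion in your final paragraph, but additionally sketch the underlying core/neck/far-field decomposition that those references carry out; this extra detail is sound and more informative than the paper's bare citation.
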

	\begin{proof}
		It is similar to that of Proposition B.2 in \cite{FG20}, so is omitted.
	\end{proof}

	\begin{lem}\label{p1-00}
		We have that
		\begin{equation*}
			|x|^{-\mu}\ast W_i^{p_s}
			=\int_{\mathbb{R}^n}\frac{W_i^{p_s}(y)}{|x-y|^{\mu}}dy
			=\widetilde{\alpha}_{n,\mu,s}W_i^{2_s^{\ast}-p_s}(x),
		\end{equation*}
		where  $\widetilde{\alpha}_{n,\mu,s}=I(\gamma)S^{\frac{(n-\mu)(2s-n)}{4(n-\mu+2s)}}C_{n,\mu}^{\frac{2s-n}{2(n-\mu+2s)}}[n(n-2s)]^{\frac{n-2s}{4}}.$
	\end{lem}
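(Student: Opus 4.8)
The plan is to exploit the fact that $W_i=W[\xi_i,\lambda_i]$ is, up to the explicit multiplicative constant $\alpha_{n,\mu,s}$ from Le \cite{ple}, a translate and dilate of the fixed profile $V(y):=(1+|y|^2)^{-\frac{n-2s}{2}}$, together with the classical principle that the Riesz potential $|x|^{-\mu}\ast(\cdot)$ of a suitable power of $V$ is again a power of $V$.

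First I would record the two exponent identities $\tfrac{(n-2s)p_s}{2}=\tfrac{2n-\mu}{2}$ and $2_s^{\ast}-p_s=\tfrac{\mu}{n-2s}$, so that $W_i^{p_s}=\alpha_{n,\mu,s}^{p_s}\bigl(\tfrac{\lambda_i}{1+\lambda_i^2|x-\xi_i|^2}\bigr)^{\frac{2n-\mu}{2}}$ and $W_i^{2_s^{\ast}-p_s}=\alpha_{n,\mu,s}^{\frac{\mu}{n-2s}}\bigl(\tfrac{\lambda_i}{1+\lambda_i^2|x-\xi_i|^2}\bigr)^{\frac{\mu}{2}}$. Performing the change of variables $z=\lambda_i(y-\xi_i)$ in $\int_{\mathbb{R}^n}|x-y|^{-\mu}W_i^{p_s}(y)\,dy$ and using the homogeneity of the kernel $|\cdot|^{-\mu}$, all dependence on $(\xi_i,\lambda_i)$ factors out and the problem reduces to the single identity
\begin{equation*}
\int_{\mathbb{R}^n}\frac{(1+|z|^2)^{-\frac{2n-\mu}{2}}}{|w-z|^{\mu}}\,dz=B_{n,\mu}\,(1+|w|^2)^{-\frac{\mu}{2}},\qquad w=\lambda_i(x-\xi_i),
\end{equation*}
with $B_{n,\mu}$ an explicit constant. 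This last identity is classical and is exactly the step in Lieb's sharp Hardy–Littlewood–Sobolev computation \cite{Lieb83}: it can be obtained either by the Fourier transform (the kernel $|\cdot|^{-\mu}$ being, up to a constant, the multiplier $|\xi|^{\mu-n}$, so that $|\cdot|^{-\mu}\ast V^{p_s}$ is a constant times $(-\Delta)^{-(n-\mu)/2}$ applied to $(1+|\cdot|^2)^{-\frac{2n-\mu}{2}}$, whose value is a known power of $(1+|\cdot|^2)$), or more transparently by the stereographic projection $\mathbb{R}^n\to S^n$, under which $(1+|z|^2)^{-\frac{2n-\mu}{2}}$ becomes a constant times the conformal factor and $|w-z|^{-\mu}$ a constant times the chordal-distance kernel, so that the convolution of a constant is a constant on $S^n$ and the integral is evaluated by the Funk–Hecke formula. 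Collecting the powers of $\lambda_i$ (which combine to reconstruct $\bigl(\tfrac{\lambda_i}{1+\lambda_i^2|x-\xi_i|^2}\bigr)^{\mu/2}$) then gives $|x|^{-\mu}\ast W_i^{p_s}=\alpha_{n,\mu,s}^{\,p_s-\frac{\mu}{n-2s}}B_{n,\mu}\,W_i^{2_s^{\ast}-p_s}$, which is the claimed structure with $\widetilde{\alpha}_{n,\mu,s}=\alpha_{n,\mu,s}^{\,p_s-\frac{\mu}{n-2s}}B_{n,\mu}$.

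It then remains to verify that this constant equals the stated product $I(\gamma)\,S^{\frac{(n-\mu)(2s-n)}{4(n-\mu+2s)}}C_{n,\mu}^{\frac{2s-n}{2(n-\mu+2s)}}[n(n-2s)]^{\frac{n-2s}{4}}$. For this I would substitute the explicit value of $\alpha_{n,\mu,s}$ together with the explicit value of $B_{n,\mu}$ (a product of powers of $\pi$ and ratios of Gamma functions), and rewrite the Gamma-function factors using the closed forms of the fractional Sobolev constant $S=S_{n,s}$ and of the HLS constant $C_{n,\mu}$ in \eqref{defhlsbc}, the leftover numerical factor being $I(\gamma)$. A slightly shorter route to this bookkeeping avoids computing $B_{n,\mu}$ altogether: once the scaling argument has produced the proportionality $|x|^{-\mu}\ast W_i^{p_s}=K\,W_i^{2_s^{\ast}-p_s}$, one inserts it into the Euler–Lagrange equation \eqref{ele-1.1} satisfied by $W_i$ and uses the known identity $(-\Delta)^sV=\Lambda_{n,s}V^{2_s^{\ast}-1}$ (with $\Lambda_{n,1}=n(n-2)$, and $\Lambda_{n,s}$ expressed through the fractional Sobolev extremal in general) from \cite{Chen-ou}; matching constants yields $K=\Lambda_{n,s}\,\alpha_{n,\mu,s}^{\,2-2_s^{\ast}}$, from which the stated form follows after the same elementary algebra.

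The conceptual content — the scaling reduction plus the classical profile-convolution identity — is routine; the only genuinely delicate point is the constant bookkeeping in the previous paragraph, matching $\alpha_{n,\mu,s}^{\,p_s-\frac{\mu}{n-2s}}B_{n,\mu}$ to the specific product $I(\gamma)\,S^{\cdots}C_{n,\mu}^{\cdots}[n(n-2s)]^{\cdots}$. In the write-up I would therefore quote the explicit values of $\alpha_{n,\mu,s}$, $B_{n,\mu}$, $S_{n,s}$ and $C_{n,\mu}$ from \cite{ple,GY18,DY19,GHPS19} and relegate the resulting elementary identities to the appendix rather than reproduce them in the main text.
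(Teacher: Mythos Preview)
Your proposal is correct and essentially matches the paper's approach: the paper's proof is a single sentence, ``The conclusion follows by the Fourier transforms of the kernels of Riesz and Bessel potentials (cf.~\cite{DHQWF}),'' which is precisely your Fourier-multiplier route to the profile identity $|x|^{-\mu}\ast V^{p_s}=B_{n,\mu}V^{2_s^\ast-p_s}$ after your scaling reduction. Your write-up is considerably more detailed than the paper's (which does not justify the constant at all), and your alternative stereographic-projection argument and the Euler--Lagrange matching trick for the constant are nice additions, but the core mechanism is the same.
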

	\begin{proof}
		The conclusion follows by the Fourier transforms of the kernels of Riesz and Bessel potentials (cf. \cite{DHQWF}).
	\end{proof}

	\begin{lem}\label{FPU3}
		Given $s\in(0,\frac{n}{2})$, $n>2s$, $\mu\in(0,n)$ with $0<\mu\leq4s$ and any non-negative exponents such that $t_1+t_2+t_3=2_s^\ast$. Let $W_i$, $W_j$ and $W_k$ be three bubbles with $\delta$-interaction that is
		\begin{equation*}
			\mathscr{Q}=\max\big\{Q_{ij}, Q_{ik}, Q_{jk}\big\}\leq\delta.
		\end{equation*}
		Then we have
		\begin{equation*}
			\int W_i^{t_1}W_j^{t_2}W_k^{t_3}
			=o\bigg(\mathscr{Q}^{\min\{t_1,t_2,t_3\}}\bigg).
		\end{equation*}
	\end{lem}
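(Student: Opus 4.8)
The plan is to prove Lemma \ref{FPU3} by reducing the three–bubble integral to products of two–bubble integrals, for which the asymptotics in Lemma \ref{FPU1} are available. The starting observation is that at every point $x\in\mathbb{R}^n$ the bubbles can be ordered pointwise; for definiteness we may localize to the region $\{W_i\geq W_j\geq W_k\}$ (the other five regions are handled by the obvious relabelling). On such a region one has, for any non-negative exponents $t_1,t_2,t_3$ with $t_1+t_2+t_3=2_s^\ast$,
\begin{equation*}
W_i^{t_1}W_j^{t_2}W_k^{t_3}
\leq W_i^{t_1+\theta}W_j^{t_2-\theta}W_k^{t_3}
\quad\text{and}\quad
W_i^{t_1}W_j^{t_2}W_k^{t_3}
\leq W_i^{t_1}W_j^{t_2+\eta}W_k^{t_3-\eta},
\end{equation*}
for any $0\le\theta\le t_2$, $0\le\eta\le t_3$, since on this region the replaced factor is the larger one. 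The idea is to move a small amount of the exponent onto the two ``extreme'' bubbles so as to split the product, via the elementary bound $W_j^{t_2}\le W_i^{t_2/2}W_k^{t_2/2}$ valid on $\{W_i\ge W_j\ge W_k\}$ — or, more flexibly, Young's inequality $abc\le \varepsilon' a^{p}+\varepsilon' b^{q}+C_{\varepsilon'}c^{r}$ tuned so that each resulting monomial involves only two of the three bubbles with exponents summing to $2_s^\ast$.

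Concretely, first I would split the exponent of $W_j$ by writing $W_j^{t_2}=W_j^{t_2'}\cdot W_j^{t_2-t_2'}$ and using $W_j\le W_i$ on one factor and $W_j\le W_k$ on the other (all pointwise on the chosen region), obtaining
\begin{equation*}
\int_{\{W_i\ge W_j\ge W_k\}}W_i^{t_1}W_j^{t_2}W_k^{t_3}
\le \int W_i^{t_1+t_2'}W_k^{2_s^\ast-t_1-t_2'}
\ \text{(after also moving }W_k\text{ as needed)},
\end{equation*}
and then balancing the auxiliary exponents so that the surviving two-bubble integral is of the form $\int W_i^{\tilde s}W_k^{\tilde t}$ with $\tilde s+\tilde t=2_s^\ast$ and, crucially, $\min(\tilde s,\tilde t)$ strictly larger than $\min\{t_1,t_2,t_3\}$ by a fixed amount depending only on $n,s$. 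Applying Lemma \ref{FPU1} this is $O\big(Q_{ik}^{\tilde t}\big)$ (or $O(Q_{ik}^{n/(n-2s)}|\log Q_{ik}|)$ in the borderline case), and since $Q_{ik}\le\mathscr{Q}$ and $\tilde t>\min\{t_1,t_2,t_3\}$, this is $o\big(\mathscr{Q}^{\min\{t_1,t_2,t_3\}}\big)$. The same computation on each of the six ordering regions, combined with $\mathbb{R}^n=\bigcup(\text{six regions})$, yields the claimed bound after summing. When one of the $t_a$ vanishes the statement is just Lemma \ref{FPU1}, so we may assume all three exponents are positive, which is exactly what gives the strict gain.

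The main obstacle is the bookkeeping of exponents: one must verify that the small exponent shifts can always be chosen so that (a) every intermediate monomial has all exponents non-negative, (b) each final two-bubble monomial has exponents summing to exactly $2_s^\ast$ so that Lemma \ref{FPU1} applies, and (c) the improved exponent $\min(\tilde s,\tilde t)$ beats $\min\{t_1,t_2,t_3\}$ uniformly. A clean way to organize (a)–(c) is to fix $\varepsilon_0:=\tfrac12\min\{t_1,t_2,t_3\}>0$ and apply the three-term Young inequality with weights chosen so that the dominant bubble picks up exponent $t_1+\varepsilon_0$, the middle one exponent $t_2-2\varepsilon_0$ split evenly onto the other two, and the smallest one exponent $t_3+\varepsilon_0$; one checks directly that every resulting pair-exponent lies in $[\varepsilon_0,\,2_s^\ast-\varepsilon_0]$, which is all that is needed. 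Since this is precisely the type of interaction estimate treated in \cite{FG20,DSW21,DHP}, I expect the argument to go through with only the exponent modifications forced by $s$, and indeed one may alternatively simply invoke the proof of the corresponding lemma there and note that none of the steps used $s=1$; but the above self-contained reduction via pointwise ordering plus Lemma \ref{FPU1} is the cleanest route.
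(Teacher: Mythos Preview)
Your proposal has a genuine gap. The ``elementary bound'' $W_j^{t_2}\le W_i^{t_2/2}W_k^{t_2/2}$ on $\{W_i\ge W_j\ge W_k\}$ is simply false: take $W_i=100$, $W_j=10$, $W_k=1$, then $W_j^{2}=100$ while $W_iW_k=100$, but for $W_j=50$ one gets $2500>100$. Knowing only $W_k\le W_j\le W_i$ gives no upper bound of $W_j$ in terms of a geometric mean of $W_i$ and $W_k$. Your alternative via Young's inequality is also problematic: the three-term Young inequality is \emph{additive}, $abc\le \tfrac{a^p}{p}+\tfrac{b^q}{q}+\tfrac{c^r}{r}$, and produces single-bubble integrals $\int W_\ell^{2_s^\ast}=O(1)$, which says nothing about $o(\mathscr{Q}^{\min t_a})$.

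More fundamentally, the pointwise-monotonicity strategy you describe cannot succeed uniformly. On the region $\{W_i\ge W_j\ge W_k\}$ one can only shift exponent \emph{from a smaller bubble to a larger one} (e.g.\ $W_j^{\alpha}\le W_i^{\alpha}$ or $W_k^{\alpha}\le W_j^{\alpha}$). To end up with a two-bubble integral one must eliminate one factor; but if, say, $t_1=t_2=t_3=2_s^\ast/3$, every such elimination yields $\int W_\ell^{\,2\cdot 2_s^\ast/3}W_m^{\,2_s^\ast/3}$ whose minimum exponent is exactly $2_s^\ast/3=\min\{t_1,t_2,t_3\}$, so Lemma~\ref{FPU1} gives only $O(\mathscr{Q}^{\min t_a})$, not $o(\mathscr{Q}^{\min t_a})$. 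The exponent shift you propose (``smallest one picks up $t_3+\varepsilon_0$'') would require moving mass \emph{onto} the smallest bubble $W_k$, which monotonicity forbids.

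The paper avoids all of this with a single global H\"older step. Assuming $t_1\le t_2\le t_3$, one writes the pointwise identity
\[
W_i^{t_1}W_j^{t_2}W_k^{t_3}
=(W_i^{t_1}W_j^{t_2})^{\theta_1}\,(W_i^{t_1}W_k^{t_3})^{\theta_2}\,(W_jW_k)^{t_2t_3/(t_2+t_3)},
\qquad \theta_1=\tfrac{t_2}{t_2+t_3},\ \theta_2=\tfrac{t_3}{t_2+t_3},
\]
and applies H\"older with exponents chosen so that each factor becomes a two-bubble integral with exponents summing to $2_s^\ast$. The first two factors contribute $\mathscr{Q}^{t_1\theta_1}$ and $\mathscr{Q}^{t_1\theta_2}$, totalling $\mathscr{Q}^{t_1}$; the crucial gain comes from the third factor $\int(W_jW_k)^{2_s^\ast/2}\approx \mathscr{Q}^{n/(n-2s)}|\log\mathscr{Q}|$, which supplies the extra positive power of $\mathscr{Q}$ needed for $o(\mathscr{Q}^{t_1})$. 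No region decomposition, no monotonicity, and no case where the gain disappears.
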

	\begin{proof}
		We can assume that $t_1\leq t_2\leq t_3$. Let $\theta_1=\frac{t_2}{t_2+t_3}$ and $\theta_2=\frac{t_3}{t_2+t_3}$, we have
		\begin{equation*}
			\int W_i^{t_1}W_j^{t_2}W_k^{t_3}
			=\int\bigg(W_i^{t_1}W_j^{t_2}\bigg)^{\theta_1}\bigg(W_i^{t_1}W_k^{t_3}\bigg)^{\theta_2}\bigg(W_jW_k\bigg)^\frac{t_2t_3}{t_2+t_3}.
		\end{equation*}
		By the H\"older inequality and Lemma \ref{FPU1}, we get
		\begin{equation*}
			\begin{split}
				\int W_i^{t_1}W_j^{t_2}W_k^{t_3}
				&\leq\bigg(\int\big(W_i^{t_1}W_j^{t_2}\big)^\frac{2_s^\ast}{t_1+t_2}\bigg)^\frac{\theta_1(t_1+t_2)}{2_s^\ast}\bigg(\int\big(W_i^{t_1}W_k^{t_3}\big)^\frac{2_s^\ast}{t_1+t_3}\bigg)^\frac{\theta_2(t_1+t_3)}{2_s^\ast}\bigg(\int \big(W_jW_k\big)^\frac{2_s^\ast}{2}\bigg)^\frac{2t_2t_3}{2_s^\ast(t_2+t_3)}     \\
				&\leq
				\left\lbrace
				\begin{aligned}
					&\mathscr{Q}^{t_1}\mathscr{Q}^\frac{2_s^\ast}{2}|\log \mathscr{Q}|,\hspace{15mm}if\hspace{2mm}t_1<t_2\leq t_3,\\&
					\mathscr{Q}^{t_2}\mathscr{Q}^\frac{t_2t_3}{t_2+t_3}|\log\mathscr{Q}|^\frac{2t_2}{2_s^\ast},\hspace{6mm}if\hspace{2mm}t_1=t_2<t_3,\\&
					\mathscr{Q}^\frac{n}{n-2s}|\log\mathscr{Q}|,\hspace{18mm}if\hspace{2mm}t_1=t_2=t_3,    \\
				\end{aligned}
				\right.
			\end{split}
		\end{equation*}
		which implies the desired estimate.
	\end{proof}

	\begin{lem}\label{wwc101}
		Assume $s\in(0,\frac{n}{2})$, $n>2s$, $0<\mu<n$ with $0<\mu\leq4s$. There exists a $\Gamma_1^a>0$ such that
		\begin{equation*}
			\begin{split}
				\int I_{n,\mu,s}[W_{j},\mathcal{Z}^{a}_j]\mathcal{Z}_{j}^{b}=
				\left\lbrace
				\begin{aligned}
					&0,\hspace{7.3mm}\hspace{2mm}a\neq b,\\
					&\Gamma_0^a,\hspace{5mm}\hspace{2mm}1\leq a=b\leq n+1.
				\end{aligned}
				\right.
			\end{split}
		\end{equation*}
		If $i\neq j$, there holds
		\begin{equation*}
			\Big|\int\Big(|x|^{-\mu}\ast W_{i}^{p
				_s}\Big)W_{i}^{p_s-2}\mathcal{Z}_{i}^{a}\mathcal{Z}_{j}^{b}+\int\Big(|x|^{-\mu}\ast (W_{i}^{p_s-1}\mathcal{Z}_{i}^{a})\Big)W_{i}^{p_s-1}\mathcal{Z}_{j}^{b}\Big|\lesssim Q_{ij}, \hspace{2mm}\mbox{if}\hspace{2mm}1\leq a,b\leq n+1.
		\end{equation*}
		Here $\Gamma_0^1=\cdots=\Gamma_0^n$ and $\Gamma^{n+1}_0$ are composed of some $\Gamma$ functions. Moreover,
		we have that the following equality holds
		\begin{equation*}
			\begin{split}
				p_s\int
				&\Big(|x|^{-\mu}\ast W_k^{p_s-1}\partial_{\lambda_k} W_k\Big)
				W_k^{p_s-1}W_i+(p_s-1)\int
				\Big(|x|^{-\mu}\ast W_k^{p_s}\Big)
				W_k^{p_s-2}W_i\partial_{\lambda_k} W_k\\&
				=\int\Big(|x|^{-\mu}\ast W_i^{p_s}\Big)
				W_i^{p_s-1}\partial_{\lambda_k} W_k.
			\end{split}
		\end{equation*}
	\end{lem}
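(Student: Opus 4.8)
For the self-interaction term (the first assertion), the plan is to reduce the quadratic form to a Dirichlet inner product. Differentiating the Euler--Lagrange equation \eqref{ele-1.1} for $W[\xi,\lambda]$ with respect to the translation parameters $\xi^{a}$ and the dilation parameter $\lambda$ shows that every rescaled derivative $\mathcal{Z}_j^{a}$ in \eqref{qta} solves the linearized equation $(-\Delta)^{s}\mathcal{Z}_j^{a}=I_{n,\mu,s}[W_j,\mathcal{Z}_j^{a}]$. Pairing this with $\mathcal{Z}_j^{b}$ and integrating by parts identifies $\int I_{n,\mu,s}[W_j,\mathcal{Z}_j^{a}]\mathcal{Z}_j^{b}$ with $\langle\mathcal{Z}_j^{a},\mathcal{Z}_j^{b}\rangle_{\dot H^{s}(\mathbb{R}^{n})}$. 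After a translation and dilation reducing to $\xi_j=0$, $\lambda_j=1$, and using that the Hartree bubble $W[0,1]$ is a fixed positive multiple of the Talenti bubble $U[0,1]$ in \eqref{minimizer} (so the $\mathcal{Z}_j^{a}$ are proportional to the derivatives of $U$), this inner product becomes a fixed positive multiple of $\int W_j^{4s/(n-2s)}\mathcal{Z}_j^{a}\mathcal{Z}_j^{b}$. Lemma~\ref{armidale} then yields at once the vanishing for $a\neq b$, the positivity together with the common value $\Gamma_0^{1}=\cdots=\Gamma_0^{n}$ coming from rotational symmetry, and a separate positive value $\Gamma_0^{n+1}$; that these constants are built from Gamma functions would be recorded by quoting the classical Beta-integral evaluations of the Talenti-bubble quantities, as in \cite{Bahri-1989}.

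For the off-diagonal bound ($i\neq j$) the plan is to strip away the nonlocal factors. By Lemma~\ref{p1-00} the first integral equals $\widetilde{\alpha}_{n,\mu,s}\int W_i^{2_s^{\ast}-2}\mathcal{Z}_i^{a}\mathcal{Z}_j^{b}$, while for the second integral the elementary bound $|W_i^{p_s-1}\mathcal{Z}_i^{a}|\lesssim W_i^{p_s}$ gives $|x|^{-\mu}\ast|W_i^{p_s-1}\mathcal{Z}_i^{a}|\lesssim W_i^{2_s^{\ast}-p_s}$. Since $|\mathcal{Z}_i^{a}|\lesssim W_i$ and $|\mathcal{Z}_j^{b}|\lesssim W_j$ directly from \eqref{qta}, both integrals are controlled by $\int W_i^{2_s^{\ast}-1}W_j$, and Lemma~\ref{FPU1} with $\tilde s=2_s^{\ast}-1$ and $\tilde t=1$ (so that $|\tilde s-\tilde t|=2_s^{\ast}-2$ stays bounded away from $0$) gives $\int W_i^{2_s^{\ast}-1}W_j\approx Q_{ij}$, which is the claim.

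For the last identity, I would first recognize that its left-hand side is precisely $\int I_{n,\mu,s}[W_k,\partial_{\lambda_k}W_k]\,W_i$ by the definition \eqref{I-FAI-1} of $I_{n,\mu,s}$. Differentiating \eqref{ele-1.1} in $\lambda_k$ gives $(-\Delta)^{s}\partial_{\lambda_k}W_k=I_{n,\mu,s}[W_k,\partial_{\lambda_k}W_k]$. Since $\partial_{\lambda_k}W_k\in\dot H^{s}(\mathbb{R}^{n})$ and every integral in sight converges by the decay of the bubbles together with the H\"{o}lder and Hardy--Littlewood--Sobolev inequalities, integrating by parts and then using $(-\Delta)^{s}W_i=(|x|^{-\mu}\ast W_i^{p_s})W_i^{p_s-1}$ turns the left-hand side into $\int\partial_{\lambda_k}W_k\,(-\Delta)^{s}W_i=\int\big(|x|^{-\mu}\ast W_i^{p_s}\big)W_i^{p_s-1}\partial_{\lambda_k}W_k$, which is exactly the desired equality.

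The only step I expect to require genuine care is the closed-form evaluation of $\Gamma_0^{1}$ and especially $\Gamma_0^{n+1}$: showing that they are expressible through Gamma functions amounts to the Beta-integral bookkeeping for $\int U^{2_s^{\ast}-2}(\mathcal{Z}^{a})^{2}$ carried out in \cite{Bahri-1989}, which I would simply cite; the remaining manipulations are routine integrations by parts and applications of Lemmas~\ref{p1-00} and \ref{FPU1}.
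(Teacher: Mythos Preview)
Your proposal is correct and follows essentially the same route as the paper's proof: both differentiate the Euler--Lagrange equation to obtain $(-\Delta)^s\mathcal{Z}_j^a=I_{n,\mu,s}[W_j,\mathcal{Z}_j^a]$, pair with $\mathcal{Z}_j^b$ to reduce the self-interaction term to a Dirichlet inner product evaluated via the Talenti-bubble identities of \cite{Bahri-1989,FG20}, use Lemma~\ref{p1-00} to strip the convolution in the off-diagonal case (the paper then defers to \cite{FG20} where you invoke Lemmas~\ref{armidale} and~\ref{FPU1} directly, which amounts to the same thing), and obtain the last identity by integrating the linearized equation against $W_i$ by parts. Your write-up is in fact more explicit than the paper's, which simply records the key identities and says ``one can follow the same argument as in \cite{FG20}.''
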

	\begin{proof}
		It holds:
		$$
		(-\Delta)^s \mathcal{Z}_j^a=p_s\Big(|x|^{-\mu}\ast W_{j}^{p_s-1}\mathcal{Z}_j^a\Big)W_{j}^{p_s-1}+(p_s-1)\Big(|x|^{-\mu}\ast W_{j}^{p_s}\Big)W_{j}^{p_s-2}\mathcal{Z}_j^a,
		$$
		for $1\leq a\leq n+1$.
		Therefore, it is easy to check that
		\begin{equation*}
			\begin{split}
				p_s&\int\Big(|x|^{-\mu}\ast W_{j}^{p_s-1}\mathcal{Z}_j^a\Big)W_{j}^{p_s-1}\mathcal{Z}_j^b+(p_s-1)\int\Big(|x|^{-\mu}\ast W_{j}^{p_s}\Big)W_{j}^{p_s-2}\mathcal{Z}_j^a\mathcal{Z}_j^b\\&=\int(-\Delta)^\frac{s}{2}\mathcal{Z}_j^a\cdot(-\Delta)^\frac{s}{2}\mathcal{Z}_j^b=C\int U_j^{2_s^{\ast}-2}\big(\frac{1}{\lambda_{j}}\frac{\partial U[\xi,\lambda_j]}{\partial \xi^a}\Big|_{\xi=\xi_j}\big)\big(\frac{1}{\lambda_{j}}\frac{\partial U[\xi,\lambda_j]}{\partial \xi^b}\Big|_{\xi=\xi_j}\big).
			\end{split}
		\end{equation*}
		Moreover, Lemma \ref{p1-00} gives us that
		\begin{equation*}
			|x|^{-\mu}\ast W_k^{p_s}
			=\widetilde{\alpha}_{n,\mu,s}W_k^{2_s^{\ast}-p_s}(x).
		\end{equation*}
		Hence one can follow from the same argument as in the proof of \cite{FG20}.
		Note that it holds
		$$(-\Delta)^s(\partial_{\lambda_k}W_k)=p_s\Big(|x|^{-\mu}\ast W_{k}^{p_s-1}\partial_{\lambda_k} W_k\Big)W_{k}^{p_s-1}+(p_s-1)\Big(|x|^{-\mu}\ast W_{k}^{p_s}\Big)W_{k}^{p_s-2}\partial_{\lambda_k} W_k.$$
		Then, the conclusion follows by simple integration by parts. The Lemma is obtained.
	\end{proof}
	
	\begin{lem}\label{chufa}
		Given $s\in(0,\frac{n}{2})$, $n>2s$, $\mu\in(0,n)$ with $0<\mu\leq4s$ and any non-negative exponents such that $\tilde{s}+\tilde{t}=2_s^\ast$. Let $W_i$ and $W_j$ be two bubbles with $\delta$-interaction. Then we have
		\begin{equation*}
			\int_{\{W_i\geq W_j\}}W_i^{\tilde{s}}W_j^{\tilde{t}}\lesssim
			\left\lbrace
			\begin{aligned}
				&Q_{ij}^{\tilde{t}},\hspace{8mm}if\hspace{2mm}\tilde{s}>\tilde{t},\\&
				Q_{ij}^\frac{n}{n-2s},\hspace{4mm}if\hspace{2mm}\tilde{s}<\tilde{t}.
			\end{aligned}
			\right.
		\end{equation*}
	\end{lem}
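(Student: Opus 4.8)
The final statement is Lemma~\ref{chufa}, an integral estimate for $\int_{\{W_i\ge W_j\}}W_i^{\tilde s}W_j^{\tilde t}$ with $\tilde s+\tilde t=2_s^\ast$, split according to whether $\tilde s>\tilde t$ or $\tilde s<\tilde t$.

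\textbf{Approach.} The plan is to reduce to the standard two-bubble interaction estimates already recorded in Lemma~\ref{FPU1} and Lemma~\ref{FPU2}, by using the constraint $W_i\ge W_j$ on the domain of integration to trade powers of $W_i$ for powers of $W_j$ in exactly the way that improves the exponent. First I would normalize: by the scaling invariance of the quantity $\int W_i^aW_j^b$ with $a+b=2_s^\ast$ under the common dilation/translation group, I may assume $\xi_i=0$, $\lambda_i=1$, so that $W_i=U[0,1]$ and $Q_{ij}\approx(\lambda_j+\lambda_j^{-1}+\lambda_j|\xi_j|^2)^{-(n-2s)/2}$ with $\lambda_j\ge 1$ (the case $\lambda_j\le 1$ being symmetric after also using $W_i\ge W_j$).

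\textbf{Key steps.} (i) \emph{Case $\tilde s>\tilde t$.} Here I want to land on $Q_{ij}^{\tilde t}=Q_{ij}^{\min(\tilde s,\tilde t)}$, which is precisely the bound $\int W_i^{\tilde s}W_j^{\tilde t}\approx Q_{ij}^{\min(\tilde s,\tilde t)}$ from Lemma~\ref{FPU1} in the regime $|\tilde s-\tilde t|\ge\varepsilon$. So on the subdomain $\{W_i\ge W_j\}$ I simply estimate $\int_{\{W_i\ge W_j\}}W_i^{\tilde s}W_j^{\tilde t}\le\int_{\mathbb R^n}W_i^{\tilde s}W_j^{\tilde t}\lesssim Q_{ij}^{\tilde t}$, using that $\tilde t=\min(\tilde s,\tilde t)$; when $|\tilde s-\tilde t|$ is small but still $\tilde s>\tilde t$ I instead split $W_j^{\tilde t}=W_j^{\tilde t-\eta}W_j^{\eta}$ with a tiny $\eta>0$, bound $W_j^{\tilde t-\eta}\le W_i^{\tilde t-\eta}$ on $\{W_i\ge W_j\}$ is the \emph{wrong} direction — rather I keep the full $W_j^{\tilde t}$ and appeal directly to Lemma~\ref{FPU1}, since $\tilde t$ is a fixed exponent bounded away from $\tilde s$ after we have chosen $\tilde s>\tilde t$ strictly; the borderline is exactly the $\tilde s=\tilde t$ case which is excluded. (ii) \emph{Case $\tilde s<\tilde t$.} Now $\min(\tilde s,\tilde t)=\tilde s$, and the naive bound $Q_{ij}^{\tilde s}$ from Lemma~\ref{FPU1} is \emph{weaker} than the claimed $Q_{ij}^{n/(n-2s)}$ whenever $\tilde s<\tfrac{n}{n-2s}=\tfrac{2_s^\ast}{2}$. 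The gain comes from the restriction $W_i\ge W_j$: on this set I write $W_i^{\tilde s}W_j^{\tilde t}=(W_i^{\tilde s}W_j^{\tilde s})\,W_j^{\tilde t-\tilde s}\le W_i^{\tilde s}W_j^{\tilde s}\cdot W_i^{\tilde t-\tilde s}\cdot(W_j/W_i)^{\tilde t-\tilde s}\le W_i^{\tilde s+(\tilde t-\tilde s)}W_j^{\tilde s}= W_i^{\tilde t}W_j^{\tilde s}$ — i.e.\ on $\{W_i\ge W_j\}$ we have $W_i^{\tilde s}W_j^{\tilde t}\le W_i^{\tilde t}W_j^{\tilde s}\le\inf(W_i^{\tilde s},W_i^{\tilde t})\cdot W_j^{\tilde s}$, and more to the point $W_i^{\tilde s}W_j^{\tilde t}\le W_i^{(\tilde s+\tilde t)/2}W_j^{(\tilde s+\tilde t)/2}=W_i^{2_s^\ast/2}W_j^{2_s^\ast/2}$, because the function $t\mapsto W_i^{t}W_j^{2_s^\ast-t}$ is, on $\{W_i\ge W_j\}$, monotone so that its value at $t=\tilde s<2_s^\ast/2$ is dominated by its value at the midpoint. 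Hence $\int_{\{W_i\ge W_j\}}W_i^{\tilde s}W_j^{\tilde t}\le\int_{\mathbb R^n}W_i^{2_s^\ast/2}W_j^{2_s^\ast/2}$, and applying Lemma~\ref{FPU1} in the equal-exponent case gives $\lesssim Q_{ij}^{n/(n-2s)}|\log Q_{ij}|$, which is slightly worse than claimed; to remove the log I instead use Lemma~\ref{FPU2} with $\tilde s'>\tilde t'$, $\tilde s'+\tilde t'=2_s^\ast$, noting $W_i^{\tilde s}W_j^{\tilde t}\le \inf(W_i^{\tilde s'},W_i^{\tilde t'})W_j^{\tilde t}\le W_i^{\tilde t'}\inf(W_i^{\tilde s'},W_i^{\tilde t'})$ is not quite it either — the clean route is: on $\{W_i\ge W_j\}$, $W_i^{\tilde s}W_j^{\tilde t}= W_i^{\tilde t}\inf(W_i^{\tilde s-\tilde t},W_j^{\tilde s-\tilde t})\cdot$(correction)$\le W_j^{\tilde t}\inf(W_i^{\tilde s},W_i^{\tilde t})$ with $\tilde s>\tilde t$ after renaming, so that the integral is $\int W_j^{\tilde t}\inf(W_i^{\tilde s},W_i^{\tilde t})=O(Q_{ij}^{n/(n-2s)}|\log Q_{ij}|)$ — and then the stated $Q_{ij}^{n/(n-2s)}$ (without log) is obtained because, in the subregion actually contributing, one of $\tilde s,\tilde t$ can be taken strictly away from $2_s^\ast/2$, removing the logarithm.

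\textbf{Main obstacle.} The delicate point — and where I expect to spend the real effort — is the $\tilde s<\tilde t$ case: one must genuinely exploit the domain restriction $\{W_i\ge W_j\}$ to upgrade the exponent from $\min(\tilde s,\tilde t)=\tilde s$ to $\tfrac{n}{n-2s}$, and then carefully track whether a logarithmic factor appears. I would handle this by decomposing $\mathbb R^n$ into the region near the concentration point of $W_j$ (where $W_j$ is large but the measure is small, scaling like $\lambda_j^{-n}$) and its complement (where $W_j\approx\lambda_j^{-(n-2s)/2}|x|^{-(n-2s)}$ is small), and on $\{W_i\ge W_j\}$ bounding $W_i^{\tilde s}W_j^{\tilde t}$ by $W_i^{\tilde t}W_j^{\tilde s}$ (licit since $\tilde s<\tilde t$ forces $\tilde t-\tilde s>0$ and $W_i/W_j\ge1$), then invoking Lemma~\ref{FPU1}/Lemma~\ref{FPU2} with the exponents $\tilde t>\tilde s$ now in the favorable order. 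Since the statement only asks for the inequality ``$\lesssim$'' (not a two-sided asymptotic), the crude domain-monotonicity argument $W_i^{\tilde s}W_j^{\tilde t}\le W_i^{\tilde t}W_j^{\tilde s}$ on $\{W_i\ge W_j\}$ combined with the known sharp two-bubble integrals suffices, and the proof is short; this is consistent with the paper's remark that it follows the same pattern as the cited computations in \cite{DSW21,FG20,Bahri-1989}.
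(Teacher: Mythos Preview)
Your treatment of the case $\tilde s>\tilde t$ is correct: dropping the restriction and invoking Lemma~\ref{FPU1} gives $Q_{ij}^{\tilde t}$ directly.

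The case $\tilde s<\tilde t$ has a genuine gap. On $\{W_i\ge W_j\}$ the monotonicity $W_i^{\tilde s}W_j^{\tilde t}\le W_i^{a}W_j^{2_s^\ast-a}$ for any $a\in[\tilde s,\tilde t]$ is valid, but no choice of $a$ reduces to Lemma~\ref{FPU1} or \ref{FPU2} with the right outcome: taking $a=2_s^\ast/2$ yields $\int(W_iW_j)^{2_s^\ast/2}\approx Q_{ij}^{n/(n-2s)}|\log Q_{ij}|$, and the logarithm is genuine since it arises precisely from the annulus where $W_i\approx W_j$, which meets $\{W_i\ge W_j\}$; any $a>2_s^\ast/2$ gives only $Q_{ij}^{2_s^\ast-a}$ with exponent strictly below $n/(n-2s)$, and the implicit constant in Lemma~\ref{FPU1} blows up as $a\downarrow 2_s^\ast/2$, so one cannot pass to the limit. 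Your closing remark about ``subregions where the exponents can be taken strictly away from $2_s^\ast/2$'' does not repair this --- $\tilde s,\tilde t$ are fixed, and the near-diagonal region is exactly where the pointwise bound loses a factor.

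The paper itself defers to \cite[Lemma~A.6]{DHP}, whose argument is a direct computation rather than a reduction to full-space interaction integrals. After normalizing $(\xi_i,\lambda_i)=(0,1)$ and splitting into tower and cluster configurations, one integrates $W_i^{\tilde s}W_j^{\tilde t}$ explicitly over $\{W_i\ge W_j\}$: for instance in the tower case the main contribution comes from $\lambda_j^{-1/2}\lesssim|x|\lesssim1$, where $W_i\approx1$ and $W_j\approx\lambda_j^{-(n-2s)/2}|x|^{-(n-2s)}$, and since $\tilde t>\tfrac{n}{n-2s}$ strictly the radial integral $\int_{\lambda_j^{-1/2}}^{1}r^{\,n-1-(n-2s)\tilde t}\,dr\approx\lambda_j^{((n-2s)\tilde t-n)/2}$ is a pure power with no logarithm, giving $\lambda_j^{-n/2}=Q_{ij}^{n/(n-2s)}$. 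This explicit decomposition is what you allude to in your last paragraph, but you then fall back on the exponent swap plus Lemma~\ref{FPU1}/\ref{FPU2}, which reintroduces the loss.
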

	\begin{proof}
		The only difference is that the exponents have been modified by the parameter $s$. Thus one can follow the same proof as in \cite[Lemma A.6]{DHP} and we omit it.
	\end{proof}

	\begin{lem}\label{B3}
		We have
		\begin{equation*}
			\int\frac{1}{|\hat{y}-x|^{n-2s}}\frac{1}{(1+|x|^2)^{\varsigma_0/2}}dx
			\lesssim\left\lbrace
			\begin{aligned}
				&\frac{1}{(1+|\hat{y}|^2)^{\varsigma_0/2-1}},\hspace{5mm}\hspace{2mm}2<\varsigma_0<n,\\&
				\frac{1+\log\sqrt{1+|\hat{y}|^2}}{(1+|\hat{y}|^2)^{(n-2s)/2}},\hspace{2mm}\hspace{2mm}\varsigma_0=n,
				\\&\frac{1}{(1+|\hat{y}|^2)^{(n-2s)/2}},\hspace{4mm}\hspace{2mm}\varsigma_0>n.
			\end{aligned}
			\right.
		\end{equation*}
	\end{lem}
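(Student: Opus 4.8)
The plan is to treat this as a standard estimate for the Riesz-type potential of a radially decreasing ``bump'', proved by the same region-splitting technique already used for \eqref{w4n} and \eqref{xwxw}. First I would dispose of the range $|\hat{y}|\le 2$: there the kernel $|\hat{y}-x|^{2s-n}$ is locally integrable (because $n-2s<n$) and $(1+|x|^2)^{-\varsigma_0/2}$ is integrable at infinity (because $\varsigma_0>2s$, which is automatic once $\varsigma_0\ge n$ and is what is needed for convergence in general), so the left-hand side is bounded by an absolute constant, which is comparable to each of the three right-hand sides on $\{|\hat{y}|\le 2\}$; in particular the ``$1+$'' in the $\varsigma_0=n$ line is precisely what keeps that bound nondegenerate near $\hat{y}=0$.

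For $R:=|\hat{y}|\ge 2$ I would split $\mathbb{R}^n=A_1\cup A_2\cup A_3$, with $A_1=\{|x|\le R/2\}$, $A_2=\{|\hat{y}-x|\le R/2\}$, and $A_3$ the complement. On $A_1$ one has $|\hat{y}-x|\approx R$, so the contribution is $\approx R^{2s-n}\int_{|x|\le R/2}(1+|x|^2)^{-\varsigma_0/2}\,dx$, and a one-dimensional radial computation evaluates the remaining integral to be $\approx R^{n-\varsigma_0}$, $\approx\log R$, or $\approx 1$ according to whether $\varsigma_0<n$, $\varsigma_0=n$, or $\varsigma_0>n$. On $A_2$ one has $|x|\approx R$, hence $(1+|x|^2)^{-\varsigma_0/2}\approx R^{-\varsigma_0}$, and $\int_{|\hat{y}-x|\le R/2}|\hat{y}-x|^{2s-n}\,dx\approx R^{2s}$, giving a contribution $\approx R^{2s-\varsigma_0}$. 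On $A_3$ one has $|\hat{y}-x|\approx\max\{|x|,R\}\gtrsim R$ and $1+|x|^2\gtrsim R^2$; decomposing $A_3$ into dyadic annuli $\{|x|\approx 2^k R\}$ with $k\ge -1$, the integrand is $\approx(2^k R)^{2s-n-\varsigma_0}$ on a set of measure $\approx(2^k R)^n$, and summing the geometric series with ratio $2^{2s-\varsigma_0}<1$ yields $\lesssim R^{2s-\varsigma_0}$.

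Adding the three contributions, the dominant term depends on $\varsigma_0$: when $\varsigma_0>n$ the terms $R^{2s-\varsigma_0}$ coming from $A_2$ and $A_3$ are absorbed by the $A_1$-term $R^{2s-n}=R^{-(n-2s)}$, matching $(1+|\hat{y}|^2)^{-(n-2s)/2}$; when $\varsigma_0=n$ the $A_1$-term $R^{-(n-2s)}\log R$ dominates and matches $\tfrac{1+\log\sqrt{1+|\hat{y}|^2}}{(1+|\hat{y}|^2)^{(n-2s)/2}}$ once the reduction to $R\ge 2$ is taken into account; and when $2<\varsigma_0<n$ all three regions contribute at order $R^{2s-\varsigma_0}$, which gives the decay claimed in that line (the two ranges $\varsigma_0\ge n$ being the only ones actually invoked in the body, through Step~\ref{step5.1}). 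There is no genuine obstacle here -- the whole argument is bookkeeping of elementary radial integrals -- and the only point that demands attention is the borderline exponent $\varsigma_0=n$, where the logarithm is produced by the integral over $A_1$ and where the constant must be chosen so as to absorb the small-$R$ behavior.
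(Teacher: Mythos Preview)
Your region-splitting argument is correct and is precisely the standard approach the paper defers to (it simply cites Appendix~B of \cite{FG20} and gives no details). One caveat: in the subcritical range $2<\varsigma_0<n$ your computation yields decay $R^{2s-\varsigma_0}\approx(1+|\hat y|^2)^{-(\varsigma_0/2-s)}$, not the stated exponent $\varsigma_0/2-1$; the latter is a typo inherited from the $s=1$ setting of \cite{FG20}, so your claim that the computation ``gives the decay claimed in that line'' is not literally correct for $s\neq1$ --- but as you rightly note, only the cases $\varsigma_0\ge n$ are actually invoked in the body (in Step~\ref{step5.1}, with $n=6s$ and $\varsigma_0\in\{6s,7s\}$), and there everything matches.
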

	\begin{proof}
		The conclusion follows from the same proof as in Appendix B of \cite{FG20} except minor modifications.
	\end{proof}
	
	In order to study the higher order term $N(\phi)$, we introduce the auxiliary functions as follows:
	\begin{equation*}
		\begin{split}
			&N_1(\phi):=\big(|x|^{-\mu}\ast\sigma^{p_s}\big)\phi^{p_s-1},\hspace{4mm}
			N_2(\phi):=\big(|x|^{-\mu}\ast\sigma^{p_s-1}\phi\big)\sigma^{p_s-2}\phi,\hspace{4mm} N_3(\phi):=\big(|x|^{-\mu}\ast\sigma^{p_s-1}\phi\big)\phi^{p_s-1},     \\
			&N_4(\phi):=\big(|x|^{-\mu}\ast\sigma^{p_s-2}\phi^2\big)\sigma^{p_s-1},\hspace{4mm}
			N_5(\phi):=\big(|x|^{-\mu}\ast\sigma^{p_s-2}\phi^2\big)\sigma^{p_s-2}\phi,\hspace{4mm}
			N_6(\phi):=\big(|x|^{-\mu}\ast\sigma^{p_s-2}\phi^2\big)\phi^{p_s-1},    \\
			&N_7(\phi):=\big(|x|^{-\mu}\ast\phi^{p_s}\big)\sigma^{p_s-1},\hspace{
			4mm}
			N_8(\phi):=\big(|x|^{-\mu}\ast\phi^{p_s}\big)\sigma^{p_s-2}\phi,\hspace{4mm}
			N_9(\phi):=\big(|x|^{-\mu}\ast\phi^{p_s}\big)\phi^{p_s-1}.
		\end{split}
	\end{equation*}
	Applying the elementary inequality, we can derive the following estimate.
	\begin{lem}\label{gaojiexiang}
		Recalling that $N(\phi)$ is defined in \eqref{u-2}, then we have
		\begin{equation*}
			N(\phi)
			\lesssim N_1(\phi)+N_2(\phi)+N_3(\phi)+N_4(\phi)+N_5(\phi)+N_6(\phi)+N_7(\phi)+N_8(\phi)+N_9(\phi).
		\end{equation*}
	\end{lem}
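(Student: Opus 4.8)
The final statement is Lemma \ref{gaojiexiang}, which asserts that the higher-order nonlinearity $N(\phi)$ defined in \eqref{u-2} is pointwise dominated (up to a constant) by the sum of the nine auxiliary terms $N_1(\phi),\dots,N_9(\phi)$. The plan is to expand the two convolution expressions in the definition of $N(\phi)$ by multilinearity of the Riesz potential and then estimate the algebraic terms $(\sigma+\phi)^{p_s}-\sigma^{p_s}-p_s\sigma^{p_s-1}\phi$ and $(\sigma+\phi)^{p_s-1}-\sigma^{p_s-1}$ by the elementary inequalities for powers.

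First I would write
\begin{equation*}
	N(\phi)=\Big(|x|^{-\mu}\ast\big[(\sigma+\phi)^{p_s}-\sigma^{p_s}\big]\Big)(\sigma+\phi)^{p_s-1}
	+\Big(|x|^{-\mu}\ast\sigma^{p_s}\Big)\big[(\sigma+\phi)^{p_s-1}-\sigma^{p_s-1}\big]
	-p_s\Big(|x|^{-\mu}\ast\sigma^{p_s-1}\phi\Big)\sigma^{p_s-1}.
\end{equation*}
For the first summand I would use the convexity-type bound $\big|(\sigma+\phi)^{p_s}-\sigma^{p_s}-p_s\sigma^{p_s-1}\phi\big|\lesssim \sigma^{p_s-2}\phi^2+|\phi|^{p_s}$ valid for $p_s\geq2$, together with the splitting $(\sigma+\phi)^{p_s-1}\lesssim\sigma^{p_s-1}+|\phi|^{p_s-1}$, to produce, after distributing the convolution, terms of the shape $\big(|x|^{-\mu}\ast(\sigma^{p_s-2}\phi^2+|\phi|^{p_s})\big)(\sigma^{p_s-1}+|\phi|^{p_s-1})$, which are exactly $N_4,N_5(\text{dominated by }N_5\text{-type}),N_6,N_7,N_8,N_9$. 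The leftover linear-in-$\phi$ piece $p_s\sigma^{p_s-1}\phi$ convolved gives a term that recombines; for the second summand I would use $\big|(\sigma+\phi)^{p_s-1}-\sigma^{p_s-1}-(p_s-1)\sigma^{p_s-2}\phi\big|\lesssim|\phi|^{p_s-1}$ and the identity $|x|^{-\mu}\ast\sigma^{p_s}=\widetilde\alpha_{n,\mu,s}\sigma^{2_s^\ast-p_s}$ (Lemma \ref{p1-00}) — since $\sigma^{2_s^\ast-p_s}\sigma^{p_s-2}\phi$ is of $N_1/N_2$-type and $\sigma^{2_s^\ast-p_s}|\phi|^{p_s-1}$ is of $N_1$-type — noting the cancellation of the two occurrences of $p_s\big(|x|^{-\mu}\ast\sigma^{p_s-1}\phi\big)\sigma^{p_s-1}$ (one from the expansion of the first summand's cross term, one subtracted explicitly) against the $(p_s-1)$-term coming from the second summand. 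What remains is to collect the surviving terms and match them against the nine $N_j$'s, absorbing numerical constants into $\lesssim$.

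The main obstacle is bookkeeping rather than any genuine analytic difficulty: one must be careful that the pointwise elementary inequalities $|(a+b)^p-a^p-pa^{p-1}b|\lesssim a^{p-2}b^2+|b|^p$ and $|(a+b)^{p-1}-a^{p-1}|\lesssim a^{p-2}|b|+|b|^{p-1}$ are applied with the correct exponent depending on whether $p_s-1\geq1$ or $p_s-1<1$, i.e. on the sign of $4s-\mu$ (recall $p_s=\frac{2n-\mu}{n-2s}\geq2$, so $p_s-1\geq1$ always, which actually simplifies matters), and that the Riesz potential is distributed only over the algebraic sums, never factored incorrectly. Since the statement is a pointwise inequality, no integral estimates are invoked here; the Hardy–Littlewood–Sobolev and Hölder bounds that turn this into the $L^{(2_s^\ast)'}$ control of $N(\phi)$ are deferred to where Lemma \ref{gaojiexiang} is used. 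Thus the proof reduces to a short calculation, and I would simply present the expansion and cite the elementary power inequalities.

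\begin{proof}
	For $p_s\geq2$ one has the pointwise inequalities
	\begin{equation*}
		\big|(\sigma+\phi)^{p_s}-\sigma^{p_s}-p_s\sigma^{p_s-1}\phi\big|
		\lesssim\sigma^{p_s-2}\phi^2+|\phi|^{p_s},\qquad
		\big|(\sigma+\phi)^{p_s-1}-\sigma^{p_s-1}-(p_s-1)\sigma^{p_s-2}\phi\big|
		\lesssim|\phi|^{p_s-1},
	\end{equation*}
	together with $(\sigma+\phi)^{p_s-1}\lesssim\sigma^{p_s-1}+|\phi|^{p_s-1}$. Writing, from \eqref{u-2},
	\begin{equation*}
		N(\phi)=\Big(|x|^{-\mu}\ast\big[(\sigma+\phi)^{p_s}-\sigma^{p_s}-p_s\sigma^{p_s-1}\phi\big]\Big)(\sigma+\phi)^{p_s-1}
		+p_s\Big(|x|^{-\mu}\ast\sigma^{p_s-1}\phi\Big)\big[(\sigma+\phi)^{p_s-1}-\sigma^{p_s-1}\big]
		+\Big(|x|^{-\mu}\ast\sigma^{p_s}\Big)\big[(\sigma+\phi)^{p_s-1}-\sigma^{p_s-1}-(p_s-1)\sigma^{p_s-2}\phi\big],
	\end{equation*}
	and using Lemma \ref{p1-00} to replace $|x|^{-\mu}\ast\sigma^{p_s}$ by $\widetilde{\alpha}_{n,\mu,s}\sigma^{2_s^\ast-p_s}$ with $2_s^\ast-p_s\geq0$, we distribute each convolution over the algebraic sums and apply the inequalities above. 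Each resulting term is of the form $\big(|x|^{-\mu}\ast A\big)B$ with $A\in\{\sigma^{p_s-1}\phi,\sigma^{p_s-2}\phi^2,|\phi|^{p_s}\}$ and $B\in\{\sigma^{p_s-1},\sigma^{p_s-2}\phi,|\phi|^{p_s-1}\}$ (the term with $A=\sigma^{p_s}\cdot(\text{linear})$ being handled via $\sigma^{2_s^\ast-p_s}$ as above), which are precisely $N_1(\phi),\dots,N_9(\phi)$ up to constants and modulus. Collecting all nine possibilities yields
	\begin{equation*}
		N(\phi)\lesssim N_1(\phi)+N_2(\phi)+N_3(\phi)+N_4(\phi)+N_5(\phi)+N_6(\phi)+N_7(\phi)+N_8(\phi)+N_9(\phi),
	\end{equation*}
	as claimed.
\end{proof}
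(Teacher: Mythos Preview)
Your decomposition of $N(\phi)$ into three pieces and the use of the elementary power inequalities is exactly the argument the paper has in mind (the paper itself says only ``Applying the elementary inequality, we can derive the following estimate'' and gives no further details). One correction: your appeal to Lemma~\ref{p1-00} is both unnecessary and incorrect as stated, since that lemma gives $|x|^{-\mu}\ast W_i^{p_s}=\widetilde{\alpha}_{n,\mu,s}W_i^{2_s^\ast-p_s}$ for a \emph{single} bubble $W_i$, not for the sum $\sigma=\sum_i W_i$. But you do not need it: the third piece of your decomposition is bounded directly by $(|x|^{-\mu}\ast\sigma^{p_s})|\phi|^{p_s-1}$, which is precisely $N_1(\phi)$ (with $|\phi|$ in place of $\phi$). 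Drop the reference to Lemma~\ref{p1-00} and the proof is clean.
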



\begin{thebibliography}{99}
		
		
		\bibitem{Aryan}
		S. Aryan, {\em Stability of Hardy-Littlewood-Sobolev inequality under bubbling}, Calc. Var. Partial Differential Equations, {\bf 62} (2023), Paper No. 223, 42 pp.
		
		\bibitem{Bahri-1989}
		A. Bahri, {\em Critical Points at Infinity in some Variational Problems}, Pitman Research Notes in Mathematics Series, {\bf 182} (1989), Longman.
	
\bibitem{B-W-W}
		T. Bartsch, T. Weth, and M. Willem, {\em A Sobolev inequality with remainder term and critical equations on domains with topology for the polyharmonic operator}, Calc. Var. Partial Differential Equations, {\bf 18} (2003), 253-268.
	
		\bibitem{BE91}
		G. Bianchi and H. Egnell, {\em A note on the Sobolev inequality}, J. Funct. Anal., {\bf 100} (1991), 18-24.
		
		\bibitem{BE198585}
		H. Brezis and E. H. Lieb, {\em Sobolev inequalities with remainder terms}, J. Funct. Anal., {\bf 62} (1985), 73-86.
		
		\bibitem{Ca17}
		E. A. Carlen, {\em Duality and stability for functional inequalities}, Ann. Fac. Sci. Toulouse Math. (6), {\bf 26} (2017), 319-350.
		
		\bibitem{C-K-L-24}
		H. Chen, S. Kim, and J. Wei, {\em Sharp quantitative stability estimates for critical points of fractional Sobolev inequalities}, preprint, \url{arXiv:2408.07775} [math.AP].
		
		\bibitem{C-L-T}
		L. Chen, G. Lu, and H. Tang, {\em Stability of Hardy-Littlewood-Sobolev inequalities with explicit lower bounds}, Adv. Math., {\bf 450} (2024), Paper No. 109778, 28 pp.
		
		\bibitem{CFW13}
		S. Chen, R. Frank, and T. Weth, {\em Remainder terms in the fractional Sobolev inequality}, Indiana Univ. Math. J., {\bf 62} (2013), 1381-1397.
		
		\bibitem{Chen-ou}
		W. Chen, C. Li, and B. Ou, {\em Classification of solutions for an integral equation}, Commun. Pure Appl. Math., {\bf 59} (2006), 330-343.
		
		\bibitem{C-F-M-P}
		A. Cianchi, N. Fusco, F. Maggi, and A. Pratelli, {\em The sharp Sobolev inequality in quantitative form}, J. Eur. Math. Soc., {\bf 11} (2009), 1105-1139.
		
		\bibitem{CFM18}
		G. Ciraolo, A. Figalli, and F. Maggi, {\em A quantitative analysis of metrics on $\mathbb{R}^N$ with almost constant positive scalar curvature, with applications to fast diffusion flows}, Int. Math. Res. Not., {\bf 2017} (2018), 6780-6797.
		
		\bibitem{DHP}
		W. Dai, Y. Hu, and S. Peng, {\em On the sharp quantitative stability of critical points of the Hardy-Littlewood-Sobolev inequality in $\mathbb{R}^n$ with $n\geq3$}, Preprint. \url{arXiv:2501.19248} [math.AP].
		
		\bibitem{DHQWF}
		W. Dai, J. Huang, Y. Qin,  B. Wang, and Y. Fang, {\em Regularity and classification of solutions to static Hartree equations involving fractional Laplacians}, Discrete Contin. Dyn. Syst., {\bf 39} (2019), 1389-1403.
		
		\bibitem{delPino-1}
		M. del Pino, P. Felmer, and M. Musso, {\em Two-bubble solutions in the super-critical Bahri-Coron’s problem}, Calc. Var. Partial Differ. Equ., {\bf 16} (2003), 113-145.
		
		\bibitem{DSW21}
		B. Deng, L. Sun, and J. Wei, {\em Sharp quantitative estimates of Struwe's Decomposition}, to appear in Duke Math. J., \url{arXiv:2103.15360} [math.AP].
		
		\bibitem{DLYZ24}
		S. Deng, W. Luo, M. Yang, and X. Zhang, {\em On some critical fractional Hartree equations: nondegeneracy of the positive solutions and its applications}, to appear in Sci. China Math.
		
		\bibitem{DSB213}
		S. Deng, X. Tian, M. Yang, and S. Zhao, {\em Remainder terms of a nonlocal Sobolev inequality}, Math. Nachr., {\bf 297} (2024), 1652-1667.
		
		\bibitem{T-Konig23}
		N. de Nitti and T. K\"onig, {\em Stability with explicit constants of the critical points of the fractional Sobolev inequality and applications to fast diffusion}, J. Funct. Anal., {\bf 285} (2023), Paper No. 110093, 30 pp.
		
		\bibitem{Dolbeault-1}
		J. Dolbeault, {\em Sobolev and Hardy-Littlewood-Sobolev inequalities: duality and fast diffusion}, Math. Res. Lett., {\bf 18} (2011), 1037-1050.
		
		\bibitem{D-E-F-F-L}
		J. Dolbeault, M. J. Esteban, A. Figalli, R. L. Frank, and M. Loss, {\em Sharp stability for Sobolev and log-Sobolev inequalities, with optimal dimensional dependence}, preprint,  \url{arXiv:2209.08651} [math.AP].
		
		\bibitem{DE22}
		J. Dolbeault and M. J. Esteban, {\em Hardy-Littlewood-Sobolev and related inequalities: stability}, The physics and mathematics of Elliott Lieb the 90th anniversary, Vol. I, 247-268, EMS Press, Berlin, (2022).
		
		\bibitem{Druet}
		O. Druet, {\em From one bubble to several bubbles: the low-dimensional case}, J. Differential Geom., {\bf 63} (2003), 399-473.
		
		\bibitem{DY19}
		L. Du and M. Yang, {Uniqueness and nondegeneracy of solutions for a critical nonlocal equation,} Discrete Contin. Dyn. Syst. A., {\bf 39} (2019), 5847-5866.
		
		\bibitem{FG20}
		A. Figalli and F. Glaudo, {\em On the sharp stability of critical points of the Sobolev inequality}, Arch. Ration. Mech. Anal. {\bf 237} (2020), 201-258.
		
		\bibitem{F-N-18}
		A. Figalli and R. Neumayer, {\em Gradient stability for the Sobolev inequality: the case $p\geq2$}, J. Eur. Math. Soc., {\bf 21} (2018), 319-354.
		
		\bibitem{F-Z-22}
		A. Figalli and Y. R.-Y. Zhang, {\em Sharp gradient stability for the Sobolev inequality}, Duke Math. J., {\bf 171} (2022), 2407-2459.
		
		\bibitem{GMYZ22}
		F. Gao, V. Moroz, M. Yang, and S. Zhao, {\em Construction of infinitely many solutions for a critical Choquard equation via local Poho\v{z}aev identities}, Calc. Var. Partial Differ. Equ., {\bf 61} (2022), Paper No. 222, 47 pp.
		
		\bibitem{GY18}
		F. Gao and M. Yang, {\em The Brezis-Nirenberg type critical problem for nonlinear Choquard equation}, Sci. China Math., {\bf 61} (2018), 1219-1242.
		
		\bibitem{G-W10}
		F. Gazzola and T. Weth, {\em Remainder terms in a higher order Sobolev inequality}, Arch. Math., (Basel) {\bf 95} (2010), 381-388.

        \bibitem{GGG}
        P. G\'erard, {\em Description du d\'efaut de compacit\'e de l'injection de Sobolev}, ESAIM Control Optim. Calc. Var., {\bf 3} (1998), 213-233.
		
		\bibitem{GHPS19}
		L. Guo, T. Hu, S. Peng, and W. Shuai, {\em Existence and uniqueness of solutions for Choquard equation involving Hardy-Littlewood-Sobolev critical exponent}, Calc. Var. Partial Differential Equations, {\bf 58} (2019), Paper No. 128, 34 pp.
		
		\bibitem{H-L-1928}
		G. Hardy and J. Littlewood, {\em Some properties of fractional integral. I}, Math. Z., {\bf 27} (1928), 565-606.
		
		\bibitem{T-K-23}
		T. K\"{o}nig, {\em On the sharp constant in the Bianchi-Egnell stability inequality}, Bull. Lond. Math. Soc., {\bf 55} (2023), 2070-2075.
		
		\bibitem{T-K-24}
		T. K\"{o}nig, {\em Stability for the Sobolev inequality: existence of a minimizer}, to appear in J. Eur. Math. Soc. \url{arXiv:2211.14185} [Math. AP].
		
		\bibitem{ple}
		P. Le, {\em Symmetry and classification of solutions to an integral equation of the Choquard type}, C. R. Math. Acad. Sci. Paris, {\bf 357} (2019), 878-888.
		
		\bibitem{LLTX}
		X. Li, C. Liu,  X. Tang, and G. Xu, {\em Dynamics of radial threshold solutions for generalized energy-critical Hartree equation},  Preprint. \url{arXiv:2310.14646} [math.AP].
		
		
		\bibitem{LTX}
		X. Li, X. Tang, and G. Xu, {\em Nondegeneracy of the positive solutions for critical nonlinear Hartree equation in $\mathbb{R}^6$}, Proc. Amer. Math. Soc., {\bf 150} (2022), 5203-5216.
		
		\bibitem{XLi}
		X.  Li, C.  Liu, X. Tang, and G. Xu, {\em Nondegeneracy of positive bubble solutions for generalized energy-critical Hartree equations}, Preprint. \url{arXiv:2304.04139} [math.AP].
		
		\bibitem{Lieb83}
		E. H. Lieb, {\em Sharp constants in the Hardy-Littlewood-Sobolev and related inequalities}, Ann. of Math., {\bf 118} (1983), 349-374.
		
		\bibitem{Lion85-1}
		P. L. Lions, {\em The concentration-compactness principle in the calculus of variations, the limit case, part \uppercase\expandafter{\romannumeral 1}}, Rev. Mat. Iberam. {\bf 1} (1985), 145-201.
		
		\bibitem{Lion85-2}
		P. L. Lions, {\em The concentration-compactness principle in the calculus of variations, the limit case, part \uppercase\expandafter{\romannumeral 2}}, Rev. Mat. Iberam. {\bf 1} (1985), 45-121.
		
		\bibitem{L}
		K. Liu, Q. Zhang and W. Zou, {\em On the stability of critical points of the Hardy-Littlewood-Sobolev inequality}, Preprint. \url{arXiv:2306.15862} [math.AP].
		
		\bibitem{L-W-99}
		G. Lu and J. Wei, {\em On a Sobolev inequality with remainder terms}, Proc. Amer. Math. Soc., {\bf 128} (2000), 75-84.
		
		\bibitem{PP14}		
		G. Palatucci and A. Pisante, {\em Improved Sobolev embeddings, profile decompositions, and concentration-compactness for fractional Sobolev spaces}, Calc. Var. Partial Differential Equations, {\bf 50} (2014) 799-829.
		
		\bibitem{P-P-15}
		G. Palatucci and A. Pisante, {\em A global compactness type result for Palais-Smale sequences in fractional Sobolev spaces}, Nonlinear Anal., {\bf 117} (2015), 1-7.
		
		
		\bibitem{Parker}
		T. H. Parker, {\em Bubble tree convergence for harmonic maps}, J. Differential Geom., {\bf44} (1996), 595-633.
		
		\bibitem{p-y-z24}
		P. Piccione, M. Yang, and S. Zhao, {\em Quantitative profile
			decomposition and stability for a nonlocal Sobolev inequality}, J. Differential Equations, {\bf 417} (2025), 64-104.
		
		\bibitem{G-Tian}
		J. Qing and G. Tian, {\em Bubbling of the heat flows for harmonic maps from surfaces}, Comm. Pure Appl. Math., {\bf 50} (1997), 295-310.
		
		\bibitem{S1963}
		S. Sobolev, {\em On a theorem of functional analysis}, Translated by J. R. Brown. Transl., Ser. 2, Am. Math. Soc., \textbf{34} (1963), 39-68.
		
		\bibitem{Struwe-1984}
		M. Struwe, {\em A global compactness result for elliptic boundary value problems involving limiting nonlinearities}, Math. Z., \textbf{187} (1984), 511-517.
		
		\bibitem{YZ25}
		M. Yang and S. Zhao, {\em Stability estimates for critical points of a nonlocal Sobolev-type inequality}, Preprint. \url{arXiv:2501.01927} [math.AP].
		
		\bibitem{Zhang}
		X. Zhang, W. Ye, and M. Yang, {\em Nondegeneracy of positive solutions for a biharmonic hartree equation and its application}, to appear in J. Differential Equations, \url{arXiv:2310.11056} [math.AP].
		
		\bibitem{Z}
		Y. Zhang, Y. Zhou, and W. Zou,
		{\em On the stability of fractional Sobolev trace inequality and corresponding profile decomposition}, Preprint. \url{arXiv:2312.01766} [math.AP].
		
	\end{thebibliography}
\end{document}